\DeclareMathAlphabet{\mathbbold}{U}{bbold}{m}{n}
\newenvironment{manualtheorem}[1]{%
  \manualtheoreminner
}{\endmanualtheoreminner}
\providecommand{\customgenericname}{}
\newcommand{\newcustomtheorem}[2]{%
  \newenvironment{#1}[1]
  {%
   \renewcommand\customgenericname{#2}%
   \renewcommand\theinnercustomgeneric{##1}%
   \innercustomgeneric
  }
  {\endinnercustomgeneric}
}
  \def\R{{\mathbb R}}
  \def\Z{{\mathbb Z}}
  \def\N{{\mathbb N}}
  \def\G{{\Gamma}}
  \renewcommand{\>}{\rangle}
\newtheorem{theorem}{Theorem}
\numberwithin{theorem}{section}
\newtheorem{proposition}[theorem]{Proposition}
\newtheorem{lemma}[theorem]{Lemma}
\newtheorem{corollary}[theorem]{Corollary}
\newtheorem{conjecture}[theorem]{Conjecture}
\newtheorem{question}[theorem]{Question}
\newtheorem{claim}[theorem]{Claim}
\theoremstyle{definition}
\newtheorem{definition}[theorem]{Definition}
\newtheorem{remark}[theorem]{Remark}
\theoremstyle{remark}
\newtheorem{fact}[theorem]{Fact}
\title{The sphere complex of a locally finite graph}
\author{Brian Udall}
\date{October 2024}
\begin{document}

\maketitle

\begin{abstract}
    For a locally finite graph $\Gamma$, we consider its mapping class group Map$(\Gamma)$ as defined by Algom-Kfir--Bestvina. For these groups, we prove a generalization of the results of Laudenbach and Brendle--Broaddus--Putman, producing a $3$-manifold $M_{\Gamma}$ whose mapping class group surjects onto Map$(\Gamma)$ with kernel a compact abelian group of sphere twists so that the corresponding short exact sequence splits. Along the way we obtain an induced faithful action of $\text{Map}(\Gamma)$ on the sphere complex $\mathcal{S}(M_{\Gamma})$ of $M_{\Gamma}$, which is the simplicial complex whose simplices are isotopy classes of finite collections of spheres in $M_{\Gamma}$ which are pairwise disjoint. When $\Gamma$ has finite rank, we further show that the action of $\text{Map}(\Gamma)$ on a certain natural subcomplex has elements with positive translation length, and also consider a candidate for an Outer space of such a graph. As another application, we prove that for many $\Gamma$, $\text{Map}(\Gamma)$ is quasi-isometric to a particular subgraph of $\mathcal{S}(M_{\Gamma})$, following Schaffer-Cohen. We also deduce analogs of the results of Domat--Hoganson--Kwak. 
\end{abstract}

\section{Introduction}

Prompted by the recent surge of interest in the mapping class groups of infinite-type surfaces, called big mapping class groups, Algom-Kfir--Bestvina proposed a version of "big Out$(F_n)$" \cite{algom-kfir_groups_2021}. Given a locally finite graph $\Gamma$, its mapping class group, denoted Map$(\Gamma)$, is the group of proper homotopy equivalences of $\Gamma$ up to proper homotopy. 

\par 
Essentially all of the work so far in studying mapping class groups of infinite type graphs has been in understanding the basic algebraic and topological properties of these groups, as well as their coarse geometry \cite{algom-kfir_groups_2021} \cite{domat_coarse_2023} \cite{domat_generating_2023}.
\par 
Given a locally finite connected graph $\Gamma$, we consider the associated \textit{doubled handlebody with punctures} $M_{\Gamma}$. This is a $3$-manifold obtained by gluing two copies of a regular neighborhood of $\Gamma$ in $\R^3$ along their common surface boundary by the identity map (see Definition \ref{def:DoubledHandlebody} for more details). Then the \textit{sphere complex} $\mathcal{S}(M_{\Gamma})$ is the simplicial complex whose $k$-simplices are given by isotopy classes of $k+1$ essential non-peripheral smoothly embedded spheres in $M_{\Gamma}$ which can be realized disjointly. The mapping class group Map$(M_{\Gamma})$ of $M_{\Gamma}$, which is the group of orientation preserving diffeomorphisms of $M_{\Gamma}$ up to isotopy, acts naturally on this complex.
\par 
This complex is well understood in the case when $\Gamma$ is compact. Hatcher initially defined it \cite{hatcher_homological_1995}, building on results of Whitehead and Laudenbach \cite{whitehead_certain_1936, whitehead_equivalent_1936}\cite{laudenbach_sur_1973, laudenbach_topologie_1974}. Both its contractibility and the high connectivity of many of its subcomplexes has been used to great effect to produce many results on the homological properties of Aut$(F_n)$ and Out$(F_n)$, see \cite{hatcher_homological_1995}\cite{hatcher_cerf_1998}\cite{hatcher_homology_2004}\cite{hatcher_stabilization_2010} for examples of this. Their geometry is also fairly well understood, though much is still not known \cite{handel_free_2013}\cite{iezzi_sphere_2016}\cite{hilion_hyperbolicity_2017}\cite{clay_uniform_2017}\cite{handel_free_2019}\cite{hamenstadt_spotted_2023}\cite{hamenstadt_submanifold_2024}. See the beginning of Subsection \ref{SphereGraph} as well as the discussion around Conjecture \ref{conj:NSHyperb}.
\par
The main result of this paper implies that the action of $\text{Map}(M_{\Gamma})$ on $\mathcal{S}(M_{\Gamma})$ descends to a faithful action of Map$(\Gamma)$ on $\mathcal{S}(M_{\Gamma})$. Indeed, we have the following. 
\begin{theorem}\label{SESMainTheorem}
Let $\Gamma$ be a locally finite connected graph, and let $M_{\Gamma}$ denote its associated doubled handlebody with punctures.
\begin{enumerate}
    \item  There is a short exact sequence of continuous maps
    \begin{equation*}
        1 \to \mathrm{Twists}(M_{\Gamma}) \to \mathrm{Map}(M_{\Gamma}) \to \mathrm{Map}(\Gamma) \to 1
    \end{equation*}
    where $\mathrm{Twists}(M_{\Gamma})$ denotes the subgroup of $\mathrm{Map}(M_{\Gamma})$ generated by sphere twists on sphere systems of $M_{\Gamma}$. Further, as long as $\Gamma$ is not a sporadic graph, $\mathrm{Twists}(M_{\Gamma})$ is precisely the kernel of the action of $\mathrm{Map}(M_{\Gamma})$ on $\mathcal{S}(M_{\Gamma})$, so there is an induced faithful action of $\mathrm{Map}(\Gamma)$ on $\mathcal{S}(M_{\Gamma})$. 
    \item The short exact sequence splits topologically, giving an isomorphism
    \begin{equation*}
        \mathrm{Map}(M_{\Gamma}) \cong \mathrm{Twists}(M_{\Gamma})\rtimes \mathrm{Map}(\Gamma)
    \end{equation*}
    as topological groups. 
    
    \end{enumerate}
\end{theorem}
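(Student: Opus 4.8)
The plan is to follow the strategy of Laudenbach and of Brendle--Broaddus--Putman from the compact case, but carried out uniformly over an infinite sphere system and with care about topologies, so that all the maps involved are continuous. The organizing tool throughout is Hatcher's dictionary between (maximal) sphere systems in $M_\Gamma$ and their dual graphs, extended to the locally finite setting.

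\emph{The map $\rho\colon\mathrm{Map}(M_\Gamma)\to\mathrm{Map}(\Gamma)$.} First I would fix a maximal system $\Sigma$ of essential, non-peripheral spheres whose complementary pieces are all "small": three-holed spheres (copies of $S^3$ with three open balls removed) together with the standard model pieces around the punctures and ends. The dual graph of such a $\Sigma$ is a locally finite graph properly homotopy equivalent to $\Gamma$, and any two maximal systems are related by a finite sequence of elementary surgery/slide moves --- here I would invoke, and if necessary re-prove in the infinite-type setting, the connectivity of the complex of sphere systems following Hatcher--Laudenbach --- so that $\phi\mapsto(\text{action of }\phi\text{ on dual graphs})$ is a well-defined homomorphism to $\mathrm{Map}(\Gamma)$; equivalently, a diffeomorphism of $M_\Gamma$ acts on $\pi_1$ and on the end space, and these data assemble into an element of $\mathrm{Map}(\Gamma)$ via the description of Algom-Kfir--Bestvina. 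Continuity of $\rho$ and of the inclusion $\mathrm{Twists}(M_\Gamma)\hookrightarrow\mathrm{Map}(M_\Gamma)$ I would check near the identity using the compact-open topologies on diffeomorphisms and on proper homotopy equivalences.

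\emph{Sphere twists and the kernel.} A twist $t_S$ about an embedded sphere $S$ is supported in a collar $S\times[-1,1]$, acts trivially on $\pi_1$ and on ends, and by Hatcher fixes the isotopy class of every essential sphere; hence $\mathrm{Twists}(M_\Gamma)\subseteq\ker(\text{action on }\mathcal S(M_\Gamma))\subseteq\ker\rho$. Since every essential sphere lies in some maximal system and twists obey the usual surgery relations, $\mathrm{Twists}(M_\Gamma)$ is the closure of the group generated by $\{t_S:S\in\Sigma\}$, and these are commuting involutions with disjoint supports, so $\mathrm{Twists}(M_\Gamma)\cong\prod_{S\in\Sigma}\mathbb Z/2\mathbb Z$ with the product topology --- a compact abelian group. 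For the reverse inclusion $\ker\rho\subseteq\mathrm{Twists}(M_\Gamma)$, the generalized Laudenbach statement and the crux of Part 1, given $\phi\in\ker\rho$ I would: isotope $\phi$ so that $\phi(\Sigma)=\Sigma$ and then so that $\phi$ is the identity on $\Sigma$ and near the punctures and ends; restrict $\phi$ to each complementary piece $P$, where a diffeomorphism fixing the sphere boundary is, by the Alexander trick and $\pi_1(\mathrm{Diff}^+(S^2))=\mathbb Z/2\mathbb Z$, isotopic rel boundary to a product of boundary-parallel twists; and finally assemble these piecewise isotopies along a compact exhaustion of $M_\Gamma$, checking that the resulting a priori infinite product of twists converges in $\mathrm{Map}(M_\Gamma)$, which it does by the previous sentence. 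The sharper identification $\ker(\text{action on }\mathcal S)=\mathrm{Twists}(M_\Gamma)$, and hence faithfulness of the induced $\mathrm{Map}(\Gamma)$-action, then requires excluding the sporadic $\Gamma$ --- the finitely many small cases, roughly rank $\le 1$ with few ends, for which $M_\Gamma$ is $S^3$, $S^1\times S^2$, $S^2\times\mathbb R$, or a small punctured variant and carries extra symmetries acting trivially on the degenerate complex $\mathcal S(M_\Gamma)$.

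\emph{Surjectivity and the splitting.} Surjectivity of $\rho$, and simultaneously the section, I would obtain by realizing a topological generating set of $\mathrm{Map}(\Gamma)$ --- handle swaps, handle shifts toward an end, and elementary Nielsen moves, as in Algom-Kfir--Bestvina and Domat--Hoganson--Kwak --- by explicit slides and permutations of a fixed symmetric thickening $N\subset\mathbb R^3$ of a standard model of $\Gamma$, doubled across $\partial N$ to $M_\Gamma$. To upgrade this to a continuous splitting $s\colon\mathrm{Map}(\Gamma)\to\mathrm{Map}(M_\Gamma)$ I would follow Brendle--Broaddus--Putman: choose the realizing diffeomorphisms with supports "as disjoint as possible" and verify that the defining relations of $\mathrm{Map}(\Gamma)$, including the limiting relations encoding its topology, lift. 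The main obstacle is exactly this coherence: the failure of a set-theoretic section to be a homomorphism is measured by a $\mathrm{Twists}(M_\Gamma)$-valued obstruction, and one must organize the choices so that it vanishes, now compatibly over infinitely many handles and ends and in a way that also yields continuity of $s$. A cleaner alternative I would also pursue is to define $s$ through a $\mathrm{Map}(\Gamma)$-equivariant choice of straightening diffeomorphism relative to a basepoint trivalent sphere system dual to the standard model, using the Outer-space-type complex discussed later in the paper.
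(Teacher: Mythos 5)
Your overall skeleton (a surjection built from a sphere-system/dual-graph dictionary, with sphere twists as the kernel, and a BBP-style splitting) is the right shape, but several of the load-bearing steps in your plan are either assumed where they are actually unknown, or are factually incorrect.

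\textbf{The kernel.} You propose to isotope $\phi\in\ker\rho$ so that $\phi(\Sigma)=\Sigma$ for a \emph{maximal} system $\Sigma$ and then decompose piece by piece, citing ``the generalized Laudenbach statement'' for infinite systems. That statement is precisely what the paper records as an open Conjecture (\ref{conj:LaudenbachExtension}); the actual proof does not assume it. Instead the paper first shows (Lemma \ref{pi_2Trivial}) that elements of $\ker\Psi$ act trivially on $\pi_2(M_\Gamma)$ --- a universal-cover argument that needs rank $0$ or rank $\geq 2$ because it exploits the fact that a nontrivial deck transformation of a tree fixes at most two ends --- and then (Lemma \ref{KernelStructure}) builds the ambient isotopy by hand along a compact exhaustion, choosing at each stage a finite sphere system and an isotopy supported outside the previous compact set so that the concatenation makes sense; the rank-$1$ case (Lemma \ref{lem:rank1KernelStructure}) needs a separate reduction. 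Your plan has no substitute for the $\pi_2$-triviality step (which is what lets you move $\Sigma$ back to itself in the first place), no rank case analysis, and no mechanism for the infinite concatenation beyond the conjectural generalization. You also assert $\mathrm{Twists}(M_\Gamma)\cong\prod_{S\in\Sigma}\Z/2$; this is wrong. Sphere twists on separating spheres are isotopic to the identity via a pair-of-pants relation that lets one push the support off to infinity (Lemma \ref{TrivialSeperatingTwists}), so the group is only $\prod_{i=1}^{\mathrm{rk}(\Gamma)}\Z/2$, generated by twists on a reduced (not maximal) system, and establishing even the nontriviality of the remaining twists requires the Brendle--Broaddus--Putman crossed homomorphism (Lemma \ref{nonseparatingNontrivial}). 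Commutativity and disjointness of supports do not rule out further relations.

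\textbf{The splitting.} Your approaches --- lift a generating set and verify that the (including limiting) relations lift, or construct an equivariant straightening --- are not what the paper does, and the paper explicitly remarks (after the proof of part (2)) that producing an explicit section in this way is already nontrivial in the compact case and faces additional obstacles here (the abelianization of $\mathrm{Map}(\Gamma)$ is unknown; the characterization of the image via a trivialization of the tangent bundle is not available). The paper instead constructs the continuous crossed homomorphism $\mathfrak{T}\colon\mathrm{Map}(M_\Gamma)\to H^1(M_\Gamma;\Z/2)$ restricting to a topological isomorphism on $\mathrm{Twists}(M_\Gamma)$ (Lemma \ref{CrossedHomContinuous}), and then invokes a general algebraic fact (Lemma \ref{AbelianKernelSplitting}) and a topological upgrade (Lemma \ref{ContinuousSplitting}) that a continuous crossed homomorphism onto an abelian normal subgroup, restricting to the identity on it, yields a continuous semidirect-product splitting along $\ker\mathfrak{T}$. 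Without this mechanism you have no way to make the ``coherence over infinitely many handles and ends'' you flag as the main obstacle actually work; the cocycle obstruction does not vanish for free.
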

Here, a \textit{sphere system} is a smooth proper embedding of (possibly infinitely many) disjoint essential non-peripheral $2$-spheres in $M_{\Gamma}$. Unless otherwise stated, we assume that no pair of spheres are isotopic. A sphere in a $3$-manifold $M$ is \textit{essential} if it does not bound a $3$-ball, and \textit{non-peripheral} if it does not bound a once punctured $3$-ball and it is not isotopic to a boundary component. A sphere system is \textit{reduced} if its complement is simply connected. A locally finite graph $\Gamma$ is \textit{sporadic} if it is rank $0$ with at most $4$ ends, a rank $1$ graph with at most $1$ end, or a compact rank $2$ graph. See the discussion before Conjecture \ref{conj:LaudenbachExtension}.
\par 
\par 
Versions of this theorem for graphs with finitely many ends are well known. Namely, when $\Gamma$ is a finite rank graph with $0$ or $1$ end, the first part of Theorem \ref{SESMainTheorem} is due to Laudenbach \cite{laudenbach_topologie_1974}. For finite rank graphs with finitely many ends, the first part of Theorem \ref{SESMainTheorem} is due to Hatcher--Vogtmann \cite{hatcher_homology_2004} (c.f. Proposition \ref{HatcherVSES}). When $\Gamma$ is compact, the second part of Theorem \ref{SESMainTheorem} is due to Brendle--Broaddus--Putman \cite{brendle_mapping_2023}.
\par 
Even more, we can precisely determine the group Twists$(M_{\Gamma})$. Let rk$(\Gamma)$ denote the rank of the fundamental group of $\Gamma$, which is a free group of possibly infinite rank.
\begin{theorem}\label{TwistGroupStructure}
    The group $\mathrm{Twists}(M_{\Gamma})$ is topologically isomorphic to $\Pi_{i=1}^{\text{rk}(\Gamma)}\Z/2$, with generating set given by sphere twists on a reduced sphere system (allowing for infinite products). In particular, $\mathrm{Twists}(M_{\Gamma})$ is compact and abelian.
\end{theorem}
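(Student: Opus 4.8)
The plan is to construct a topological isomorphism
\[
\Psi\colon \prod_{i\in I}\mathbb{Z}/2 \longrightarrow \mathrm{Twists}(M_{\Gamma}),\qquad (a_i)_{i\in I}\longmapsto \prod_{i\in I} T_{S_i}^{\,a_i},
\]
where $\Sigma=\{S_i\}_{i\in I}$ is a fixed reduced sphere system and $T_{S_i}$ the twist about $S_i$; recall that $\Sigma$ being reduced forces $|I|=\mathrm{rk}(\Gamma)$, as its complement is a disjoint union of (possibly punctured) balls. First I would check that $\Psi$ is a well-defined continuous homomorphism. Since $\Sigma$ is a proper embedding it is locally finite, so the $T_{S_i}$ have pairwise disjoint supports lying in pairwise disjoint neighbourhoods $N_i$ of the $S_i$, the partial products are eventually constant on any compact set, and hence any infinite product $\prod_i T_{S_i}^{a_i}$ defines a diffeomorphism of $M_{\Gamma}$. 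These twists pairwise commute, and each satisfies $T_{S_i}^2=1$ in $\mathrm{Map}(M_{\Gamma})$ because $\pi_1(\mathrm{SO}(3))=\mathbb{Z}/2$ and the relevant isotopy is supported in $N_i$; thus $\Psi$ is a homomorphism. Continuity holds because a basic neighbourhood of the identity of $\mathrm{Map}(M_{\Gamma})$ only constrains diffeomorphisms on a fixed compact set $K$, and $K$ meets only finitely many $N_i$. Because $\prod_{i\in I}\mathbb{Z}/2$ is compact by Tychonoff and $\mathrm{Map}(M_{\Gamma})$ is Hausdorff, $\Psi$ will automatically be a homeomorphism onto a closed subgroup as soon as it is shown to be injective; so compactness and abelianness of $\mathrm{Twists}(M_{\Gamma})$, as well as the description of the generating set, all reduce to proving that $\Psi$ is injective with image exactly $\mathrm{Twists}(M_{\Gamma})$.

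For surjectivity, note $\mathrm{Twists}(M_{\Gamma})$ is by definition generated by products $\prod_j T_{R_j}$ over sphere systems $\{R_j\}$, so it suffices to see each individual twist $T_R$ ($R$ essential and non-peripheral) lies in $\operatorname{im}\Psi$; I would in fact prove $T_R=\prod_{i\in J(R)}T_{S_i}$, where $J(R)$ is the set of $i$ with $[R]=\sum_{i\in J(R)}[S_i]$ in $H_2(M_{\Gamma};\mathbb{Z}/2)$ modulo peripheral classes. The two ingredients are (i) $T_R=1$ when $R$ is separating or peripheral, obtained by isotoping $R$ to bound a simply connected, respectively once-punctured, region and computing there (or citing the compact case), and (ii) a co-bounding relation $T_R=T_{R'}T_{R''}$ whenever $R,R',R''$ are disjoint with $[R]=[R']+[R'']$, proved by exhibiting a compact simply connected submanifold with boundary $R\sqcup R'\sqcup R''$ supporting the product $T_RT_{R'}T_{R''}$, where again one computes or cites. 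Realizing homologous spheres disjointly is standard sphere-system surgery, and (i)+(ii) give the displayed formula; then $\prod_j T_{R_j}=\prod_i T_{S_i}^{c_i}\in\operatorname{im}\Psi$ with $c_i=\sum_j[\,i\in J(R_j)\,]\bmod 2$, while conversely $\operatorname{im}\Psi\subseteq\mathrm{Twists}(M_{\Gamma})$ since each $\prod_{i\in J}T_{S_i}$ is the twist on the subsystem $\{S_i\}_{i\in J}$ of $\Sigma$.

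Injectivity is the crux. For each $i_0\in I$ I would build a continuous \emph{detection homomorphism} $\delta_{i_0}\colon\mathrm{Twists}(M_{\Gamma})\to\mathbb{Z}/2$ with $\delta_{i_0}(T_{S_i})=\delta_{i,i_0}$: cut $M_{\Gamma}$ along $\Sigma\setminus\{S_{i_0}\}$, take the component containing $S_{i_0}$, and cap off its spherical boundary and fill its punctures to get $S^1\times S^2$; an element of $\mathrm{Twists}(M_{\Gamma})$ has a representative supported in a neighbourhood of $\Sigma$, which therefore restricts to a diffeomorphism of this $S^1\times S^2$, and the classical computation of $\pi_0\mathrm{Diff}^+(S^1\times S^2)$ — in which the $\mathbb{Z}/2=\pi_1(\mathrm{SO}(3))$ summand is exactly the sphere twist — reads off $\delta_{i_0}$. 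Then $\delta:=(\delta_{i_0})_{i_0}\colon\mathrm{Twists}(M_{\Gamma})\to\prod_{i\in I}\mathbb{Z}/2$ is continuous and satisfies $\delta\circ\Psi=\mathrm{id}$, so $\Psi$ is injective; combined with the previous paragraph, $\Psi$ is a topological isomorphism. In the finite-rank case this whole scheme is the content of the theorems of Laudenbach and Hatcher--Vogtmann, cf. Proposition~\ref{HatcherVSES}, from which I would simply deduce the finite-rank statement.

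The main obstacle is this injectivity step in the infinite-rank setting. Defining the $\delta_{i_0}$ rigorously requires first isotoping an arbitrary element of $\mathrm{Twists}(M_{\Gamma})$ to be supported near the \emph{infinite} sphere system $\Sigma$, and then arguing component-by-component that the detection can be carried out without moving any "mass" between the pieces of $M_{\Gamma}\setminus\Sigma$; one must also keep careful track of the topology to know that the closure of $\langle T_{S_i}:i\in I\rangle$ is the full product $\prod_{i\in I}\mathbb{Z}/2$ rather than the restricted direct sum — which is precisely where compactness of $\prod_{i\in I}\mathbb{Z}/2$ together with continuity of $\Psi$ and $\delta$ does the work.
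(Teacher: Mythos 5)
Your proposal takes a genuinely different route from the paper, and the two halves of it have very different statuses.

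For the surjectivity/generation half, you argue that $T_R=1$ for $R$ separating or peripheral (your item (i)) and then use the lantern/pair-of-pants relation (ii) to express every twist as a product of core-sphere twists. The conclusion is right, but your justification of (i) is not: a separating sphere in $M_{\Gamma}$ need not ``bound a simply connected region'' when $\Gamma$ has positive rank, so you cannot simply ``compute there.'' The paper instead proves generation by first establishing (Lemmas~\ref{KernelStructure} and~\ref{lem:rank1KernelStructure}) that any element of $\ker(\Psi)$ is isotopic to a product of twists on a \emph{fixed} sphere system (core spheres plus piece boundaries), and then kills the separating twists by a swindle (Lemma~\ref{TrivialSeperatingTwists}): pushing the twist to infinity across a chain of pairs of pants. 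That swindle uses non-compactness essentially and is the mechanism you are missing in (i). Your local-finiteness argument that $c_i=\sum_j[i\in J(R_j)]$ is a finite sum is correct and worth keeping if you pursue this route.

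The injectivity half contains a genuine gap, and it is not the minor technical one you flag at the end. You propose detection homomorphisms $\delta_{i_0}\colon\mathrm{Twists}(M_{\Gamma})\to\Z/2$ by choosing a representative of $[f]$ supported near $\Sigma$, cutting, capping, and reading off $\pi_1(\mathrm{SO}(3))$ in the resulting $S^1\times S^2$. The problem is \emph{well-definedness on isotopy classes}: if $f$ and $g$ are both supported near $\Sigma$ and are isotopic in $M_{\Gamma}$, the isotopy between them need not respect the decomposition along $\Sigma$, so there is no a priori reason the restricted diffeomorphisms of the capped-off $S^1\times S^2$ are isotopic. Indeed, showing that different representatives of the same class give the same value of $\delta_{i_0}$ is logically equivalent to the injectivity you are trying to prove, so the argument as written is circular. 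Compactness of $\prod_i\Z/2$ and continuity of $\Psi$ and $\delta$ have nothing to say here, because $\delta$ must first be a well-defined function before continuity can be invoked. The paper resolves exactly this issue by importing the Brendle--Broaddus--Putman crossed homomorphism $\mathfrak{T}\colon\mathrm{Map}(M_{\Gamma})\to H^1(M_{\Gamma};\Z/2)$ (Lemma~\ref{nonseparatingNontrivial}), which is constructed from a trivialization of the tangent bundle and is therefore manifestly an isotopy invariant; the pairing $\mathfrak{T}(f)([\gamma_{i_0}])$ against a dual curve $\gamma_{i_0}$ to $S_{i_0}$ plays exactly the role you want $\delta_{i_0}$ to play, but comes with well-definedness built in. That construction, or something playing the same role, is the missing ingredient in your proposal.
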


We remark that Theorems \ref{SESMainTheorem} and \ref{TwistGroupStructure} also restrict to subgroups of $\mathrm{Map}(M_{\Gamma})$. For example, one obtains a splitting 
$$\mathrm{PMap}(M_{\Gamma})\cong \mathrm{Twists}(M_{\Gamma})\rtimes \mathrm{PMap}(\Gamma)$$
of the pure mapping class group $\mathrm{PMap}(M_{\Gamma})$. Theorem \ref{TwistGroupStructure} is also a generalization of previous results \cite{laudenbach_topologie_1974}\cite{hatcher_homology_2004}.
\par 
We now note some of the applications one can obtain from the above results. First, we restrict to the case when $\Gamma$ has finite nonzero rank. In this case, we look at the action of $\text{Map}(M_{\Gamma})$ on the full subcomplex of $\mathcal{S}(M_{\Gamma})$ whose vertices consist of nonseparating spheres. We denote this complex by $\mathcal{S}_{ns}(M_{\Gamma})$. We have the following. 

\begin{manualtheorem}{7.12}
    For $\Gamma$ an infinite graph with $1\leq \mathrm{rk}(\Gamma)<\infty$, where in the rank $1$ case we assume $\Gamma$ has more than one end, the complex $\mathcal{S}_{ns}(M_{\Gamma})$ has infinite diameter. Further, the action of $\mathrm{Map}(\Gamma)$ on $\mathcal{S}_{ns}(M_{\Gamma})$ admits elements with positive translation length.
\end{manualtheorem}

The core of this proof lies in developing projection maps for nonseparating spheres (see Definition \ref{def:SphereProjection}) and mimicking the proof of the corresponding result for the nonseparating curve graph of a finite genus infinite type surface, which can be found in Section 8 of \cite{durham_graphs_2018}.
\par 
Letting $\Gamma$ continue to be a finite rank graph, we also define the \textit{Outer space} $\mathbb{O}(\Gamma)$ to be the space of finite weighted sphere systems in $M_{\Gamma}$ whose complementary components are simply connected.  This can be thought of as a subspace of $\mathcal{S}(M_{\Gamma})$, see Subsection \ref{subsec:OuterSpace}. We have the following, which is an extension of the corresponding result for $\text{Out}(F_n)$.
\begin{customprop}{7.14}
Let $\Gamma$ be a locally finite graph with finite positive rank. Then $\mathbb{O}(\Gamma)$ is contractible. The point stabilizers of the action of $\mathrm{Map}(\Gamma)$ on $\mathbb{O}(\Gamma)$ are subgroups isomorphic to semidirect products of finite groups and homeomorphism groups of clopen subsets of $E(\Gamma)$. In particular, the point stabilizers of the action of $\mathrm{PMap}(\Gamma)$ on $\mathbb{O}(\Gamma)$ are finite.
\end{customprop}

In Subsection \ref{subsec:CoarseGeoPure} we translate the results of Domat--Hoganson--Kwak about pure mapping class groups of graphs to pure mapping class groups of doubled handlebodies. Instead of repeating all the results here, we refer the reader to Subsection \ref{subsec:CoarseGeoPure}.
\par 
Finally, in Subsection \ref{subsec:CoarseGeoTranslatable} we consider a particular collection of graphs known as \textit{translatable graphs}, analogous to the translatable surface of Schaffer-Cohen \cite{schaffer-cohen_graphs_2024}. These can be thought of as graphs which can be formed by taking a collection of copies of a fixed graph arranged like $\Z$ with neighboring copies wedge summed along a point. See Definition \ref{def:Transl} for a more precise definition. For a particular subclass of these graphs (those with tame end space, see Definition \ref{def:tame}), we have the following.

\begin{customthm}{8.20}
    Suppose $\Gamma$ is a translatable graph that has a tame end space. The groups $\mathrm{Map}(\Gamma)$ and $\mathrm{Map}(M_{\Gamma})$ are CB generated and, equipped with a CB generating set and the associated word metric, are equivariantly quasi-isometric to $\mathcal{TS}(M_{\Gamma})$.
\end{customthm}

We also note that Section \ref{Connectivity} contains various results about the connectivity of $\mathcal{S}(M_{\Gamma})$ and some of its subcomplexes, extending results of Hatcher in \cite{hatcher_homological_1995}. In Subsection \ref{subsec:Topology} we show that the permutation topology on $\text{Map}(\Gamma)$ induced by its action on $\mathcal{S}(M_{\Gamma})$ agrees with the topology defined on it in Section \ref{sec:MCG}, and also that $\text{Map}(M_{\Gamma})$ is Polish.

\vskip 10pt

\noindent\textbf{Outline}: In Section \ref{sec:Background}, we set up all the background for the rest of the paper. In Section \ref{sec:MCG}, we discuss the mapping class groups of graphs and their doubled handlebodies, and produce the surjective homomorphism which appears in Theorem \ref{SESMainTheorem}. In Section \ref{sec:SES}, we prove Theorem \ref{SESMainTheorem}(1), and begin the proof of Theorem \ref{TwistGroupStructure}. After this we complete the proofs of both of these theorems in Section \ref{sec:SphereTwists}. We consider the connectivity properties of $\mathcal{S}(M_{\Gamma})$ and some of its subcomplexes in Section \ref{Connectivity}. In Section \ref{Applications} we study the permutation topology on $\text{Map}(\Gamma)$, and afterwords discuss nonseparating spheres and prove Theorem \ref{PositiveTranslationLength}, as well as define an Outer space for finite rank graphs and prove Proposition \ref{prop:OuterSpaceContrStab}. Finally in Section \ref{sec:CoarseGeo} we study the coarse geometry of (pure) mapping class groups of graphs and their doubled handlebodies and prove results analogous to those in \cite{domat_coarse_2023} and \cite{domat_generating_2023} for doubled handlebodies, and afterwords prove Theorem \ref{thm:TranslCBGen}.

\vskip 5pt
\noindent\textbf{Acknowledgements}: The author thanks his advisor Christopher J. Leininger for his support and guidance throughout this project. He'd also like to thank George Domat, Hannah Hoganson, and Mladen Bestvina for discussions about their work and for their support. Thanks to Thomas Hill, Michael Kopreski, Rebecca Rechkin, and George Shaji for helpful conversations on the topic while working on a project in parallel to this one. Finally, thanks to Sanghoon Kwak for helpful comments and corrections on an earlier version of the paper. In addition, the author acknowledges partial support from NSF grant DMS-1745670.

\tableofcontents

\section{Background}\label{sec:Background}

\subsection{Space of ends}

We define a \textit{generalized continuum} to be a topological space $X$ which is Hausdorff, connected, locally connected, and $\sigma$-compact. The \textit{space of ends} $E(X)$ of a generalized continuum $X$ is defined to be the inverse limit
\begin{equation*}
    E(X):=\varprojlim_{K\subset X} \pi_0(X \setminus K)
\end{equation*}
where the limit runs over the compact subsets $K$ of $X$. With the standard topology, $E(X)$ is a totally disconnected compact metrizable space, so it is in particular homeomorphic to a closed subset of the Cantor set. This notion is originally due to Freudenthal \cite{freudenthal_uber_1931}, and the book of Baues--Quintero discusses these notions in the generality considered here \cite{baues_infinite_2001}. Every space that we will consider the end space of is either a manifold or a locally finite graph, both of which are generalized continua.
\par 
Points of $E(X)$ may equivalently be described as equivalence classes of infinite sequences $U_1 \supset U_2 \supset \cdots$ of connected nonempty open subsets of $X$ with noncompact closure so that the intersection of the closures of the $U_i$'s is empty. Two sequences $U_1 \supset U_2 \supset \cdots$ and $V_1 \supset V_2 \supset \cdots$ are equivalent if for all $n$, there is an $m$ so that $V_m\subset U_n$, and vice versa. We write $e=[U_1\supset U_2 \supset \cdots]$ for an element of $E(X)$.
\par 
There is a natural compactification $X \sqcup E(X)$ of $X$, with basis given by the union of a basis for $X$, along with the sets of the form $U\cup U^*$, where $U$ is a connected nonempty open subset of $X$ with noncompact closure and
\begin{equation*}
    U^* = \{e=[U_1\supset U_2 \supset \cdots]\in E(X) \ | \ \exists n \text{ such that } U_n\subset U\}.
\end{equation*}
The collection of all such $U^*$ sets also forms a basis for a topology on $E(X)$.
\par 
We say that $f:X\to Y$, a function between two topological spaces, is a \textit{proper} map if it is continuous and for any compact set $K\subset Y$, $f^{-1}(K)$ is compact. A proper map $f:X\to Y$ between two generalized continua induces a continuous map between their end spaces. In fact, one can think of $E$ as a functor from the category of generalized continua with arrows given by proper maps to the category of compact Hausdorff spaces with arrows the continuous maps. In particular, a proper homotopy equivalence $f:X\to X$ with proper inverse $g:X\to X$ (so that $fg$ and $gf$ are properly homotopic to the identity map) induces a homeomorphism $E(f):E(X)\to E(X)$ which extends to a continuous map from $X\sqcup E(X)$ to itself. 

\subsection{Graphs and their doubled handlebodies}\label{subsec:graphsandtheirdoubledhandlebodies}
Let $\Gamma$ be a locally finite connected graph (every graph considered in this paper will be locally finite and connected, unless otherwise specified). Typically, $\Gamma$ will be noncompact. We consider the space of ends of $\Gamma$. The \textit{space of ends accumulated by loops} of $\Gamma$, denoted $E_{\ell}(\Gamma)$, is the closed subset of $E(\Gamma)$ consisting of those ends whose neighborhoods in $\Gamma$ are all of infinite rank. Note that the elements of $E_{\ell}$ are called \textit{non}-$\infty$-\textit{stable ends} in \cite{ayala_proper_1990} and \textit{ends accumulated by genus} in \cite{algom-kfir_groups_2021}. 
\par 
We say a graph is of \textit{finite type} if it has finite rank and finitely many ends. Otherwise, we say that it is of \textit{infinite type}.
\par 
We consider pairs of spaces of the form $(E, E_{\ell})$, where $E$ is the end space of some graph $\Gamma$, and $E_{\ell}$ is the corresponding space of ends accumulated by loops. We say that two such pairs $(E, E_{\ell})$ and $(E', E'_{\ell})$ have the \textit{same homeomorphism type} and write $(E, E_{\ell})\cong (E', E'_{\ell})$ if there is a homeomorphism $f:E \to E'$ which restricts to a homeomorphism $f|_{E_{\ell}}: E_{\ell} \to E'_{\ell}$. The \textit{characteristic triple} of $\Gamma$ is the triple $(\text{rk}(\Gamma), E(\Gamma), E_{\ell}(\Gamma))$. Two characteristic pairs $(r, E, E_{\ell})$ and $(r', E', E'_{\ell})$ are \textit{isomorphic} if $r=r'$ and $(E, E_{\ell}) \cong (E', E'_{\ell})$, and an \textit{isomorphism} between them is a choice of homeomorphism from $(E, E_{\ell})$ to $(E', E'_{\ell})$.
\par
It is clear that a proper homotopy equivalence between two locally finite graphs induces an isomorphism between their characteristic pairs. Conversely, we have the following classification theorem due to Ayala--Dominguez-M\'{a}rquez--Quintero \cite{ayala_proper_1990}. 
\begin{theorem}[{\cite[Theorem 2.7]{ayala_proper_1990}}]\label{ADMQGraphClassification}
    An isomorphism of two characteristic triples $(r, E, E_{\ell})$ and $(r', E', E'_{\ell})$ of two locally finite graphs $\Gamma$ and $\Gamma'$ is induced by a proper homotopy equivalence $\Gamma \to \Gamma'$. If $\Gamma$ and $\Gamma'$ are both trees, then this extension is unique up to proper homotopy.
\end{theorem}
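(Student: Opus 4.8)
The plan is to classify locally finite graphs up to proper homotopy equivalence through a \emph{standard form}: I would first show that every locally finite connected graph is properly homotopy equivalent to a model built only from its characteristic triple, then classify these models directly, treating the tree case (and its uniqueness assertion) as the base case of the whole argument.

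For the reduction to standard form, fix a maximal subtree $T\subseteq\Gamma$. The edges of $\Gamma$ not in $T$ form a set $\{e_i\}$ in bijection with a free basis of $\pi_1(\Gamma)$, hence of cardinality $\text{rk}(\Gamma)$; local finiteness makes $\{e_i\}$ a locally finite family, and an end of $\Gamma$ lies in the closure of $\{e_i\}$ in the end compactification exactly when every one of its neighborhoods has infinite rank, i.e.\ exactly on $E_\ell(\Gamma)$. For each $i$ let $\alpha_i\subseteq T$ be the unique embedded arc joining the endpoints of $e_i$. A single ambient proper homotopy of $\Gamma$ that simultaneously collapses all the $\alpha_i$---legitimate because they may be chosen so that only finitely many meet each compact set---slides each $e_i$ to a loop wedged on at a point $p_i$. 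The result is a \emph{standard model} $X=X(T,\{p_i\})$: a locally finite tree with a circle wedged on at each point of a locally finite set whose closure meets $E(T)$ precisely in $E_\ell(\Gamma)$, and the slide realizes a proper homotopy equivalence $\Gamma\to X$.

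Next comes the tree case; here I would first establish the uniqueness clause. If $f,g\colon T\to T'$ are proper maps between locally finite trees with $E(f)=E(g)$, then for compatible finite exhaustions $C_1\subseteq C_2\subseteq\cdots$ of $T$ and $C'_1\subseteq C'_2\subseteq\cdots$ of $T'$, each noncompact complementary branch $B$ of some $C_n$ is connected and is carried by both $f$ and $g$ into a single complementary branch of some $C'_m$---and necessarily the \emph{same} one, since that branch is pinned down by the common image $E(f)(E(B))=E(g)(E(B))$ of the ends of $B$; one may therefore homotope $f$ to $g$ one branch at a time inside slightly enlarged branches and assemble a proper homotopy. For existence, given a homeomorphism $h\colon E(T)\to E(T')$, fix basepoints and exhaustions as above; the clopen partitions of $E(T)$ and $E(T')$ cut out by the noncompact complementary components of the $C_n$ and $C'_n$ generate the respective topologies, so after passing to a common cofinal subsequence one may assume $h$ carries the stage-$n$ partition of $E(T)$ into that of $E(T')$, giving a coherent matching of noncompact branches. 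Build $f\colon T\to T'$ by induction on $n$, extending over $C_{n+1}$ while sending the branch of $T\setminus C_n$ above each boundary point into the matched branch of $T'\setminus C'_n$: contractibility of the trees makes each extension possible, and connectedness of the branches together with the matching makes $f$ proper. Running the same construction with $h^{-1}$ produces $f'$ with $E(f'f)=\mathrm{id}$ and $E(ff')=\mathrm{id}$, so by the uniqueness statement $f$ is a proper homotopy equivalence.

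To assemble the classification, given an isomorphism $(r,E,E_\ell)\cong(r',E',E'_\ell)$, realize $\Gamma\simeq X$ and $\Gamma'\simeq X'$ in standard form. Since the wedge points in a standard model may be chosen freely subject only to local finiteness and the closure condition, and since the underlying homeomorphism $h\colon E\to E'$ restricts to $E_\ell\to E'_\ell$, one can choose $X$ and $X'$ together with a bijection between their wedge points compatible with $h$ at infinity (a back-and-forth argument using $r=r'$ and countability of the wedge sets); the tree-level proper homotopy equivalence $T\to T'$ from the previous step, adjusted by small proper homotopies to realize this bijection and extended over each wedged circle by a homeomorphism, is then a proper homotopy equivalence $X\to X'$, and the composite $\Gamma\to X\to X'\to\Gamma'$ is the required map. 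The main obstacle throughout is \emph{properness}: contractibility makes every map and homotopy easy to construct on compact pieces, and essentially all the work goes into organizing these constructions through compatible compact exhaustions and the end-space homeomorphism so that preimages of compact sets remain compact. This is exactly where the hypothesis that the extra edges accumulate only on $E_\ell$ is needed, since it is what lets the infinitely many loop slides in the reduction step be carried out by a single proper homotopy.
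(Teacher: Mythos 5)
The paper does not prove this theorem: it is quoted directly from Ayala--Dominguez-M\'{a}rquez--Quintero [Theorem 2.7], so there is no internal argument to compare your attempt against. Your plan---reduce to a standard form (tree with wedged circles), classify trees with the uniqueness clause proved branch-by-branch through compatible exhaustions, then assemble by matching wedge points---is the natural route and, I believe, can be completed.

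Two steps are thinner than they should be. First, in the tree-uniqueness argument you say one may ``homotope $f$ to $g$ one branch at a time\ldots and assemble a proper homotopy,'' but assembling infinitely many branch-wise homotopies into a single \emph{proper} homotopy is exactly where the content lies; one needs to build the homotopy inductively over a nested exhaustion so that it is eventually constant on every compact subset, and this bookkeeping is not laid out. Second, and more substantively, the claim that ``the wedge points in a standard model may be chosen freely subject only to local finiteness and the closure condition'' is itself a nontrivial lemma---essentially the theorem restricted to a fixed tree---and the subsequent phrase ``adjusted by small proper homotopies to realize this bijection'' compresses the hardest part of the whole argument: one must exhibit a bijection $P\to P'$ compatible with the end homeomorphism $h$ at infinity \emph{together with} a tree proper homotopy equivalence that realizes it, which requires a back-and-forth through compatible exhaustions interleaving the wedge-point counts with the end data (and, when $\operatorname{rk}$ is infinite, some care that the discrepancy in counts on each compact piece stays bounded so the sliding stays proper). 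As written these are asserted rather than proved, so the proposal reads as a plausible outline rather than a complete argument.
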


In particular, every locally finite graph $\Gamma$ is properly homotopic to a graph in \textit{standard form}, originally defined in \cite{domat_coarse_2023}. This is a graph which consists of a tree with loops attached at vertices. It is often simpler to assume a graph is in this form, and we will do so when needed.

\par 
Let $M_{n,s}$ denote the connect sum of $n$ copies of $S^2\times S^1$ with $s$ open balls removed. We will define an associated $3$-manifold $M_{\Gamma}$ as the union of copies of $M_{n,s}$ for varying $n,s$ which are glued together along their boundaries in a specified way, see Definition \ref{def:DoubledHandlebody}. 
\begin{definition}\label{def:pieceOfAVertex}
    The \textit{piece} of a vertex $v$ of $\Gamma$ is a homeomorphic copy of $M_{n,s}$, where $n$ is the number of edges which have $v$ as both vertices (such an edge is called a \textit{loop}), and $s$ is the number of edges which have $v$ as only one of its vertices. 
\end{definition}

The piece of a vertex $v$ with $n$ loops and $s$ other edges contains a copy of a graph with a vertex with $n$ loops and $s$ extra edges beginning at that vertex. 

\begin{definition}\label{def:DoubledHandlebody}
    The \textit{doubled handlebody} (with punctures) $M_{\Gamma}$ of $\Gamma$ is the union of all the pieces of the vertices of $\Gamma$, with two boundary spheres quotiented together by an orientation reversing map if the corresponding vertices are connected by an edge in $\Gamma$.

\end{definition}

\begin{figure}
    \centering
    \includegraphics[scale=.6]{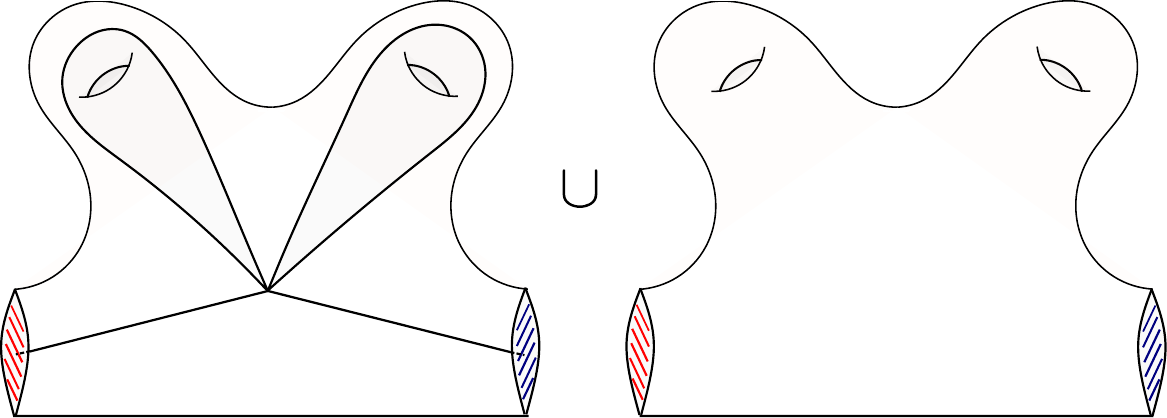}
    \caption{A picture of $M_{2,2}$ with an embedded copy of the graph with a vertex with $2$ loops and $2$ other edges. We have decomposed $M_{2,2}$ as the union of two compact three dimensional handlebodies, glued along their boundaries except for some discs in the boundary. These discs are colored red and blue, corresponding to which discs have their boundaries matched up. Each disc is a hemisphere of one of the boundary components of $M_{2,2}$. We assume that the graph is embedded in such a way that the handlebody that it lies in is isotopic to a regular neighborhood of the graph.}
    \label{fig:compactdoubledHandlebody}
\end{figure}

One can think of $\Gamma$ as being properly embedded into $M_{\Gamma}$ in a natural way. Namely, so that the intersection of a piece with $\Gamma$ looks like Figure \ref{fig:compactdoubledHandlebody}. We will call a regular neighborhood in $M_{\Gamma}$ of this embedding a \textit{handlebody} of $\Gamma$. It is easy to see that the complement of a handlebody of $\Gamma$ in $M_{\Gamma}$ is homeomorphic to the handlebody itself (see Figure \ref{fig:doubledHandlebodyEx}). Thus we can think of $M_{\Gamma}$ as the union of two handlebodies glued along their surface boundary. We will write $M_{\Gamma}=B\cup B'$, where $B$ and $B'$ are two copies of a fixed handlebody of $\Gamma$ which we quotient together along their surface boundaries via the identity map. We denote by $\Sigma$ the corresponding surface, which in the quotient is the intersection of $B$ and $B'$. We think of $\Gamma$ as a subspace of $B\subset M_{\Gamma}$, embedded via the map $i:\Gamma \to M_{\Gamma}$. We sometimes also consider a copy of $\Gamma$ in $B'$ (that is embedded in $B'$ the same way that $\Gamma$ is embedded in $B$), which we denote by $\Gamma'$. It will follow from Proposition \ref{RichardsHandlebody} that the homeomorphism type of $M_{\Gamma}$ only depends on the characteristic triple of $\Gamma$.
\par 
See Figure \ref{fig:doubledHandlebodyEx} for an example of a doubled handlebody. Here $M_{\Gamma}$ is a union of infinitely many copies of $M_{1,2}$.
\begin{figure}
    \centering
    \includegraphics[angle=-270,scale=.5]{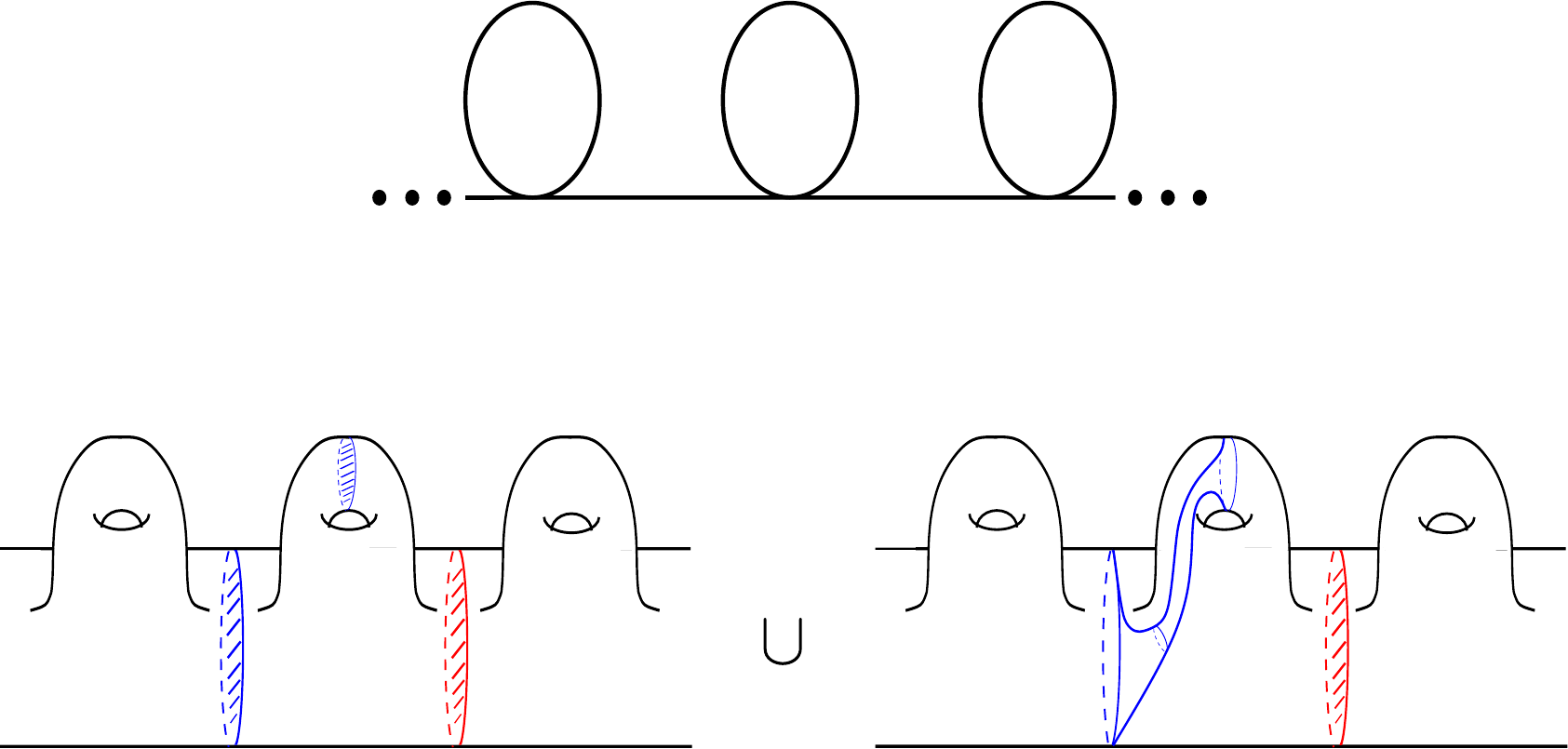}
    \caption{A two ended infinite type graph, with its associated doubled handlebody (drawn as the union of two handlebodies), are pictured here. Also included are two embedded spheres in $M_{\Gamma}$, a red one and a blue one.}
    \label{fig:doubledHandlebodyEx}
\end{figure}

Note that $B$ deformation retracts onto $\Gamma$. Further, we can also obtain a retraction $r:M_{\Gamma}\to \Gamma$ defined as follows. On $B$, we define $r$ as the retraction induced by the deformation retraction given above. On $B'$, $r$ is defined by first reflecting $B'$ onto $B$ by a map preserving $\Sigma$, and then composing by the retraction of $B$ to $\Gamma$ just given. While this retraction is not induced by a deformation retraction, it is easy to see that $r$ induces an isomorphism on fundamental groups. 
\par
We may assume that the retraction $r$ respects the gluing of the pieces of $M_{\Gamma}$. That is, the preimage of a point of $\Gamma$ under $r$ is the expected subset of a given piece. For example, $r^{-1}(p)$ for $p$ a point in the interior of an edge which is not a loop is a sphere containing $p$ which is parallel to a boundary sphere of a piece. Further, we can assume that the boundary components of pieces have $r$ image the midpoint of the corresponding edge. If $K$ is a union of pieces, then $r(K)$ is a graph whose valence $1$ vertices have $r$ preimages the boundary component of $K$. If $\Gamma$ is in standard form, then every piece is homeomorphic to $M_{1,s}$ or $M_{0,s}$ for some $s\geq 0$.
\par 
We note the following simple fact. which will typically be used without reference. 
\begin{fact}\label{CompactExhaustion}
    $M_{\Gamma}$ admits a compact exhaustion whose elements consist of connected unions of pieces. 
\end{fact}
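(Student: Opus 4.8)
The plan is to lift a compact exhaustion of $\Gamma$ to one of $M_\Gamma$ through the piece decomposition of Definition~\ref{def:DoubledHandlebody}. Write $P_v\subseteq M_\Gamma$ for the piece of a vertex $v$ (a copy of some $M_{n,s}$); since loops contribute $S^2\times S^1$ summands rather than boundary spheres, no boundary sphere of $P_v$ is ever glued to another boundary sphere of the same $P_v$, so each $P_v$ really does sit in $M_\Gamma$ as a subspace, and $M_\Gamma=\bigcup_v P_v$. First I would fix a base vertex $v_0$ and let $\Gamma_n$ be the subgraph of $\Gamma$ spanned by the vertices at combinatorial distance at most $n$ from $v_0$. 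Local finiteness makes each $\Gamma_n$ finite (inductively there are finitely many vertices within distance $n$, hence finitely many edges among them); connectedness of $\Gamma$ makes each $\Gamma_n$ connected, since a shortest edge-path from $v_0$ to a vertex at distance $\le n$ has all its vertices and edges in $\Gamma_n$, and also gives $\bigcup_n \Gamma_n=\Gamma$. Finally, an edge incident to a vertex of $\Gamma_n$ joins two vertices at distance at most $n+1$, so $\Gamma_{n+1}$ contains every edge and every vertex adjacent to $\Gamma_n$.

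Next I would set $K_n:=\bigcup_{v\in \Gamma_n}P_v\subseteq M_\Gamma$, which is a union of pieces. Each $P_v\cong M_{n,s}$ is compact, being a closed subset of the closed $3$-manifold $\#_n(S^2\times S^1)$, and connected, since deleting finitely many disjoint open balls from a connected $3$-manifold leaves it connected; and $K_n$ is obtained from finitely many such pieces by identifying the finitely many boundary spheres indexed by the edges of $\Gamma_n$, hence $K_n$ is compact. It is connected because $\Gamma_n$ is: an edge-path in $\Gamma_n$ produces a chain of pieces inside $K_n$ with consecutive pieces meeting along a glued sphere, so any two points of $K_n$ are joined within $K_n$. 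Since $\Gamma_n\subseteq\Gamma_{n+1}$ we get $K_n\subseteq K_{n+1}$, and since every vertex of $\Gamma$ lies in some $\Gamma_n$ while $M_\Gamma=\bigcup_v P_v$, we get $\bigcup_n K_n=M_\Gamma$.

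It remains to check $K_n\subseteq\operatorname{int}(K_{n+1})$, the only place the adjacency property is used. Every point in the interior of a piece $P_v\subseteq K_n$, and every point on a sphere along which two pieces of $K_n$ are glued, already has a neighborhood contained in $K_n\subseteq K_{n+1}$; hence the frontier of $K_n$ in $M_\Gamma$ lies on boundary spheres of pieces $P_v$ with $v\in\Gamma_n$ corresponding to edges $e$ having an endpoint outside $\Gamma_n$. For such an $e=\{v,w\}$ with $v\in\Gamma_n$ and $w\notin\Gamma_n$, the adjacency property gives $w\in\Gamma_{n+1}$ and $e\in\Gamma_{n+1}$, so inside $K_{n+1}$ this sphere is exactly the locus where $P_v$ and $P_w$ are glued and therefore has a bicollar neighborhood in $K_{n+1}$. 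Thus $K_n\subseteq\operatorname{int}(K_{n+1})$, and $\{K_n\}_{n\ge 1}$ is a compact exhaustion of $M_\Gamma$ by connected unions of pieces. There is no genuine obstacle here: the only thing to handle carefully is making the phrase "union of pieces" precise as a subspace of $M_\Gamma$ together with its induced gluings, and that is routine bookkeeping.
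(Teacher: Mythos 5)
Your proof is correct. The paper states Fact~\ref{CompactExhaustion} without proof, treating it as self-evident; your argument supplies the expected verification, lifting the combinatorial ball exhaustion $\{\Gamma_n\}$ of the locally finite connected graph $\Gamma$ through the piece decomposition of Definition~\ref{def:DoubledHandlebody} and checking compactness, connectedness, and the nesting $K_n\subset\operatorname{int}(K_{n+1})$, with the two key observations (that loops produce $S^2\times S^1$ summands rather than self-glued boundary spheres, and that an edge leaving $\Gamma_n$ terminates inside $\Gamma_{n+1}$, so the frontier spheres of $K_n$ acquire bicollars in $K_{n+1}$) correctly handled.
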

Typically, for consistency, when we need to take a compact exhaustion, we will assume that it is by unions of pieces.
\par
The end spaces of $B$, $\Sigma$, and $M_{\Gamma}$ all have notions of a subspace of ends "accumulated by loops". (For $\Sigma$, this space is typically called the ends accumulated by genus). Thus, all three have notions of characteristic triples $(r, E, E_{\ell})$, defined in the same way, except in the case of $\Sigma$ where $r$ is defined to be the genus of $\Sigma$. It still makes sense to talk about isomorphisms between their characteristic triples, in this case.
\par 
We note the following lemma about the characteristic triples of $\Gamma, B, \Sigma,$ and $M_{\Gamma}$.
\begin{lemma}\label{EndsHomeo}
    The inclusions of $\Gamma$, $B$, and $\Sigma$ into $M_{\Gamma}$ induce isomorphisms between their characteristic triples. 
\end{lemma}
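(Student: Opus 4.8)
The plan is to build a single proper map that realizes all three inclusions simultaneously and then invoke the functoriality of the end-space construction together with the retraction $r\colon M_\Gamma \to \Gamma$ already described. First I would record that $\Gamma \hookrightarrow B \hookrightarrow M_\Gamma$ and $\Sigma \hookrightarrow M_\Gamma$ are all proper embeddings, since each is a union of pieces (Fact \ref{CompactExhaustion}) compatible with the chosen compact exhaustions; hence each induces a continuous map on end spaces by the functoriality of $E$ discussed in Subsection 2.1. The retraction $r\colon M_\Gamma \to \Gamma$ is proper (the $r$-preimage of a union of pieces is a union of pieces, by the paragraph preceding Fact \ref{CompactExhaustion}), and $r \circ i = \mathrm{id}_\Gamma$. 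Since $B$ deformation retracts onto $\Gamma$ through a proper homotopy (the deformation retraction can be taken to move points within their pieces, hence is proper), the inclusion $\Gamma \hookrightarrow B$ is a proper homotopy equivalence, so $E(\Gamma) \to E(B)$ is a homeomorphism. Composing with $r|_B$, the inclusion $B \hookrightarrow M_\Gamma$ induces a homeomorphism $E(B) \to E(M_\Gamma)$ as well, and therefore so does $\Gamma \hookrightarrow M_\Gamma$.

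For $\Sigma$, the argument is the same in spirit: $M_\Gamma = B \cup B'$ with $\Sigma = B \cap B'$, and $\Sigma$ is the double of the surface boundary pieces along the sphere-boundaries, so $\Sigma$ is also swept out by a compatible union-of-pieces exhaustion. The reflection $B' \to B$ fixing $\Sigma$ together with the retraction of $B$ to $\Gamma$ gives the global retraction $r$; restricting $r$ to $\Sigma$ and noting that $\Sigma$ includes one boundary-surface piece per edge, one checks that $r|_\Sigma \colon \Sigma \to \Gamma$ is proper and $\pi_1$-surjective, and more to the point that the inclusion $\Sigma \hookrightarrow M_\Gamma$ composed with $r$ is properly homotopic to $r|_\Sigma$, so $E(\Sigma) \to E(M_\Gamma)$ is a bijection; it is a homeomorphism because it is a continuous bijection between compact Hausdorff spaces.

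It remains to match the distinguished subspaces of ends "accumulated by loops." An end $e$ of $M_\Gamma$ lies in $E_\ell(M_\Gamma)$ iff every neighborhood of $e$ contains infinitely many $S^2\times S^1$-summands; under $r$ this corresponds exactly to every neighborhood of the image end in $\Gamma$ having infinite rank, i.e. to membership in $E_\ell(\Gamma)$ — the key point is that a neighborhood of $e$ given by a complementary component of a union $K$ of pieces has $r$-image a complementary component of $r(K)$, and the number of loops of the latter equals the number of $S^2\times S^1$-summands of the former by Definition \ref{def:pieceOfAVertex}. The same bookkeeping applies to $B$ (loops of $\Gamma$ are exactly handles of $B$) and to $\Sigma$ (loops of $\Gamma$ correspond to genus of $\Sigma$, which is the definition of $E_\ell(\Sigma)$). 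Hence each of the three end-space homeomorphisms carries the respective "accumulated by loops" subspace onto $E_\ell(M_\Gamma)$, and since the ranks agree trivially (all equal $\mathrm{rk}(\Gamma)$, with genus of $\Sigma$ also equal to $\mathrm{rk}(\Gamma)$), the characteristic triples are isomorphic via these inclusions.

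The main obstacle I anticipate is the bookkeeping in the last paragraph: carefully verifying that "accumulated by loops" is preserved, i.e. that the correspondence between loops of subgraphs $r(K)$ and $S^2\times S^1$-summands (resp. handles, resp. genus) of $K$ behaves well under the inverse-limit defining $E_\ell$, rather than just on individual compact pieces. Everything else is a routine application of properness and functoriality of $E$, together with the already-established fact that $r$ is a proper $\pi_1$-isomorphism.
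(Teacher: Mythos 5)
Your approach is genuinely different from the paper's, but there is a gap in the step where you conclude that the inclusion $B \hookrightarrow M_\Gamma$ induces a homeomorphism on ends. You establish that $E(\Gamma) \to E(B)$ is a homeomorphism (via the proper deformation retraction) and that $r|_B = r \circ i_B \colon B \to \Gamma$ is a proper homotopy equivalence, so $E(r|_B) = E(r) \circ E(i_B)$ is a homeomorphism. But this only gives injectivity of $E(i_B)$ and surjectivity of $E(r)$; it does \emph{not} give surjectivity of $E(i_B)$ unless you already know $E(r)$ is injective. You cite that $r$ is proper and a $\pi_1$-isomorphism, but neither property yields injectivity of $E(r)$: properness of a map does not imply the induced end map is injective, and $r$ is not a (proper) homotopy equivalence — $M_\Gamma$ has nontrivial $\pi_2$ whenever $\mathrm{rk}(\Gamma)>0$, so $i_\Gamma \circ r$ cannot be homotopic to $\mathrm{id}_{M_\Gamma}$. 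The same issue recurs for $\Sigma$: "the inclusion $\Sigma \hookrightarrow M_\Gamma$ composed with $r$ is properly homotopic to $r|_\Sigma$" is a tautology ($r \circ i_\Sigma$ \emph{is} $r|_\Sigma$), and it does not by itself yield that $E(i_\Sigma)$ is surjective.

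The fact that $E(r)$ is a homeomorphism is true, but it is the content of the separate (unnamed) lemma immediately following Lemma \ref{EndsHomeo} in the paper, whose proof the paper says is "essentially identical" to that of Lemma \ref{EndsHomeo}. So invoking it here would not save any work. The paper instead establishes the bijection on ends directly: by Fact \ref{CompactExhaustion} every end of $M_\Gamma$ is represented by a nested sequence of unbounded open sets whose closures have spherical boundary equal to $r$-preimages of edge-interior points, and intersecting such a sequence with $B$, $\Sigma$, or $\Gamma$ gives a representing sequence for the corresponding end of the subspace; this simultaneously gives the bijection and (after checking the $U^*$ basis) continuity, and then one appeals to compactness and Hausdorffness. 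To repair your argument you would either need to prove directly that $E(r)$ is injective (e.g.\ by showing that for a union of pieces $K$, the components of $M_\Gamma\setminus K$ correspond bijectively to the components of $\Gamma \setminus r(K)$, using that the fibers of $r$ are connected), or argue surjectivity of $E(i_B)$ and $E(i_\Sigma)$ directly as the paper does. Your handling of $E_\ell$ and of the rank/genus matching in the last two paragraphs is fine and matches the paper's brief treatment.
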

\begin{proof}
    First note that these inclusions induce bijections between pairs $(E, E_{\ell})$, and their ranks/genus are all the same. The latter statement is immediate, and to see the former statement, note that every end of any of the given spaces can be represented by a sequence of particularly simple sets. That is, by Fact \ref{CompactExhaustion}, every end of $M_{\Gamma}$ can be represented by a sequence of open unbounded sets whose closures have spherical boundary so that these sphere components are preimages of points in the interior of edges of $\Gamma$. A similar statement holds for $\Sigma$ and $B$ by intersecting the sets described for $M_{\Gamma}$ with either of the two subspaces.
    \par 
    Similar reasoning shows that each inclusion is continuous. Namely, any open set of $E(M_{\Gamma})$ is the union of sets of the form $U^*$, where $U$ is an unbounded open set of the form described in the previous paragraph. The preimage of each such open set is also open.
    \par 
    The induced maps also preserve the ends accumulated by loops. As each map is a continuous bijection between two compact Hausdorff spaces, they are all homeomorphisms, which restrict to homeomorphisms of the end accumulated by loops.
\end{proof}

We have a similar lemma, whose proof is essentially identical to Lemma \ref{EndsHomeo}.
\begin{lemma}
    The retraction map $r:M_{\Gamma}\to \Gamma$ induces an isomorphism between the characteristic triples of $M_{\Gamma}$ and $\Gamma$.
\end{lemma}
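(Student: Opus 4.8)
The plan is to run the argument of Lemma~\ref{EndsHomeo} verbatim, with the inclusions replaced by the retraction $r$. The first point is that $r$ is a proper map: since we arranged $r$ to respect the gluing of the pieces of $M_{\Gamma}$, the preimage of any finite subgraph of $\Gamma$ is a finite union of pieces and hence compact, and $\Gamma$ is exhausted by finite subgraphs, so $r$ is proper. By the functoriality of the end-space construction $E(-)$ recalled in Subsection~2.1, $r$ therefore induces a continuous map $E(r)\colon E(M_{\Gamma})\to E(\Gamma)$ extending to a continuous map of the end compactifications. The equality of ranks is immediate from the fact already recorded above that $r$ induces an isomorphism $r_*\colon \pi_1(M_{\Gamma})\xrightarrow{\sim}\pi_1(\Gamma)$, so $\mathrm{rk}(M_{\Gamma})=\mathrm{rk}(\Gamma)$.

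Next I would check that $E(r)$ is a bijection. As in Lemma~\ref{EndsHomeo}, every end of $M_{\Gamma}$ is represented by a nested sequence $U_1\supset U_2\supset\cdots$ of unbounded open sets whose closures have boundary a disjoint union of spheres, each of which is the $r$-preimage of a point in the interior of an edge of $\Gamma$; thus $U_i=r^{-1}(V_i)$ for $V_i$ a complementary component of $\Gamma$ with finitely many edge-midpoints removed, and the sequence $V_1\supset V_2\supset\cdots$ represents an end of $\Gamma$ mapped to the original end by $E(r)$. Conversely every end of $\Gamma$ is represented by such a sequence of $V_i$'s, and applying $r^{-1}$ recovers a representing sequence for a preimage end, giving surjectivity; injectivity follows since $r^{-1}$ carries non-equivalent nested sequences in $\Gamma$ to non-equivalent nested sequences in $M_{\Gamma}$.

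It remains to see that $E(r)$ sends $E_{\ell}(M_{\Gamma})$ onto $E_{\ell}(\Gamma)$. Because $r$ matches the pieces up, a neighborhood of an end of $M_{\Gamma}$ has infinite rank if and only if its $r$-image (a neighborhood of the corresponding end of $\Gamma$) does, so the representing sequences above detect membership in the "accumulated by loops" subspace on both sides simultaneously. Finally, $E(r)$ is a continuous bijection between the compact Hausdorff spaces $E(M_{\Gamma})$ and $E(\Gamma)$, hence a homeomorphism, and by the previous sentence it restricts to a homeomorphism $E_{\ell}(M_{\Gamma})\to E_{\ell}(\Gamma)$; together with $\mathrm{rk}(M_{\Gamma})=\mathrm{rk}(\Gamma)$ this is exactly an isomorphism of characteristic triples.

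The only place requiring any care — and it is the same point as in Lemma~\ref{EndsHomeo} — is the bookkeeping that lets one describe ends of $M_{\Gamma}$ and of $\Gamma$ by the same combinatorial data (nested sequences bounded by edge-midpoint spheres, respectively edge-midpoints), so that $r$ visibly identifies the two end spaces together with their distinguished subspaces. Once the normalization of $r$ relative to the piece decomposition of Subsection~2.2 is invoked, everything else is routine point-set topology.
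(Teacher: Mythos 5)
Your proposal is correct and takes essentially the same approach the paper intends: the paper simply states that the proof is ``essentially identical to Lemma \ref{EndsHomeo},'' and your argument is exactly that lemma's proof run with the retraction $r$ in place of the inclusions, using the same normalization of $r$ relative to the piece decomposition and the same compact-Hausdorff endgame.
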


While we will not make use of it, we note a correspondence for proper homotopy classes of lines in $M_{\Gamma}$ and $\Gamma$. Further, we also consider the following special case of lines. Suppose we are given an end $e\in E(\Gamma)\setminus E_{\ell}(\Gamma)$. The fundamental group $\pi_1(\Gamma, e)$ based at $e$ is the set of proper homotopy classes of proper lines $\sigma: \R \to \Gamma$ so that $\lim_{t\to \pm \infty} \sigma(t) = e$. The group operation is given by concatenation, which makes sense as all elements in $\pi_1(\Gamma,e)$ eventually coincide as sets since $e\in E(\Gamma) \setminus E_{\ell}(\Gamma)$. These groups were used in \cite{algom-kfir_groups_2021}.
\par 
We can define the same thing for $M_{\Gamma}$. Namely, for $e\in E(M_{\Gamma}) \setminus E_{\ell}(M_{\Gamma})$, we let $\pi_1(M_{\Gamma}, e)$ denote the set of proper homotopy classes of proper lines in $M_{\Gamma}$ both of whose ends accumulate to $e$. Here the group operation is not obvious as lines accumulating to the same end do not eventually coincide. The following lemma shows that the retraction $r$ induces a natural bijection between these two sets.
\begin{lemma}\label{FundGroupIso}
    The retraction $r:M_{\Gamma} \to \Gamma$ induces a bijection between the proper homotopy classes of lines in $M_{\Gamma}$ and proper homotopy classes of lines in $\Gamma$. In particular, there is an induced bijection from $\pi_1(M_{\Gamma}, e)$ to $\pi_1(\Gamma, e)$. 
\end{lemma}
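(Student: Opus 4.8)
The plan is to construct an inverse to the map induced by $r$ using the embedding $i:\Gamma \to M_{\Gamma}$, and to show the two composites are the identity on proper homotopy classes of lines. First I would observe that $i$ induces a map on proper homotopy classes of proper lines $\sigma:\R \to \Gamma$: since $i$ is a proper embedding (as $\Gamma$ is properly embedded in $B \subset M_{\Gamma}$), it carries proper lines to proper lines and properly homotopic lines to properly homotopic lines. This gives a map $i_*$ from proper homotopy classes of lines in $\Gamma$ to proper homotopy classes of lines in $M_{\Gamma}$. The composite $r \circ i$ is the identity on $\Gamma$ (since $r$ restricted to $B$ is the deformation retraction onto $\Gamma$, and $i(\Gamma) \subset B$), so $r_* \circ i_* = \mathrm{id}$ on proper homotopy classes of lines in $\Gamma$.

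The substantive direction is $i_* \circ r_* = \mathrm{id}$, i.e. that for any proper line $L:\R \to M_{\Gamma}$, the line $i \circ r \circ L$ is properly homotopic to $L$. For this I would use the structure of $r$. On $B$, there is a genuine deformation retraction $H_t: B \to B$ from the identity to $i \circ r|_B$; on $B'$, the retraction factors through the reflection $\rho: B' \to B$ fixing $\Sigma$, and one checks there is likewise a homotopy on $B'$ from the identity to $i \circ r|_{B'}$ through maps into $M_{\Gamma}$ (moving a point of $B'$ first across $\Sigma$ and then down to $\Gamma$). Because $r$ respects the gluing of pieces (as noted just before Fact \ref{CompactExhaustion}, e.g. $r^{-1}$ of a midpoint of a non-loop edge is a boundary sphere of a piece), these partial homotopies can be arranged to agree on $\Sigma$, patching to a homotopy $G_t: M_{\Gamma} \to M_{\Gamma}$ from $\mathrm{id}$ to $i \circ r$. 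The key point is that $G_t$ can be taken \emph{proper} as a homotopy (i.e. $G: M_{\Gamma} \times [0,1] \to M_{\Gamma}$ is proper): using a compact exhaustion by unions of pieces (Fact \ref{CompactExhaustion}) and the fact that $r$ and the retraction homotopies send each union of pieces $K$ into $r(K)$, which is a compact subgraph, no point escapes to infinity during the homotopy. Composing $G_t$ with the proper line $L$ then gives a proper homotopy from $L$ to $i \circ r \circ L$, which shows $i_* \circ r_* = \mathrm{id}$.

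Having established the bijection on proper homotopy classes of lines, the statement about $\pi_1(M_\Gamma, e) \to \pi_1(\Gamma, e)$ follows: by Lemma \ref{EndsHomeo} and the preceding lemma, $r$ induces a homeomorphism $E(M_\Gamma) \to E(\Gamma)$ matching $E_\ell$ with $E_\ell$, so for $e \in E(M_\Gamma) \setminus E_\ell(M_\Gamma)$ the identified end lies in $E(\Gamma) \setminus E_\ell(\Gamma)$; and $r_*$ restricts to a bijection between lines both of whose ends accumulate to $e$ and lines both of whose ends accumulate to the corresponding end of $\Gamma$, because the extension of $r$ to the end compactifications (which exists since $r$ is proper) tracks where the ends of a line go. One needs only note that "both ends accumulate to $e$" is a property invariant under proper homotopy, which is immediate from properness.

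The main obstacle I expect is verifying \emph{properness of the homotopy} $G_t$ rather than mere existence of a homotopy — one must be careful that the combined retraction on $B$ and $B'$, which is not itself induced by a deformation retraction of $M_\Gamma$ (as the text remarks), nonetheless admits a homotopy to the identity that does not push mass out to the ends. The piece-respecting property of $r$ is exactly what makes this work, and I would lean on it heavily; everything else (functoriality of $i_*$ and $r_*$, the triangle identity $r \circ i = \mathrm{id}_\Gamma$) is routine.
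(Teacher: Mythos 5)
Your proposal has a genuine gap: the homotopy $G_t : M_\Gamma \to M_\Gamma$ from $\mathrm{id}$ to $i \circ r$ that you attempt to patch together out of homotopies on $B$ and $B'$ does not exist, proper or otherwise. The map $i \circ r$ factors through the graph $\Gamma$, so $(i\circ r)_* : \pi_2(M_\Gamma) \to \pi_2(M_\Gamma)$ factors through $\pi_2(\Gamma) = 0$ and is therefore the zero map, whereas $\mathrm{id}_*$ is of course the identity. Since $\pi_2(M_\Gamma) \neq 0$ whenever $M_\Gamma$ contains an essential sphere (which is the case for every $\Gamma$ that is not a point or a one-ended tree — already $\pi_2(S^2 \times S^1) = \mathbb{Z}$), the two maps are not homotopic. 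Concretely, your patching must fail near $\Gamma'$: for points $q$ approaching a point of $\Gamma'$ from different sides, the routes ``across $\Sigma$ and then down to $\Gamma$'' do not converge, so the assembled $G_t$ is discontinuous there. In particular $r$ is not a homotopy equivalence of spaces — the text's claim that $r$ induces an isomorphism on $\pi_1$ is all that holds — and this is exactly why the paper flags the deformation retraction involved as ``not proper.''

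The paper's proof sidesteps this by never attempting a global homotopy. Given a single proper line $\sigma : \R \to M_\Gamma$, it first perturbs $\sigma$ by a small homotopy to a line $\sigma'$ disjoint from $\Gamma'$ — a generic-position move that is only available because $\sigma$ and $\Gamma'$ are both one-dimensional in a $3$-manifold, so this step does not generalize to a homotopy of the whole space. Then $M_\Gamma \setminus \Gamma'$ is an open regular neighborhood of $\Gamma$ and genuinely deformation retracts onto $\Gamma$; this retraction is not a proper homotopy of $M_\Gamma \setminus \Gamma'$, but its restriction to the properly embedded line $\sigma'$ \emph{is} a proper homotopy of that line, which is all one needs. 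Your other steps — $r_* \circ i_* = \mathrm{id}$ since $r \circ i = \mathrm{id}_\Gamma$, well-definedness of $r_*$ on proper homotopy classes, and the restriction to lines accumulating to a fixed end $e \notin E_\ell$ — are fine and match the paper. Replacing the global-homotopy argument with the ``perturb off $\Gamma'$, then flow'' argument applied to an individual line would repair the proof.
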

\begin{proof}
    Take a proper line $\sigma:\R \to M_{\Gamma}$. If $\sigma$ intersects $\Gamma'$, then we may homotope $\sigma$ by a small homotopy to $\sigma'$ which does not intersect $\Gamma'$. By making this homotopy small enough, it is clear that $r\circ \sigma$ and $r\circ \sigma'$ are properly homotopic lines in $\Gamma$. But by flowing away from $\Gamma'$ we may homotope $\sigma'$ to $r\circ \sigma'$. Thus $\sigma$ is properly homotopic to $r\circ \sigma$. (This flow, which is a deformation retraction from $M_{\Gamma}\setminus \Gamma'$ to $\Gamma$, is not proper, but when restricted to a homotopy defined on a properly embedded line it is proper).
    \par 
    On the other hand, if $\eta, \eta':\R \to M_{\Gamma}$ are two properly homotopic lines via the proper homotopy $H:[0,1]\times \R \to M_{\Gamma}$, then $r\circ H$ is a proper homotopy equivalence between $r\circ \eta$ and $r\circ \eta'$. 
    \par 
    It follows that the map from the set of proper homotopy classes of proper lines in $M_{\Gamma}$ to those of $\Gamma$ induced by post composition with $r$ is a bijection. In particular, this restricts to a bijection between $\pi_1(M_{\Gamma}, e)$ to $\pi_1(\Gamma, e)$.
\end{proof}
We will see later that this bijection respects the passing from the mapping class group of $M_{\Gamma}$ to that of $\Gamma$, see Proposition \ref{MCGHom}.

\begin{remark}\label{Identifications}
    In what follows, we will identify the end spaces of all these spaces as induced by the inclusion maps given in Lemma \ref{EndsHomeo}. We will often just denote the space by $E$, with $E_{\ell}$ being the space of ends accumulated by loops (or genus). Similarly, we will identify the fundamental groups of $\Gamma$ and $M_{\Gamma}$, both at basepoints in $\Gamma$, and at basepoints in the end space. 
\end{remark}

We now discuss the Hatcher normal form for finite sphere systems in $M_{\Gamma}$, and the related notion of equivalence of sphere systems. The definitions and proofs of the following can all be found in \cite{hatcher_homological_1995}, \cite{hatcher_isoperimetric_1996}, and \cite{hensel_realisation_2014}. 
\par 
Fix a maximal sphere system $\mathcal{P}$ in $M_{\Gamma}$ (i.e. a sphere system so every essential non-peripheral sphere either intersects or is isotopic to some component of it). If $\Gamma$ is of infinite type, then $\mathcal{P}$ is infinite. Such a maximal system splits $M_{\Gamma}$ into a collection $\{P_k\}$ of copies of $M_{0,3}$ (sometimes one has to replace a boundary component of a copy of $M_{0,3}$ with a puncture corresponding to an end of $M_{\Gamma}$). 
\begin{definition}
    A finite sphere system $S$ is in \textit{normal form} with respect to $\mathcal{P}$ if a given component $S_i$ of $S$ either coincides with a component of $\mathcal{P}$, or $S_i$ meets $\mathcal{P}$ transversely in a nonempty collection of circles which split $S_i$ into components called \textit{sphere pieces} which satisfy the following for all $P_k$:
    \begin{enumerate}
        \item Each sphere piece in $P_k$ meets each component of $\partial P_k$ in at most one circle.
        \item No sphere piece in $P_k$ is a disc which is isotopic relative to its boundary to a disc in $\partial P_k$.
    \end{enumerate}
    Given any sphere system $Q$, we say that $S$ is in normal form with respect to $Q$ if it is in normal form with respect to some maximal system containing $Q$.
\end{definition}

\begin{definition}\label{def:equivSphere}
    Fix two finite sphere systems $S$ and $S'$ which are in normal form with respect to a sphere system $Q$. We say that they are \textit{equivalent} if there is a homotopy $h_t:S\to M_{\Gamma}$ from $S$ to $S'$ so that
    \begin{enumerate}
        \item On the components $S_i$ of $S$ which coincide with a component of $Q$, $h_i$ is the identity.
        \item On the other components $S_i$, $h_t$ remains transverse to $Q$ for all $t$, and $h_t(S)\cap Q$ varies by isotopy in $Q$.
    \end{enumerate}
\end{definition}

Each part of the following lemma for the manifold $M_{n,s}$ can be found in one of \cite{hatcher_homological_1995}, \cite{hatcher_isoperimetric_1996} or \cite{hensel_realisation_2014}. However, as the sphere systems we are dealing with are finite, they are contained in some copy of $M_{n,s}$ in $M_{\Gamma}$ and thus the argument can be replicated.
\begin{lemma}\label{lem:NormalAndEquivalent}
    Fix $Q$ a sphere system in $M_{\Gamma}$.
    \begin{enumerate}[label=(\alph*)]
    \item Every finite sphere system $S$ can be isotoped into normal form with respect to $Q$.
        \item A finite sphere system $S$ is in normal form with respect to $Q$ if and only if the number of components of $S\cap Q$ is minimal amongst representatives of the isotopy class of $S$.
        \item Isotopic sphere systems in normal form with respect to $Q$ are equivalent.
    \end{enumerate}
\end{lemma}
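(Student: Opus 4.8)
The plan is to deduce all three parts from the corresponding statements for the compact manifolds $M_{n,s}$, which are due to Hatcher \cite{hatcher_homological_1995, hatcher_isoperimetric_1996} (see also Hensel \cite{hensel_realisation_2014}); the only real content is to check that, because $S$ is finite, the relevant portion of each argument takes place inside a compact union of pieces.

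First I would fix a maximal sphere system $\mathcal{P}$ of $M_{\Gamma}$ containing $Q$ (any sphere system extends to a maximal one), so that $M_{\Gamma}\setminus \mathcal{P}$ is a disjoint union of pieces $P_k\cong M_{0,3}$. Since $S$ is compact it is bounded away from the ends of $M_{\Gamma}$, so by Fact \ref{CompactExhaustion} it meets only finitely many $P_k$, all of them compact, and Hatcher's normalization procedure applied to $S$ with respect to $\mathcal{P}$ is a finite sequence of isotopies supported in a compact region. Hence there is a compact connected union of pieces $N := \bigcup_{k\in F}P_k$ ($F$ finite) containing $S$ together with the support of this normalization isotopy; such an $N$ is homeomorphic to $M_{n,s}$ for suitable $n,s$, with $\mathcal{P}_N := \mathcal{P}\cap N$ a maximal sphere system of $N$ and $Q_N := Q\cap N$ a subsystem of it. The key point is that normal form and equivalence localize: a finite sphere system contained in $N$ is in normal form with respect to $\mathcal{P}$ in $M_{\Gamma}$ if and only if it is in normal form with respect to $\mathcal{P}_N$ in $N$ — the conditions of the definition are vacuous on the $P_k$ with $k\notin F$, which are disjoint from it — and likewise the equivalence relation of Definition \ref{def:equivSphere} for such systems taken relative to $Q$ coincides with the one taken in $N$ relative to $Q_N$, since the homotopies in question move intersection circles only within the pieces, all of which lie in $N$.

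With this reduction, part (a) is immediate: apply Hatcher's normalization inside $N\cong M_{n,s}$ to isotope $S$ into normal form with respect to $\mathcal{P}_N$, hence with respect to $\mathcal{P}$, hence with respect to $Q$. For part (c), given isotopic $S,S'$ both in normal form with respect to $Q$, I would first arrange that they are in normal form with respect to a common maximal system $\mathcal{P}\supseteq Q$: if the witnessing systems $\mathcal{P}_1\supseteq Q$ for $S$ and $\mathcal{P}_2\supseteq Q$ for $S'$ differ, use (a) together with the uniqueness up to equivalence of normal forms in $M_{n,s}$ to replace $S'$ by an equivalent sphere system that is normal with respect to $\mathcal{P}_1$; then pass to a compact $N$ containing both and invoke the $M_{n,s}$ version of (c). For part (b): the forward direction follows because normal form with respect to $\mathcal{P}$ makes $|S\cap\mathcal{P}_N|$ minimal in the isotopy class (the $M_{n,s}$ version of (b)), and a sphere system realizing minimal intersection with a sphere system also realizes minimal intersection with every subsystem of it, so $|S\cap Q| = |S\cap Q_N|$ is minimal; for the converse, if $|S\cap Q|$ is minimal then $S$ meets no component of $Q$ in a bigon, so one can extend $Q$ one sphere at a time to a maximal system $\mathcal{P}$ with respect to which $S$ is in normal form, by keeping each newly added sphere in minimal position with respect to $S$ and the current system.

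The main obstacle is not any single deep step but the bookkeeping of the compact reduction: one must ensure that the possibly infinite systems $Q$ and $\mathcal{P}$ restrict to genuine sphere systems on the chosen $N$ (which is why it is convenient to take $N$ as a union of complementary pieces of $\mathcal{P}$ rather than trying to isotope $Q$ into good position by hand), and that "normal form" and the equivalence relation of Definition \ref{def:equivSphere} really do localize to $N$. Within this, the most delicate points are the reconciliation of different maximal extensions of $Q$ in part (c), and the passage from minimal position with respect to $\mathcal{P}$ to minimal position with respect to the subsystem $Q$ in part (b); this last point is the one place where the "no bigon" characterization of normal form is genuinely needed rather than a black-box citation.
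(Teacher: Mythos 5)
Your reduction to a compact union of pieces $N\cong M_{n,s}$ and appeal to the results of Hatcher and Hensel there is exactly what the paper's very terse proof does --- the paper simply asserts that since the sphere systems are finite, they lie inside a copy of $M_{n,s}$ and ``the argument can be replicated,'' together with a pointer to Hensel's Lemma 7.3 for part (c). Your spelling-out of the localization (taking $N$ to be a union of complementary pieces of a maximal system $\mathcal P\supseteq Q$, so that $\mathcal P_N$ is maximal in $N$ and the normal-form conditions, being piece-local, restrict vacuously outside $N$) is in the spirit of what the paper leaves to the reader, and it is the right reduction.

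The one step that does not hold up as written is the forward direction of part (b). The ``general principle'' you invoke --- that a system realizing minimal intersection with $\mathcal P$ automatically realizes minimal intersection with every subsystem $Q\subseteq\mathcal P$ --- is not a formal consequence of minimality of the total count: a priori one could trade fewer intersection circles on $Q$ against more on $\mathcal P\setminus Q$ within the same isotopy class. What makes the conclusion true is the stronger, sphere-by-sphere uniqueness of the intersection pattern for isotopic systems in normal form (which is the content underlying part (c)), and a complete argument must invoke that rather than an unstated triviality. Better still, there is no need for the detour through $\mathcal P_N$-minimality at all: the $M_{n,s}$ version of (b) applies directly to the non-maximal subsystem $Q_N$, since $S$ normal with respect to the maximal $\mathcal P_N$ is, by definition, normal with respect to $Q_N$. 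Your converse for (b) --- extending $Q$ one sphere at a time, guided by the no-bigon observation --- has the same character of a sketch, since absence of bigons is necessary but not obviously sufficient for the piece-by-piece normal-form conditions; again it is cleaner to invoke the compact-case result for $Q_N$ as a black box. You do flag this passage as the delicate point in your closing remarks, which is accurate, but the body of the argument should actually close it rather than gesture at it.
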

For a proof of Lemma \ref{lem:NormalAndEquivalent}(c), see Lemma 7.3 of \cite{hensel_realisation_2014} (the proof there assumes $Q$ is a single sphere, but it is easy to see how to extend it to any sphere system).

\subsection{The Sphere Complex}\label{SphereGraph}
We define \textit{sphere complex} of $M_{\Gamma}$ to be the simplicial complex, denoted by $\mathcal{S}(M_{\Gamma})$, with $k$-simplices given by isotopy classes of sphere systems with $k+1$ elements. Recall that all the spheres in a sphere system are essential and non-peripheral. For notational simplicity, when we say that $S\in \mathcal{S}(M_{\Gamma})$, what we really mean is that $S$ is a vertex of $\mathcal{S}(M_{\Gamma})$ corresponding to some sphere in $M_{\Gamma}$, unless stated otherwise. Given any subcomplex $X$ of $\mathcal{S}(M_{\Gamma})$, we let $X^{(1)}$ denote the $1$-skeleton of $X$.
\par 
As discussed in the introduction, this complex is well studied. In particular, for $n\geq 2$, $\mathcal{S}(M_{n,0})$ and $\mathcal{S}(M_{n,1})$ have infinite diameter \cite{handel_free_2019}\cite{iezzi_sphere_2016}. Further, for $n\geq 2$, $\mathcal{S}(M_{n,0})$ is known to be hyperbolic, and the loxodromic elements are classified \cite{handel_free_2013}\cite{handel_free_2019}. See \cite{hilion_hyperbolicity_2017} for more discussion about hyperbolicity. It is known that for $n\geq 4$, $\mathcal{S}(M_{n,2})$ contains quasi-isometrically embedded copies of $\R^k$ for all $k\geq 0$, and is thus in particular not hyperbolic, and has infinite asymptotic dimension \cite{hamenstadt_spotted_2023}. For all $n\geq 0$ and $s\geq 3$, $\mathcal{S}(M_{n,s})$ has finite diameter \cite{iezzi_sphere_2016}. We also note the special case of $\mathcal{S}(M_{1,2})$. In this case, every separating sphere is disjoint from exactly $1$ nonseparating sphere, and every nonseparating sphere is disjoint from exactly $1$ separating sphere and $2$ nonseparating spheres. In particular, $\mathcal{S}(M_{1,2})$ is isomorphic to a graph which consists of the real line with vertices at every integer point, whose vertices correspond to the nonseparating spheres, along with one edge for each such vertex which connects the nonseparating sphere to the unique separating sphere it is disjoint from. We will make use of this graph in the proof of Theorem \ref{PositiveTranslationLength}.
\par 
We note the following simple fact about $\mathcal{S}(M_{\Gamma})$ when $\Gamma$ is of infinite type, mirroring the corresponding result for surfaces.
\begin{proposition}\label{SphereGraphFiniteDiameter}
    Suppose $\Gamma$ is of infinite type (i.e. it does not have finite rank or finitely many ends). Then $\mathcal{S}(M_{\Gamma})$ has diameter $2$.
\end{proposition}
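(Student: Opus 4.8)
The plan is to prove the two inequalities $\operatorname{diam}\mathcal{S}(M_\Gamma)\le 2$ and $\operatorname{diam}\mathcal{S}(M_\Gamma)\ge 2$ separately, with all of the real content in the first. For the upper bound I would fix two vertices $S,S'\in\mathcal{S}(M_\Gamma)$, realize them by disjointly-embedded-from-$K$ spheres, and exploit that a sphere is compact: by Fact~\ref{CompactExhaustion} there is a connected compact union of pieces $K\subset M_\Gamma$ with $S\cup S'\subset K$. The key observation is that since $\Gamma$ is of infinite type, at least one component $N$ of $\overline{M_\Gamma\setminus K}$ is again of infinite type. To see this, note $\partial K$ is a finite union of spheres, so cutting $M_\Gamma$ along $\partial K$ yields $K$ together with finitely many complementary pieces $N_1,\dots,N_m$; the end space decomposes as $E(M_\Gamma)=\bigsqcup_i E(N_i)$ (since $K$ is compact), and the rank of $M_\Gamma$ differs from $\operatorname{rk}(K)+\sum_i\operatorname{rk}(N_i)$ by at most the number of spheres in $\partial K$ (cutting along one sphere changes total rank by at most $1$). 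Hence if $\operatorname{rk}(\Gamma)=\infty$ some $N_i$ has infinite rank, and if $E(\Gamma)$ is infinite some $N_i$ has infinitely many ends.

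Next I would produce an essential, non-peripheral sphere $T$ of $M_\Gamma$ lying in the interior of such an $N$. If $N$ has infinite rank, take $T$ to be a non-separating sphere of $N$: it remains non-separating in $M_\Gamma$, hence essential, and it is non-peripheral since a non-separating sphere bounds neither a ball nor a once-punctured ball (and is not isotopic to a boundary component, those being separating). If instead $N$ has infinitely many ends, take $T\subset N$ to separate $E(N)$ into two infinite subsets; then both sides of $T$ in $M_\Gamma$ contain infinitely many ends, so again $T$ is essential and non-peripheral. By construction $T$ is disjoint from $K$, hence from both $S$ and $S'$, so $d(S,T)\le 1$ and $d(S',T)\le 1$ (the bound only improves if $T$ happens to be isotopic to $S$ or $S'$), giving $d(S,S')\le 2$. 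Incidentally this shows $\mathcal{S}(M_\Gamma)$ is nonempty and connected.

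For the lower bound it suffices to exhibit two distinct vertices that cannot be realized disjointly, which I regard as routine. Since $\Gamma$ is of infinite type, $M_\Gamma$ contains an embedded $M_{2,0}$ when $\operatorname{rk}(\Gamma)\ge 2$, and an embedded $M_{1,2}$ or $M_{0,4}$ when $\operatorname{rk}(\Gamma)\le 1$ (there are then infinitely many ends), chosen with $\pi_1$-injective, incompressible spherical boundary; inside it one has two spheres of positive geometric intersection number — e.g.\ non-separating spheres dual to $a$ and to $ab$ in $\pi_1=\langle a,b\rangle$ of $M_{2,0}$, or spheres partitioning the four ends of $M_{0,4}$ in crossing ways — whose intersection cannot be removed in $M_\Gamma$. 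Hence $\operatorname{diam}\mathcal{S}(M_\Gamma)\ge 2$, and combined with the above the diameter equals $2$.

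The hard part will be the middle step: checking that the sphere $T$ found in the complementary region $N$ is essential and non-peripheral \emph{in $M_\Gamma$}, not merely in $N$. This is exactly where the infinite-type hypothesis is used — it guarantees, via the rank/end bookkeeping of the first paragraph, that the side of $T$ away from $K$ still carries enough topology (a non-separating homology class, or infinitely many ends) to obstruct bounding a ball or a once-punctured ball. A secondary point requiring care is that the lower-bound spheres really do intersect essentially in all of $M_\Gamma$; this follows by choosing the ambient $M_{2,0}$, $M_{1,2}$, or $M_{0,4}$ with incompressible boundary so that geometric intersection numbers are detected locally.
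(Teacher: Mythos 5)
Your upper-bound argument is exactly the paper's proof of this proposition: realize $S,S'$ inside a compact union of pieces $K$ (Fact~\ref{CompactExhaustion}), use infinite type to find a third vertex $T$ supported in $M_\Gamma\setminus K$, and conclude $d(S,S')\le 2$. The paper states this in three lines and does not spell out why $T$ exists, nor does it address the lower bound; you supply both, which is reasonable but not a departure in method.

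One small slip: for $\operatorname{rk}(\Gamma)\ge 2$ you claim $M_\Gamma$ contains an \emph{embedded} $M_{2,0}$, but $M_{2,0}$ is a closed manifold and cannot be a proper submanifold of the noncompact $M_\Gamma$; you want $M_{2,1}$ (a genus-two piece with one boundary sphere), and the two intersecting nonseparating spheres you describe live there just as well. Also, ``$\pi_1$-injective, incompressible spherical boundary'' is not the right phrasing — a sphere is simply connected so $\pi_1$-injectivity is vacuous; the hypothesis you actually need is that $\partial N$ consists of essential (not bounding a ball) spheres in $M_\Gamma$, which is what lets Hatcher normal form with respect to $\partial N$ localize the intersection of $S$ and $S'$ to $N$ and hence promote a nonzero intersection number in $N$ to one in $M_\Gamma$. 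With these cosmetic fixes your argument is sound.
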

\begin{proof}
    Any two vertices $S$ and $S'$ of $\mathcal{S}(M_{\Gamma})$ are contained simultaneously in some finite union of pieces $K$, up to isotopy. By the assumption that $\Gamma$ has infinite type, there is some vertex $S''$ contained in $M_{\Gamma}\setminus K$. In particular, \begin{equation*}
        d_{\mathcal{S}(M_{\Gamma})}(S, S') \leq d_{\mathcal{S}(M_{\Gamma})}(S, S'')+d_{\mathcal{S}(M_{\Gamma})}(S', S'') =2.\end{equation*}\end{proof}

We will discuss more properties of $\mathcal{S}(M_{\Gamma})$ and some of its subgraphs in Section \ref{Connectivity}. The following lemma is about the sphere complex of $\mathcal{S}(M_{\Gamma})$ when $\Gamma$ is simply connected. We will utilize this in the proof of Theorem \ref{SESMainTheorem}.

\begin{lemma}\label{lem:simConnectedSpherePartition}
    Let $\Gamma$ be a tree. Then two embedded spheres $S$ and $S'$ in $M_{\Gamma}$ not bounding balls are isotopic if and only if they induce the same partition of $E(\Gamma)$.
\end{lemma}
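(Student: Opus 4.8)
I would prove the two directions separately, with almost all of the work in the ``if'' direction.

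\medskip
\noindent\textbf{The easy direction.} If $S$ and $S'$ are isotopic, extend the isotopy to an ambient isotopy $\Phi_t$ of $M_\Gamma$ (isotopy extension applies since $S$ is compact). The time-one map $\Phi_1$ is a homeomorphism isotopic to the identity, so it induces the identity on $E(M_\Gamma)=E(\Gamma)$; since it carries the complementary components of $S$ onto those of $S'$, it matches up which block of ends lies on which side. Hence $S$ and $S'$ induce the same partition. (Even more directly, the partition is manifestly an isotopy invariant.)

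\medskip
\noindent\textbf{Setup for the converse.} Since $\Gamma$ is a tree, $\pi_1(M_\Gamma)\cong\pi_1(\Gamma)$ is trivial, so $H_1(M_\Gamma)=0$ and every embedded sphere in $M_\Gamma$ is separating (a nonseparating sphere would be dual to a loop meeting it once, contradicting $H_1=0$). Thus $S$ cuts $M_\Gamma$ into two pieces whose end sets are the two blocks $E_1\sqcup E_2=E(\Gamma)$ of its partition, and likewise for $S'$; by hypothesis we relabel so the blocks coincide. Neither block is empty: a complementary piece with no ends would be compact, simply connected, and bounded by a single sphere, hence a $3$-ball by the Poincar\'e conjecture, contradicting that $S$ does not bound a ball. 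Throughout, the key replacement for irreducibility will be the following elementary consequence of the Poincar\'e conjecture: \emph{any sphere in $M_\Gamma$ bounding a compact simply connected submanifold bounds a ball.}

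\medskip
\noindent\textbf{Reducing to the disjoint case.} Put $S'$ in normal form with respect to $S$ (Lemma~\ref{lem:NormalAndEquivalent}(a)), so that $|S\cap S'|$ is minimal in the isotopy class of $S'$ (Lemma~\ref{lem:NormalAndEquivalent}(b)). I claim $S\cap S'=\emptyset$. If not, let $c$ be an innermost circle of $S\cap S'$ on $S'$: it bounds a disc $D'\subseteq S'$ whose interior is disjoint from $S$, hence contained in the closure $\bar A$ of one complementary component of $S$, and $c$ splits $S$ into two discs $D_1, D_2$. Performing the surgery that replaces $D_i$ by $D'$ produces embedded spheres $D'\cup D_i$ in $\bar A$; using that a compact simply connected piece they bound is a ball, one of these surgeries yields an isotopy of $S'$ strictly decreasing $|S\cap S'|$, contradicting minimality. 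This innermost-disc argument is the step that needs the most care, and it is the main obstacle; the only point genuinely special to $M_\Gamma$ is the ``compact simply connected $\Rightarrow$ ball'' principle above. (Alternatively, one can invoke Proposition~\ref{RichardsHandlebody} to work in the explicit model $M_\Gamma\cong S^3\setminus E(\Gamma)$ with $E(\Gamma)$ tamely embedded, where this reduction becomes a consequence of Alexander's theorem together with isotopy extension rel a tame totally disconnected set.)

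\medskip
\noindent\textbf{The disjoint case.} Suppose now $S\neq S'$ are disjoint and induce the same partition $E_1\sqcup E_2$. Then $S'$ lies in one closed complementary piece of $S$; say the piece $A$ with end set $E_1$. The region $N$ between $S$ and $S'$, i.e.\ the part of $A$ on the far side of $S'$ from its $E_1$-end, is bounded by $S\sqcup S'$ and has no ends (its end set would be $E_1\setminus E_1=\emptyset$), hence is compact. It is also simply connected: any loop in $N$ bounds a disc in $M_\Gamma$, which by an innermost-disc argument (spheres are incompressible) can be pushed into $N$. Capping the two boundary spheres of $N$ with balls gives a closed simply connected $3$-manifold, so $N\cong S^2\times[0,1]$ by the Poincar\'e conjecture, and therefore $S$ is isotopic to $S'$. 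If instead $S=S'$ there is nothing to prove. \qed
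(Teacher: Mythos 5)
Your easy direction matches the paper's, but your hard direction takes a genuinely different route. The paper encloses both $S$ and $S'$ in a compact submanifold homeomorphic to some $M_{0,n}$, observes that sameness of the partition of $E(\Gamma)$ forces sameness of the partition of the boundary spheres $T_1,\dots,T_n$, and then cites the known classification of spheres in $M_{0,n}$ by boundary partition (Lemma~9 of the Bering reference). You instead attempt a from-scratch argument: reduce to disjoint spheres by innermost-disc surgery, then show the cobounded region is compact, simply connected with two sphere boundary components, hence $S^2\times[0,1]$ by Poincar\'e. Your disjoint-case argument is correct, including the point that the end set of the region between $S$ and $S'$ is $E_1\cap E_2=\varnothing$.

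The reduction to the disjoint case, however, has a real gap, and you are right to flag it as the main obstacle. Your innermost circle $c$ on $S'$, bounding $D'\subset\bar A$, splits $\bar A$ into two regions $R_1,R_2$ bounded by $D'\cup D_1$ and $D'\cup D_2$. Your argument requires one of these to be compact (hence a ball), but the ends $E_1$ of $\bar A$ may be distributed across both $R_1$ and $R_2$, in which case neither is compact and the surgery does not immediately terminate. (A concrete instance: $M_{0,6}$ with $S,S'$ both separating $\{1,2,3\}$ from $\{4,5,6\}$, and the innermost disc $D'$ cutting $\bar A$ so that end $1$ sits in $R_1$ while ends $2,3$ sit in $R_2$.) A secondary issue is that even when $R_i$ is a ball, it may contain further components of $S'\cap\bar A$ whose boundary circles lie in the interior of $D_i$, so the claimed isotopy of $S'$ across $R_i$ needs more care to see that it strictly decreases $|S\cap S'|$. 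Neither of these is addressed, and your alternative via the model $S^3\setminus E(\Gamma)$ is also only gestured at. A clean way to close the gap without leaving your framework is to note that in the simply connected manifold $M_\Gamma$ two spheres inducing the same partition of ends are homologous (hence homotopic by Hurewicz), and then invoke Theorem~\ref{LaudenbachSphereIsotopy} to upgrade homotopy to isotopy; alternatively, follow the paper and reduce to the compact $M_{0,n}$ case where the classification is already established.
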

\begin{proof}
    An isotopy of a sphere in $M_{\Gamma}$ preserves the ends of $M_{\Gamma}$ it separates, so if two spheres are isotopic they must induce the same partition of $E(\Gamma)$. On the other hand, suppose $S$ and $S'$ induce the same partition of $E(\Gamma)$. As $S$ and $S'$ are compact, we can choose a finite number of disjoint spheres $T_1,\ldots, T_n$ which are disjoint from $S$ and $S'$ so that $S, S'$ lie inside the submanifold bounded by $T_1, \ldots, T_n$, and so that this submanifold is compact. As $S$ and $S'$ induce the same partition of $E(\Gamma)$, they induce the same partition of the spheres $T_1, \ldots, T_n$. But isotopy classes of spheres in manifolds homeomorphic to $M_{0, n}$ are determined by their partitions of boundary spheres, see Lemma 9 of \cite{bering_iv_finite_2024}.
\end{proof}

\par 
Fix a sphere system $Q$ and an embedded sphere $S$ in normal form with respect to $Q$. We may identify an \textit{innermost disc} $D$ of this intersection, which is a disc in a component of $Q$ so that $D$ does not contain any other component of $S\cap Q$. The \textit{surgery} of $S$ along $D$ is the pair of spheres which result from cutting $S$ along $D$, and then attaching copies of $D$ to each of the two components. The resulting spheres are embedded and essential, though not always non-peripheral.
\par 

In Subsection \ref{nonseparating Sphere Finite Rank}, we will study the \textit{nonseparating sphere complex} of $M_{\Gamma}$. This complex, denoted $\mathcal{S}_{ns}(M_{\Gamma})$, is the full subcomplex of $\mathcal{S}(M_{\Gamma})$ whose vertices are the nonseparating spheres of $M_{\Gamma}$. We note the following.

\begin{lemma}\label{lem:nonSepConnected}
    The complex $\mathcal{S}_{ns}(M_{\Gamma})$ is connected.  
\end{lemma}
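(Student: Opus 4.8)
The plan is to reduce to the finite-type case. Given two nonseparating spheres $S$ and $S'$ in $M_\Gamma$, by Fact~\ref{CompactExhaustion} we can find a connected union of pieces $K$ which (up to isotopy) contains both $S$ and $S'$. Since $\Gamma$ is connected of positive rank — note the lemma is implicitly in the setting where $\mathcal{S}_{ns}$ is nonempty, which forces $\mathrm{rk}(\Gamma)\geq 1$ — we may enlarge $K$ if necessary so that $K$ itself has positive rank, i.e. $K\cong M_{n,s}$ with $n\geq 1$. It suffices to connect $S$ and $S'$ by a path of nonseparating spheres \emph{of $M_\Gamma$}; I will do this by first connecting them inside $K$ and then checking that nonseparating-in-$K$ spheres can be taken to be nonseparating-in-$M_\Gamma$.

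The first key step is the finite-type statement: in $M_{n,s}$ with $n\geq 1$, the nonseparating sphere complex $\mathcal{S}_{ns}(M_{n,s})$ is connected. This is classical and follows from Hatcher's machinery: any nonseparating sphere is part of a reduced sphere system, the dual graph of a reduced sphere system is a rose/graph of rank $n$, and one moves between reduced systems by "sliding" spheres — each elementary slide changes one sphere at a time, keeping it disjoint from (hence adjacent in $\mathcal{S}_{ns}$ to) the others, and each sphere involved remains nonseparating because the dual graph stays connected of rank $n\geq 1$. Alternatively one invokes the known connectivity of the sphere complex $\mathcal{S}(M_{n,s})$ together with a standard surgery argument showing that separating vertices can be bypassed through nonseparating ones when $n\geq 1$. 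I would cite Hatcher \cite{hatcher_homological_1995} (and Hatcher--Vogtmann \cite{hatcher_homology_2004}) for this.

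The second key step is to pass from a path in $\mathcal{S}_{ns}(K)$ to a path in $\mathcal{S}_{ns}(M_\Gamma)$: a sphere that is nonseparating in $K$ need not a priori be nonseparating in $M_\Gamma$, but in fact it is — the inclusion $K\hookrightarrow M_\Gamma$ induces an injection on the relevant homology $H_2$, since $M_\Gamma\setminus K$ is a disjoint union of manifolds glued to $K$ along spheres, so a class that is nontrivial in $H_2(K)$ relative to $\partial K$ remains nontrivial in $H_2(M_\Gamma)$; equivalently, cutting $M_\Gamma$ along such a sphere leaves it connected iff cutting $K$ does. Concretely, a sphere is nonseparating iff it is dual to a loop of $\Gamma$ surviving in the piece structure, and that property is intrinsic to $K$. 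Hence any edge-path in $\mathcal{S}_{ns}(K)$ between $S$ and $S'$ is also an edge-path in $\mathcal{S}_{ns}(M_\Gamma)$, proving connectedness.

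The main obstacle I anticipate is the bookkeeping in the second step — making sure "nonseparating" is genuinely preserved under the inclusion $K\hookrightarrow M_\Gamma$ and is not sensitive to how the pieces outside $K$ are attached — together with cleanly citing or re-deriving the finite-type connectivity of $\mathcal{S}_{ns}(M_{n,s})$ in the punctured case $s>0$; the latter is where one must be a little careful, since with punctures present one cannot afford to let a surgery produce a peripheral sphere, but choosing surgeries along innermost discs and discarding peripheral outcomes handles this exactly as in Hatcher's original argument.
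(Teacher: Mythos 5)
Your overall structure matches the paper's: confine both spheres to a finite-type $K\cong M_{n,s}$ with $n\geq 1$, establish that $\mathcal{S}_{ns}(M_{n,s})$ is connected, and push the resulting path back into $\mathcal{S}_{ns}(M_\Gamma)$. Your attention to whether "nonseparating in $K$" really coincides with "nonseparating in $M_\Gamma$" is well placed — it does, because $K$ is a connected union of pieces and each component of $M_\Gamma\setminus K$ is attached along a single boundary sphere, so cutting along a sphere in $K$ disconnects $M_\Gamma$ exactly when it disconnects $K$ — and the paper treats this as implicit, so flagging it is fine.

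The genuine gap is the finite-type step, which you outsource to "classical" machinery without verifying the one thing that actually needs proof. Your first sketch, via reduced sphere systems and slides, has a concrete flaw: a reduced sphere system in $M_{n,s}$ can contain \emph{separating} spheres (they correspond to bridge edges of the dual graph), so "the dual graph stays connected of rank $n\geq 1$" does not imply that the slid sphere, or the other spheres in the system you pass through, remain nonseparating. As written, the argument does not produce a path in $\mathcal{S}_{ns}$. Your second alternative (connectivity of $\mathcal{S}(M_{n,s})$ plus a surgery argument to bypass separating vertices) is the right idea, but you leave the surgery argument unstated — and that surgery argument \emph{is} the content of the lemma. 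I am also not aware of a clean citation in the references you name for the connectivity of the nonseparating sphere complex in the punctured case, so "classical" is doing more work here than it should.

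The paper's proof is a short, self-contained version of your option (b), done directly rather than by detouring through $\mathcal{S}(M_{n,s})$: put $S$ and $S'$ in normal form; if they intersect, surger $S$ along an innermost disc of $S'$ to obtain two spheres $S_1,S_2$ disjoint from $S$ and meeting $S'$ in fewer circles; since $[S]=[S_1]+[S_2]$ in $H_2(M_{n,s},\partial M_{n,s};\Z/2)$ and $[S]\neq 0$ (nonseparating is equivalent to nonzero in this group), one of $S_1,S_2$ is nonseparating; induct on $|S\cap S'|$. You should either carry out this argument or find a genuine citation, rather than relying on the slide sketch, which as stated does not establish what you need.
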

\begin{proof}
    It suffices to show that $\mathcal{S}_{ns}(M_{n,s})$ is connected for all $n\geq 1$, $s\geq 0$, as any two spheres in $M_{\Gamma}$ are contained in a homeomorphic copy of $M_{n,s}$ for sufficiently large $n$, $s$. Suppose $S$ and $S'$ are two nonseparating spheres in $M_{n,s}$. Isotope them so that they are in normal form with respect to each other. If they are disjoint, then there is clearly a path between them. Otherwise, choose an innermost disc in $S'$, and surger $S$ along this disc to obtain two spheres $S_1$ and $S_2$ disjoint from $S$, both of which intersect $S'$ in a smaller number of components. Since $S$ is nonseparating, as an element in homology it is the sum of the homology classes of $S_1$ and $S_2$ (with appropriate orientations). Thus, one of $S_1$ or $S_2$ is nontrivial in homology, and in particular nonseparating. Suppose it is $S_1$. As $S$ and $S_1$ are connected by an edge and $S_1$ intersects $S'$ less than $S$ does, we may continue by induction on intersection number to find a path consisting of nonseparating spheres between $S$ and $S'$, showing that $\mathcal{S}_{ns}(M_{n,s})$ is connected.
    \end{proof}

In Subsection \ref{nonseparating Sphere Finite Rank}, we will utilize the free factor complex of a free group $F_n$, which was initially defined by Hatcher-Vogtmann in 1998 (see \cite{hatcher_complex_2022}) in the proof of Theorem \ref{PositiveTranslationLength}. Following the notation of Bestvina-Bridson in \cite{bestvina_rigidity_2023}, we denote this complex by $\mathcal{AF}_n$. For $n\geq 3$, this complex is the geometric realization of the poset of proper free factors of $F_n$. Namely, its simplices correspond to an ordered sequence of proper free factors of $F_n$. For $n=2$, two free factors $\<a\>$ and $\<b\>$ are joined by an edge if $a$ and $b$ generate the entire free group. In either case, there is a natural action of $\text{Aut}(F_n)$ on this complex. (Note that the complexes we are considering differ from what is usually called the free factor complex, whose vertices are conjugacy classes of free factors or generators).
\par 
 Recall that the \textit{translation length} of an element $g\in G$ where $G$ acts by isometries on a metric space $X$ is defined to be
\begin{equation*}
    l_X(g)=\lim_{k \to \infty} \frac{d_X(x, g^k(x))}{k}
\end{equation*}
where $x\in X$. This quantity is well defined, finite, and independent of the choice of $x$.
\par 
The following lemma is a consequence of the proof of Proposition 2.12 in \cite{bestvina_rigidity_2023}.
\begin{lemma}\label{FreeFactorPositiveTranslation}
    Fix $n\geq 2$. Then the action of $\mathrm{Aut}(F_n)$ on $\mathcal{AF}_n$ admits elements with positive translation length.
\end{lemma}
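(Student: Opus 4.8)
The plan is to transfer positive translation length from the usual free factor complex $\mathcal{FF}_n$, whose vertices are \emph{conjugacy classes} of proper free factors of $F_n$, back to $\mathcal{AF}_n$. First I would construct the natural map $\pi\colon \mathcal{AF}_n \to \mathcal{FF}_n$ sending a proper free factor $A$ to its conjugacy class $[A]$. Since the simplices of $\mathcal{AF}_n$ record chains of proper free factors under inclusion (and, for $n=2$, pairs generating $F_2$), and a genuine inclusion $A\subseteq B$ descends to the incidence relation in $\mathcal{FF}_n$ (similarly a basis maps to a generating pair of conjugacy classes), the map $\pi$ is order-preserving, hence simplicial, and therefore $1$-Lipschitz on $1$-skeleta. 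Moreover $\pi$ is equivariant for the action of $\mathrm{Aut}(F_n)$ on $\mathcal{AF}_n$ and the action of $\mathrm{Aut}(F_n)$ on $\mathcal{FF}_n$ that factors through the quotient $\mathrm{Out}(F_n)$.

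Next I would invoke two known facts. For every $n\geq 2$ there is a fully irreducible $\bar\phi\in\mathrm{Out}(F_n)$ (for instance the class of $a\mapsto b,\ b\mapsto ab$ when $n=2$, and standard train-track or punctured-surface constructions in general); fix a lift $\phi\in\mathrm{Aut}(F_n)$ of $\bar\phi$. By the theorem of Bestvina--Feighn, $\mathcal{FF}_n$ is Gromov hyperbolic and fully irreducible outer automorphisms act loxodromically on it; when $n=2$ one instead uses that $\mathcal{FF}_2$ is the Farey graph, on which the hyperbolic elements of $\mathrm{Out}(F_2)\cong\mathrm{GL}_2(\mathbb{Z})$ act loxodromically. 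In either case $\ell_{\mathcal{FF}_n}(\bar\phi)>0$.

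Finally I would combine these: for any vertex $x\in\mathcal{AF}_n$ and any $k$,
\[
 d_{\mathcal{FF}_n}\!\bigl(\pi(x),\bar\phi^k\pi(x)\bigr)=d_{\mathcal{FF}_n}\!\bigl(\pi(x),\pi(\phi^k x)\bigr)\le d_{\mathcal{AF}_n}(x,\phi^k x),
\]
so dividing by $k$ and letting $k\to\infty$ gives $\ell_{\mathcal{AF}_n}(\phi)\ge \ell_{\mathcal{FF}_n}(\bar\phi)>0$. As a byproduct this also shows $\mathcal{AF}_n$ has infinite diameter. The only delicate points are checking that $\pi$ is genuinely simplicial in the low-rank exceptional cases where $\mathcal{AF}_n$ is defined by hand ($n=2$, and the degenerate incidence for $n=3$), and citing the hyperbolicity and loxodromicity statements in exactly the form needed for the conjugacy-class version of the complex; I expect this bookkeeping to be the main, though still routine, obstacle. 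This is essentially the argument extracted from the proof of Proposition 2.12 in \cite{bestvina_rigidity_2023}.
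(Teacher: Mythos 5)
Your argument is correct and is essentially the reconstruction the paper intends: the paper itself gives no proof, simply asserting the lemma is a consequence of the proof of Proposition 2.12 in \cite{bestvina_rigidity_2023}, and that proof proceeds exactly by the equivariant $1$-Lipschitz quotient $\pi\colon\mathcal{AF}_n\to\mathcal{FF}_n$ and transfer of loxodromicity that you spell out. Your checks (that the chain of free factors cannot collapse under $\pi$, the $n=2$ Farey-graph case, and the equivariance through $\mathrm{Out}(F_n)$) are all sound, so this is a valid and more self-contained version of the same route.
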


Fix a basepoint $p\in \partial M_{n,1}$. We need one more auxiliary complex, which we will denote by $\widehat{\mathcal{S}}_{ns}(M_{n,1})$. It is the flag complex whose vertex set agrees with $\mathcal{S}_{ns}(M_{n,1})$. If $n\geq 3$, then there is an edge between two disjoint spheres $S$ and $S'$ if $\pi_1(M_{n,1}\setminus(S \cup S'),p)$ is nontrivial. If $n=2$, then there is an edge between disjoint spheres $S$ and $S'$ if $\pi_1(M_{n,1}\setminus S, p)$ and $\pi_1(M_{n,1} \setminus S', p)$ generate $\pi_1(M_{n,1}, p)$. Note that for any $n\geq 2$, an edge in $\mathcal{S}_{ns}(M_{n,1})$ between vertices $S$ and $S'$ is not in $\widehat{\mathcal{S}}_{ns}(M_{n,1})$ if and only if $S$, $S'$, and $\partial M_{n,1}$ bound a copy of $M_{0,3}$.
\par 
Then we have the following.
\begin{lemma}\label{FreeFactorEquivariantMap} Fix $n\geq 2$. 
\begin{enumerate}[label=(\alph*)]
    \item The complex $\widehat{\mathcal{S}}_{ns}(M_{n,1})$ is connected, quasi-isometric to $\mathcal{S}_{ns}(M_{n,1})$, and $\text{Aut}(F_n)$ invariant.
    \item There is an $\mathrm{Aut}(F_n)$-equivariant map $\Phi:\widehat{\mathcal{S}}_{ns}^{(1)}(M_{n,1})\to \mathcal{AF}_n$ so that for all vertices $S, S'\in \widehat{\mathcal{S}}_{ns}(M_{n,1})$,
    \begin{equation*}
        d_{\mathcal{AF}_n}(\Phi(S), \Phi(S'))\leq 2d_{\widehat{\mathcal{S}}_{ns}(M_{n,1})}(S, S').
    \end{equation*}
    That is, $\Phi$ is $2$-Lipschitz.
\end{enumerate}
\end{lemma}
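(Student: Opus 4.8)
The plan is to establish part (a) first and then build the map $\Phi$ for part (b).

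\textbf{Part (a).} To see that $\widehat{\mathcal{S}}_{ns}(M_{n,1})$ is connected and quasi-isometric to $\mathcal{S}_{ns}(M_{n,1})$, I would note that the two complexes share a vertex set and that $\widehat{\mathcal{S}}_{ns}(M_{n,1})$ is obtained from $\mathcal{S}_{ns}(M_{n,1})$ by deleting exactly the edges where $S$, $S'$, and $\partial M_{n,1}$ cobound a copy of $M_{0,3}$. So it suffices to show that any such deleted edge can be replaced by a uniformly bounded-length path in $\widehat{\mathcal{S}}_{ns}(M_{n,1})$. If $S$, $S'$, $\partial M_{n,1}$ bound $M_{0,3}$, then since $n \geq 2$ there is a nonseparating sphere $T$ on the ``other side'' of $S$ (in the complementary piece that is not the $M_{0,3}$, which is a copy of $M_{n-1,2}$ or similar) disjoint from both $S$ and $S'$ and forming a good (non-$M_{0,3}$) edge with each; this gives a path of length $2$. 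Hence $\widehat{\mathcal{S}}_{ns}(M_{n,1})$ is connected and the inclusion of vertex sets is a quasi-isometry. Invariance under $\mathrm{Aut}(F_n)$ is clear since the defining condition is phrased purely in terms of $\pi_1$ of complements, which any diffeomorphism of $M_{n,1}$ respects (using the identification $\pi_1(M_{n,1},p)\cong F_n$).

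\textbf{Part (b).} For the equivariant map, I would send a nonseparating sphere $S$ to the subgroup $\pi_1(M_{n,1}\setminus S, p)$, viewed inside $\pi_1(M_{n,1},p)\cong F_n$. Cutting $M_{n,1}$ along a nonseparating sphere yields a copy of $M_{n-1,3}$, so by van Kampen this subgroup is a rank $(n-1)$ free factor of $F_n$, hence a vertex of $\mathcal{AF}_n$ (for $n\ge 3$; for $n=2$ it is a cyclic free factor, still a vertex). Naturality of van Kampen makes this assignment $\mathrm{Aut}(F_n)$-equivariant. For the Lipschitz bound it suffices to treat an edge of $\widehat{\mathcal{S}}_{ns}(M_{n,1})$: if $S$ and $S'$ are disjoint and the edge is present, then for $n\ge 3$ the group $\pi_1(M_{n,1}\setminus(S\cup S'),p)$ is nontrivial, and it is a common free factor (or at least a nontrivial free factor) of both $\pi_1(M_{n,1}\setminus S,p)$ and $\pi_1(M_{n,1}\setminus S',p)$, so $\Phi(S)$ and $\Phi(S')$ are at distance at most $2$ in $\mathcal{AF}_n$ via this common refinement; for $n=2$ the edge condition says the two cyclic factors together generate $F_2$, which is exactly adjacency in $\mathcal{AF}_2$, giving distance $1\le 2$. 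Composing over a geodesic in $\widehat{\mathcal{S}}_{ns}(M_{n,1})$ gives the stated $2$-Lipschitz estimate.

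\textbf{Main obstacle.} The delicate point is checking that $\pi_1(M_{n,1}\setminus(S\cup S'),p)$, when nontrivial, really is a (nontrivial) free factor of $\pi_1(M_{n,1}\setminus S,p)$ and of $\pi_1(M_{n,1}\setminus S',p)$ simultaneously, and that the resulting factors differ from the full groups, so that one genuinely lands at distance $\le 2$ in $\mathcal{AF}_n$ rather than potentially at the cone point. This requires a careful analysis of the complementary pieces of $S \cup S'$ inside $M_{n,1}$ — since $S,S'$ are disjoint nonseparating spheres, cutting along both yields a manifold of the form $M_{k,s}$ whose fundamental group injects as a free factor into each of the one-sphere complements — together with handling the edge case where the complement becomes disconnected. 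I also want to be careful that the non-$M_{0,3}$ condition on edges is exactly what prevents the complement's $\pi_1$ from being all of $F_n$ or all of a rank-$(n-1)$ factor, which is precisely why those edges were removed in passing to $\widehat{\mathcal{S}}_{ns}(M_{n,1})$.
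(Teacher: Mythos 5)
Your proposal is correct and follows essentially the same route as the paper's proof: for (a) you replace a deleted ($M_{0,3}$-cobounding) edge by a length-2 path through a nonseparating sphere in the complementary $M_{n-1,2}$ component, and for (b) you define $\Phi(S)=\pi_1(M_{n,1}\setminus S,p)$ and obtain the Lipschitz bound by noting that $\pi_1(M_{n,1}\setminus(S\cup S'),p)$ (for $n\geq 3$) is a common nontrivial proper free factor, handling $n=2$ separately via the generating condition. The paper's argument is the same; the "main obstacle" you flag (properness/nontriviality of the common factor and the separating case) is a real but routine check that the paper also leaves implicit, and it works out because the relevant complementary pieces $M_{k,s}$ have $\pi_1$ a free factor of each one-sphere complement with rank strictly less than $n-1$ whenever $S\neq S'$ in $\mathcal{S}_{ns}$.
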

    We note that the action of $\text{Aut}(F_n)$ on $\mathcal{S}_{ns}(M_{n,1})$ is induced from the short exact sequence given in Proposition \ref{HatcherVSES}.
\begin{proof}
\begin{enumerate}[label=(\alph*)]
    \item Fix two disjoint vertices $S$ and $S'$. If $S$ and $S'$ are not joined by an edge in $\widehat{\mathcal{S}}_{ns}(M_{n,1})$, then it follows from the end of the paragraph after Lemma \ref{FreeFactorPositiveTranslation} that the complementary component of $S\cup S'$ not containing $p$ is homeomorphic to $M_{n-1, 2}$. As $n\geq 2$, there is a nonseparating sphere $S''$ in this component. 
    \par 
    If $n=2$, then it is clear that $\pi_1(M_{n,1}\setminus S'',p)$ along with either $\pi_1(M_{n,1}\setminus S,p)$ or $\pi_1(M_{n,1}\setminus S',p)$ generates $\pi_1(M_{n,1},p)$ (this can be seen via the Van Kampen theorem), so in this case we have an edge path of length $2$ in $\widehat{\mathcal{S}}_{ns}(M_{n,1})$ connecting $S$ and $S'$. If $n\geq 3$, then $\pi_1(M_{n,1}\setminus (S\cup S''), p)$ and $\pi_1(M_{n,1}\setminus (S'\cup S''), p)$ are both nontrivial free factors of $\pi_1(M_{n,1}, p)$. In particular, there is also an edge path of length $2$ between $S$ and $S'$. This shows that $\widehat{\mathcal{S}}_{ns}(M_{n,1})$ is connected, and further it gives the following inequalities for arbitrary vertices $S$ and $S'$ of $\mathcal{S}_{ns}(M_{n,1})$:
    $$\frac{1}{2}d_{\widehat{\mathcal{S}}_{ns}(M_{n,1})}(S, S') \leq d_{\mathcal{S}_{ns}(M_{n,1})}(S, S') \leq d_{\widehat{\mathcal{S}}_{ns}(M_{n,1})}(S, S')$$
    showing that $\widehat{\mathcal{S}}_{ns}(M_{n,1})$ is quasi-isometrically embedded in $\mathcal{S}_{ns}(M_{n,1})$. Since their vertex sets agree, the complexes are also quasi-isometric.
    \par 
    The $\text{Aut}(F_n)$ equivariance follows from the topological characterization of edges of $\mathcal{S}_{ns}(M_{n,1})$ which are not in $\widehat{\mathcal{S}}_{ns}(M_{n,1})$. Namely, such an edge between $S$ and $S'$ exists if and only if $S$, $S'$, and $\partial M_{n,1}$ bound a copy of $M_{0,3}$. This condition is preserved by homeomorphisms, and thus by $\text{Aut}(F_n)$.
    \item Given a vertex $S\in \widehat{\mathcal{S}}_{ns}(M_{n,1})$, we define $\Phi(S)=\pi_1(M_{n,1}\setminus S, p)$.   This is $\text{Aut}(F_n)$ equivariant by construction (we can extend the map to the edges linearly and equivariantly). 
    \par 
    If $n=2$ and $S$ and $S'$ are two vertices joined by an edge, then by definition $\Phi(S)$ and $\Phi(S')$ generate $\pi_1(M_{n,1}, p)$ and thus $\Phi(S)$ and $\Phi(S')$ are connected by an edge. This implies that $\Phi$ is $1$-Lipschitz in this case.
    \par 
    If $n\geq 3$ and $S$ and $S'$ are two vertices joined by an edge, then $\pi_1(M_{n,1} \setminus (S \cup S'), p)$ is a nontrivial free subfactor of both $\Phi(S)$ and $\Phi(S')$. Thus $d_{\mathcal{AF}_n}(\Phi(S), \Phi(S'))\leq 2$. Hence $\Phi$ is $2$-Lipschitz.
    
\end{enumerate}
    
\end{proof}

We remark that the map $\Phi$ is similar to a map used by Kapovich--Rafi in \cite{kapovich_hyperbolicity_2014}, as well as a map used by Hilion--Horbez in \cite{hilion_hyperbolicity_2017}.

\subsection{Borel--Moore Homology}\label{subsec:BorelMoore}
To obtain the splitting of the short exact sequence in Theorem \ref{SESMainTheorem}, one needs to understand how the first cohomology of $M_{\Gamma}$ relates to the embedded spheres of $M_{\Gamma}$. As $M_{\Gamma}$ is typically not compact, the standard second homology will not suffice, as Poincare duality only gives an isomorphism from $H_2(M_{\Gamma}, \Z)$ to $H_c^1(M_{\Gamma}, \Z)$, the first cohomology with compact support. Instead, we utilize the notion of \textit{Borel--Moore} homology, introduced in \cite{borel_homology_1960}

\begin{definition}[{\cite[Borel--Moore Homology]{borel_homology_1960}}]\label{BorelMooreHomology}
    Suppose $X$ is locally compact space. Consider the locally finite singular chains of $X$ of dimension $i$ with coefficients in an abelian group $A$, denoted $C_i^{BM}(X;A)$ (in this paper, one can always assume $A = \Z/2$). This is the group of formal infinite sums of the form
    \begin{equation*}
        \sum_{\sigma} a_{\sigma}\sigma
    \end{equation*}
    where $a_{\sigma}\in A$ and $\sigma$ runs over all continuous maps from the standard $i$-simplex into $X$, so that for each compact set $K\subset X$, there are only finitely $\sigma$'s so that $\text{im}(\sigma)\cap K \neq \varnothing$ and $a_{\sigma}\neq 0$.
    \\
    There are well defined boundary maps $\partial:C_{i+1}^{BM}(X; A)\to C_i^{BM}(X; A)$ defined in the same way as for singular homology. The $i$th \textit{Borel--Moore homology group}, denoted $H_i^{BM}(X; A)$, is defined as
    \begin{equation*}
        H_i^{BM}(X; A)=\text{ker}(\partial:C_{i}^{BM}(X; A)\to C_{i-1}^{BM}(X; A))/\text{im}(\partial:C_{i+1}^{BM}(X; A)\to C_{i}^{BM}(X; A)).
    \end{equation*}
\end{definition}
It is easy to see that if $X$ is compact, then the Borel--Moore homology agrees with standard singular homology.
\par 
We will make use of the following extension of Poincare duality.

\begin{theorem}[{\cite[Section IX Theorem 4.7]{iverson_cohomology_1984}}]
    Fix a manifold $X$ of dimension $n$ and a finitely generated abelian group $A$. There is a natural isomorphism
    \begin{equation*}
        H_k^{BM}(X; A) \cong H^{n-k}(X; A).
    \end{equation*}
\end{theorem}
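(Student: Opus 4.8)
The plan is to prove this by the classical cap-product-with-the-fundamental-class argument, in the inductive (``bootstrap'') form familiar from the standard proofs of Poincar\'e duality, adapted to the Borel--Moore setting. Every space to which we apply this statement is orientable (indeed $M_{\Gamma}$ is a doubled handlebody) and the only coefficients we use are $A = \Z/2$, so I would carry out the argument for $X$ oriented, or with $A = \Z/2$ where orientations are automatic; in either case $X$ carries a fundamental class $[X] \in H_n^{BM}(X; A)$ whose image in $H_n^{BM}(X, X \setminus \{x\}; A) \cong A$ is a generator for every $x \in X$. (For the fully general statement of \cite{iverson_cohomology_1984} one replaces $A$ on the right-hand side by the orientation-twisted coefficient sheaf; we do not need this.)

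The duality map is cap product with $[X]$. At the chain level, a singular cochain $\alpha \in C^{n-k}(X; A)$, with no support restriction, caps a locally finite chain $z = \sum_{\sigma} a_{\sigma}\sigma$ by evaluating $\alpha$ on front faces and keeping back faces; the result is again locally finite because $z$ is, so one obtains a well-defined homomorphism $D_X \colon H^{n-k}(X;A) \to H_k^{BM}(X;A)$, $\alpha \mapsto \alpha \cap [X]$. The two facts to check about $D_X$ are: first, for an open subset $U \subseteq X$ it is compatible with the restriction maps $H^{n-k}(X;A)\to H^{n-k}(U;A)$ and $H_k^{BM}(X;A)\to H_k^{BM}(U;A)$ (Borel--Moore homology being contravariant for open inclusions, via restriction of locally finite chains), using that $[X]$ restricts to $[U]$; and second, it intertwines, up to sign, the Mayer--Vietoris sequences of an open cover $X = U \cup V$ in ordinary cohomology and in Borel--Moore homology.

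With $D_X$ in hand, the bootstrap proceeds in the usual steps. First, for $X = \R^n$ one computes both sides directly ($H_n^{BM}(\R^n;A) = A$ and $H_k^{BM}(\R^n;A) = 0$ for $k \ne n$, while $H^0(\R^n;A) = A$ and $H^j(\R^n;A) = 0$ otherwise) and checks that $D_{\R^n}$ is an isomorphism. Second, $D$ is an isomorphism for a (possibly infinite) disjoint union of opens on which it is an isomorphism, because the Borel--Moore homology of a disjoint union is the \emph{product} of the Borel--Moore homologies of the pieces, matching the behaviour of ordinary cohomology. Third, if $D_U$, $D_V$, $D_{U \cap V}$ are isomorphisms then so is $D_{U \cup V}$, by comparing the two Mayer--Vietoris sequences and applying the five lemma. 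Fourth, if $U_1 \subseteq U_2 \subseteq \cdots$ is an increasing union with each $D_{U_i}$ an isomorphism, then $D_{\bigcup_i U_i}$ is an isomorphism: at the chain level $C_*^{BM}(\bigcup_i U_i) = \varprojlim_i C_*^{BM}(U_i)$, so a Milnor exact sequence expresses $H_*^{BM}(\bigcup_i U_i)$ in terms of $\varprojlim$ and $\varprojlim^1$ of the $H_*^{BM}(U_i)$, and likewise for $H^*$; the two $\varprojlim^1$ terms correspond under $D$ after the degree shift, so the five lemma again applies. Combining these, $D$ is an isomorphism for every open subset of $\R^n$ (each such is an increasing union of finite unions of convex opens, handled by the third and fourth steps starting from the first), and then for every manifold $X$, which being second countable admits a countable cover by Euclidean charts to which the same induction applies.

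The main obstacle is bookkeeping rather than conceptual: because Borel--Moore homology is covariant for proper maps but only contravariant for open inclusions, the Mayer--Vietoris and colimit arguments run in the opposite direction to the ones for ordinary homology, so one must be careful that every comparison square commutes with the correct sign and that the $\varprojlim^1$ terms genuinely match up degreewise; the construction of the cap product and the verification that it descends to (co)homology with the correct support conditions also need care. An alternative that sidesteps all of this is the sheaf-theoretic route of \cite{iverson_cohomology_1984}: identify $H_k^{BM}(X;A)$ with the hypercohomology $\mathbb{H}^{-k}(X; \omega_X \otimes A)$ of the dualizing complex, and observe that for an $n$-manifold $\omega_X$ is the orientation sheaf placed in degree $-n$, which for oriented $X$ (or $A = \Z/2$) is just $A_X[n]$, giving $\mathbb{H}^{-k}(X; A_X[n]) = H^{n-k}(X;A)$ directly.
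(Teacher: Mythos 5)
The paper does not prove this statement: it is cited directly from Iverson, whose argument there is the sheaf-theoretic one you mention only as an alternative at the end (identify $H_k^{BM}$ with hypercohomology of the dualizing complex and observe that for an $n$-manifold the dualizing complex is the orientation sheaf shifted by $n$). Your primary route, the cap-product bootstrap, is the standard elementary alternative, and it is sound for the case the paper actually uses.

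Two remarks. First, you correctly observe that the statement as quoted in the paper omits an orientability hypothesis (or equivalently, the twist by the orientation sheaf); for a connected non-orientable $n$-manifold and $A=\Z$ one has $H_n^{BM}(X;\Z)=0$ while $H^0(X;\Z)=\Z$, so the untwisted form is literally false in general. Restricting to oriented $X$ or to $A=\Z/2$, as you do, is exactly what makes the cap product with a fundamental class available, and it is all the paper needs since $M_{\Gamma}$ is an oriented $3$-manifold and the coefficients used are $\Z/2$.

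Second, the one place your sketch genuinely glosses over a nontrivial point is the chain-level contravariance of Borel--Moore chains for open inclusions, on which both the compatibility with restriction and the identity $C_*^{BM}\bigl(\bigcup_i U_i\bigr)=\varprojlim_i C_*^{BM}(U_i)$ used for the Milnor sequence depend. With the naive locally finite singular chain model there is no restriction chain map $C_*^{BM}(U_i)\to C_*^{BM}(U_j)$ for $U_j\subset U_i$, since a simplex of $U_i$ need not lie in $U_j$; one must either pass to the model $C_*^{BM}(U)=\varprojlim_{K}C_*(U,U\setminus K)$ and invoke excision (which inverts a quasi-isomorphism, so some zigzag or replacement is needed to get an honest chain map), or work with a sheafified chain model, at which point one is essentially back to Iverson's argument. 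You flag this as ``bookkeeping rather than conceptual,'' which is a fair characterization, but it is the step a complete written proof must resolve by committing to a specific model; the computation for $\R^n$, the product compatibility over disjoint unions, and the Mayer--Vietoris/five-lemma step are genuinely routine once that is in place. Finally, you do not address the ``finitely generated'' hypothesis on $A$; your argument does not use it, and indeed the cap-product proof works for arbitrary $A$ once the model issue above is settled, so that hypothesis appears to be an artifact of Iverson's sheaf-theoretic framework rather than anything needed here.
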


The second Borel--Moore homology of $M_{\Gamma}$ takes on a particularly simple form, which we will exploit in the proof of the second part of Theorem \ref{SESMainTheorem}. For simplicity, we will assume $\Gamma$ is in standard form. Thus, we can identify each piece of $M_{\Gamma}$ with $M_{1,s}$ or $M_{0,s}$, for some $s \geq 0$, depending on the piece. We think of $M_{1,s}$ as $S^2\times S^1$ with $s$ open balls removed. Define the \textit{core spheres} of $M_{\Gamma}$ to be a choice of one nonseparating spheres in each piece homeomorphic to $M_{1,s}$. Then we have the following lemma, see Figure \ref{fig:BorelMoore} as well.
\begin{lemma}\label{BorelMooreGenerators}
    The group $H_2^{BM}(M_{\Gamma}; A)$ is generated by the core spheres of $M_{\Gamma}$.
\end{lemma}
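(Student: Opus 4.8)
The plan is to combine the Poincaré duality isomorphism $H_2^{BM}(M_{\Gamma};A)\cong H^1(M_{\Gamma};A)$ with an explicit model computation of $H^1(M_{\Gamma};A)$ using the retraction $r:M_{\Gamma}\to\Gamma$, and then match the resulting generators with the core spheres via intersection pairing. Since $r$ induces an isomorphism on fundamental groups (and hence, being a retraction onto a $K(\pi,1)$-ish graph, a homotopy equivalence after passing to universal covers of the relevant pieces), it induces an isomorphism $H^1(\Gamma;A)\cong H^1(M_{\Gamma};A)$. For a locally finite graph in standard form, $H^1(\Gamma;A)$ is the free $A$-module (allowing infinite formal sums, i.e.\ the full product) on the set of loops of $\Gamma$: dually, $H_1(\Gamma;A)$ is spanned by the loop classes and $H^1$ is the dual. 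Each loop of $\Gamma$ in standard form sits inside a unique piece homeomorphic to $M_{1,s}$, and that loop is dual to the core sphere of that piece.

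Concretely, first I would fix the standard-form identification of the pieces as copies of $M_{1,s}$ (one per loop) and $M_{0,s}$, and record that the core spheres are in bijection with the loops of $\Gamma$, equivalently with a free basis of $\pi_1(\Gamma)=\pi_1(M_{\Gamma})$. Next I would invoke the Iverson form of Poincaré duality quoted just above to get $H_2^{BM}(M_{\Gamma};A)\cong H^1(M_{\Gamma};A)\cong H^1(\Gamma;A)$, where the last step uses that $r$ is a homotopy equivalence in the appropriate sense (or just that it induces a $\pi_1$-isomorphism between spaces that are aspherical through dimension relevant to $H^1$). Then I would identify the class of a core sphere $S$ in the $i$-th piece: under Poincaré duality it corresponds to the cohomology class that evaluates to $1\in A$ on any loop of $\Gamma$ crossing $S$ transversally once, and $0$ on loops supported away from that piece. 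This is exactly the dual basis element to the $i$-th loop. Since the collection of these dual basis elements (with infinite $A$-linear combinations allowed) is all of $H^1(\Gamma;A)$, the core spheres generate $H_2^{BM}(M_{\Gamma};A)$.

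The bookkeeping to watch is the infinitary nature of Borel--Moore chains: a core sphere in each $M_{1,s}$ piece is a genuine compact cycle, but a general element of $H_2^{BM}$ is an infinite locally finite sum of spheres, so "generated by" must be understood as allowing locally finite infinite combinations — which is exactly what matches the product $\prod A$ description of $H^1(\Gamma;A)$. I would state this carefully so the reader sees that the generation statement in Lemma \ref{BorelMooreGenerators} is in the locally-finite sense, consistent with how $H_2^{BM}$ is defined. The main obstacle is making the duality computation honest for a noncompact $M_{\Gamma}$: one must be sure that the natural intersection pairing between $H_2^{BM}(M_{\Gamma};A)$ and the compactly-supported (ordinary) $H_1^{c}$-side — or equivalently the evaluation of $H^1$ on $H_1$ — is the one realized geometrically by counting transverse intersection points of a sphere with a loop, and that the basis of loops of $\Gamma$ pulls back to a dual-to-core-spheres basis. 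I would handle this by working piece-by-piece: in a single piece $M_{1,s}=S^2\times S^1$ minus balls, $H^1$ is generated by the dual of the $S^1$ factor, Poincaré-dual to $\{*\}\times S^1$'s transverse sphere $S^2\times\{*\}$, and then assemble via a Mayer--Vietoris / direct-limit argument over the compact exhaustion by unions of pieces (Fact \ref{CompactExhaustion}), noting that the $M_{0,s}$ pieces contribute nothing to $H^1$. The only subtlety in the assembly is the $\varprojlim$/$\varprojlim^1$ behavior for $H^1$ as an inverse limit of the finite-stage cohomologies, but since each restriction map is surjective on the relevant free modules, the $\varprojlim^1$ term vanishes and the limit is the stated product, completing the identification.
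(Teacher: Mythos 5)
Your argument is correct in outcome but takes a genuinely different route from the paper's. The paper works directly at the chain level: it takes a Borel--Moore $2$-cycle $\alpha$, perturbs it transverse to the boundary spheres of the pieces, subdivides so the restriction to each piece $K$ becomes a class in $H_2(K,\partial K;A)$, and then applies \emph{local} Lefschetz duality in each compact piece to see that this relative class is an $A$-multiple of the core sphere. Your proposal instead applies \emph{global} Poincar\'e duality $H_2^{BM}(M_\Gamma;A)\cong H^1(M_\Gamma;A)$, then uses the retraction $r:M_\Gamma\to\Gamma$ to reduce to $H^1(\Gamma;A)$, identifies that with the product over loops, and translates back to core spheres via the geometric intersection pairing. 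Both are sound; the paper's approach is more self-contained at the chain level and makes the ``locally finite infinite sum'' nature of the generation completely explicit by construction, whereas yours shifts the burden to a correct reading of the duality isomorphism for noncompact manifolds and to a universal-coefficients computation of $H^1(\Gamma)$.

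One wobble to fix if you write this up: $r$ is emphatically \emph{not} a homotopy equivalence, and ``homotopy equivalence after passing to universal covers'' is wrong --- the universal cover of $\Gamma$ is a contractible tree while that of $M_\Gamma$ has enormous $\pi_2$. What you actually need, and what you half-say afterwards, is simply that a $\pi_1$-isomorphism induces an $H_1$-isomorphism and hence, by universal coefficients (the $\mathrm{Ext}$ term on $H_0$ vanishing), an $H^1$-isomorphism; asphericity plays no role. Once you have $r^*:H^1(\Gamma;A)\xrightarrow{\sim} H^1(M_\Gamma;A)$ this way, the Mayer--Vietoris/Milnor-$\varprojlim^1$ detour in your last paragraph is unnecessary --- it is correct but is re-deriving something you already have for free. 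The remaining geometric step, that the core sphere of a piece is Poincar\'e dual to the class pairing to $1$ with the corresponding loop and to $0$ with all others, is fine and is the same underlying duality fact the paper uses one piece at a time.
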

\begin{proof}
    Fix a nonzero $\alpha \in H_2^{BM}(M_{\Gamma};A)$, represented by a sum of $A$-multiples of $2$-simplices. We may assume by a small homotopy that these simplices are smooth maps which transversely intersect the boundary spheres of the pieces of $M_{\Gamma}$. By subdividing, we may then assume that $\alpha$ is written as a sum of chains, each of which lies in some piece $K$ so that its boundary lies in $\partial K$. Note that by the local finiteness of the original representation of $\alpha$, only finitely many simplices of $\alpha$ intersect a given piece, so this element is well defined.
    \par 
    We can realize the summand in the piece $K$ as an element of $H_2(K, \partial K;A)$. Lefschetz duality implies that every element of $H_2(K, \partial K;A)\cong H^1(K;A)$ can be written as an $A$-multiple of the core sphere of $K$, if it exists. This is because the dual of the standard generator of $H^1(K; A)\cong A$ is the core sphere. Thus, in particular, $\alpha$ can be written as a sum over the core spheres of $M_{\Gamma}$.
\end{proof}

\begin{figure}
    \centering
    \includegraphics[scale=.5]{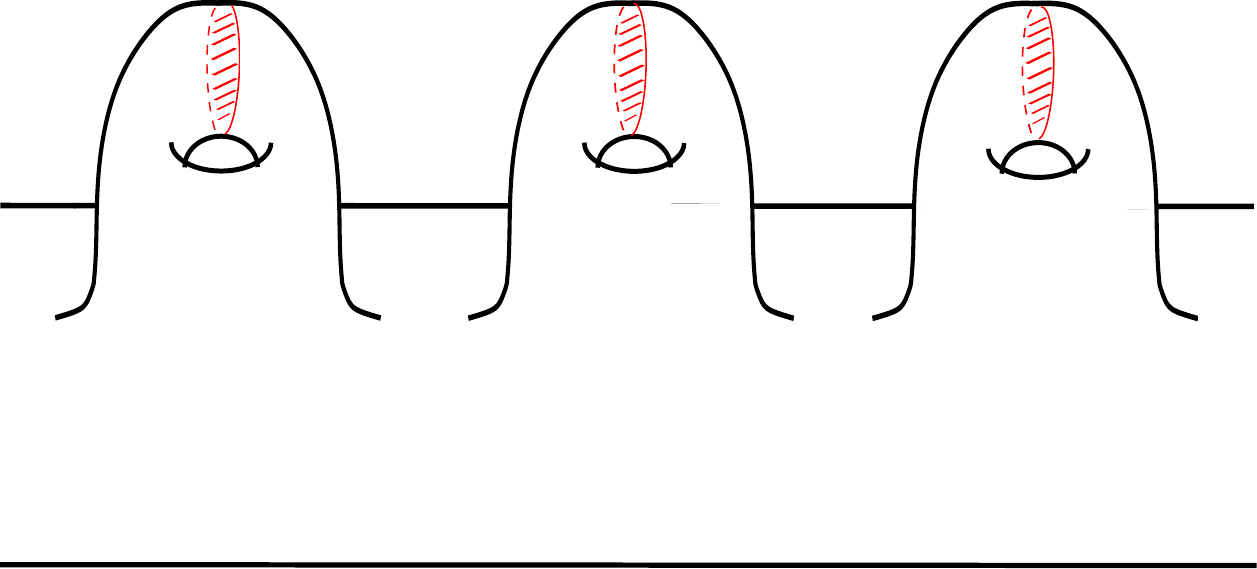}
    \caption{The doubles of the red discs generate the second Borel--Moore homology of the doubled handlebody.}
    \label{fig:BorelMoore}
\end{figure}

In particular, there is a natural bijection between $H_2^{BM}(M_{\Gamma}, \Z/2)$ and $2^{\text{rk}(\Gamma)}$. If $\text{rk}(\Gamma)=\infty$, we can make $H_2^{BM}(M_{\Gamma}, \Z/2)$ into a topological group so that it is homeomorphic to the Cantor set. We will see later that there is an isomorphism (as topological groups) between $H_2^{BM}(M_{\Gamma}, \Z/2)$ and $\text{Twists}(M_{\Gamma})$, see Lemma \ref{CrossedHomContinuous}.

\subsection{Coarse Geometry}
In Section \ref{sec:CoarseGeo} we will discuss the coarse geometry of mapping class groups of both doubled handlebodies and graphs. We will state the relevant definitions and results here. Much of this discussion is imported from \cite{domat_coarse_2023} and \cite{domat_generating_2023}, and one can refer to there for more details.
\par 
First we introduce the class of groups that we will consider.
\begin{definition}
    A topological group $G$ is \textit{Polish} if it is separable and completely metrizable.
\end{definition}
Recall that a pseudo-metric on a set $X$ is a function $d:X\times X \to \R_{\geq 0}$ which is symmetric and satisfies the triangle inequality (but is not required to have the condition that $d(x,y)=0$ implies $x=y$).
\begin{definition}[{\cite[Proposition 2.15]{rosendal_coarse_2022}}]
    Let $A$ be a subset of a topological group $G$. Then we say that $A$ is \textit{coarsely bounded (CB)} in $G$ if one of the following equivalent conditions holds.
    \begin{enumerate}
        \item (Rosendal's Criterion) For any neighborhood $\mathcal{V}$ of $\text{id}\in G$, there is a finite set $\mathcal{F}\subset G$ and an $n\geq 1$ so that $A\subset (\mathcal{F}\mathcal{V})^n$. 
        \item For all continuous group actions of $G$ on a metric space $X$ by isometries, $\text{diam}(A\cdot x)<\infty$ for all $x\in X$.
        \item For all left invariant pseudo-metrics $d$ on $G$ which is compatible with its topology, $\text{diam}_d(A)< \infty$.
    \end{enumerate}
\end{definition}

\begin{definition}
    We say that a topological group $G$ is \textit{CB generated} if there is a CB subset $A$ of $G$ which generates $G$. We also say that $G$ is \textit{locally CB} if there exists a CB neighborhood of the identity. 
\end{definition}

We will have need of the following notion of equivalence between metric spaces, which is a loosening of the more standard notion of quasi-isometry.
\begin{definition}[Coarse Equivalence]
    Let $f:(X, d_X)\to (Y, d_Y)$ be a map between pseudo-metric spaces. Then $f$ is \textit{coarsely Lipschitz} if there exists a non-decreasing function $\Phi_+:[0, \infty)\to [0, \infty)$ so that for all $x_1, x_2\in X$, 
    \begin{equation*}
        d_Y(f(x_1), f(x_2)) \leq \Phi_+(d_X(x_1,x_2)).
    \end{equation*}
    Similarly, $f$ is \textit{coarsely expanding} if there is a non-decreasing function $\Phi_-:[0,\infty)\to [0,\infty]$ with $\lim_{t\to \infty}\Phi_-(t)=\infty$ so that
    \begin{equation*}
       \Phi_-(d_X(x_1,x_2))\leq d_Y(f(x_1), f(x_2))
    \end{equation*}
    We say that $f$ is a \text{coarse embedding} if it is coarsely Lipschitz and coarsely expanding. Further, $f$ is said to be \textit{coarsely surjective} if there is a $C\geq 0$ so that for all $y\in Y$ there is some $x\in X$ with $d_Y(f(x),y)\leq C$. Finally, $f$ is a \textit{coarse equivalence} if it is a coarsely surjective coarse embedding.
\end{definition}

Details on much of the following definitions and results in the rest of this subsection can be found in \cite{rosendal_coarse_2022}, with some brief discussion in \cite{domat_coarse_2023} and \cite{domat_generating_2023} as well.

\begin{theorem}[{\cite[Theorem 1.2]{rosendal_coarse_2022}}]\label{CBGeneratedQIType}
    Let $G$ be a CB generated Polish group with two word metrics $d_1$ and $d_2$ arising from CB generating sets. Then $\text{id}:(G,d_1)\to (G,d_2)$ is a quasi-isometry. 
\end{theorem}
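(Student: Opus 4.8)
The plan is to prove the slightly stronger statement that $\mathrm{id}\colon(G,d_1)\to(G,d_2)$ is \emph{bi-Lipschitz}; since a bi-Lipschitz bijection is automatically a quasi-isometry (multiplicative constants only, zero additive constant, surjective), this gives the theorem. Write $A_1,A_2$ for the two CB generating sets; replacing $A_i$ by $A_i\cup A_i^{-1}\cup\{e\}$ — which is still CB (finite unions and inverses of CB sets are CB) and still generates — I may assume each $A_i$ is symmetric and contains the identity, so that $d_i=d_{A_i}$ is a genuine left-invariant metric with $d_i(e,g)$ equal to the least $n$ with $g\in A_i^{\,n}$.

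The crux is the assertion that \emph{for any CB subset $B$ of $G$ and either generating set $A_i$, there is an $N$ with $B\subseteq A_i^{\,N}$.} Granting this, I would finish by a soft argument: apply it with $B=A_1$ and the generating set $A_2$ to get $N_1$ with $A_1\subseteq A_2^{\,N_1}$, i.e.\ $d_2(e,a)\le N_1$ for every $a\in A_1$. If $d_1(e,g)=n$, write $g=a_1\cdots a_n$ with $a_k\in A_1$; by left invariance of $d_2$ and the triangle inequality, $d_2(e,g)\le\sum_{k}d_2(e,a_k)\le N_1 n=N_1 d_1(e,g)$. Left-translating, $d_2(g,h)\le N_1 d_1(g,h)$ for all $g,h$, so $\mathrm{id}\colon(G,d_1)\to(G,d_2)$ is $N_1$-Lipschitz. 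Interchanging the roles of $A_1$ and $A_2$ produces an $N_2$ with $d_1\le N_2 d_2$, and then $\tfrac1{N_2}d_1\le d_2\le N_1 d_1$ is the desired bi-Lipschitz bound.

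So everything reduces to the crux, and this is the step that genuinely uses the Polish hypothesis, specifically completeness, via the Baire category theorem. The idea is first to show that some power $A_i^{\,N_0}$ is a neighborhood of the identity. Since $A_i$ generates, $G=\bigcup_{n\ge1}A_i^{\,n}=\bigcup_n\overline{A_i^{\,n}}$; as $G$ is Baire, some $\overline{A_i^{\,m}}$ is nonmeager, and being closed it has the Baire property, so by Pettis' lemma the (symmetric) set $\overline{A_i^{\,m}}$ has $\overline{A_i^{\,m}}\cdot\overline{A_i^{\,m}}\subseteq\overline{A_i^{\,2m}}$ containing an open identity neighborhood. To make the passage from $\overline{A_i^{\,2m}}$ back to an honest power $A_i^{\,N_0}$ clean, one uses the standard reduction — again resting on Baire category — that a CB-generated Polish group is in fact generated by an \emph{open} CB set, so that one may assume from the outset that each $A_i$ is open, hence itself an identity neighborhood. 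Once $A_i^{\,N_0}$ contains an open identity neighborhood $\mathcal V$, let $B$ be any CB set: Rosendal's criterion (condition (1) in the definition of CB), applied to this particular $\mathcal V$, yields a finite $\mathcal F\subseteq G$ and $k\ge1$ with $B\subseteq(\mathcal F\mathcal V)^k$; since $A_i$ generates and $\mathcal F$ is finite, $\mathcal F\subseteq A_i^{\,L}$ for some $L$, whence $\mathcal F\mathcal V\subseteq A_i^{\,L+N_0}$ and $B\subseteq A_i^{\,k(L+N_0)}$, establishing the crux.

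I expect the main obstacle to be precisely this Baire-category input: it is the only place the completeness of $G$ (and the restriction to CB generating sets rather than arbitrary ones) is used, and it is what prevents the large-scale geometry of a word metric from depending on the chosen CB generating set. Everything downstream — converting "$B\subseteq A_i^{\,N}$" into a Lipschitz bound for the identity map, and symmetrizing the two directions — is formal, using only left invariance of word metrics and the triangle inequality.
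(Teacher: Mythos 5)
You correctly identify the shape of the argument: use Baire category and Pettis' lemma to show some power of $A_i$ contains an identity neighborhood, then invoke Rosendal's criterion (condition (1) of the CB definition) to absorb any CB set into a power of $A_i$, and deduce a bi-Lipschitz bound by left invariance. That is the right idea. But the reduction you invoke to close the gap you yourself spot — ``one may assume from the outset that each $A_i$ is open'' — is circular. The theorem is a statement about the \emph{particular} word metrics $d_{A_1},d_{A_2}$ that the given generating sets produce. Replacing $A_i$ by an open CB generating set $A_i'$ gives a different word metric $d_{A_i'}$, and to know that $d_{A_i}$ and $d_{A_i'}$ are quasi-isometric you would already need the theorem. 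So the Baire/Pettis step as you run it only yields that $\overline{A_i^{\,2m}}$ has nonempty interior, and there is no way, from what you have written, to upgrade this to an honest power $A_i^{\,N_0}$ having nonempty interior. The containment $\overline{X}\subseteq X\mathcal V$ for open identity neighborhoods $\mathcal V$ lets you go one more step to $\mathcal V\subseteq A_i^{\,2m}\mathcal V$, but that never terminates.

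This gap is not cosmetic: without some regularity hypothesis on the $A_i$ your ``crux'' claim — every CB set lies in some $A_i^{\,N}$ — is actually false, and so is the theorem read for completely arbitrary CB generating sets. Take $G=(\mathbb Z/2)^{\mathbb N}$ (compact, hence every subset CB), let $H\le G$ be the dense subgroup of finitely supported sequences, and use Zorn to choose an $\mathbb F_2$-linear complement $T$ of $H$ in $G$. Then $A:=\{0\}\cup\{e_n:n\in\mathbb N\}\cup T$ is symmetric, contains $0$, is CB, and algebraically generates $G$; but every element of $A^{\,n}$ has $H$-component (in the decomposition $G=H\oplus T$) supported on at most $n$ coordinates, so $\bigcup_n A^{\,n}=G$ while $A^{\,n}\subsetneq G$ for every $n$. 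Thus $(G,d_A)$ has infinite diameter while $(G,d_G)$ has diameter $1$: the identity is not a quasi-isometry, and $B=G$ violates the crux for this $A$. The hypothesis that makes everything work — and that the one-line version of Rosendal's Theorem 1.2 quoted in the paper suppresses, while his full argument (around his Theorem 2.73) uses — is that the generating sets be analytic (or Borel, or open). Then $A_i^{\,m}$ itself, rather than its closure, is non-meager with the Baire property, Pettis gives an identity neighborhood directly inside $A_i^{\,2m}$, and the rest of your reduction to Lipschitz bounds is correct. So the right fix is not the wholesale replacement of $A_i$ by an open set, but to carry an analyticity assumption through the crux; your downstream bookkeeping is fine.
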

    
\begin{proposition}\label{CoarseEquivBothCB}
    If $G_1$ and $G_2$ are two coarsely equivalent groups, then $G_1$ is CB if and only if $G_2$ is CB.
\end{proposition}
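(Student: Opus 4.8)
The plan is to deduce this from the elementary fact that boundedness of a pseudo-metric space is a coarse invariant, together with the characterization of CB groups as those that are bounded in their canonical coarse structure. Concretely, I would work through condition (3) in the definition of CB: a subset $A\subseteq G$ is CB if and only if $\diam_d(A)<\infty$ for every compatible left-invariant pseudo-metric $d$ on $G$.

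First I would fix compatible left-invariant pseudo-metrics $d_1$ on $G_1$ and $d_2$ on $G_2$ through which the hypothesized coarse equivalence $f\colon (G_1,d_1)\to (G_2,d_2)$ is expressed, with coarsely Lipschitz gauge $\Phi_+$ and coarse-surjectivity constant $C$. Assuming $G_1$ is CB, condition (3) gives $\diam_{d_1}(G_1)=:D<\infty$. Since $f$ is coarsely Lipschitz and $\Phi_+$ is non-decreasing, $\diam_{d_2}(f(G_1))\le \Phi_+(D)$; since every point of $G_2$ lies within $d_2$-distance $C$ of $f(G_1)$, the triangle inequality yields $\diam_{d_2}(G_2)\le \Phi_+(D)+2C<\infty$. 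Running the same argument with a coarse inverse of $f$ (which is again coarsely Lipschitz and coarsely surjective) shows conversely that $G_2$ being CB forces $G_1$ to be CB, so the only remaining point is to pass from finite $d_2$-diameter back to the full CB condition for $G_2$.

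This last step is the only one requiring care, and is where the hypothesis that $G_1$ and $G_2$ are the sort of (locally CB Polish) groups for which the phrase ``coarsely equivalent'' is meaningful gets used: for such groups all compatible left-invariant pseudo-metrics inducing the coarse structure are mutually coarsely equivalent, so finite diameter in the particular metric $d_2$ propagates to finite diameter in every compatible left-invariant pseudo-metric, which is exactly condition (3) for $G_2$. Equivalently, one phrases the whole argument in Rosendal's canonical group coarse structure, in which ``$G$ is CB'' literally means ``$G$ is a bounded coarse space'' and the statement is simply that coarse equivalences preserve boundedness. I expect this bookkeeping about which pseudo-metrics are in play, rather than any genuine difficulty, to be the main thing to get right; the diameter estimates themselves are routine.
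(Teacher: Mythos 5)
The paper does not actually supply a proof of this proposition; it is stated with a remark ("holds more generally in the category of coarse spaces, though we do not need this generality"), so there is nothing to compare against line by line. Your argument is essentially correct and lands where the paper's remark points.

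Two small observations on how to tighten it. Your diameter estimate $\diam_{d_2}(G_2)\le \Phi_+(D)+2C$ is fine. The final step is where the wording is slightly loose: "all compatible left-invariant pseudo-metrics inducing the coarse structure are mutually coarsely equivalent" is not quite the relevant fact, and it doesn't by itself control arbitrary compatible left-invariant pseudo-metrics (e.g.\ bounded ones, which do not "induce the coarse structure" and need to be accounted for). The clean statement is simpler: if $d_2$ is \emph{coarsely proper} (which exists exactly when $G_2$ is locally CB, by Proposition \ref{locallyCBCoarseType}), then by the very definition of coarsely proper, $B_R(\mathrm{id})$ is CB for every $R$; so $\diam_{d_2}(G_2)=D'<\infty$ immediately gives $G_2=B_{D'}(\mathrm{id})$ is CB, and then condition (3) for CB yields finiteness in all compatible left-invariant pseudo-metrics for free. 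So you should invoke coarse properness of $d_2$ directly rather than an equivalence-of-metrics claim. Your closing sentence — phrasing everything in Rosendal's canonical group coarse structure, where CB literally means bounded and the result is just "coarse equivalences preserve boundedness" — is both the cleanest version and the most general (it does not require local CB to even formulate), and it is precisely the route the paper's remark gestures at.
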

We remark that the previous proposition holds more generally in the category of coarse spaces, though we do not need this generality.

\begin{proposition}[{\cite[Proposition 4.33]{rosendal_coarse_2022}}]\label{CBKernelCoarseEquiv}
    Let $K$ be a closed normal subgroup of a Polish group $G$. Suppose $K$ is CB in $G$. Then the quotient map $\pi:G \to G/K$ is a coarse equivalence. 
\end{proposition}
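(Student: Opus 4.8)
The plan is to verify directly the three conditions in the definition of a coarse equivalence for the surjective, continuous, open homomorphism $\pi:G\to G/K$. I would work with the canonical left-invariant coarse structure on each group, or equivalently — since $G$ is Polish, hence metrizable — fix a compatible left-invariant metric $d$ on $G$ (Birkhoff--Kakutani) and the induced pseudo-metric $\bar d$ on $G/K$ given by $\bar d(xK,yK)=\inf_{k\in K} d(x,yk)$; here $\bar d$ is an honest metric because $K$ is closed, and it is compatible with the quotient topology. Coarse surjectivity of $\pi$ is immediate with constant $0$ since $\pi$ is onto. Coarse Lipschitzness is also easy: in metric terms $\bar d(\pi(x),\pi(y))\le d(x,y)$ by definition, so $\pi$ is $1$-Lipschitz; in the language of coarsely bounded sets, this is the statement that a continuous open surjective homomorphism carries CB sets to CB sets, which follows from Rosendal's criterion (if $A\subseteq(\mathcal F\mathcal V)^n$ then $\pi(A)\subseteq(\pi(\mathcal F)\pi(\mathcal V))^n$, and $\pi(\mathcal V)$ is a neighborhood of the identity in $G/K$ because $\pi$ is open).

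The substance is the coarsely expanding (equivalently, coarsely proper) estimate, and this is precisely where the hypothesis that $K$ is CB is used. In metric form: let $R=\diam_d(K)$, which is finite because $K$ is coarsely bounded — this is exactly condition (3) in the definition of CB applied to the compatible left-invariant metric $d$. Given $x,y\in G$, pick $k\in K$ with $d(x,yk)\le \bar d(\pi(x),\pi(y))+1$; then by left-invariance $d(yk,y)=d(k,e)\le R$, so $d(x,y)\le \bar d(\pi(x),\pi(y))+1+R$. Thus $\Phi_-(t)=\max(0,t-1-R)$ witnesses coarse expansion, and $\pi$ is a coarse equivalence. If one prefers to stay with coarsely bounded sets: for $A\subseteq G$ with $\pi(A)$ CB in $G/K$ one has $A\subseteq\pi^{-1}(\pi(A))=AK$; given a neighborhood $\mathcal V$ of the identity in $G$, bound $\pi(A)\subseteq(\mathcal F'\,\pi(\mathcal V))^n$ with $\mathcal F'\subseteq G/K$ finite, lift $\mathcal F'$ to a finite $\mathcal F\subseteq G$ with $\pi(\mathcal F)=\mathcal F'$, check that $\pi^{-1}(\pi(A))\subseteq(\mathcal F\mathcal V)^n K$, and finally use that $K$ is itself CB, so $K\subseteq(\mathcal F''\mathcal V)^m$ for some finite $\mathcal F''$; combining these yields a Rosendal-criterion bound for $A$, so $A$ is CB.

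I expect the coarse-properness step to be the main obstacle. Making the lift through $\pi$ rigorous requires using openness of $\pi$ (so that neighborhoods of the identity in $G/K$ are exactly the images of neighborhoods of the identity in $G$), being careful that $\pi^{-1}$ does not commute with products of subsets, and — crucially — invoking coarse boundedness of the fiber $K$ to absorb the kernel; without that hypothesis the estimate fails. A secondary, routine technical point is to confirm that $\bar d$ is compatible with the quotient topology and that the coarse structure in play on each group is the intended (left-invariant) one, so that ``coarse equivalence'' carries its standard meaning; for Polish groups these checks are standard.
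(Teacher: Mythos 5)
The paper does not give a proof of this statement; it is cited verbatim from Rosendal, so there is no in-paper argument to compare against. Judged on its own merits, your second formulation — the one phrased entirely in terms of coarsely bounded sets — is correct and is, as far as I can tell, the standard argument. The key chain $\pi^{-1}(\pi(A)) = AK \subseteq (\mathcal F\mathcal V)^n K \subseteq (\mathcal F\mathcal V)^n(\mathcal F''\mathcal V)^m$ is exactly what absorbs the kernel using the hypothesis that $K$ is CB, and the openness of $\pi$ (to get a neighborhood basis of the identity in $G/K$ from images of neighborhoods in $G$) together with surjectivity handles bornologousness and coboundedness.

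The metric version, on the other hand, has a genuine gap. A compatible left-invariant metric $d$ produced by Birkhoff--Kakutani is \emph{not} in general coarsely proper; the standard construction gives a bounded metric, so $\diam_d(K) < \infty$ is automatic and carries no content, and even after passing to an unbounded compatible metric there is no guarantee that the $d$-induced coarse structure on $G$ agrees with the canonical left-coarse structure (whose bounded sets are the CB sets). So the opening claim that working with the canonical left-coarse structure is \emph{equivalent} to fixing such a $d$ is false in general. A coarsely proper compatible left-invariant metric exists precisely when $G$ is locally CB (compare Proposition \ref{locallyCBCoarseType}), and only then does the metric computation $d(x,y)\le \bar d(\pi(x),\pi(y))+1+R$ establish the coarse equivalence at the level the proposition asserts. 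Since the proposition is stated for arbitrary Polish $G$, the CB argument must be the primary one; the metric calculation should be presented as an alternative valid only under a local-CB hypothesis (which, for what it is worth, does hold in every application the paper makes of this result).
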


\begin{definition}
    A topological group $G$ has \textit{arbitrarily small subgroups} if for any neighborhood $V$ of the identity, there is a subgroup $H$ of $G$ with $H\subset V$.
\end{definition}

\begin{definition}
    A pseudo-metric $d$ on a Polish group $G$ is said to be \textit{coarsely proper} if for all $R\geq 0$, the metric ball $B_R(\text{id})$ is CB in $G$.
\end{definition}

We arrive at the following result of interest, which will be relevant in defining and comparing the coarse geometry of mapping class groups of graphs and their associated doubled handlebodies.

\begin{proposition}[{\cite[Proposition 2.25 and Proposition 2.27]{domat_coarse_2023}}]\label{locallyCBCoarseType}
    Let $G$ be a separable, metrizable, locally CB topological group with arbitrarily small subgroups. Then the following holds.
    \begin{enumerate}
        \item $G$ admits a continuous left-invariant and coarsely proper pseudo-metric.
        \item If $d$ and $d'$ are continuous, left-invariant, pseudo-metrics on $G$ and $d$ is coarsely proper, then the identity map $\text{Id}:(G,d)\to (G,d')$ is coarsely Lipschitz. In particular, if both $d$ and $d'$ are coarsely proper, then the identity map is a coarse equivalence.
    \end{enumerate}
\end{proposition}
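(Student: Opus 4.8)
The plan is to construct the pseudo-metric in part (1) by hand, and then to derive part (2) formally from the orbit characterization of CB sets. For part (1): since $G$ is metrizable, the Birkhoff--Kakutani theorem supplies a compatible left-invariant metric $\partial$ on $G$. Local CB-ness together with the arbitrarily-small-subgroups hypothesis furnishes an open CB \emph{subgroup} $V \le G$ — take an open CB neighborhood of the identity and replace it by an open subgroup it contains (subsets of CB sets are CB). Since $G$ is separable and $V$ is open there are only countably many left cosets of $V$; fix coset representatives $g_0 = 1, g_1, g_2, \dots$. Give each $v \in V$ the weight $w(v) = \partial(1,v)$ and each $g_k^{\pm 1}$ the weight $k+1$, and set
\[
d(g,h) \;=\; \inf\Bigl\{\, \textstyle\sum_{i=1}^{n} w(s_i) \;:\; g^{-1}h = s_1 s_2 \cdots s_n,\ \ s_i \in V \cup \{\, g_k^{\pm 1} : k \ge 0 \,\} \,\Bigr\}.
\]
This is a left-invariant pseudo-metric by construction (concatenation of words gives the triangle inequality; $d$ depends only on $g^{-1}h$), and it is continuous: a net $h_\alpha \to 1$ eventually enters the open set $V$, so $d(1, h_\alpha) \le w(h_\alpha) = \partial(1, h_\alpha) \to 0$, and continuity everywhere follows from left-invariance and the triangle inequality.

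The substantive point in part (1) is coarse properness: that the $d$-ball $B^d_R(1)$ about the identity is CB for every $R$. The mechanism is this: in a word $s_1 \cdots s_n$ of total weight $< R$, at most $R$ letters are coset representatives (each costs at least $1$) and none is a $g_k^{\pm 1}$ with $k \ge R$; and — crucially, because $V$ is a subgroup — every maximal run of consecutive letters lying in $V$ multiplies to a single element of $V$. Hence $B^d_R(1)$ lies in a product of at most $2\lceil R \rceil + 1$ copies of the CB set $V \cup F_R$, where $F_R = \{\, g_k^{\pm 1} : k < R \,\}$ is finite, and a finite product of CB sets is CB. The subgroup property of $V$ is exactly what prevents the short ``within-$V$'' moves from escaping a CB set, which is why the small-subgroups hypothesis enters; isolating the open CB subgroup from the hypotheses is the one step that takes genuine thought. (If $G$ were also assumed Polish, part (1) would instead be immediate from Rosendal's characterization of locally bounded Polish groups.)

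For part (2), fix continuous left-invariant pseudo-metrics $d$ and $d'$ with $d$ coarsely proper, and fix $R \ge 0$. Left translation is an isometric action of $G$ on $(G, d')$ (isometric because $d'$ is left-invariant; continuous because multiplication and $d'$ are continuous). Since $d$ is coarsely proper, $B^d_R(1)$ is CB, so by the defining property of CB sets, applied to this action, the orbit of the identity — which is $B^d_R(1)$ itself — has finite $d'$-diameter; call it $\Phi_+(R)$, a nondecreasing function of $R$ since the balls are nested. Then $d(g,h) \le R$ forces $g^{-1}h \in B^d_R(1)$, so $d'(g,h) = d'(1, g^{-1}h) \le \Phi_+(R)$; thus $\mathrm{Id} \colon (G,d) \to (G,d')$ is coarsely Lipschitz with gauge $\Phi_+$. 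If $d'$ is coarsely proper as well, running the argument with $d$ and $d'$ interchanged shows $\mathrm{Id}$ is coarsely Lipschitz both ways — hence a coarse embedding — and it is trivially coarsely surjective, so it is a coarse equivalence. Part (2) being formal, I expect the only real obstacle to be the coarse-properness verification, together with pinning down the open CB subgroup, in part (1).
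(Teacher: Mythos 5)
The paper cites this proposition from Domat--Hoganson--Kwak without reproving it, so there is no internal argument to compare against; the assessment below is of your proof on its own merits.

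Your construction in part (1) — a weighted word pseudo-metric built from an open CB subgroup $V$, countably many coset representatives, and the Birkhoff--Kakutani metric restricted to $V$ — is the standard Rosendal-style argument, and once the ingredients are in place every step you carry out is correct. Left-invariance, symmetry (using that $\partial$ is symmetric and $V$ is closed under inverses so $w(s^{-1})=w(s)$ for each generator), and the triangle inequality come from the word-metric formalism; continuity at the identity uses that $V$ is open; and coarse properness uses that $V$ is a subgroup, so consecutive $V$-letters collapse, together with the fact that only finitely many $g_k^{\pm1}$ can occur in a word of weight below $R$, so $B^d_R(1)$ lies in a product of boundedly many copies of the CB set $V\cup F_R$. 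Part (2) is a clean formal argument from the orbit characterization of CB sets, the gauge $\Phi_+$ is nondecreasing because the balls are nested, and interchanging $d$ and $d'$ gives the coarse equivalence when both are coarsely proper.

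The one step that deserves a genuine flag is ``take an open CB neighborhood of the identity and replace it by an open subgroup it contains.'' The paper's stated definition of ``arbitrarily small subgroups'' only asks for a subgroup $H\subset V$, not an open one, and taken literally that condition is vacuous (the trivial subgroup always qualifies); your argument genuinely needs $V$ to be open. Openness is what gives countably many cosets by separability, and what makes $d$ continuous at the identity — a non-open subgroup inside a CB neighborhood would serve neither purpose. The intended meaning of the hypothesis, and what actually holds in the applications to $\mathrm{Map}(\Gamma)$ and $\mathrm{Map}(M_{\Gamma})$ (where the $\mathcal{V}_K$ form a neighborhood basis of open subgroups), is that the small subgroups are open. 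You should state this explicitly as the hypothesis you are using rather than present it as a routine extraction from the definition as phrased; as written, the extraction does not go through.
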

In particular, a group as in Proposition \ref{locallyCBCoarseType} has a well defined class of pseudo-metrics, which are all coarsely equivalent to each other. We shall see that all the groups we are interested in satisfy the conditions of this proposition when they are locally CB.
\par 
We end with some discussion of asymptotic dimension. We begin with the definition.
\begin{definition}\label{AsymptotidDimensionDef}
    A pseudo-metric space $(X,d)$ has \textit{asymptotic dimension at most} $n$, denoted $\text{asdim}(X)\leq n$, if for every $R>0$ there is a cover $\mathcal{V}$ of $X$ by uniformly bounded sets so that every $R$ ball in $X$ intersects at most $n+1$ elements of $\mathcal{V}$. We say $\text{asdim}(X)=n$ if $\text{asdim}(X)\leq n$ and $\text{asdim}(X)\nleq n-1$.
\end{definition}

We will never work with the definition of asymptotic dimension directly. The main fact we need is the following.

\begin{lemma}[{\cite[Proposition 22]{bell_asymptotic_2008}}]\label{CoarseEquivSameAsDim}
  If $f:X\to Y$ is a coarse embedding, then $\text{asdim}(X)\leq \text{asdim}(Y)$. Thus, if $f$ is a coarse equivalence, then $\text{asdim}(X)=\text{asdim}(Y)$.  
\end{lemma}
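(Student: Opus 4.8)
The plan is to run the standard pull-back-of-covers argument. First I would dispose of the trivial case: if $\text{asdim}(Y)=\infty$ there is nothing to prove, so assume $\text{asdim}(Y)=n<\infty$. Let $\Phi_+$ and $\Phi_-$ denote the non-decreasing control functions provided by the hypothesis that $f$ is a coarse embedding, so that $\Phi_-(d_X(x_1,x_2))\le d_Y(f(x_1),f(x_2))\le \Phi_+(d_X(x_1,x_2))$ for all $x_1,x_2\in X$, with $\Phi_-(t)\to\infty$ as $t\to\infty$.

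Next, to verify $\text{asdim}(X)\le n$ I would fix a scale $R>0$ in $X$ and set $S:=\Phi_+(R)$. Applying the definition of $\text{asdim}(Y)\le n$ at scale $S$ yields a cover $\mathcal{U}$ of $Y$, all of whose members have diameter at most some constant $D$, such that every $S$-ball in $Y$ meets at most $n+1$ members of $\mathcal{U}$. I would then take $\mathcal{V}:=\{\, f^{-1}(U) : U\in\mathcal{U}\,\}$, which covers $X$ since $\mathcal{U}$ covers $f(X)$, and check its two defining properties. For uniform boundedness: if $x_1,x_2\in f^{-1}(U)$ then $d_Y(f(x_1),f(x_2))\le D$, hence $\Phi_-(d_X(x_1,x_2))\le D$; choosing $T$ with $\Phi_-(t)>D$ for all $t>T$ (possible since $\Phi_-\to\infty$) forces $d_X(x_1,x_2)\le T$, so every member of $\mathcal{V}$ has diameter at most $T$, independently of $U$. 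For bounded multiplicity at scale $R$: if $B_X(x,R)$ meets $f^{-1}(U)$, say at $x'$, then $f(x')\in U$ and $d_Y(f(x),f(x'))\le\Phi_+(R)=S$, so $U$ meets $B_Y(f(x),S)$; thus every member of $\mathcal{V}$ met by $B_X(x,R)$ has the form $f^{-1}(U)$ for some $U$ met by $B_Y(f(x),S)$, and distinct members of $\mathcal{V}$ arise from distinct such $U$'s, so $B_X(x,R)$ meets at most $n+1$ members of $\mathcal{V}$. Hence $\text{asdim}(X)\le n=\text{asdim}(Y)$.

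Finally, for the second assertion I would note that a coarse equivalence $f:X\to Y$ admits a quasi-inverse $g:Y\to X$, obtained by sending each $y\in Y$ to a point $x$ with $d_Y(f(x),y)$ at most the coarse-surjectivity constant of $f$; a routine check shows $g$ is again a coarse embedding, so applying the inequality just proved to $g$ gives $\text{asdim}(Y)\le\text{asdim}(X)$ and hence equality. I do not expect a genuine obstacle here, as the argument is entirely formal; the one thing to be careful about is the bookkeeping of the control functions — in particular matching the chosen scale $R$ in $X$ with the scale $S=\Phi_+(R)$ in $Y$, so that coarse expansion delivers uniform boundedness of the pulled-back cover and coarse Lipschitzness delivers the multiplicity bound — together with the standard verification that a quasi-inverse of a coarse equivalence is itself a coarse embedding.
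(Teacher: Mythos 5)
Your argument is correct; it is the standard pull-back-of-covers proof of this well-known fact. The paper itself gives no proof at all — it simply cites Bell--Dranishnikov, Proposition~22 — so there is nothing to compare against beyond noting that your write-up is essentially the argument one would find in that reference: pull back a uniformly bounded cover of $Y$ at scale $S=\Phi_+(R)$, use coarse expansion (the divergence of $\Phi_-$) to get uniform boundedness of the preimages, use coarse Lipschitzness to transfer the $R$-multiplicity bound, and for the equality statement pass to a coarse inverse and check it is again a coarse embedding. One small point worth making explicit in a final write-up: the bound ``distinct members of $\mathcal{V}$ arise from distinct $U$'s'' works because if $f^{-1}(U_1)\neq f^{-1}(U_2)$ then certainly $U_1\neq U_2$; the converse can fail, but that only makes the multiplicity count smaller, so the inequality goes the right way.
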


We note one final definition and result which we will use to produce quasi-isometries between groups.

\begin{definition}[{\cite[Definition 2.43]{rosendal_coarse_2022}}]\label{def:coarselyProperMap}
    A map $\phi:G\to H$ between topological groups is \textit{coarsely proper} if for any CB set $A\subset H$, $\phi^{-1}(A)$ is CB in $G$.
\end{definition}

We then have the following Milnor--Schwartz type lemma. Here we give a greatly simplified version of it in terms of Polish groups to keep our definitions to a minimum, but a more general version is given in \cite{rosendal_coarse_2022}.

\begin{lemma}[{\cite[Theorem 2.77]{rosendal_coarse_2022}}]\label{lem:milnorSchwartzRosendal}
    Suppose $G$ is a Polish group, and $H$ is a CB generated Polish group with word metric $d_H$ given by a CB generating set. Let $\phi:G \to H$ be a coarsely proper surjective continuous homomorphism. Then $G$ is CB generated and $\phi$ is a quasi-isometry between $(G,d_G)$ and $(H, d_H)$, where $d_G$ is any word metric on $G$ given by a CB generating set.
\end{lemma}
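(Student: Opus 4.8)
The plan is to recognize this as the special case of Rosendal's Milnor--Schwarz lemma \cite[Theorem 2.77]{rosendal_coarse_2022} corresponding to the isometric action of $G$ on the metric space $(H, d_H)$ given by $g \cdot h := \phi(g) h$, and then to record a short self-contained argument in this setting. First I would check the hypotheses for this action: it is by isometries because $d_H$ is left-invariant; it is continuous because $\phi$ and the multiplication of $H$ are continuous; it is cobounded --- indeed transitive --- because $\phi$ is surjective; and it is metrically proper because, for each $R \ge 0$, the set $\{ g \in G : d_H(g \cdot 1, 1) \le R\}$ equals $\phi^{-1}(B)$, where $B := \{h \in H : d_H(1,h)\le R\}$ is $\mathrm{CB}$ in $H$, so $\phi^{-1}(B)$ is $\mathrm{CB}$ in $G$ since $\phi$ is coarsely proper (Definition \ref{def:coarselyProperMap}).

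Next I would produce an explicit $\mathrm{CB}$ generating set of $G$. Let $A$ be the $\mathrm{CB}$ generating set of $H$ defining $d_H$; after replacing $A$ by $A \cup A^{-1} \cup \{1\}$ --- which alters $d_H$ only within its quasi-isometry type by Theorem \ref{CBGeneratedQIType} --- I may assume $A$ is symmetric and contains the identity. Set $S := \phi^{-1}(A)$. Then $S$ is $\mathrm{CB}$ in $G$ by coarse properness (and, taking $A$ to be a neighbourhood of the identity, $S$ is an open neighbourhood of the identity, since $\phi$ is continuous). It also generates $G$: one has $\ker \phi = \phi^{-1}(1) \subseteq S$, while $\phi(\langle S \rangle) = \langle \phi(S)\rangle = \langle A\rangle = H$ by surjectivity, and these two facts together force $\langle S\rangle = G$. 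Hence $G$ is $\mathrm{CB}$ generated, and by Theorem \ref{CBGeneratedQIType} it is enough to establish the quasi-isometry claim for the word metric $d_G$ determined by this particular $S$.

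Finally I would compare $d_G$ and $d_H$ through $\phi$. Since $\phi(S) \subseteq A$, applying $\phi$ to a shortest $S$-word for $g^{-1} g'$ yields $d_H(\phi(g), \phi(g')) \le d_G(g, g')$. For the reverse inequality, set $m := d_H(\phi(g), \phi(g'))$, write $\phi(g^{-1} g') = a_1 \cdots a_m$ with each $a_i \in A$, choose lifts $s_i \in \phi^{-1}(\{a_i\}) \subseteq S$ (possible because $\phi$ is surjective), and note that $(g^{-1}g')(s_1 \cdots s_m)^{-1} \in \ker \phi \subseteq S$; hence $g^{-1} g' \in S^{m+1}$ and $d_G(g, g') \le m + 1$. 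Thus $\phi$ is a $(1,1)$-quasi-isometric embedding, and being surjective it is a quasi-isometry. I expect the only nonformal point to be this last lifting step: it uses $\ker \phi \subseteq S$ in an essential way, and it is precisely what upgrades the conclusion from a coarse equivalence (which would already follow more softly from Proposition \ref{CBKernelCoarseEquiv}, once one observes that $\ker\phi$ is a closed $\mathrm{CB}$ normal subgroup of $G$) to an honest quasi-isometry with linear constants; everything else is a direct unwinding of the definitions together with Theorem \ref{CBGeneratedQIType}.
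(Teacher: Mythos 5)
The paper does not prove this lemma; it simply cites it as \cite[Theorem 2.77]{rosendal_coarse_2022}, so there is no internal proof to compare your argument against. Your proof is correct and self-contained: you verify the hypotheses of Rosendal's geometric Milnor--Schwartz statement for the natural action $g\cdot h=\phi(g)h$, but then bypass the abstract theorem entirely by an explicit lifting argument. The key points all check out: $S=\phi^{-1}(A)$ is CB because $\phi$ is coarsely proper; $S$ generates $G$ because $\ker\phi\subseteq S$ and $\phi(\langle S\rangle)=H$; applying $\phi$ to an $S$-word gives $d_H(\phi(g),\phi(g'))\le d_G(g,g')$; and lifting an $A$-word $a_1\cdots a_m$ for $\phi(g^{-1}g')$ to $s_1\cdots s_m$ with $s_i\in S$ and absorbing the remaining $\ker\phi$-error into one extra letter of $S$ gives $d_G(g,g')\le m+1$. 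Coupled with surjectivity and the well-definedness of the quasi-isometry type from Theorem~\ref{CBGeneratedQIType}, this yields the quasi-isometry. Your remark that the lifting step is what upgrades the soft coarse-equivalence conclusion of Proposition~\ref{CBKernelCoarseEquiv} to an honest $(1,1)$-quasi-isometry is exactly right, and the parenthetical about $S$ being an open neighbourhood of the identity is harmless but not needed for the statement as given.
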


\section{Mapping class groups}\label{sec:MCG}
\subsection{Mapping class groups of doubled handlebodies and graphs}
We now define the primary objects of interest. Fix a smooth structure on $M_{\Gamma}$. Let $\text{Diff}^+(M_{\Gamma})$ denote the group of orientation preserving diffeomorphisms of $M_{\Gamma}$. We equip it with the compact open topology.
\begin{definition}
    The \textit{mapping class group} of $M_{\Gamma}$, denoted $\text{Map}(M_{\Gamma})$ is $\text{Diff}^+(M_{\Gamma})/\sim$, where two diffeomorphisms are equivalent if there is an isotopy between them. We give $\text{Map}(M_{\Gamma})$ the quotient topology.
\end{definition}

We will occasionally work with doubled handlebodies with finitely many balls removed. The mapping class groups of these manifolds are defined in the same way, but we assume that the boundary is fixed by every map and every isotopy.
\par 
Recall that in dimension $3$, the group arising from quotienting the group of orientation preserving homeomorphisms $\text{Homeo}^+(M_{\Gamma})$ by isotopy is isomorphic to the mapping class group as defined above. We will make use of this when discussing the topology of $\text{Map}(M_{\Gamma})$. See the book of Moise for more on this \cite{moise_geometric_1977}, along with results of Hatcher \cite{hatcher_linearization_1980}\cite{hatcher_proof_1983}. We can thus write 
\begin{equation*}
    \text{Map}(M_{\Gamma})=\text{Homeo}^+(M_{\Gamma})/\text{Homeo}_0(M_{\Gamma})
\end{equation*}
where $\text{Homeo}_0(M_{\Gamma})$ is the path component of the identity.
\par 
 Given $K\subset M_{\Gamma}$ a compact submanifold, we define
\begin{equation*}
    \mathcal{V}_K=\{[f]\in \text{Map}(M_{\Gamma})| \exists g\in [f] \text{ so that } g|_K=id\}.
\end{equation*}
In Subsection \ref{subsec:Topology} we show that the topology generated by the $\mathcal{V}_K$ sets is the same as the quotient topology coming from the compact open topology on $\text{Homeo}^+(M_{\Gamma})$. The proof does not depend on other results in this paper, and we will make use of the equivalence of the two topologies without reference in the rest of the paper.
\par
Typically we will drop the brackets on elements of the mapping class group, for simplicity. 
\par 
We will study the subgroup of \textit{sphere twists} of $\text{Map}(M_{\Gamma})$. Sphere twists are defined as follows. First fix a generator $\ell:[0,1] \to \text{SO}(3)$ of $\pi_1(\text{SO}(3), id)\cong \Z/2$. Take a smoothly embedded essential non-peripheral sphere $S$ of $M_{\Gamma}$. Fix a regular neighborhood $N \cong S\times [0,1]$ of $S$. The sphere twist $T_S$ is defined as
\begin{equation*}
    T_S(p) = \begin{cases}
        (\ell(t)\cdot x, t) & \text{if } p=(x,t)\in N \\ 
        id & p\notin N 
    \end{cases}
\end{equation*}
where $\ell(t)\cdot x$ denotes the action of $\ell(t)\in \text{SO}(3)$ on $x\in S\cong S^2 \subset \R^3$. We denote by $\text{Twists}(M_{\Gamma})$ the subgroup of $\text{Map}(M_{\Gamma})$ generated by compositions of the sphere twists on (potentially infinite) sphere systems of $M_{\Gamma}$. We equip $\text{Twists}(M_{\Gamma})$ with the subspace topology.
\par 
In \cite{algom-kfir_groups_2021}, the authors proposed a definition for the mapping class group of an infinite, locally finite graph $\Gamma$.

\begin{definition}
        A map $f:\Gamma \to \Gamma$ is a \textit{proper homotopy equivalence} if $f$ is proper and a homotopy equivalence, and there exists a proper map $g:\Gamma \to \Gamma$ so that $fg$ and $gf$ are properly homotopic to the identity. Let $\text{PHE}(\Gamma)$ be the group of proper homotopy equivalences of $\Gamma$, equipped with the compact open topology. 
        \par 
        The \textit{mapping class group} $\text{Map}(\Gamma)$ of $\Gamma$ is defined as
        $$\text{Map}(\Gamma)=\text{PHE}(\Gamma)/\text{proper homotopy}.$$
        We give $\text{Map}(\Gamma)$ the quotient topology.
\end{definition}
Note that not every homotopy equivalence of $\Gamma$ which is proper is a proper homotopy equivalence as defined above. See Example 4.1 of \cite{algom-kfir_groups_2021}.
\par 
The relationship between the groups $\text{Map}(M_{\Gamma})$ and $\text{Map}(\G)$ is well understood in the case of when $\Gamma=\Gamma_{n,r}$ is a finite rank graph of rank $n$ with $r$ rays. Namely, we have the following result, which follows easily from Proposition $1$ of \cite{hatcher_homology_2004}. This is the finite type version of Theorem \ref{SESMainTheorem}. Note that the result in \cite{hatcher_homology_2004} is not phrased in terms of mapping class groups as how we have defined them, but everything is easily translated. Also, their result is for the pure mapping class group, but it clearly extends to the full mapping class group.
\begin{proposition}\label{HatcherVSES}
    There is a short exact sequence
    $$1\to \mathrm{Twists}(M_{\Gamma_{n,r}})\to \mathrm{Map}(M_{\Gamma_{n,r}})\to \mathrm{Map}(\Gamma_{n,r})\to 1.$$
\end{proposition}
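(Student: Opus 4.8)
The plan is to deduce this from Proposition~1 of \cite{hatcher_homology_2004} together with a translation of conventions, and then to promote the resulting short exact sequence of pure mapping class groups to one of full mapping class groups. Hatcher--Vogtmann phrase their result for the compact manifold obtained from $\#_n(S^1\times S^2)$ by removing $r$ open balls, with diffeomorphisms taken relative to the boundary spheres and in the pure case where those spheres are not permuted. The first step is to set up the dictionary: capping each boundary sphere with a once--punctured ball converts their manifold into $M_{\Gamma_{n,r}}$, and I would check that this induces an isomorphism of pure mapping class groups. The only points to verify are that a diffeomorphism of a collar $S^2\times[0,1]$ which is the identity on one end is isotopic rel that end to the identity or to a single sphere twist (i.e.\ $\pi_0$ of that diffeomorphism group is $\mathbb{Z}/2$, generated by the twist), so that capping loses no classes except that a twist near a capped sphere is absorbed into $\mathrm{Twists}(M_{\Gamma_{n,r}})$; and that the extra freedom near a cap to ``push the puncture around'' realizes exactly the inner automorphisms of $\pi_1$, which on the graph side is precisely the difference between Hatcher--Vogtmann's relative automorphism group and $\mathrm{PMap}(\Gamma_{n,r})$.

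Granting the dictionary, I would identify the three terms of the pure sequence. For the kernel: Hatcher--Vogtmann's kernel is generated by sphere twists and, by Laudenbach's argument, is $(\mathbb{Z}/2)^n$ with basis the twists on the $n$ spheres of a reduced sphere system; it lies in $\mathrm{Twists}(M_{\Gamma_{n,r}})$ by definition, and conversely, since $M_{\Gamma_{n,r}}$ is of finite type, the twist on any sphere is a product of twists on a reduced system (using that $T_{S_1}T_{S_2}=T_S$ up to isotopy when $S_1,S_2,S$ cobound a pair of pants, and that isotopic spheres have equal twists), so the two subgroups coincide; compare Theorem~\ref{TwistGroupStructure}. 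For the quotient: a proper homotopy equivalence of the finite graph $\Gamma_{n,r}$ is determined up to proper homotopy by its induced automorphism of $\pi_1\cong F_n$ and its permutation of the finite set of ends (Theorem~\ref{ADMQGraphClassification}), and this identifies $\mathrm{PMap}(\Gamma_{n,r})$ with precisely the automorphism group occurring as Hatcher--Vogtmann's quotient. For the homomorphism: the map $\mathrm{Map}(M_{\Gamma_{n,r}})\to\mathrm{Map}(\Gamma_{n,r})$ built in Proposition~\ref{MCGHom} sends $f$ to $r\circ f\circ i$, where $r\colon M_{\Gamma}\to\Gamma$ is the retraction inducing an isomorphism on $\pi_1$; under the above identifications this is the map induced on $\pi_1$, which is exactly Hatcher--Vogtmann's map, so exactness of the pure sequence is their statement verbatim.

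Finally I would pass from the pure to the full mapping class groups. The groups $\mathrm{Map}(M_{\Gamma_{n,r}})$ and $\mathrm{Map}(\Gamma_{n,r})$ both surject onto the same group $Q$ of permutations of the $r$ ends --- all of $\mathrm{Sym}(r)$, since the $r$ rays (resp.\ the $r$ puncture--ends) are mutually interchangeable --- with kernels the respective pure subgroups, and the resulting square of four homomorphisms commutes because the graph map is induced from the manifold map. Since $\mathrm{Twists}(M_{\Gamma_{n,r}})\subseteq\mathrm{PMap}(M_{\Gamma_{n,r}})$, the snake lemma then shows that $\mathrm{Twists}(M_{\Gamma_{n,r}})$ is the kernel of the full map $\mathrm{Map}(M_{\Gamma_{n,r}})\to\mathrm{Map}(\Gamma_{n,r})$, and surjectivity of the full map follows from surjectivity in the pure case together with surjectivity onto $Q$. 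As $\Gamma_{n,r}$ is of finite type, all groups in sight are discrete, so there is no continuity to check.

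The step I expect to be the main obstacle is the dictionary of the first two paragraphs: aligning Hatcher--Vogtmann's boundary/framed, $\mathrm{Aut}$-type conventions with the punctured, proper--homotopy, $\mathrm{Out}$-type conventions used here, and confirming that the translation neither creates nor kills mapping classes --- in particular that no rotation or twist supported near a puncture survives as a nontrivial element outside $\mathrm{Twists}(M_{\Gamma_{n,r}})$, which is precisely why the definition restricts to orientation--preserving diffeomorphisms. Once these conventions are pinned down, the remainder is formal.
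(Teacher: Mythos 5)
Your proposal takes essentially the same approach the paper gestures at: cite Hatcher--Vogtmann's Proposition~1, translate between their rel-boundary conventions and this paper's punctured/proper-homotopy conventions, and then extend from the pure to the full mapping class group. The paper itself provides no details beyond those two sentences of instruction, so you are filling in a sketch, and the overall skeleton of your argument is sound; in particular the pure-to-full step at the end is correct (and the snake lemma is a bit of overkill --- one only needs that a mapping class in $\ker\Psi$ necessarily fixes the ends, hence lies in the pure subgroup where exactness is already known, together with a cofinality argument for surjectivity).

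Two imprecisions in the dictionary paragraph are worth flagging, since they are phrased in a way that, taken literally, is false. First, capping the $r$ boundary spheres of $M_{n,r}$ with once-punctured balls does not ``induce an isomorphism of pure mapping class groups'': the induced map $\mathrm{Map}(M_{n,r},\partial)\to\mathrm{PMap}(M_{\Gamma_{n,r}})$ is a surjection with nontrivial kernel, generated by twists on the boundary-parallel spheres --- exactly the $r$ (really $r-1$ independent) extra twists the paper mentions when comparing the punctured and bounded settings. You acknowledge this a line later (``a twist near a capped sphere is absorbed''), but the opening claim of an isomorphism contradicts it and should be replaced by a comparison of short exact sequences in which the left and middle vertical maps are these surjections. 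Second, ``Hatcher--Vogtmann's kernel is \dots $(\mathbb{Z}/2)^n$'' is only correct for the capped manifold $M_{\Gamma_{n,r}}$ (where it agrees with $\mathrm{Twists}(M_{\Gamma_{n,r}})$ by Theorem~\ref{TwistGroupStructure}); in their rel-boundary setting the twist subgroup has the larger rank $n+r-1$. Neither slip is fatal --- the rest of the argument correctly tracks which group each object lives in --- but the dictionary should be written as a map of exact sequences so these distinctions are visible.
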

\par 
\begin{remark}\label{Rmk:SameMap}
    The map from $\mathrm{Map}(M_{\Gamma_{n,r}})$ to $\mathrm{Map}(\Gamma_{n,r})$ is defined in the same way as the map $\Psi$ that we will define in Proposition \ref{MCGHom}. We will make use of this fact in the proof of that result, as well as in Lemma \ref{lem:rank1KernelStructure}.
\end{remark}
Note here that $\text{Twists}(M_{\Gamma_{n,r}})$ is generated by $n$ sphere twists on the core spheres of $M_{\Gamma_{n,r}}$, i.e. the unique sphere in each factor of $S^2\times S^1$ whose connect sum (with added punctures) makes up $M_{\Gamma_{n,r}}$. Further, if we replace the punctures of $M_{\Gamma_{n,r}}$ with sphere boundary components, we only need to add $r$ many more sphere twists, generated on spheres that are parallel to the boundary components (in fact, only $r-1$ of these are needed, see Figure 2.1 in \cite{hatcher_stabilization_2010} and the discussion before it). We also remark that it is possible to obtain this result from the methods of this paper, via a simplified argument following the proof of Theorem \ref{SESMainTheorem} (though we utilize Proposition \ref{HatcherVSES} in our proofs).
\par 
It is a consequence of work of Algom-Kfir--Bestvina that the topology on $\text{Map}(\Gamma)$ is generated by the following basis at the identity \cite{algom-kfir_groups_2021}. Here $K$ denotes a compact subgraph of $\Gamma$.\begin{equation*}
\mathcal{V}_K=\{[f]\in \text{Map}(\Gamma)\ | \ \exists g\in [f] \text{ satisfying conditions (1)-(4)}\}
\end{equation*}
where here, conditions (1)--(4) are
\begin{enumerate}[label = (\arabic*)]
    \item $g=\text{id}$ on $K$,
    \item $g$ preserves the complementary components of $K$,
    \item there is a representative $k\in [f]^{-1}$ so that $k$ also satisfies (1) and (2),
    \item there are proper homotopies of $gk$ and $kg$ to the identity map that are stationary on $K$ and preserve the complementary components of $K$.
\end{enumerate}
We use the notation $\mathcal{V}_K$ for subgroups in both $\text{Map}(M_{\Gamma})$ and in $\text{Map}(\Gamma)$, but it should always be clear in context which one meant.
\par 
The primary goal of this paper is to establish a relationship between the mapping class groups of $\Gamma$ and its doubled handlebody $M_{\Gamma}$. To understand this relationship, we first need to better understand the mapping class group of $M_{\Gamma}$. We have the following result, analogous to the classification statement of Richards for surfaces \cite{richards_classification_1963}.
\begin{proposition}\label{RichardsHandlebody}
    Given an isomorphism $$\phi:(r, E(M_{\Gamma}), E_{\ell}(M_{\Gamma}))\to (r, E(M_{\Gamma'}), E_{\ell}(M_{\Gamma'})),$$ there is a homeomorphism $f:M_{\Gamma}\to M_{\Gamma'}$ which induces the map $\phi$.
\end{proposition}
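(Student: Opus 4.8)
The plan is to deduce this by a Richards-style exhaustion argument, with the graph classification (Theorem \ref{ADMQGraphClassification}) serving as the combinatorial shadow of the homeomorphism we want to build. First, by Lemma \ref{EndsHomeo} the characteristic triple of $M_\Gamma$ is canonically identified with that of $\Gamma$, and likewise for $M_{\Gamma'}$, so giving $\phi$ is the same as giving an isomorphism of the characteristic triples of $\Gamma$ and $\Gamma'$. The local input I will use is that a compact connected union of pieces of $M_\Gamma$ is homeomorphic to $M_{n,s}$, where $n$ is its rank (the sum of the ranks of its pieces plus the first Betti number of the graph recording how they are glued) and $s$ is its number of boundary spheres, and that $n$ and $s$ are complete invariants of such a manifold by the prime decomposition theorem. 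In particular every compact piece union has finite rank, so infinite rank is visible only through $E_\ell$.

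Next, fix compact exhaustions $M_\Gamma=\bigcup_i K_i$ and $M_{\Gamma'}=\bigcup_i K_i'$ by connected unions of pieces (Fact \ref{CompactExhaustion}), with $K_i\subset\operatorname{int}K_{i+1}$; after enlarging we may assume every component of $M_\Gamma\setminus K_i$ (resp. $M_{\Gamma'}\setminus K_i'$) is noncompact. Then each $\partial K_i$ is a finite disjoint union of spheres, $\pi_0(M_\Gamma\setminus K_i)$ is finite, the complementary components determine a clopen partition of $E(M_\Gamma)$, and the inverse limit of these partitions recovers $E(M_\Gamma)$ with its topology. The heart of the proof is to interleave and further subdivide the two exhaustions (passing to subsequences on both sides and inserting intermediate piece-unions) so as to obtain bijections $\beta_i\colon\pi_0(M_\Gamma\setminus K_i)\to\pi_0(M_{\Gamma'}\setminus K_i')$ that are compatible with the inclusion-induced maps on $\pi_0$, induce $\phi$ on the inverse limits, match each complementary component with one of the same rank, and are arranged so that each connected component of the collar $K_{i+1}\setminus\operatorname{int}K_i$ is matched with a component of $K_{i+1}'\setminus\operatorname{int}K_i'$ of equal rank and with the same number of spheres on either side. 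Here one uses that a finite-rank complementary component, cut deep enough, has only rank-$0$ tails (since any end not in $E_\ell$ has a rank-$0$ neighborhood basis), so such components are eventually determined by their clopen sets of ends together with a global rank count, while an infinite-rank component necessarily contains an end of $E_\ell$, which $\phi$ respects. This bookkeeping — subdividing the exhaustions finely enough that the finite rank is absorbed into the collars at matching rates on the two sides while the $\beta_i$ still converge to $\phi$ — is exactly the delicate point of Richards' surface argument, and it is the main obstacle here as well.

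Finally, construct the homeomorphism inductively. Given an orientation-preserving homeomorphism $f_i\colon K_i\to K_i'$ carrying each boundary sphere of $K_i$ to the boundary sphere of $K_i'$ that cuts off the $\beta_i$-matched complementary component, extend over a collar block $B\subset K_{i+1}\setminus\operatorname{int}K_i$: we have $B\cong M_{n,s}$ and its matched block $B'\cong M_{n,s}$ with the same $n,s$ and a prescribed identification of the $\partial K_i$-side boundary spheres, and since every homeomorphism of a subunion of the boundary spheres of $M_{n,s}$ extends to an orientation-preserving homeomorphism of $M_{n,s}$ realizing any prescribed permutation of the remaining boundary spheres (orientations staying consistent because the gluings defining the doubled handlebodies are orientation-reversing), we obtain $f_{i+1}$ extending $f_i$. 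Set $f=\varinjlim f_i\colon M_\Gamma\to M_{\Gamma'}$, an orientation-preserving homeomorphism. Because the $\beta_i$ converge to $\phi$, for each end $e=[U_1\supset U_2\supset\cdots]$ (with $U_i$ a complementary component of $K_i$) the sequence $\beta_i(U_i)$ represents $\phi(e)$, so $E(f)=\phi$; this automatically matches $E_\ell$ and ranks, and hence $f$ induces $\phi$, as required.
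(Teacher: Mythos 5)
Your proposal is correct in spirit but takes a genuinely different route from the paper. The paper does \emph{not} redo the exhaustion bookkeeping on $M_\Gamma$ and $M_{\Gamma'}$: instead it invokes Richards' classification theorem for the surfaces $\Sigma, \Sigma'$ (the common boundary of the two handlebodies in each doubled handlebody), observing that Richards' construction of compact exhaustions can be carried out using curves that bound discs in the handlebodies. Those curves then double to spheres in $M_\Gamma$ and $M_{\Gamma'}$, and the surface homeomorphism produced by Richards can be promoted step by step to a homeomorphism of the $3$-manifolds by permuting boundary spheres of the compact pieces. In other words, the paper gets the hard bookkeeping for free as a black box, and the only new verification is that Richards' choices of exhaustion curves can be made compatible with the handlebody structure. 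You instead attempt to reprove the analogue of Richards' theorem directly in the $3$-manifold setting: exhaust $M_\Gamma$ and $M_{\Gamma'}$ by unions of pieces, interleave and subdivide the exhaustions to match ranks and clopen end-partitions, and extend homeomorphisms across collar blocks homeomorphic to $M_{n,s}$. This is coherent and should work — $M_{n,s}$ is determined by $(n,s)$ just as compact surfaces with boundary are determined by $(g,b)$, and rank plays exactly the role of genus — but it asks you to actually carry out the delicate inductive matching rather than cite it. You flag this yourself as ``the main obstacle,'' and indeed that paragraph is where the real content lives; in a finished proof you would either have to spell it out or cite the corresponding steps of Richards, at which point the paper's shortcut of working through $\Sigma$ becomes the cleaner route.

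One small imprecision to fix: ``every homeomorphism of a subunion of the boundary spheres of $M_{n,s}$ extends to an orientation-preserving homeomorphism of $M_{n,s}$'' is too strong as stated — the prescribed boundary maps must be orientation-compatible with a fixed orientation of $M_{n,s}$, or you would be forced into an orientation-reversing extension. In your induction the boundary maps are restrictions of the orientation-preserving $f_i$, so this is automatic in context, but the general claim should be qualified. The parenthetical attributing consistency to the orientation-reversing gluings in Definition \ref{def:DoubledHandlebody} is not really the relevant point; what matters is that $M_\Gamma$, $M_{\Gamma'}$ are oriented and $f_i$ is orientation-preserving, so the boundary restrictions respect boundary orientations.
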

\begin{proof}
    Let $\Sigma$ denote the surface corresponding to $\Gamma$ (as defined after Definition \ref{def:DoubledHandlebody}), and $\Sigma'$ the surface associated to $\Gamma'$. The proof of this is essentially exactly the same as Richards' proof, and we will not repeat it here \cite{richards_classification_1963}. The primary idea to note is that the proof uses pairs of compact exhaustions in $\Sigma$ and $\Sigma'$, which can be defined by their boundary components. These boundary components are curves in $\Sigma$ and $\Sigma'$ which we may assume bound discs inside the handlebodies associated to $\Sigma$ and $\Sigma'$. Such a choice of boundary components is possible due to the inductive construction in \cite{richards_classification_1963}. Indeed, the construction of this pair of compact exhaustions depends on a choice of two arbitrarily chosen compact exhaustions (in the notation of \cite{richards_classification_1963}, these are the compact exhaustions $\{B_i\}$ and $\{B_i'\}$), along with choices of curves which divide certain compact surfaces up in prescribed ways (these curves correspond to the various applications of Lemma (C) in \cite{richards_classification_1963}). For both the boundary components of the arbitrarily chosen compact exhaustions and the curves chosen when applying Lemma (C), we may assume they all bound discs in the associated handlebodies.
    \par 
    Such curves then give rise to spheres in $M_{\Gamma}$ and $M_{\Gamma'}$ by bounding them off by discs in both of the pairs of handlebodies making up $M_{\Gamma}$ and $M_{\Gamma'}$. Given these sequences of compact exhaustions of $M_{\Gamma}$ and $M_{\Gamma'}$, we can define sequences of homeomorphisms as Richards does for these compact doubled handlebodies. Indeed, this is possible as the boundary components of a doubled handlebody can be permuted in any way desired by homeomorphisms, so in particular this sequence of homeomorphisms can be chosen so that the sphere corresponding to a given boundary curve is sent to the sphere corresponding to the image of the boundary curve under the homeomorphism constructed in \cite{richards_classification_1963} for $\Sigma$ and $\Sigma'$. This implies that we can choose $f$ inducing the map $\phi$.
\end{proof}

We note that this along with Lemma \ref{EndsHomeo} implies that given any two graphs $\Gamma$ and $\Gamma'$ which are proper homotopy equivalent, their doubled handlebodies $M_{\Gamma}$ and $M_{\Gamma'}$ are homeomorphic (even if the graphs are not assumed to be in standard form).
    \par 
We define the \textit{pure mapping class group} of $\Gamma$, denoted $\text{PMap}(\Gamma)\leq\text{Map}(\Gamma)$, to be the subgroup of elements acting trivially on $E(\Gamma)$. Similarly, the pure mapping class group of $M_{\Gamma}$, denoted $\text{PMap}(M_{\Gamma})\leq \text{Map}(M_{\Gamma})$, is the subgroup of elements acting trivially on $E(M_{\Gamma})$. We will also consider the subgroups of these groups which can be realized by a compactly supported homeomorphism or proper homotopy equivalence, which we will denote by $\text{Map}_c(M_{\Gamma})\leq \text{Map}(M_{\Gamma})$ and $\text{Map}_c(\Gamma)\leq \text{Map}(\Gamma)$.

\subsection{Constructing the homomorphism}\label{subsec:ConstructHom}
In Proposition \ref{MCGHom} we will produce a surjective homomorphism from $\text{Map}(M_{\Gamma})$ to $\text{Map}(\Gamma)$. It will be induced by analyzing what an element of $\text{Map}(M_{\Gamma})$ does to the copy of $\Gamma$ in $M_{\Gamma}$. We give a couple of examples to motivate the idea. 
\par 
Before we do this, we note the following helpful picture for visualizing $M_{\Gamma}$. If one cuts along the core spheres of $M_{\Gamma}$, the resulting manifold is homeomorphic to $S^3$ with a collection of balls removed, along with a set of points homeomorphic to $E(M_{\Gamma})$. The sphere boundary components come in pairs, and if we glue them back together we get $M_{\Gamma}$ back. See Figure \ref{fig:doubledHandlebodyasS^3}.
\begin{figure}
    \centering
    \includegraphics[scale=.5]{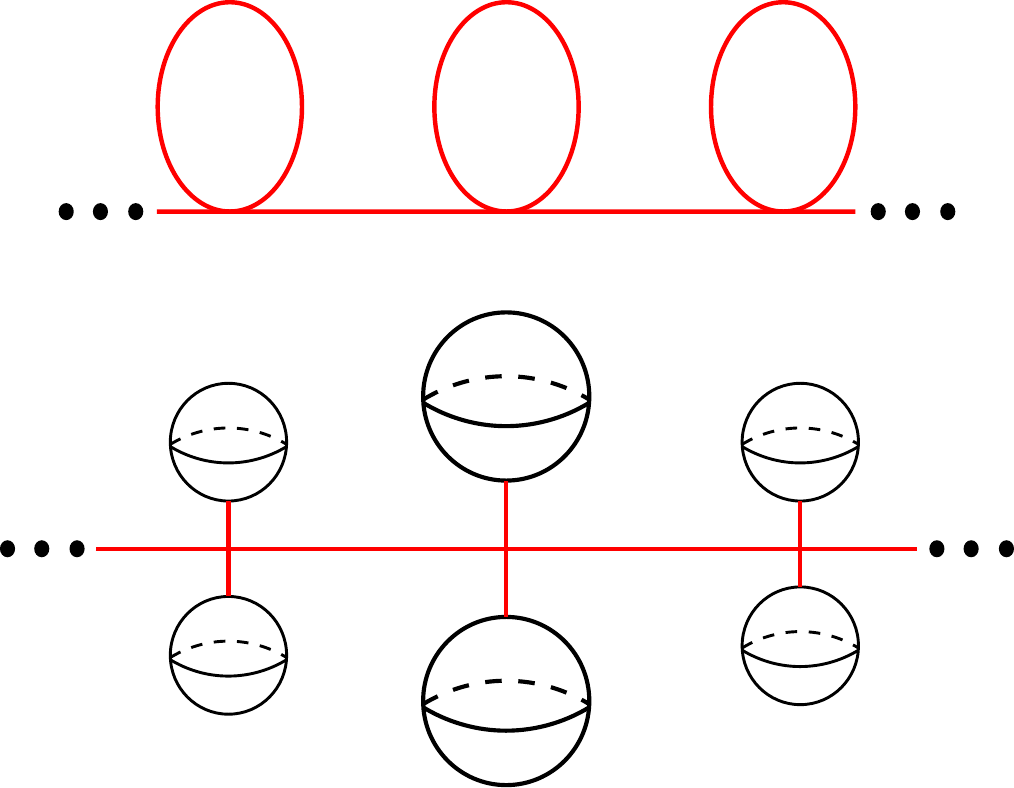}
    \caption{Here is a graph $\Gamma$ with its doubled handlebody. Pairs of vertically aligned spheres are identified, and these pairs accumulate to two removed points. The red object in the bottom picture is a copy of $\Gamma$. For simplicity in the figures, we not be too precise on how the sphere pairs are glued, although it should always be done in an orientation preserving way.}
    \label{fig:doubledHandlebodyasS^3}
\end{figure}
\par 
We consider first the word maps on graphs. These are defined in Subsection 3.3 of \cite{domat_coarse_2023}, see Figure 6 in particular. These maps are analogous to point push maps (though an actual point push map is homotopic to the identity). Fix an oriented interval $I$ around a point $p$ in the interior of an edge of $\Gamma$. A word map then monotonically maps $I$ around an immersed path $\ell$ whose starting and end points are at the endpoints of $I$, so that $\ell\cup I$ represents a word $w$ in the fundamental group at the initial point of $I$. This can be realized by a "sphere push" in $M_{\Gamma}$ around $w$. These can be visualized as maps which take the sphere 
$r^{-1}(p)$ and drags it along an embedded loop whose $r$ image is a copy of the closed loop $\ell\cup I$ based at $p$. See Figure \ref{fig:spherePush}.
\begin{figure}
    \centering
    \includegraphics[scale=.6]{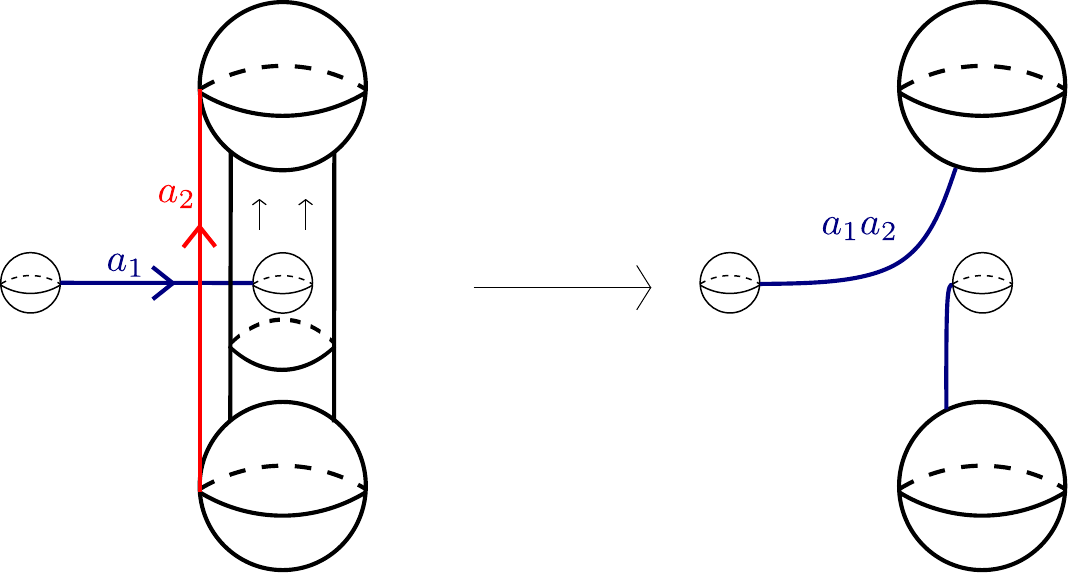}
    \caption{The sphere push pictured drags the sphere once around the solid torus via a map that tapers off to the identity near the boundary of the torus (drawn as a cylinder on the left). The loop labeled $a_1$ on the left gets mapped to $a_1a_2$ on the right. }
    \label{fig:spherePush}
\end{figure}
\par 
Another important type of map is loop shifts. See Subsection 3.4 of \cite{domat_coarse_2023}. Let us suppose that $\Gamma$ is in standard form. Given an embedded copy $\Lambda$ of the graph pictured in Figure \ref{fig:doubledHandlebodyEx} in $\Gamma$, we can take a small neighborhood of it, and shift the loops $\Lambda$ one spot to the right, taking points in the neighborhood of $\Lambda$ along with to obtain a continuous map. We can realize these in $M_{\Gamma}$ as "cylinder shifts". Namely, we take a cylinder in $M_{\Gamma}$ which cuts off the neighborhood of $\Lambda$, and inside of it we shift the sphere pairs in such a way that it tapers off to the identity as one approaches the cylinder. See Figure \ref{fig:cylinderShift}.
\begin{figure}
    \centering
    \includegraphics[scale=.6]{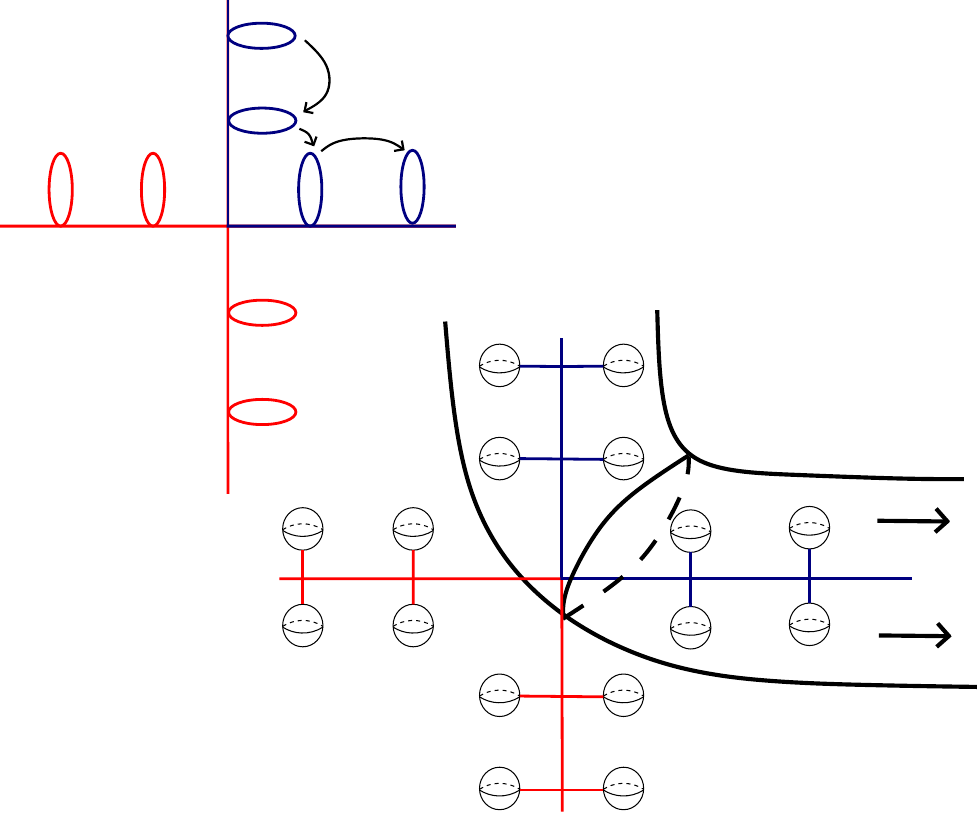}
    \caption{In the bottom right is a cylinder shift corresponding to a shift along the blue copy of $\Lambda$ in the graph on the top left, in the direction indicated by the arrows in the top left picture. In the bottom right, the sphere pairs are pushed along in the interior of the cylinder in the direction indicated by the arrows. }
    \label{fig:cylinderShift}
\end{figure}
\par 
We now construct the homomorphism from $\text{Map}(M_{\Gamma})$ to $\text{Map}(\Gamma)$ which appears in Theorem \ref{SESMainTheorem}.

\begin{proposition} \label{MCGHom}
There is a map $\Psi:\text{Map}(M_{\Gamma})\to \text{Map}(\Gamma)$ which restricts to a map $\Psi_P:\text{PMap}(M_{\Gamma})\to \text{PMap}(\Gamma)$ with the following properties.
\begin{enumerate}[label=(\alph*)]
    \item The maps $\Psi$ and $\Psi_P$ are continuous homomorphisms.
    \item The action of $f\in \text{Map}(M_{\Gamma})$ on $E(M_{\Gamma})$ is the same as the action of $\Psi(f)$ on $E(\Gamma)$ (with respect to the identifications of $E(M_{\Gamma})$ and $E(\Gamma)$ given in Remark \ref{Identifications}).
    \item $\Psi$ and $\Psi_P$ are surjective.
    \item $\Psi$ and $\Psi_P$ are open maps, and in particular are also quotient maps.
    \item The image of $f \in \text{Map}(M_{\Gamma})$ in $\text{Out}(\pi_1(\Gamma))$ is the same as the image of $\Psi(f)$.
    \item Given an end $\alpha \in E(\Gamma)\setminus E_{\ell}(\Gamma)$, the isomorphism induced by $f \in \text{Map}(M_{\Gamma})$ from $\pi_1(\Gamma, \alpha)$ to $\pi_1(\Gamma, f(\alpha))$ is the same as the isomorphism induced by $\Psi(f)$.
\end{enumerate}
\end{proposition}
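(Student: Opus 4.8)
The plan is to build $\Psi$ directly from the retraction $r : M_\Gamma \to \Gamma$. Given $f \in \mathrm{Map}(M_\Gamma)$ represented by an orientation-preserving homeomorphism $F$ of $M_\Gamma$, consider the composition $r \circ F \circ i : \Gamma \to \Gamma$, where $i : \Gamma \hookrightarrow M_\Gamma$ is the embedding from Subsection \ref{subsec:graphsandtheirdoubledhandlebodies}. First I would check that $r \circ F \circ i$ is a proper map: $i$ is a proper embedding, $F$ is a homeomorphism hence proper, and $r$ is proper by construction, so the composition is proper. Next I would check it is a homotopy equivalence, in fact a proper homotopy equivalence: $r$ induces an isomorphism on $\pi_1$ and, by Lemma \ref{EndsHomeo} and the lemma on $r$, an isomorphism of characteristic triples; $i$ does the same; $F$ is a homeomorphism. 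Composing, $r \circ F \circ i$ induces an isomorphism of characteristic triples of $\Gamma$, so by the classification theorem \ref{ADMQGraphClassification} it is properly homotopic to a proper homotopy equivalence, and in particular lies in $\mathrm{PHE}(\Gamma)$ after adjusting within its proper homotopy class (one can take the inverse to come from $F^{-1}$ the same way, and $r\circ F^{-1}\circ i$ composed with $r\circ F\circ i$ is properly homotopic to the identity using the deformation retraction of $M_\Gamma \setminus \Gamma'$ onto $\Gamma$, exactly as in the proof of Lemma \ref{FundGroupIso}). Define $\Psi(f) = [r \circ F \circ i]$.

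The main steps are then: (i) \emph{well-definedness} --- if $F_0, F_1$ are isotopic, the isotopy $F_t$ gives a proper homotopy $r \circ F_t \circ i$, so $\Psi$ descends to mapping class groups; combined with the previous paragraph this makes $\Psi$ a well-defined map $\mathrm{Map}(M_\Gamma) \to \mathrm{Map}(\Gamma)$. (ii) \emph{Homomorphism (a)} --- given $F, G$, I would show $r \circ (FG) \circ i$ is properly homotopic to $(r \circ F \circ i)(r \circ G \circ i)$; the point is that $G \circ i$ and $i \circ (r \circ G \circ i)$ are properly homotopic in $M_\Gamma$ (flow along the deformation retraction away from $\Gamma'$, pushing $G(\Gamma)$ off $\Gamma'$ first by a small homotopy, as in Lemma \ref{FundGroupIso}), so $F \circ G \circ i \simeq F \circ i \circ (r \circ G \circ i)$, and applying $r$ gives the claim; continuity I would get from the $\mathcal{V}_K$ description of both topologies, since $F|_K = \mathrm{id}$ on a union of pieces forces $r \circ F \circ i = \mathrm{id}$ on the corresponding compact subgraph. (iii) \emph{Behavior on ends (b)} --- immediate from Lemma \ref{EndsHomeo} and the $r$-lemma: all the maps $i$, $F$, $r$ induce the identifications of end spaces from Remark \ref{Identifications}, so the end action of $\Psi(f)$ is conjugate to that of $f$ under these identifications. (iv) \emph{Surjectivity (c)} --- given $\varphi \in \mathrm{PHE}(\Gamma)$, I would produce a homeomorphism of $M_\Gamma$ inducing it: realize $\varphi$ (up to proper homotopy) by a composition of word maps and loop shifts and permutations of ends, and lift each of these to the sphere pushes and cylinder shifts described before Proposition \ref{MCGHom} (and homeomorphisms of $M_\Gamma$ permuting ends, which exist by Proposition \ref{RichardsHandlebody}); alternatively, by \ref{ADMQGraphClassification} reduce to standard form and handle generators. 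The pure versions $\Psi_P$ follow because these constructions preserve the trivial end action. (v) \emph{Open/quotient (d)} --- show $\Psi(\mathcal{V}_K) \supseteq \mathcal{V}_{r(K')}$ for suitable compact $K'$, using that a proper homotopy equivalence fixing a subgraph lifts to a homeomorphism fixing a neighborhood; being a continuous open surjective homomorphism it is a quotient map. (vi) \emph{Outer action (e)} --- $r$ and $i$ are $\pi_1$-isomorphisms (after choosing a path to a basepoint), and the induced outer automorphism of $r \circ F \circ i$ equals the conjugate of that of $F$ under the identification $\pi_1(\Gamma) \cong \pi_1(M_\Gamma)$ of Remark \ref{Identifications}. (vii) \emph{Based-at-an-end action (f)} --- this is exactly Lemma \ref{FundGroupIso}: the bijection $\pi_1(M_\Gamma,\alpha) \to \pi_1(\Gamma,\alpha)$ is post-composition with $r$, and it intertwines the action of $F$ with that of $r \circ F \circ i$, since $F$ carries lines to lines and $r$ carries the resulting proper homotopy classes to those of $\Psi(f)$ applied to $r$ of the original line.

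I expect step (v), openness, to be the main obstacle --- more precisely, the lifting statement that a proper homotopy equivalence of $\Gamma$ restricting to the identity on a compact subgraph $K$ (and satisfying conditions (1)--(4) of the $\mathcal{V}_K$ definition) can be realized by a homeomorphism of $M_\Gamma$ that is the identity on the corresponding union of pieces. Surjectivity (iii) is closely related and has the same flavor; the cleanest route to both is probably to set up the realization map from proper homotopy equivalences to homeomorphisms carefully enough (respecting compact exhaustions by unions of pieces) that it is manifestly "compatible with supports," so that openness falls out. The homomorphism property (ii) is the other place requiring genuine care, since composition of graph maps only matches composition of $3$-manifold maps after a proper homotopy, and one must be sure the relevant homotopies are proper; but this is precisely the kind of flow-along-$r$ argument already carried out in Lemma \ref{FundGroupIso}, so I expect it to go through without new ideas.
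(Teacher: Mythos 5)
Your overall framework matches the paper exactly: $\Psi$ is defined by $[F]\mapsto [r\circ F\circ i]$, the homomorphism property is proved by flowing the image of $F\circ i$ off $\Gamma'$ onto $\Gamma$ (as in Lemma~\ref{FundGroupIso}), continuity is read off from the $\mathcal{V}_K$-bases, and (b), (e), (f) are immediate from functoriality of $E$ and the identifications of $\pi_1$. These parts are essentially the paper's argument.

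The genuine gap is in surjectivity (your step (iv)). You propose to ``realize $\varphi$ (up to proper homotopy) by a composition of word maps and loop shifts and permutations of ends'' and lift each of these. But $\mathrm{PMap}(\Gamma)$ is \emph{not} algebraically generated by such maps; by Theorem~B of \cite{domat_generating_2023} it is generated by the \emph{topological closure} $\overline{\mathrm{Map}_c(\Gamma)}$ of the compactly supported subgroup together with a family of commuting loop shifts. An arbitrary element of that closure is only a \emph{limit} of compactly supported maps, so one cannot simply lift a finite product of generators. The real work — which your sketch omits — is showing that when you lift a sequence $g_n\to\phi$ in $\overline{\mathrm{Map}_c(\Gamma)}$ to compactly supported elements $f_n\in\mathrm{Map}_c(M_\Gamma)$ via Proposition~\ref{HatcherVSES}, the sequence $f_n$ need not converge: there is a sphere-twist ambiguity in each lift. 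The paper resolves this by observing (via Remark~\ref{Rmk:SameMap}) that the ambiguity is exactly a finite product of sphere twists supported in a compact submanifold, so one can correct the $f_n$ by sphere twists to make them eventually agree on each $r^{-1}(K)$, forcing convergence to some $\eta$ with $\Psi(\eta)=\phi$ by continuity. Without handling this closure-and-convergence issue your surjectivity argument does not go through; relatedly, even the compactly supported piece is not generated by word maps alone (one needs all of $\mathrm{Out}(F_n)$ on finite pieces, which is why \ref{HatcherVSES} is invoked rather than an explicit list of lifts). Your ``alternatively, reduce to standard form via \ref{ADMQGraphClassification}'' does not address this, since ADMQ classifies graphs, not proper homotopy equivalences.

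Your step (v) (openness) is fine in outline and matches the paper's: the paper shows $\Psi(\mathcal{V}_K)=\mathcal{V}_{r(K)}$ by re-running the surjectivity argument on complementary components, treating boundary points of $r(K)$ as ends — so fixing (iv) also repairs (v). You correctly flagged (v) as delicate; the actual sticking point, though, is the closure issue in (iv), not realizability of single compactly supported maps.
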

\begin{proof}
\phantom{Starting line}
    \begin{enumerate}[label=(\alph*)]
        \item We let $\Psi$ and $\Psi_P$ be the maps induced by the map sending $f\in \text{Diff}^+(M_{\Gamma})$ to $r\circ f \circ i$, which we'll write as $rfi$. The map $rfi$ is a proper map from $\Gamma$ to itself, although it is not immediately clear that it is a proper homotopy equivalence. To show this, we will first show that, given another map $g\in \text{Diff}^+(M_{\Gamma})$, the maps $rfirgi$ and $rfgi$ are properly homotopic. Once this is known, it will follow that a proper homotopy inverse for $rfi$ is given by $rf^{-1}i$, and since $rfirgi$ and $rfgi$ are properly homotopic, it will follow that $\Psi$ is a actually a homomorphism from $\text{Map}(M_{\Gamma})$ to $\text{Map}(\Gamma)$. Further, it is clear from how $\Psi$ is defined that the map $\Psi_P$ will also have the same desired properties (strictly speaking we haven't showed that the image of $\Psi_P$ lies in $\text{PMap}(\Gamma)$, but this will follow from part (b)).
        \par 
        To give a proper homotopy from $rfirgi$ to $rfgi$, we first properly homotope $gi:\Gamma \to M_{\Gamma}$ to a map $g'$ whose image lies in $\Gamma$. It will then follow that $rfirgi$ is properly homotopic to $rfirg'=rfg'$ which is properly homotopic to $rfgi$. 
        \par 
        We build the proper homotopy from $gi$ to $g'$ as follows (compare to the proof of Lemma \ref{FundGroupIso}). Namely, first on the vertices of the graph $g(\Gamma)$ we homotope so that they lie in the interior of $B$. Then the new intersection of $g(\Gamma)$ with $B'$ consists of a collection of properly embedded arcs. We may homotope these arcs so that they are disjoint from $\Gamma'$, and then flow away from $\Gamma'$ into $\Gamma$ to obtain the map $g'$ as desired. This homotopy is proper as the collection of intersecting arcs in $B'$ is proper, as $g$ is a homeomorphism. This implies the desired result, and in particular we know that $\Psi$ and $\Psi_P$ are homomorphisms.
        \par 
         To see continuity, take a compact connected subgraph $K$ of $\Gamma$. For simplicity, assume $K$ is chosen so that $r^{-1}(K)$ is a submanifold of $M_{\Gamma}$. This can be done by assuming that $K$ is the image of a union of pieces. Consider $\Psi^{-1}(\mathcal{V}_K)$. As this is a subgroup of $\text{Map}(M_{\Gamma})$, it suffices to show that any sequence $\{f_n\}$ in $\text{Map}(M_{\Gamma})$ converging to $\text{id}$ is eventually in $\Psi^{-1}(\mathcal{V}_K)$. By assumption, given any compact submanifold of $M_{\Gamma}$, for all sufficiently large $n$, $f_n$ can be isotoped so that it is the identity on this submanifold. In particular, this is true for $r^{-1}(K)$. But then it is clear for such $n$ that $\Psi(f_n)\in \mathcal{V}_K$, proving the result. The same proof works for $\Psi_P$.
        \item This follows as $r$ and $i$ induce the identity map on the ends, and $E$ is a functor.
        \item Once we show that $\Psi_P$ is surjective, it follows from Proposition \ref{RichardsHandlebody} and part (b) that $\Psi$ is surjective as well. Theorem B of \cite{domat_generating_2023} implies that $\text{PMap}(\Gamma)$ is generated by closure of the compactly supported elements $\overline{\text{Map}_c(\Gamma)}$ and a subgroup generated by a collection of commuting loop shifts of $\Gamma$. To show that $\Psi_P$ is surjective, it suffices to show that all maps in either subgroup are in the image of $\Psi_P$.  
        \par 
        Let $\phi \in \overline{\text{Map}_c(\Gamma)}$. We fix a sequence of compactly supported elements $g_n$ converging to $\phi$. By Proposition \ref{HatcherVSES} and Remark \ref{Rmk:SameMap}, we can find compactly supported elements $f_n\in \text{Map}_c(M_{\Gamma})$ so that $\Psi(f_n)=g_n$, and the only ambiguity in the choice of $f_n$ is a finite collection of sphere twist in a compact submanifold of $M_{\Gamma}$. As $\{g_n\}$ converges to $\phi$, the elements of this sequence eventually coincide on any given compact subgraph $K$ of $\Gamma$. By possibly composing by sphere twists, we may then assume that the elements of $\{f_n\}$ eventually coincide on $r^{-1}(K)$, up to perhaps slightly expanding $K$ so that $r^{-1}(K)$ is a union of pieces. It follows that $\{f_n\}$ converges to some $\eta \in \text{Map}(M_{\Gamma})$. By continuity, $\Psi(\eta)=\phi$.
        \par 
        Given a loop shift of $\Gamma$, one can realize it explicitly as the image of a cylinder shift, which we discussed above. In Proposition 3.20 of \cite{domat_generating_2023}, they realize the collection of generating loop shifts as maps whose supports are in disjoint small neighborhoods of copies of $\Lambda$ (recall this is the graph pictured in Figure \ref{fig:doubledHandlebodyEx}) in $\Gamma'$, a locally finite graph properly homotopy equivalent to $\Gamma$. We can then realize every such loop shift as the image of a cylinder shift in $M_{\Gamma'}$, which is homeomorphic to $M_{\Gamma}$. By continuity of $\Psi_P$, it follows that we can realize an infinite product of commuting loop shifts as the image of a limit of products of cylinder shifts. Surjectivity of $\Psi_P$, and hence $\Psi$, follows.
        \item  If $K$ is a connected union of pieces of $M_{\Gamma}$, it follows that $\Psi(\mathcal{V}_K)=\mathcal{V}_{r(K)}\subset \text{Map}(\Gamma)$. Indeed, we can treat the boundary points of $r(K)$ as ends of the complementary components. Then just repeat the proof of (c) on this complementary component, looking at the subgroup which fixes the ends corresponding to the boundary points of $r(K)$. The same proof works for $\Psi_P$.
        \item As $i$ and $r$ induce natural isomorphisms between the fundamental groups of $\Gamma$ and $M_{\Gamma}$, the result follows.
        \item This follows from the arguments of Lemma \ref{FundGroupIso}, as every element of $\pi_1(M_{\Gamma}, e)$ is realized by a line inside $\Gamma$, and every proper line in $M_{\Gamma}$ is properly homotopic to its $r$ image in $\Gamma$.
    \end{enumerate}
\end{proof}


\section{Short exact sequence}\label{sec:SES}
In this section, we give a proof of the first part of Theorem \ref{SESMainTheorem}. The proof of the second part requires Theorem \ref{TwistGroupStructure}, and will be done in Section \ref{sec:SphereTwists}. We split the proof into two cases. The first case, where we assume $\Gamma$ has rank $0$ or rank at least $2$, will be handled in Lemmas \ref{pi_2Trivial} and \ref{KernelStructure}. The rank $1$ case will be handled in Lemma \ref{lem:rank1KernelStructure}. For simplicity, let us assume that $\Gamma$ is in standard form. We can also assume that $\Gamma$ has no vertices of valence $1$, so in particular the boundary components of every piece of $M_{\Gamma}$ are essential (though potentially peripheral).
\par 
Often in this section we will cut manifolds along spheres. If $M$ is a $3$-manifold and $S$ is a sphere system, we will use the notation $M\setminus S$ to mean the manifold obtained by removing $S$, but also having boundary components which in pairs correspond to the components of $S$, in the sense that gluing these pairs to each other results in $M$ again. 
\par 
Fix an infinite locally finite graph $\Gamma$, with the associated doubled handlebody $M_{\Gamma}$. We note the following result of Laudenbach \cite{laudenbach_topologie_1974}.
\begin{theorem}[{\cite[Lemma V.4.2]{laudenbach_topologie_1974}}]\label{LaudenbachSphereIsotopy}
    Suppose $i:\sqcup_{i=1}^n S^2\to M$ and $i':\sqcup_{i=1}^n S^2\to M$ are two homotopic sphere systems in a $3$-manifold $M$ so that no two distinct spheres in $\text{im}(i)$ are homotopic. Then there is an ambient isotopy of $M$ sending $i$ to $i'$. That is, there is an isotopy $H_t:M\times [0,1]\to M$ with $H_0=id$ and $H_1\circ i = i'$.
\end{theorem}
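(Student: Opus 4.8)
The plan is to reduce the statement to the single‑sphere case and then to a routine application of the isotopy extension theorem: since a path of embeddings $\sqcup_{k=1}^n S^2 \hookrightarrow M$ from $i$ to $i'$ automatically extends to an ambient isotopy of $M$ carrying $i$ to $i'$, it suffices to straighten $i'$ onto $i$ through embeddings. First I would treat the case $n=1$, i.e.\ that two homotopic smoothly embedded $2$‑spheres $S_0,S_1\subset M$ are ambient isotopic. Perturb so that $S_0\pitchfork S_1$ is a finite disjoint union of circles and induct on the number of circles. If a circle is present, choose one that is innermost on $S_1$, bounding a disk $D\subset S_1$ whose interior is disjoint from $S_0$; its boundary bounds a disk $D'\subset S_0$, and either isotoping $D'$ across the region cut off by $D\cup D'$ or surgering $S_0$ along $D$ (and applying the $n=1$ statement to the two auxiliary spheres produced) strictly decreases the number of intersection circles. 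This is exactly the innermost‑disk technology underlying the normal‑form results of Lemma~\ref{lem:NormalAndEquivalent}. When the count reaches zero, $S_0$ and $S_1$ are disjoint and homotopic, and I would invoke the $3$‑manifold recognition result — due to Laudenbach, via prime decomposition and the Poincaré conjecture — that the region of $M$ between two disjoint homotopic spheres is a product $S^2\times I$; its product structure then supplies the ambient isotopy.

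For the inductive step on $n$, assume the statement for systems with fewer than $n$ components. Put $i'(\sqcup S^2)$ transverse to $i(\sqcup S^2)$ and apply the same innermost‑disk surgeries, now between the two systems, to make their images disjoint; the hypothesis that no two distinct components of $i$ are homotopic guarantees that the surgered auxiliary spheres are each isotopic to a genuine component of one of the systems, so no homotopy type is lost along the way. Once $i(\sqcup S^2)\cap i'(\sqcup S^2)=\varnothing$, track each component $i'(S^2_j)$ through the homotopy: it is homotopic to exactly one component of $i$, and — because the components of $i$ are pairwise non‑homotopic — that component must be $i(S^2_j)$. By the $n=1$ case, $i'(S^2_j)$ is isotopic to $i(S^2_j)$, and sliding it across the product region between the two, one component at a time while keeping the remaining components disjoint, completes the isotopy from $i'$ to $i$.

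I expect the single genuine obstacle to be the two pieces of $3$‑manifold input in the base case: that a null‑homotopic embedded sphere bounds a ball, and that two disjoint homotopic spheres cobound an $S^2\times I$. These are the heart of Laudenbach's argument — they go through irreducibility and prime‑decomposition considerations together with the Poincaré conjecture — whereas everything else is bookkeeping with innermost disks, transversality, and isotopy extension. A secondary point worth checking is that the hypothesis forbidding distinct homotopic (equivalently, parallel) components is genuinely needed: if it fails, the homotopy $H$ can permute parallel spheres, and such a permutation need not be realized by any ambient homeomorphism of $M$ (for instance two parallel belt spheres whose two complementary sides are non‑homeomorphic), so the conclusion would fail without it.
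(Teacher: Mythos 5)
The paper offers no proof of this statement: it is quoted directly as \cite[Lemma V.4.2]{laudenbach_topologie_1974} and used as a black box, so there is no internal argument to compare against. On its own merits, your outline has the right shape (reduce to $n=1$, make the spheres disjoint, invoke a parallelism result, then induct on $n$), and your closing remark explaining why the pairwise non-homotopic hypothesis is needed is correct. But the step you dismiss as ``bookkeeping'' --- eliminating the circles of $S_0\cap S_1$ by innermost disks --- is in fact the core of Laudenbach's argument, and your treatment of it has a gap.

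Let $D\subset S_1$ be an innermost disk and $E_1,E_2$ the two disks on $S_0$ bounded by $\partial D$. Your first alternative, isotoping across ``the region cut off by $D\cup D'$,'' presupposes that for some $i$ the sphere $T_i = D\cup E_i$ bounds a ball on the appropriate side. Nothing in the hypotheses forces this: the only constraint coming from $[S_0]=[S_1]$ is a single relation among $[S_0],[T_1],[T_2]$ in $\pi_2(M)$, and in a reducible manifold such as a connected sum of several copies of $S^2\times S^1$, where $\pi_2$ is a large $\Z[\pi_1 M]$-module, both $T_1$ and $T_2$ can be essential. Your second alternative, ``surger $S_0$ along $D$ and apply the $n=1$ statement to the two auxiliary spheres,'' is not a recursion you can run: $T_1$ and $T_2$ need not be homotopic to $S_0$, to $S_1$, or to each other, so the $n=1$ statement applies to no pair among them, and the surgery by itself does not decrease $|S_0\cap S_1|$. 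Laudenbach's actual argument lifts to the universal cover, where homotopy classes of spheres become homology classes detected by the partition of ends a sphere induces, establishes the isotopy there, and descends equivariantly; none of that is innermost-disk housekeeping. Likewise ``two disjoint homotopic spheres cobound $S^2\times I$'' is a byproduct of the same covering-space machinery rather than of prime decomposition alone, and for nonseparating spheres it needs care since the complement of $S_0\cup S_1$ may even be connected, so one must speak of a chosen complementary piece.
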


We remark that there is a subtlety in Theorem \ref{LaudenbachSphereIsotopy}, in that it deals with homotopies and isotopies of \textit{embeddings} of spheres in $M$. This is as opposed to the definition of $\mathcal{S}(M_{\Gamma})$, which are isotopy classes of spheres considered as sets, instead of embeddings. As we will see, accounting for this difference is important in the proof of Theorem \ref{SESMainTheorem}(1).
\par 
We will apply Theorem \ref{LaudenbachSphereIsotopy}, along with the following lemma, to obtain Theorem \ref{SESMainTheorem} in the case when $\Gamma$ has rank $0$ or rank at least $2$. This lemma is due to Laudenbach in the case that $\Gamma$ is compact \cite{laudenbach_topologie_1974}. Our proof is based on the proof of the same result by Brendle--Broaddus--Putman. \cite{brendle_mapping_2023}. 
\begin{lemma}\label{pi_2Trivial}
    Suppose $\Gamma$ either has rank $0$ or rank at least $2$. Let $f:M_{\Gamma}\to M_{\Gamma}$ be an element of $\text{Diff}^+(M_{\Gamma})$, and fix a point $x_0\in \Gamma$. Suppose $[f]\in \text{ker}(\Psi)$ and $f_*:\pi_1(M_{\Gamma}, x_0)\to \pi_1(M_{\Gamma}, x_0)$ is the identity. Then $f$ acts trivially on $\pi_2(M_{\Gamma}, x_0)$.
\end{lemma}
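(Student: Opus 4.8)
The plan is to follow the Brendle--Broaddus--Putman strategy. Fix a reduced sphere system $\Sigma=\{S_j\}$ in $M_{\Gamma}$; with $\Gamma$ in standard form one may take the $S_j$ to be the core spheres of the pieces together with the spheres $r^{-1}(p_e)$ dual to the non-loop edges $e$, discarding any peripheral ones. Also fix an arc $\alpha_j$ from $x_0$ to each $S_j$. Since $\Sigma$ is reduced, the based classes $[(\alpha_j,S_j)]$ generate $\pi_2(M_{\Gamma},x_0)$ as a module over $\Z[\pi_1(M_{\Gamma},x_0)]$ (cutting along $\Sigma$ leaves simply connected pieces, each of whose second homology is carried by its boundary spheres, which are components of $\Sigma$). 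As $f_*$ is the identity on $\pi_1$ it is a morphism of this module structure, so it suffices to show $f_*[(\alpha_j,S_j)]=[(\alpha_j,S_j)]$ for every $j$, and the first step toward this is to show that the embedded sphere $f(S_j)$ is freely homotopic to $S_j$ for each $j$.

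For this I would use both hypotheses on $f$. Because $[f]\in\ker\Psi$, Proposition~\ref{MCGHom}(b) gives that $f$ acts trivially on $E(M_{\Gamma})$, so $f(S_j)$ induces the same partition of the ends as $S_j$. Because $f_*=\mathrm{id}$ on $\pi_1$, the sphere $f(S_j)$ induces the same splitting of $\pi_1(M_{\Gamma})$ as $S_j$ (up to conjugacy): a free product decomposition when $S_j$ is separating, and a decomposition $\pi_1(M_{\Gamma})\cong A\ast\Z$ with the $\Z$-factor generated by a fixed loop meeting $S_j$ once when $S_j$ is nonseparating. I claim this end-partition together with the induced $\pi_1$-splitting determines the free homotopy class of an essential sphere in $M_{\Gamma}$; for $\Gamma$ a tree this is exactly Lemma~\ref{lem:simConnectedSpherePartition}, and in general it is a Laudenbach-type statement, proved by cutting $M_{\Gamma}$ along a finite subsystem disjoint from $S_j$ to reduce to a compact piece. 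Granting this, $f(S_j)\simeq S_j$ for all $j$; since no two components of $\Sigma$ are homotopic, $f(\Sigma)$ and $\Sigma$ are then homotopic sphere systems (matching components up), so by Theorem~\ref{LaudenbachSphereIsotopy} some ambient isotopy of $M_{\Gamma}$ carries $f(\Sigma)$ to $\Sigma$. Composing $f$ with this isotopy (which changes neither $f_*$ on $\pi_1$ nor the action on $\pi_2$) we may assume $f(\Sigma)=\Sigma$, preserving each $S_j$.

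To finish, note that near $\Sigma$ the map $f$ agrees, up to an isotopy supported near $\Sigma$, with a product of sphere twists along components of $\Sigma$; as sphere twists act trivially on $\pi_2$ (seen by lifting to the universal cover, where each lift is a sphere twist on an $S^2\times I$), we may assume $f$ is the identity on a regular neighborhood of $\Sigma$. Then $f$ fixes each $S_j$ pointwise, so $f_*[(\alpha_j,S_j)]=[(f\alpha_j,S_j)]$, which differs from $[(\alpha_j,S_j)]$ by the action of the element $[f\alpha_j\cdot\bar\alpha_j]\in\pi_1(M_{\Gamma},x_0)$. Conjugating an arbitrary loop at the endpoint of $\alpha_j$ back to $x_0$ by $\alpha_j$ and using that $f_*$ is the identity on $\pi_1(M_{\Gamma},x_0)$ while $f$ is the identity near $S_j$, one checks that $[f\alpha_j\cdot\bar\alpha_j]$ is central in $\pi_1(M_{\Gamma})$; since $\pi_1(M_{\Gamma})$ is a free group of rank $\ne 1$ — this is precisely where the hypothesis that $\Gamma$ has rank $0$ or rank at least $2$ enters — its center is trivial, so $[f\alpha_j\cdot\bar\alpha_j]=1$ and $f_*[(\alpha_j,S_j)]=[(\alpha_j,S_j)]$. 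Hence $f_*$ is the identity on $\pi_2(M_{\Gamma},x_0)$. The step I expect to be the main obstacle is the classification claim in the second paragraph — that a sphere in the typically non-compact $M_{\Gamma}$ is pinned down up to homotopy by its end-partition together with its induced $\pi_1$-splitting; making this rigorous requires choosing the auxiliary cutting system compatibly with an exhaustion of $M_{\Gamma}$, and this is presumably where the bulk of the written proof goes.
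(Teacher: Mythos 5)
Your approach is genuinely different from the paper's. The paper proves this lemma purely by passing to the universal cover: it identifies $\pi_2(M_\Gamma)\cong H_2(\widetilde{M_\Gamma})\cong \varinjlim_K\widetilde{H}^0(\widetilde{M_\Gamma}\setminus K)$, then shows the lift $\widetilde f$ fixes every end of $\widetilde{M_\Gamma}$, using that $\widetilde r\widetilde f\widetilde i$ is properly homotopic to a covering transformation of the tree $\widetilde{\Gamma}$ which fixes all lifts of $x_0$ (since $f_*=\mathrm{id}$ on $\pi_1$); when $\mathrm{rk}(\Gamma)\ge 2$ a nontrivial covering transformation can fix at most two ends, so the covering transformation is trivial. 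This never touches any sphere system. Your argument instead fixes a reduced system $\Sigma$, pins down the free homotopy class of each $f(S_j)$, applies Theorem \ref{LaudenbachSphereIsotopy}, and then works with the $\mathbb{Z}[\pi_1]$-module structure on $\pi_2$. It is essentially the Brendle--Broaddus--Putman argument, which the paper deliberately replaces here, and for good reason.

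The real gap is in the sentence ``by Theorem~\ref{LaudenbachSphereIsotopy} some ambient isotopy of $M_\Gamma$ carries $f(\Sigma)$ to $\Sigma$.'' When $\Gamma$ is of infinite type the reduced system $\Sigma$ is infinite, and Theorem \ref{LaudenbachSphereIsotopy} is stated only for finite sphere systems. The statement you are invoking for an infinite system is exactly Conjecture \ref{conj:LaudenbachExtension}, which the paper explicitly leaves open: the difficulty is precisely that naively concatenating the finite-system isotopies need not give a well-defined (or continuous) global isotopy. This is not the step you flagged as your ``main obstacle''; the classification claim you flagged (end-partition plus $\pi_1$-splitting determines the sphere up to free homotopy) is plausible and essentially Laudenbach-type, and the paper handles a cousin of it via Lemma \ref{lem:simConnectedSpherePartition} in the universal cover. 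The infinite-Laudenbach step, however, is a genuine hole. Note also that the paper's Lemma \ref{KernelStructure} does perform the kind of careful sphere-by-sphere isotopy you want, but it \emph{takes Lemma \ref{pi_2Trivial} as input} (trivial action on $\pi_2$ is what makes each $f(S)$ homotopic to $S$), so your argument is in danger of running the logical dependency backward, or at minimum duplicating the delicate infinite induction of Lemma \ref{KernelStructure} inside the proof of Lemma \ref{pi_2Trivial}.

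Two smaller remarks. First, once you have arranged $f$ to be the identity on a regular neighborhood of $\Sigma$, the cleanest way to see $f_*[(\alpha_j,S_j)]=[(\alpha_j,S_j)]$ is to note that $\alpha_j$ breaks into segments each lying in a simply connected complementary piece with $f$ fixing the boundary of that piece, so $f\alpha_j\simeq\alpha_j$ rel endpoints and hence $g=[f\alpha_j\cdot\bar\alpha_j]=1$ directly. Your centrality argument as written is shaky: loops at the endpoint of $\alpha_j$ that lie where $f$ is the identity are nullhomotopic (a neighborhood of $S_j$ is simply connected), so the conjugation you describe does not produce a nontrivial commutation relation. Second, the simply-connected-piece argument does not actually need $\mathrm{rk}(\Gamma)\ne 1$, whereas the paper's covering-space argument does (a translation of $\widetilde{\Gamma}\cong\mathbb{R}$ fixes both ends). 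So if the infinite-Laudenbach gap could be filled, your route would likely give a uniform proof including the rank-$1$ case, which the paper has to handle separately in Lemma \ref{lem:rank1KernelStructure}. That is a genuine potential advantage, but it is contingent on resolving Conjecture \ref{conj:LaudenbachExtension}.
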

\begin{proof}
    We lift to the universal cover $\widetilde{\Gamma} \subset \widetilde{M_{\Gamma}}$ with covering map $\pi:\widetilde{M_{\Gamma}}\to M_{\Gamma}$ and with a fixed lift $\widetilde{x_0}$ of $x_0$. Let $\tilde{f}:\widetilde{M_{\Gamma}}\to \widetilde{M_{\Gamma}}$ be the unique lift of $f$ fixing $\widetilde{x_0}$. We also denote by $\widetilde{i}:\widetilde{\Gamma}\to \widetilde{M_{\Gamma}}$ and $\widetilde{r}:\widetilde{M_{\Gamma}}\to \widetilde{\Gamma}$ the inclusion and retraction maps so that $\pi \circ \widetilde{i} = i \circ \pi$ and $\pi \circ \widetilde{r} = r \circ \pi$.  To show that $f$ acts trivially on $\pi_2(M_{\Gamma}, x_0)$, it suffices to show that $\widetilde{f}$ acts trivially on $\pi_2(\widetilde{M_{\Gamma}}, \widetilde{x_0})\cong H_2(\widetilde{M_{\Gamma}})$, by the Hurewicz Theorem.
    \par 
    By Poincare duality and the reduced long exact sequence for relative cohomology, we have
    $$H_2(\widetilde{M_{\Gamma}})\cong H_c^1(\widetilde{M_{\Gamma}})=\varinjlim_{K} H^1(\widetilde{M_{\Gamma}}, \widetilde{M_{\Gamma}}\setminus K)\cong \varinjlim_{K} \tilde{H}^0(\widetilde{M_{\Gamma}}\setminus K).$$
    Here, the limits are taken over compact subsets of $\widetilde{M_{\Gamma}}$. The final map is induced by the boundary maps of the long exact sequence for relative cohomology with the pair $(\widetilde{M_{\Gamma}}, \widetilde{M_{\Gamma}}\setminus K)$, each of which is an isomorphism because $\pi_1(\widetilde{M_{\Gamma}})$ is trivial and $\widetilde{M_{\Gamma}}$ is connected. 
    \par 
    Elements of $\tilde{H}^0(\widetilde{M_{\Gamma}}\setminus K)$ can be interpreted as the locally constant functions $\kappa:\widetilde{M_{\Gamma}}\setminus K \to \Z$, quotiented by the constant functions. Fix such a $\kappa$ defined on $\widetilde{M_{\Gamma}}\setminus K$, and let $K'$ be a compact set containing $K\cup \tilde{f}(K)$ so that no component of $\widetilde{M_{\Gamma}}\setminus K'$ is bounded, and $K'=\widetilde{r}^{-1}(L)$ for some compact subgraph $L$ of $\widetilde{\Gamma}$. Then the image in $\tilde{H}^0(\widetilde{M_{\Gamma}}\setminus K)$ of the $\tilde{f}$ image of the element of $H_2(\widetilde{M_{\Gamma}})$ representing $\kappa$ is represented by
    $$\kappa \circ \tilde{f}^{-1}:\widetilde{M_{\Gamma}}\setminus K' \to \Z.$$
    It thus suffices to show that $\kappa = \kappa\circ \tilde{f}^{-1}$ on $\widetilde{M_{\Gamma}}\setminus K'$. To do this, we show that $\widetilde{f}$ fixes the ends of $\widetilde{M_{\Gamma}}$. Once we know this, then points that are far from $K'$ in some component $U$ of $\widetilde{M_{\Gamma}}\setminus K'$ will still lie in that component. It follows then that $\kappa = \kappa \circ \widetilde{f}^{-1}$ on $U$ (as $\kappa$ and $\kappa\circ\widetilde{f}^{-1}$ are both constant on $U$), and thus on $\widetilde{M_{\Gamma}}\setminus K'$.
    \par 
    If $\Gamma$ is rank $0$ then it is simply connected and we are done, as $rfi$ is properly homotopy equivalent to the identity, and thus $f$ acts trivially on the ends. Assume that $\Gamma$ has rank at least $2$. Then the map $\widetilde{r}\widetilde{f}\widetilde{i}$ is a lift of $rfi$, and the latter map is properly homotopic to the identity by the assumption that $[f]\in \text{ker}(\Psi)$. We can lift this to a proper homotopy from $\widetilde{r}\widetilde{f}\widetilde{i}$ to a covering transformation $\gamma:\widetilde{\Gamma}\to \widetilde{\Gamma}$. The map $\widetilde{r}\widetilde{f}\widetilde{i}$ fixes ends that have lifts of the basepoint $x_0$ accumulating to it, as such lifts are fixed by this map due to the assumption that the induced map of $f$ on $\pi_1(M_{\Gamma}, x_0)$ is trivial. As the rank of $\Gamma$ is at least $2$, there are infinitely many such ends. If $\gamma$ is nontrivial it cannot fix more than $2$ ends. This follows, as we can choose a metric on $\widetilde{\Gamma}$ so that $\gamma$ acts by isometries and is a hyperbolic element. If $\gamma$ fixed $3$ ends then it would have to fix the center of a geodesic triangle with end points at these ends. It follows that $\gamma=\text{id}$, so since $\widetilde{r}$ and $\widetilde{i}$ act trivially on the ends, so does $\widetilde{f}$ and we are done.
\end{proof}


The next lemma gives a more explicit form of elements of $\text{ker}(\Psi)$. First, recall the core spheres of $M_{\Gamma}$, as 
 defined in Subsection \ref{subsec:BorelMoore}. This is a collection of nonseparating spheres in $M_{\Gamma}$, one for each piece of $M_{\Gamma}$ homeomorphic to $M_{1, s}$. This collection is not unique, but we will fix one from this point on.
\par 

\begin{lemma}\label{KernelStructure}
    Let $\Gamma$ be a locally finite graph. Fix $f\in \text{Diff}^+(M_{\Gamma})$ acting trivially on $\pi_2(M, x_0)$ for some $x_0\in M$. Then $f$ is isotopic to a product of sphere twists on the core spheres and the boundary components of the pieces of $M_{\Gamma}$. 
    In particular, if $\Gamma$ has rank $0$ or rank at least $2$, then $\mathrm{ker}(\Psi)$ consists of (possibly infinite) products of sphere twists on a spheres in a fixed sphere system.
\end{lemma}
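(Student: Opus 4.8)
The plan is to put $f$, up to isotopy, into a standard form with respect to a fixed reduced sphere system and then analyze it one complementary region at a time, following the scheme of Brendle--Broaddus--Putman. Let $\mathcal{R}$ be the sphere system consisting of the core spheres of the $M_{1,s}$-pieces of $M_{\Gamma}$ together with the ``edge spheres'' $r^{-1}(p)$, $p$ a midpoint of an edge (i.e.\ the spheres along which the pieces of $M_{\Gamma}$ are glued); after discarding redundancies we may assume no two components of $\mathcal{R}$ are homotopic. Cutting $M_{\Gamma}$ along $\mathcal{R}$ separates all pieces and slices each $M_{1,s}$-piece along its core, so $M_{\Gamma}\setminus\mathcal{R}$ is a disjoint union of compact copies of various $M_{0,k}$'s; in particular $\mathcal{R}$ is reduced.

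First I would arrange that $f$ preserves $\mathcal{R}$. After an isotopy assume $f(x_0)=x_0$. Since $f$ acts trivially on $\pi_2(M_{\Gamma},x_0)$, every embedded $2$-sphere $S$ satisfies $f(S)\simeq S$ (freely homotopic), so $f$ sends each component of $\mathcal{R}$ to a homotopic sphere and does not permute components, since no two components of $\mathcal{R}$ are homotopic. Thus $f(\mathcal{R})$ and $\mathcal{R}$ are homotopic sphere systems with no two components homotopic, and Theorem \ref{LaudenbachSphereIsotopy} yields an ambient isotopy carrying $f(\mathcal{R})$ onto $\mathcal{R}$. The one subtlety is that $\mathcal{R}$ is infinite when $\Gamma$ is: I would fix a compact exhaustion of $M_{\Gamma}$ by unions of pieces, apply Theorem \ref{LaudenbachSphereIsotopy} to the finite subsystems of $\mathcal{R}$ contained in successively larger unions of pieces, and arrange the supports of the resulting isotopies to shrink toward the ends so that their infinite composition converges to an ambient isotopy of $M_{\Gamma}$. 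After composing $f$ with this isotopy, $f$ preserves $\mathcal{R}$ componentwise; absorbing its behaviour near $\mathcal{R}\cup\partial M_{\Gamma}$ into collar coordinates, we may further assume $f$ is the identity outside a regular neighborhood $N$ of $\mathcal{R}\cup\partial M_{\Gamma}$ and that on each $S^2\times[-1,1]$ collar component of $N$ it has the form $(x,t)\mapsto(\rho_t(x),t)$ for a loop $\rho$ in $\text{Diff}^+(S^2)$; such a map is isotopic to a sphere twist on the corresponding component of $\mathcal{R}\cup\partial M_{\Gamma}$, or to the identity, according to the class of $\rho$ in $\pi_1(\text{Diff}^+(S^2))=\pi_1(\text{SO}(3))=\Z/2$.

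Next I would restrict $f$ to each complementary region, a compact copy of some $M_{0,k}$ on which $f$ is a boundary-fixing orientation-preserving diffeomorphism. By the classical description of the mapping class group of a punctured $3$-sphere rel boundary (Laudenbach, Hatcher), $f$ is then isotopic rel $\partial$ to a product of sphere twists about spheres parallel to the boundary spheres of $M_{0,k}$. Each such boundary sphere is parallel in $M_{\Gamma}$ to a core sphere, to an edge sphere (a boundary component of a piece), or to a component of $\partial M_{\Gamma}$, and upon regluing the twists on the two sides of each edge sphere combine (or cancel) into a single twist there, while the two boundary spheres produced by cutting an $M_{1,s}$-piece along its core recombine into a twist about the core sphere. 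Collecting these together with the collar twists from the previous step shows $f$ is isotopic to a product of sphere twists on the core spheres and boundary components of the pieces, proving the main assertion.

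For the ``in particular,'' let $\Gamma$ have rank $0$ or rank at least $2$ and let $[f]\in\ker(\Psi)$. By Proposition \ref{MCGHom}(e), $f$ acts trivially on $\text{Out}(\pi_1(M_{\Gamma}))$, so after isotoping $f$ to fix a point $x_0\in\Gamma$ and composing with a suitable point-push loop --- which is isotopic to the identity in $\text{Map}(M_{\Gamma})$ and therefore does not change its mapping class --- we may assume $f_*$ is the identity on $\pi_1(M_{\Gamma},x_0)$. Then Lemma \ref{pi_2Trivial} gives that $f$ acts trivially on $\pi_2(M_{\Gamma},x_0)$, the main assertion applies, and the ``fixed sphere system'' may be taken to be $\mathcal{R}$ together with spheres parallel to the components of $\partial M_{\Gamma}$. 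I expect the principal obstacle to be the limiting argument that promotes Theorem \ref{LaudenbachSphereIsotopy} to the infinite system $\mathcal{R}$ --- controlling the nested isotopies so their composition converges --- along with the bookkeeping of how the boundary-parallel twists in the complementary $M_{0,k}$'s recombine upon regluing.
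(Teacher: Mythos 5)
Your overall strategy matches the paper's: isotope $f$ to fix a reduced sphere system (core spheres plus the spheres along which the pieces are glued), then analyze each complementary copy of $M_{0,k}$ and invoke the classical description of its mapping class group rel boundary. The handling of the ``in particular'' clause via Proposition~\ref{MCGHom}(e), a point-push to make $f_*=\mathrm{id}$ on $\pi_1$, and Lemma~\ref{pi_2Trivial} is also essentially what the paper does (the paper is terser about the point-push step, but the idea is the same).

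The genuine gap, which you flag yourself, is the passage from Theorem~\ref{LaudenbachSphereIsotopy} for finite systems to the infinite system $\mathcal{R}$. Saying you will ``arrange the supports of the resulting isotopies to shrink toward the ends'' is precisely the thing that requires proof: after applying Laudenbach to fix the spheres in a compact piece $K_n$, when you apply it again to the next shell you have no a~priori control that the new ambient isotopy leaves $K_n$ alone, so the infinite composition need not converge. The paper's proof supplies the missing control. After the $n$-th step, a sphere $S$ in the $(n+1)$-st shell and its image $f_n(S)$ are both disjoint from $K_n$ and homotopic; lifting to the universal cover and appealing to Lemma~\ref{lem:simConnectedSpherePartition} (isotopy classes of spheres in a simply connected doubled handlebody are determined by the partition of ends they induce), one shows that the lifted spheres lie in and are isotopic within a single component $U$ of $\widetilde{M_\Gamma}\setminus\pi^{-1}(K_n)$, hence are homotopic through spheres disjoint from $K_n$; applying Laudenbach to that restricted homotopy produces an ambient isotopy supported outside $K_n$. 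That is the step that makes the concatenation of isotopies well-defined and continuous, and it is absent from your sketch. A smaller issue of exposition: you say ``we may further assume $f$ is the identity outside a regular neighborhood $N$ of $\mathcal{R}\cup\partial M_\Gamma$'' \emph{before} restricting to the complementary $M_{0,k}$'s, but that assumption would already trivialize those restrictions; the correct order (as in the paper) is to first make $f$ fix $\mathcal{R}$ pointwise, then analyze each complementary $M_{0,k}$ rel its boundary.
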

\begin{proof}
    In this proof every sphere is actually implicitly an embedding $i:S^2\to M_{\Gamma}$. Thus when we say that two spheres are homotopic or isotopic, they are homotopic or isotopic as embeddings.
    \par 
    Fix a compact exhaustion $\{K_i\}_{i\geq 1}$ of $M_{\Gamma}$ which consists of connected unions of pieces. We will assume that $K_i$ is not homeomorphic to $M_{0,2}$, and that the components of $\partial K_i$ are not isotopic to boundary components of $K_{i+1}$ for all $i$, unless one such component bounds a puncture on one side. We will write $K_0=\varnothing$. We also denote by $\mathcal{C}_i$ the collection of core spheres in $K_i \setminus K_{i-1}$.
    \par 
    Let $\mathcal{S}=\sqcup_i (\partial K_i\cup \mathcal{C}_i)$. By Theorem \ref{LaudenbachSphereIsotopy}, $f$ preserves the isotopy class of every component of $\mathcal{S}$. The goal is to produce an ambient isotopy of $M_{\Gamma}$ so that after applying this isotopy to $f$, every component of $\mathcal{S}$ is actually fixed by $f$. Note that we cannot apply Theorem \ref{LaudenbachSphereIsotopy} directly, as $\mathcal{S}$ has infinitely many elements. However, we will see that it can be inductively applied with care to achieve the desired result.
    \par 
    For the first step, apply Theorem \ref{LaudenbachSphereIsotopy} to obtain an isotopy $H_1:[0,1/2]\times M_{\Gamma}\to M_{\Gamma}$ so that $H_1(0, \cdot)=\text{id}$ and $H_1(1/2, f(p))=p$ for all $p\in \partial K_1 \sqcup \mathcal{C}_1$. We may further assume that if any component of $\partial K_1$ bounds a single puncture, then $H_1(1/2, f(p))=p$ for any point $p$ in the same component as this puncture. 
    \par 
    By induction, for $1\leq i \leq n$ assume we have an isotopy $H_i:[1-2^{1-i}, 1-2^{-i}]\times M_{\Gamma}\to M_{\Gamma}$ so that the following points hold. We write $f_0(p)=f(p)$ and inductively define $$f_i(p)=H_i(1-2^{-i}, f_{i-1}(p))$$
    so that
    \begin{itemize}
        \item $H_i(t, p)=f_{i-1}(p)$ for $p\in K_{i-1}$ and $t\in [1-2^{1-i}, 1-2^{-i}]$. In particular, $H_i$ restricted to $K_{i-1}$ does not depend on $t$.
        \item $f_i(p)=p$ for $p\in \partial K_i\sqcup \mathcal{C}_i$ and $f_i$ stabilizes $K_i$. 
        \item If any component of $\partial K_i$ bounds a single puncture, then $f_i(p)=p$ for any point $p$ in the same component as this puncture.
    \end{itemize}
    To build the next isotopy $H_{n+1}$ in the sequence, we will show that the sphere system $f_n(\partial K_{n+1}\sqcup \mathcal{C}_{n+1})$ is isotopic to $\partial K_{n+1} \sqcup \mathcal{C}_{n+1}$ via an ambient isotopy supported outside of $K_n$. 
    \par 
    Fix a component $S$ of $\partial K_{n+1}\sqcup \mathcal{C}_{n+1}$. Then $S$ and $f_n(S)$ are homotopic as $S$ and $f(S)$ are. Further, both spheres are disjoint from $K_n$, since by definition $S$ is, and as $f_n$ stabilizes $K_n$, the same is true for $f_n(S)$. 
    \par 
    Let $\widetilde{M_{\Gamma}}$ denote the universal cover of $M_{\Gamma}$ with $\pi: \widetilde{M_{\Gamma}}\to M_{\Gamma}$ the covering map. We can lift the homotopy between $S$ and $f_n(S)$ to one between lifts of these two spheres in $\widetilde{M_{\Gamma}}$. Let us denote these lifts by $T_1$ and $T_2$. By Theorem \ref{LaudenbachSphereIsotopy}, they are isotopic as well. Thus these spheres lie in the same component $U$ of $\widetilde{M_{\Gamma}}\setminus \pi^{-1}(K_n)$, or else there is no way they could induce the same partition of $E(\widetilde{M_{\Gamma}})$ (using the assumption that $K_i$ is not homeomorphic to $M_{0,2}$), contradicting Lemma \ref{lem:simConnectedSpherePartition}. But then they also induce the same partition of $E(U)$, thinking of its boundary as an end. Again by Lemma \ref{lem:simConnectedSpherePartition}, they are isotopic in $U$. This restricts to a homotopy from $T_1$ to $T_2$ whose image lies in $U$. Strictly speaking Lemma \ref{lem:simConnectedSpherePartition} only gives isotopy on the level of sets, instead of embeddings. However, the original homotopy between $T_1$ and $T_2$ is defined on embeddings, so it is clear that the homotopy between them whose image lies in $U$ can also be taken to be through embeddings.
    \par

    
    This homotopy descends to a homotopy from $S$ to $f_n(S)$ whose image is disjoint from $K_n$. We can do this same process for every other component of $\partial K_{n+1}\sqcup \mathcal{C}_{n+1}$. Thus by Theorem \ref{LaudenbachSphereIsotopy} there is an ambient isotopy $G_{n+1}:[1-2^{1-n}, 1-2^{-n}]\times M_{\Gamma}\to M_{\Gamma}$ sending the $f_n$ image of $\partial K_{n+1}\sqcup \mathcal{C}_{n+1}$ to $\partial K_{n+1}\sqcup \mathcal{C}_{n+1}$ itself which is supported outside of $K_n$. We can then define 
    \begin{equation*}
        H_{n+1}(t,p)=\begin{cases}
            f_n(p) & \text{if } p\in K_n  \\G_{n+1}(t, p) & \text{otherwise}
        \end{cases}
    \end{equation*}

    By construction, this is continuous as the two defining functions agree on $\partial K_n$, and it satisfies the three bullet points (we can easily guarantee the third bullet point to hold as $G_{n+1}$ can be assumed to be trivial on the punctured ball components).




    We may concatenate these isotopies together to obtain an isotopy $H:[0,1]\times M_{\Gamma} \to M_{\Gamma}$ where $H(t, p)=H_i(t,p)$ if $t\in [1-2^{1-i}, 1-2^{-i}]$ for $t<1$ and $H(1, p)= \lim_{t\to 1}H(t, p)$. This map is well defined by construction, and it is also continuous as $H(t, p)$ is eventually constant in the $t$ variable on every compact set. 
    \par 
    In particular, we obtain a map $f'(p)=H(1, f(p))$ that is isotopic to $f$ and fixes pointwise every core sphere of $M_{\Gamma}$, as well as every component of $\partial K_i$ for all $i$. Thus we can decompose $f'$ into maps supported on the components of $M_{\Gamma}\setminus \mathcal{S}$, each of which is homeomorphic to some copy of $M_{0, s}$, for $s$ depending on the component. But the mapping class group of $M_{0,s}$ is generated by sphere twists on its boundary components (see Lemma 2.5 of \cite{brendle_mapping_2023}, or Proposition \ref{HatcherVSES}), so it follows that $f$ is isotopic to a composition of sphere twists on the desired collection of spheres in $M_{\Gamma}$.
    \par 
    If we assume that $\Gamma$ has rank $0$ or rank at least $2$, then by Lemma \ref{pi_2Trivial}, every element of $\text{Ker}(\Psi)$ has a representative which acts trivially on $\pi_2(M_{\Gamma}, x_0)$. By the above, it follows then that $\text{Ker}(\Psi)$ is generated by compositions of sphere twists on the spheres of $\mathcal{S}$. 
\end{proof}

The following lemma proves the analogous result for $\Gamma$ having rank $1$.
\par 
\begin{lemma}\label{lem:rank1KernelStructure}
    Suppose $\Gamma$ is a rank $1$ graph. Let $f\in \mathrm{Diff}^+(M_{\Gamma})$ be so that $[f]\in \mathrm{ker}(\Psi)$. Then $f$ is isotopic to a product of sphere twists on the core spheres and the boundary components of the pieces of $M_{\Gamma}$. 
\end{lemma}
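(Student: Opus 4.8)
The plan is to reduce to Lemma~\ref{KernelStructure}: that lemma already gives the desired conclusion once we know that $f$ acts trivially on $\pi_2(M_\Gamma,x_0)$ for some $x_0$, and the rank hypothesis plays no role in that implication. So the real content is the rank~$1$ analogue of Lemma~\ref{pi_2Trivial}. First, after an ambient isotopy (which changes neither $[f]$ nor the statement to be proved) we may assume $f(x_0)=x_0$ for a fixed $x_0\in\Gamma$. Since $[f]\in\ker(\Psi)$, Proposition~\ref{MCGHom}(e) shows the image of $f$ in $\mathrm{Out}(\pi_1(\Gamma))=\mathrm{Out}(\Z)$ is trivial; as $\mathrm{Inn}(\Z)$ is trivial, $f_*$ is the identity on $\pi_1(M_\Gamma,x_0)\cong\Z$, and hence (via $r,i$) $rfi$ acts as the identity on $\pi_1(\Gamma,x_0)$.

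Here is where the argument of Lemma~\ref{pi_2Trivial} breaks down in rank~$1$. Let $H$ be a proper homotopy from $rfi$ to $\mathrm{id}_\Gamma$ (it exists since $[f]\in\ker\Psi$). Its basepoint track $\delta(t)=H_t(x_0)$ is a loop at $x_0$ representing some $w\in\pi_1(\Gamma,x_0)\cong\Z$, and the only constraint from $f_*=\mathrm{id}$ is that $\mathrm{conj}_{[\delta]}=\mathrm{id}$, i.e.\ $w$ is central — no constraint at all. Consequently, lifting $H$ to the universal cover $\widetilde\Gamma\subset\widetilde{M_\Gamma}$ only shows $\widetilde r\widetilde f\widetilde i$ is properly homotopic to the deck transformation $g_0^{w}$, and the step in Lemma~\ref{pi_2Trivial} that kills nontrivial deck transformations — which needs at least three ends accumulated by basepoint lifts — fails, since in rank~$1$ the two basepoint-lift ends are precisely the two ends fixed by every deck transformation.

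The fix is to \textbf{normalize $H$ to have null-homotopic track}, which is possible because $\pi_1(\Gamma)$ is abelian. Pick a loop $\ell$ at $x_0$ with $[\ell]=-w$ and let $Q$ be the point-push homotopy of $\mathrm{id}_\Gamma$ along $\ell$; since $\ell$ has compact image this is a proper (indeed compactly supported) homotopy from $\mathrm{id}_\Gamma$ to itself whose track represents $-w$. Then $H':=H*Q$ is a proper homotopy from $rfi$ to $\mathrm{id}_\Gamma$ with null-homotopic track. Lifting $H'$ — its lift is again a proper homotopy, as in Lemma~\ref{pi_2Trivial} — starting from the lift $\widetilde r\widetilde f\widetilde i$ fixing $\widetilde{x_0}$, the lifted track is a closed loop at $\widetilde{x_0}$, so the terminal stage is the lift of $\mathrm{id}_\Gamma$ fixing $\widetilde{x_0}$, namely $\mathrm{id}_{\widetilde\Gamma}$. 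Hence $\widetilde r\widetilde f\widetilde i$ is properly homotopic to $\mathrm{id}_{\widetilde\Gamma}$, so $(\widetilde r\widetilde f\widetilde i)_*=\mathrm{id}$ on $E(\widetilde\Gamma)$. Now $\widetilde i$ and $\widetilde r$ induce mutually inverse homeomorphisms $E(\widetilde\Gamma)\cong E(\widetilde{M_\Gamma})$ (the proof of Lemma~\ref{EndsHomeo} applies on the covers), so $\widetilde f_*=\widetilde i_*\,(\widetilde r\widetilde f\widetilde i)_*\,\widetilde r_*=\mathrm{id}$ on $E(\widetilde{M_\Gamma})$, where $\widetilde f$ is the lift of $f$ fixing $\widetilde{x_0}$. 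From here one follows Lemma~\ref{pi_2Trivial} verbatim: using $\pi_2(M_\Gamma,x_0)\cong H_2(\widetilde{M_\Gamma})\cong H^1_c(\widetilde{M_\Gamma})\cong\varinjlim_K\widetilde H^0(\widetilde{M_\Gamma}\setminus K)$ and the fact that $\widetilde f$ fixes every end of $\widetilde{M_\Gamma}$, one gets that $\widetilde f$, and therefore $f$, acts trivially on $\pi_2(M_\Gamma,x_0)$. Lemma~\ref{KernelStructure} then completes the proof.

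I expect the main obstacle to be not the track-killing step itself but recognizing that it is \emph{needed}: in rank~$1$ the universal cover has exactly two "axial" ends fixed by the whole deck group, so the mechanism used in Lemma~\ref{pi_2Trivial} to remove the deck-transformation ambiguity is simply unavailable and must be replaced by a direct normalization of the proper homotopy. The accompanying technical points — that a point-push along a loop in a graph is a genuinely proper homotopy, and that lifting a proper homotopy to a cover yields a proper homotopy — are routine and are already used implicitly in Lemma~\ref{pi_2Trivial}.
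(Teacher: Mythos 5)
Your diagnosis of why the paper's proof of Lemma~\ref{pi_2Trivial} breaks down in rank~$1$ is correct and well-explained: the lifts of $x_0$ accumulate only to the two axial ends of $\widetilde\Gamma$, which every deck transformation fixes, so the mechanism for killing $\gamma$ is unavailable. The fix you propose does not work, however, and the failure is precisely at the construction of $Q$.

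You claim a compactly supported proper homotopy $Q:\Gamma\times I\to\Gamma$ with $Q_0=Q_1=\mathrm{id}_\Gamma$ whose basepoint track represents a nontrivial $-w\in\pi_1(\Gamma,x_0)$. No such $Q$ exists once $\Gamma$ is noncompact (which is the only case of interest; the compact rank~$1$ case is already Proposition~\ref{HatcherVSES}). Indeed, say $Q_t=\mathrm{id}$ off a compact $K\subsetneq\Gamma$, and lift $Q$ to $\widetilde Q$ on $\widetilde\Gamma$ with $\widetilde Q_0=\mathrm{id}_{\widetilde\Gamma}$. On each component $C$ of $\widetilde\Gamma\setminus p^{-1}(K)$, $\widetilde Q_t|_C$ is the restriction of a deck transformation $g_C(t)$; by continuity of $t\mapsto\widetilde Q_t(\widetilde x)$ for $\widetilde x\in C$ and discreteness of the orbit, $g_C$ is constant, and $g_C(0)=\mathrm{id}$, so $g_C\equiv\mathrm{id}$. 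Thus $\widetilde Q_1$ is a deck transformation agreeing with $\mathrm{id}$ on the nonempty open set $\widetilde\Gamma\setminus p^{-1}(K)$; since the action is free, $\widetilde Q_1=\mathrm{id}_{\widetilde\Gamma}$, so the lifted track is a loop at $\widetilde{x_0}$ and the track of $Q$ is nullhomotopic. So $Q$ as you describe it cannot exist.

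The obstruction is not a technicality. What one actually gets from a finger push along $\ell$ is a compactly supported homotopy from $\mathrm{id}_\Gamma$ to the \emph{word map} associated to $\ell$, and word maps are precisely the nontrivial elements of $\mathrm{Map}(\Gamma)$ that the paper discusses in Section~\ref{subsec:ConstructHom}; they are not properly homotopic to the identity. Said yet another way, if $Q$ did exist, its lift would be a homotopy from $\mathrm{id}_{\widetilde\Gamma}$ to a nontrivial deck transformation $g_0^{-w}$, and $g_0^{-w}$ translates the tree ends of $\widetilde\Gamma$ while $\mathrm{id}$ fixes them — exactly the dichotomy you are trying to defeat. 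There is no proper homotopy in $\Gamma$ that can be composed onto $H$ to retarget the lift $\widetilde H_1=\gamma$ to $\mathrm{id}_{\widetilde\Gamma}$.

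For comparison, the paper's proof of Lemma~\ref{lem:rank1KernelStructure} avoids the universal cover and $\pi_2$ entirely. It uses a properness argument (Claim~\ref{claim:StayOutThere}) to find a finite collection of spheres $\{U_i\}$ cutting off simply connected noncompact pieces and a single compact piece $K\cong M_{1,m}$, isotopes $f$ to fix these spheres using Lemma~\ref{lem:simConnectedSpherePartition} and Theorem~\ref{LaudenbachSphereIsotopy}, and then applies Proposition~\ref{HatcherVSES} on $K$ and Lemmas~\ref{pi_2Trivial}--\ref{KernelStructure} on the rank-$0$ complementary pieces. If you want to salvage the $\pi_2$-triviality route, you would need a different argument that $\widetilde f$ fixes the tree ends of $\widetilde{M_\Gamma}$ — say by exploiting that $\widetilde f$ fixes a bi-infinite family of separating spheres $\widetilde r^{-1}(n+c_0)$ lying over $x_0$ — rather than a normalization of $H$.
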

\begin{proof}
    Our strategy will be to reduce down to the finite type case, which is covered by Proposition \ref{HatcherVSES}, and the rank $0$ case, which is covered by Lemma \ref{KernelStructure}. As in the proof of Lemma \ref{KernelStructure}, homotopies and isotopies between spheres are actually between embeddings of spheres. We may assume that $\Gamma$ is not of finite type, as otherwise we could apply Proposition \ref{HatcherVSES} directly to show that $[f]\in \text{ker}(\Psi)$ is a product of sphere twists.
    \par 
    We may assume that $\Gamma$ is a loop and a tree $\Omega$ attached by an edge $e$. Fix $p$ an interior point of $e$ and let $S=r^{-1}(p)$. Given a sphere $T$ in the noncompact component of $M_{\Gamma}\setminus S$, we let $T^*$ denote the component of $M_{\Gamma}\setminus T$ which does not contain $S$. We need the following claim.
    \begin{claim}\label{claim:StayOutThere}
        Fix two disjoint spheres $T_1$ and $T_2$ so that $T_1^*$ and $T_2^*$ are disjoint and every end of $M_{\Gamma}$ is contained in one of these two sets. Let $\mathcal{U}$ be the collection of spheres which are the $r$ preimage of a midpoint of an edge of $\Omega$. Then for all but finitely many $U\in \mathcal{U}$, $f(U)$ lies in $T_1^*$ or $T_2^*$. 
    \end{claim}
    \begin{proof}[Proof of Claim]
        If not, it would follow that there is a collection of distinct spheres $\{U_n\}$ which accumulate to an end of $M_{\Gamma}$ so that $f(U_n)$ intersects either $T_1$ or $T_2$ for all $n$. But this is impossible by properness.
    \end{proof}
    It follows that there is a finite collection $\{U_i\}_{i=1}^m$ of spheres in $\mathcal{U}$ so that $$K=M_{\Gamma}\setminus \big( \cup_{i=1}^m U_i^*\big)$$ is a compact submanifold with sphere boundary components, and so that $f(U_i)$ is disjoint from $T_1$ and $T_2$ for all $i=1,\ldots, m$. It follows from Lemma \ref{lem:simConnectedSpherePartition} that $U_{i}$ is isotopic to $f(U_{i})$. Indeed, $f$ acts trivially on $E(\Gamma)$, and as $U_{i}$ and $f(U_{i})$ are disjoint from $T_1$ and $T_2$, $S$ (thought of as an end of the unbounded component of $M_{\Gamma}\setminus S$) lies in the complements of $U_{i}^*$ and $f(U_{i})^*$ as subsets of the unbounded component of $M_{\Gamma}\setminus S$. Thus by Lemma \ref{lem:simConnectedSpherePartition} and Theorem \ref{LaudenbachSphereIsotopy} we may isotope $f$ so that it fixes every sphere in $\{U_{i}\}$.
    \par 
    Thus we may decompose $f$ as a map on the compact submanifold $K$ which is homeomorphic to $M_{1, m}$, and on a finite collection of noncompact simply connected manifolds. Then $\Psi$ restricts to a homomorphism on each component, and the restricted $\Psi$ image of each restriction of $f$ is trivial. 
    \par 
    By Proposition \ref{HatcherVSES} and the discussion after it, we can isotope $f$ restricted to $K$ so that it is a composition of sphere twists on the core sphere of $K$ and sphere twists on spheres parallel to components of $\partial K$. By Lemmas \ref{pi_2Trivial} and \ref{KernelStructure}, we can isotope $f$ on the components of $M_{\Gamma}\setminus K$ so that it a composition of sphere twists on the boundary components of pieces of $M_{\Gamma}$.
\end{proof}
The previous proof only relied on $\Gamma$ having finite rank, there was nothing special about rank $1$ in the argument. 
\par 
We remark that, from now on, we will only consider spheres and sphere systems as embedded submanifolds, and forget about the actual embedding itself, unless stated otherwise. In particular, all homotopies and isotopies are for embedded submanifolds instead of for embeddings.
\par 
Recall that a locally finite graph $\Gamma$ is \textit{sporadic} if it is rank $0$ with at most $4$ ends, a rank $1$ graph with $0$ or $1$ end(s), or a compact rank $2$ graph. These cases are explicitly excluded from the second part of Theorem \ref{SESMainTheorem}(1). We will discuss after the proof of Theorem \ref{SESMainTheorem}(1) why they need to be excluded. 
\begin{lemma}\label{lem:peripheralSpheresFixed}
    Suppose $\Gamma$ is non-sporadic, and $[g]\in \text{Map}(M_{\Gamma})$ acts trivially on $\mathcal{S}(M_{\Gamma})$. Then it also fixes isotopy classes of embedded peripheral spheres of $M_{\Gamma}$.
\end{lemma}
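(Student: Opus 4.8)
The plan is to first reduce the lemma to a statement about ends, and then establish that via an isolating sphere configuration. Recall that a peripheral sphere $P$ bounds a once-punctured $3$-ball $B_P$, which is a neighbourhood of a single isolated planar end $e = e(P)$ of $M_{\Gamma}$ (equivalently, $e$ is an end admitting such a sphere). If $g$ fixes the end $e$, then $g(P)$ is again a peripheral sphere surrounding $e$, and any two peripheral spheres around a fixed isolated planar end are isotopic: after an innermost-disc argument one may take them disjoint, so one of the two punctured balls contains the other, the compact region between them is simply connected with boundary $S^2\sqcup S^2$ and hence a product $S^2\times[0,1]$, and so the spheres are isotopic. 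Thus it suffices to show: if $[g]$ acts trivially on $\mathcal{S}(M_{\Gamma})$ and $e$ is an isolated planar end of $M_{\Gamma}$, then $g(e)=e$.

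To prove this I would construct a pair of disjoint essential non-peripheral spheres $S,S'$ with $[S]\ne[S']$ (so that $\{[S],[S']\}$ is an edge of $\mathcal{S}(M_{\Gamma})$) such that one component $Y$ of $M_{\Gamma}\setminus(S\sqcup S')$ is a copy of $M_{0,3}$ with $\partial Y=S\sqcup S'$ whose single puncture is the end $e$. If $\mathrm{rk}(\Gamma)\ge 1$, take $S,S'$ non-separating and $Y$ a pair-of-pants neighbourhood of $e$; the one remaining component $Z:=M_{\Gamma}\setminus\mathrm{int}(Y)$ satisfies $\mathrm{rk}(\pi_1 Z)=\mathrm{rk}(\Gamma)-1$ by van Kampen (gluing the simply connected $Y$ to $Z$ along two spheres adds a single free generator), and since $\Gamma$ is non-sporadic $Z$ is not a product $M_{0,2}$ (this would force $\mathrm{rk}(\Gamma)=1$ with $e$ the only end of $M_{\Gamma}$), so $S\not\sim S'$; being non-separating, $S,S'$ are automatically essential and non-peripheral. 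If $\mathrm{rk}(\Gamma)=0$, then $\Gamma$ is a tree and $M_{\Gamma}\cong M_{0,n}$ with $n\ge 5$ or $n=\infty$; here I take $S,S'$ separating, nested so that the dual graph of $M_{\Gamma}\setminus(S\sqcup S')$ is a path $I - Y - I'$ with $Y$ the middle $M_{0,3}$ containing $e$ and $I,I'$ each containing at least two of the remaining ends — possible precisely because $n\ge 5$, and making $S,S'$ essential (each side has $\ge 2$ ends) and non-peripheral, with $[S]\ne[S']$ since they induce distinct partitions of the ends (Lemma~\ref{lem:simConnectedSpherePartition}).

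Since $[g]$ acts trivially on $\mathcal{S}(M_{\Gamma})$ it fixes $[S]$, $[S']$ and the edge between them, so by Theorem~\ref{LaudenbachSphereIsotopy} (and the embeddings-versus-sets remarks following it) there is a representative $g'$ of $[g]$ with $g'(S)=S$ and $g'(S')=S'$. Then $g'$ permutes the components of $M_{\Gamma}\setminus(S\sqcup S')$, and the crux is that $g'(Y)=Y$. In the tree regime this is automatic: $g'$ preserves the unordered pair of components meeting $S$ and the pair meeting $S'$, and an automorphism of the path $I - Y - I'$ fixing both of its edges fixes all three vertices. In the regime $\mathrm{rk}(\Gamma)\ge 1$ there are exactly the two components $Y$ and $Z$; if $\mathrm{rk}(\Gamma)\ge 2$ then $\pi_1(Z)\ne 1$ while $Y$ is simply connected, and if $\mathrm{rk}(\Gamma)=1$ with at least three ends then $Z$ has at least four boundary spheres and punctures in total and so is not homeomorphic to $Y\cong M_{0,3}$; either way $g'$ cannot interchange $Y$ and $Z$. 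The one leftover case is $\mathrm{rk}(\Gamma)=1$ with exactly two ends, i.e.\ $M_{\Gamma}\cong M_{1,2}$, which is of finite type and whose sphere complex is the explicit bi-infinite line-with-pendants of Subsection~\ref{SphereGraph}; there one checks directly that an end-swap acts on this line as a nontrivial reflection (a direct computation with the finite-type structure, cf.\ Proposition~\ref{HatcherVSES}), so once more the kernel of the action on $\mathcal{S}(M_{\Gamma})$ fixes both ends. Granting $g'(Y)=Y$, the restriction $g'|_Y$ is a self-homeomorphism of $Y\cong M_{0,3}$ and hence fixes the unique end of $Y$, which is $e$. Therefore $g$ fixes $e$, and the reduction of the first paragraph completes the proof.

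The main obstacle is exactly this recognizability step — ensuring $g'(Y)=Y$, equivalently that $g$ cannot carry the isolated end $e$ to another end. Away from the smallest manifolds this is cheap, coming from a homeomorphism-type or $\pi_1$ count, but in the degenerate case $M_{1,2}$ the neighbourhood of $e$ is topologically interchangeable with the rest of the (small) manifold, so one must invoke the precise combinatorics of $\mathcal{S}(M_{1,2})$; this is the point at which the non-sporadic hypothesis genuinely does work.
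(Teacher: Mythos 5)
Your construction is genuinely different from the paper's: the paper isolates $e$ with a \emph{single} sphere (a sphere cutting off a once-punctured $M_{1,1}$ when $\mathrm{rk}(\Gamma)\geq 1$, or a sphere with $e$ and $g(e)$ on opposite sides with at most two ends on the $e$-side when $\mathrm{rk}(\Gamma)=0$), and derives a contradiction purely from the induced partitions of $E(\Gamma)$; you instead sandwich $e$ in an $M_{0,3}$ piece $Y$ cobounded by two disjoint spheres and argue by recognizability of $Y$ among the complementary components. Both work cleanly in most cases, and your treatment of the rank-$0$ and rank-$\geq 2$ cases is correct.

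However, the rank $1$, exactly two ends case is not repaired by your appeal to the combinatorics of $\mathcal{S}(M_{1,2})$, because the assertion that \emph{every} end-swap acts as a nontrivial reflection is false. Model $M_{\Gamma}$ as $S^2\times S^1$ with two points removed at $(p,1/4)$ and $(\rho(p),3/4)$, where $\rho$ is an orientation-reversing involution of $S^2$, and set $\sigma(x,t)=(\rho(x),1-t)$. This $\sigma$ is orientation-preserving, swaps the two ends, and fixes the fiber sphere $N_0=S^2\times\{0\}$ setwise. Cutting along $N_0$ produces an $M_{0,4}$ on whose four boundary spheres $\sigma$ acts as the double transposition $(N_0^{+}\,N_0^{-})(a\,b)$, which lies in the Klein-four kernel of the exceptional homomorphism $S_4\to S_3$ permuting the three essential spheres of $M_{0,4}$; hence $\sigma$ fixes every vertex of $\mathcal{S}(M_{1,2})$ adjacent to $N_0$, and then, propagating outward along the line, every vertex. (This is the graph analogue of the hyperelliptic involution of $S_{1,2}$, which permutes the two punctures yet fixes every isotopy class of curve.) So there is an end-swap in the kernel of the action, and the step ``the kernel fixes both ends'' fails. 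As far as I can see the same degeneracy undermines the paper's own argument here: the sphere ``cutting off a once-punctured $M_{1,1}$'' is peripheral when $\mathrm{rk}(\Gamma)=1$ and $|E(\Gamma)|=2$. You were right to flag this as the place where the non-sporadic hypothesis has to do real work, but the work it does is not enough; the rank-$1$, two-end graph appears to belong on the sporadic list.
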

\begin{proof}
     It suffices to show that $g$ fixes the ends of $M_{\Gamma}$, as a peripheral sphere is fixed if and only if the end it encloses is fixed. 
    \par 
    If $\Gamma$ has rank $0$, then by assumption $\Gamma$ has at least $5$ ends. Suppose $e\in E(M_{\Gamma})$ is such that $g(e)\neq e$. Let $S\in \mathcal{S}(M_{\Gamma})$ be a sphere containing $e$ and $g(e)$ on different sides, and so that the side containing $e$ contains at most $2$ ends. Then the side of $g(S)$ containing $g(e)$ contains at most $2$ ends, which is a contradiction by Lemma \ref{lem:simConnectedSpherePartition} as clearly $S$ and $g(S)$ do not induces the same partition on $E(\Gamma)$. 
    \par 
    If the rank of $\Gamma$ is at least $1$, then we can assume it has at least $2$ ends (the rank $2$ case with $1$ end is trivial). Fix a sphere $S\in \mathcal{S}(M_{\Gamma})$ which cuts off a once punctured copy of $M_{1,1}$, where the puncture corresponds to an end $e$ of $M_{\Gamma}$. If $g(e)\neq e$, then it follows that $g(S)\neq S$ as $S$ and $g(S)$ partition the ends of $M_{\Gamma}$ differently which is impossible if they are isotopic. This is a contradiction, finishing the proof.
\end{proof}

\begin{proof}[Proof of Theorem \ref{SESMainTheorem}(1)]
    Lemmas \ref{KernelStructure} and \ref{lem:rank1KernelStructure} show that $\text{ker}(\Psi)$ is equal to $\text{Twists}(M_{\Gamma})$ (regardless of if $\Gamma$ is sporadic or not). Now assume that $\Gamma$ is non-sporadic. To finish, we need to show that if $[g]\in \text{Map}(M_{\Gamma})$ acts trivially on $\mathcal{S}(M_{\Gamma})$, then it lies in $\text{Twists}(M_{\Gamma})$.
    \par    
    Fix a compact exhaustion of $M_{\Gamma}$ by connected unions of pieces. By assumption, $g$ fixes the isotopy classes of the core spheres of $M_{\Gamma}$ as well as the nonperipheral boundary components of the pieces. Lemma \ref{lem:peripheralSpheresFixed} shows that the isotopy classes of peripheral boundary spheres of pieces are also fixed. Thus, the proof of Lemma \ref{KernelStructure} shows that, for any graph $\Gamma$, that there is a $f\in [g]$ which stabilizes the core spheres and the nonperipheral boundary spheres of pieces of $M_{\Gamma}$. 
    \par    
    We are not assuming $g$ fixes isotopy classes of embeddings of spheres. As $g$ is orientation preserving, either $f$ fixes a given sphere pointwise (up to isotopy), or the diffeomorphism $\bar{f}$ induced from $f$ by cutting along each of these spheres permutes a pair of spheres obtained from cutting along a single sphere in the collection above. This happens if $f$ flips the co-orientation of the sphere in question. If we show these sorts of permutations don't happen, then we will be done, as the restriction of $\bar{f}$ to each component will be a product of sphere twists, again by Lemma 2.5 of \cite{brendle_mapping_2023} or Proposition \ref{HatcherVSES}.
    \par 
    Suppose first that $S$ is a separating sphere so that $\bar{f}$ permutes the two boundary components corresponding to $S$. Let $U$ and $U'$ denote the two components of $M_{\Gamma}\setminus S$. Then $f$ swaps $U$ and $U'$ by the assumptions on $\bar{f}$ and $S$. If $U$ contains an element $S'\in \mathcal{S}(M_{\Gamma})$ not equal to $S$, then $S'\neq g(S')$. But this is impossible by the assumption on $g$, so no such $S'$ exists. This can only happen if $U$ and $U'$ have empty sphere graphs, which implies $M_{\Gamma}$ is a sphere with at most $4$ punctures. Hence $\Gamma$ is sporadic, a contradiction.  
    
    \par 
    Now suppose $S$ is nonseparating so that $\bar{f}$ permutes the two boundary components corresponding to $S$. Hence $\Gamma$ must have rank at least $1$. As $\Gamma$ is non-sporadic by assumption, we can take a diffeomorphic copy $K$ of $M_{1,2}$ embedded in $M_{\Gamma}$ containing $S$ as a nonseparating sphere and so that the boundary spheres of $K$ are essential and separating (but potentially peripheral) in $M_{\Gamma}$. By the above discussion about separating spheres and the assumption on $\partial K$, $f$ restricts (up to isotopy) to a diffeomorphism on $K$ fixing $\partial K$ and stabilizing $S$. But if the diffeomorphism induced on $K\setminus S$ swapped the two boundary components coming from $S$, then it also swaps the two nonperipheral spheres in $K\setminus S \cong M_{0,4}$ that separate the two boundary components coming from $S$. Thus $f$ itself could not fix these two spheres, so we again have a contradiction. The result follows.

    \end{proof}
We now detail why the sporadic cases are excluded. If $\Gamma$ is rank $0$ with at most $3$ ends, then $\mathcal{S}(M_{\Gamma})$ is empty and the result is vacuous. If $\Gamma$ is a rank $0$ graph with $4$ ends, then elements of $\text{Map}(\Gamma)$ which correspond to products of disjoint cycles acting on the ends of $\Gamma$ fix all three elements of $\mathcal{S}(M_{\Gamma})$. In other words, the map $\text{Map}(\Gamma) \to \text{Aut}(\mathcal{S}(M_{\Gamma}))$ is just the exceptional homomorphism $S_4 \to S_3$. If $\Gamma$ is rank $1$ with at most $1$ end, then $\mathcal{S}(M_{\Gamma})$ is a single vertex, and $\text{Map}(\Gamma)\cong \Z/2$ and thus there is trivially not a faithful action. Finally, if $\Gamma$ is a compact rank $2$ graph, then $\text{Map}(\Gamma)\cong \text{Out}(F_2)\cong GL(2,\Z)$ is well known to surject onto $\text{Aut}(\mathcal{S}(M_{\Gamma}))\cong \text{PGL}(2,\Z)$ with an order $2$ kernel generated by the automorphism that flips the signs of a fixed generating of $F_2$. In particular, the action is not faithful.
\par 
We note a conjecture which would easily imply Lemma \ref{KernelStructure}.
\begin{conjecture}\label{conj:LaudenbachExtension}
    Let $i$ and $i'$ be two proper homotopic sphere systems (with possibly infinitely many components) in a $3$-manifold $M$ so that no two distinct spheres in $\text{im}(i)$ are homotopic. Then there is an ambient isotopy of $M$ sending $i$ to $i'$.
\end{conjecture}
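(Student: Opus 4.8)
The plan is to promote the finite-component statement, Theorem~\ref{LaudenbachSphereIsotopy}, to the infinite setting by an exhaustion argument of precisely the shape used in the proof of Lemma~\ref{KernelStructure}. Since a sphere system is by definition a proper embedding, its components form a locally finite family, and the hypothesis that no two distinct components of $i$ are homotopic passes to $i'$ (homotopic spheres are homotopic to homotopic spheres). Fix a compact exhaustion $\{K_n\}$ of $M$ by codimension-zero submanifolds; I would build the desired ambient isotopy $\Phi$ as an infinite concatenation $\Phi=\lim_n \Phi_n\cdots\Phi_1$, reparametrizing $[0,1)$ by the intervals $[1-2^{1-n},1-2^{-n}]$ exactly as in Lemma~\ref{KernelStructure}, where each $\Phi_n$ is a compactly supported ambient isotopy, supported in an ever-receding region of $M$, which arranges that $\Phi_n\cdots\Phi_1\circ i$ agrees on the nose (as an embedding) with $i'$ on every component of the system meeting $K_n$. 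Properness of $H:(\sqcup S^2)\times[0,1]\to M$ and local finiteness of the two systems guarantee that on any fixed compact set only finitely many $\Phi_n$ act nontrivially, so the concatenation converges to an honest ambient isotopy of $M$ whose time-one map satisfies $\Phi\circ i=i'$.

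To run the induction I would first choose the exhaustion adaptively, using properness of $H$: arrange that for each $n$, every component $S$ of $i$ whose $H$-track meets $K_n$ has its entire track contained in $K_{n+1}$ (only finitely many components constrain each stage, so this is compatible with exhausting $M$). Writing $A_n$ for the resulting finite, increasing, exhausting family of components, the step at stage $n$ is to apply Theorem~\ref{LaudenbachSphereIsotopy} to the two finite sphere systems $(\Phi_{n-1}\cdots\Phi_1\circ i)|_{A_n}$ and $i'|_{A_n}$: these are homotopic because the $\Phi_j$ are ambient isotopies, have no repeated components, and already agree on $A_{n-1}$ from the previous steps. The delicate point is that one needs a \emph{relative} version of Laudenbach's theorem here — the isotopy $\Phi_n$ must be supported away from $K_{n-1}$, so as not to disturb the components matched so far, which is equivalent to asking that the two finite systems be homotopic \emph{rel} the common subsystem indexed by $A_{n-1}$ via a homotopy whose track avoids $K_{n-1}$. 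Establishing this \textbf{homotopy-localization statement} is the main obstacle.

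In the doubled-handlebody case this obstacle is exactly what the proof of Lemma~\ref{KernelStructure} circumvents: one lifts to the universal cover $\widetilde{M}$, applies Theorem~\ref{LaudenbachSphereIsotopy} there, and then uses that $\widetilde{M}$ is the doubled handlebody over a \emph{tree}, where by Lemma~\ref{lem:simConnectedSpherePartition} the isotopy class of an embedded sphere is determined by the partition of ends it induces; this forces the two lifted spheres into a common complementary region of $\pi^{-1}(K_{n-1})$ and produces the localized isotopy there explicitly. For a general $3$-manifold no such dictionary exists, which is why the statement is only conjectural. A reasonable general line of attack is still to pass to the (simply connected) universal cover $\widetilde{M}$ and reduce to the following sub-question: two disjoint homotopic embedded $2$-spheres in a simply connected $3$-manifold, both lying in the complement of a fixed codimension-zero submanifold, are isotopic through spheres in that complement. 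One can try to prove this by an innermost-disk/irreducibility analysis of the region swept by the homotopy — killing the non-aspherical summands, then using that an embedded sphere in a simply connected irreducible piece bounds a ball to slide one sphere onto the other within the complement — supplemented by the $\pi_2$-vanishing input in the style of Lemma~\ref{pi_2Trivial}. I expect that making this uniform over all $M$ requires either a genuinely new idea or a mild extra hypothesis on $M$; short of that, the exhaustion machinery above reduces the conjecture cleanly to this single localized, simply-connected statement.
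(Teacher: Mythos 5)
This statement is a \emph{conjecture} in the paper, not a theorem: the paper offers no proof and explicitly flags it as open. Your write-up is honest about this --- you sketch the natural exhaustion strategy modeled on the proof of Lemma~\ref{KernelStructure}, reduce everything to a localized (relative) version of Theorem~\ref{LaudenbachSphereIsotopy} that would keep each $\Phi_n$ supported away from the region already matched, and then acknowledge that establishing this localization is the unsolved step. That is in fact the very roadblock the paper names: controlling how isotopies at later stages interact with compact sets containing the previously isotoped spheres, so that the infinite concatenation is well defined and continuous. Your further observation, that in Lemma~\ref{KernelStructure} this obstruction is circumvented because lifting to the universal cover lands one in the simply connected setting of Lemma~\ref{lem:simConnectedSpherePartition}, where isotopy class is governed by end partitions and the homotopy can be pushed into a fixed complementary region, is also an accurate reading of the paper's proof.

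One precision worth flagging: the claim in your first paragraph that ``on any fixed compact set only finitely many $\Phi_n$ act nontrivially, so the concatenation converges'' is not a free observation. It holds only if each $\Phi_n$ can be taken to have support escaping every compact set, which is exactly the relative Laudenbach statement you later identify as the gap. As phrased, it reads as though convergence comes for free and the only issue is constructing the $\Phi_n$, when in fact the two are the same problem. Beyond that, your reduction is a faithful account of why the conjecture is plausible but, as far as the paper knows, unresolved.
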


This conjecture could potentially be useful in other circumstances. For example, one could obtain a normal form for infinite sphere systems, in the sense given in \cite{hatcher_homological_1995}. It could also be valuable in making sense of a general notion of Outer space for infinite type graphs (see Subsection \ref{subsec:OuterSpace} for some discussion about Outer space for infinite type graphs). The main roadblock in proving it in an inductive manner as in Lemma \ref{KernelStructure} is that it seems difficult to control how isotopies at later steps interact with compact sets containing the spheres that were already isotoped previously (this ended up not being a problem in the proof of Lemma \ref{KernelStructure} due to our special choice of sphere systems). This gives a concern that any sort of concatenation of the isotopies may not be well defined or continuous. 


\section{Isotopy classes of sphere twists}\label{sec:SphereTwists}

In this section, we analyze the structure of the subgroup $\text{Twists}(M_{\Gamma})$ of sphere twists, with the goal of proving Theorem \ref{TwistGroupStructure}. We will use this result to prove the second part of Theorem \ref{SESMainTheorem} in Subsection \ref{subsec:SplittingSES}.
\par 
\subsection{Generating $\text{Twists}(M_{\Gamma})$}
We again assume in this section that $\Gamma$ is in standard form and that $\Gamma$ has no vertices of valence $1$. As discussed before, this implies that the boundary components of every piece are all separating spheres.
\par    
Recall the collection of spheres given in Lemmas \ref{KernelStructure} and \ref{lem:rank1KernelStructure}. This collection consists of the core spheres of the pieces of $M_{\Gamma}$, along with a collection of boundary spheres of pieces. By the two lemmas, every element of $\text{Twists}(M_{\Gamma})$ can be written as a product of sphere twists on these spheres. The next lemma shows that we can remove the sphere twists on boundary components of pieces.

\begin{lemma}\label{TrivialSeperatingTwists}
    Let $S$ denote a boundary component of a piece of $M_{\Gamma}$. Then $[T_S]=\text{id}$. 
\end{lemma}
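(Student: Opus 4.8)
First I would record that $S$ is separating. Since $\Gamma$ is in standard form — a tree with loops attached at vertices — and has no vertices of valence one, the edge $e$ of $\Gamma$ dual to $S$ is a bridge, so $M_{\Gamma}\smallsetminus S$ has exactly two components $N_1,N_2$, each a union of pieces, and since $S$ is essential neither $N_i$ is a $3$-ball. The goal is then to produce an ambient isotopy of $M_{\Gamma}$ carrying $T_S$ to the identity, and the guiding principle is to \emph{absorb} the defining $\mathrm{SO}(3)$-rotation of $T_S$ into one of the two complementary submanifolds. Note that after pushing a bicollar of $S$ slightly to one side we may assume $T_S=T_{S'}$ for a sphere $S'$ parallel to $S$ lying in $\mathrm{int}(N_1)$; thus the isotopy class of $T_S$ depends only on a bicollar of $S$ together with the partition $M_{\Gamma}\smallsetminus S=N_1\sqcup N_2$. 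This lets one replace $S$, up to isotopy, by any sphere inducing the same partition of the ends (Lemma \ref{lem:simConnectedSpherePartition} when $N_1$ is simply connected, and its evident analogue inside a finite union of pieces in general).

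There are two model situations where the absorption is immediate. (i) If $S$ is the boundary sphere of an $M_{1,1}$-piece, write that piece as $S^2\times S^1$ with an open ball removed, placed so that the removed ball sits on the rotation axis of the $\mathrm{SO}(3)$-action rotating the $S^2$-fibre; then ``rotate the $S^2$-fibre by a full turn'' is a loop of diffeomorphisms of the piece whose restriction to $\partial=S$ is the generator $\ell$ of $\pi_1(\mathrm{SO}(3))$. Extending this family over a collar of $S$ in $N_2$ by the tapered rotation and by the identity elsewhere produces a path in $\mathrm{Diff}^+(M_{\Gamma})$ from the identity to (a representative of) $T_S^{-1}$, hence $T_S\simeq\mathrm{id}$. (ii) If $S$ is peripheral — more generally, if $S$ is isotopic through spheres of $N_1$, past no handles, to a sphere cutting off a once-punctured-ball neighbourhood of an end $\varepsilon$ of $M_{\Gamma}$ — conjugate $T_S$ by an ambient isotopy $\phi_t$ pushing its support out toward $\varepsilon$; the support eventually leaves every compact set, so $\phi_tT_S\phi_t^{-1}\to\mathrm{id}$ in the compact-open topology, which is the desired isotopy $T_S\simeq\mathrm{id}$ (the same style of convergence argument already used in Proposition \ref{MCGHom}).

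For a general boundary sphere $S$ of a piece one reduces to (i) and (ii). Fix an exhaustion $K_1\subset K_2\subset\cdots$ of $M_{\Gamma}$ by unions of pieces with $S\subset K_1$; for each $n$ one writes $S$, up to isotopy, as a connected sum $S\simeq S'_n\#A_n$ in which $A_n$ is a finite sphere system contained in $K_n$ that carries the ``interesting'' local topology near $S$ (boundary spheres of $M_{1,1}$-summands and spheres cutting off finitely-ended clopen pieces of the end space), while $S'_n$ is a single sphere lying in the complement of $K_n$. Using Proposition \ref{HatcherVSES} inside $K_n$ together with standard relations among sphere twists and the moves (i), (ii) above, the twists coming from $A_n$ and the tubing are all trivial, so $T_S=T_{S'_n}$ in $\mathrm{Map}(M_{\Gamma})$ for every $n$; since the support of $T_{S'_n}$ escapes every compact set, $T_{S'_n}\to\mathrm{id}$, and Hausdorffness of $\mathrm{Map}(M_{\Gamma})$ forces $T_S=\mathrm{id}$. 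I expect the main obstacle to be organizational rather than a single hard geometric input: namely, producing the decompositions $S\simeq S'_n\#A_n$ compatibly across $n$ (this is delicate when the end space on the $N_1$ side is uncountable, so that it cannot be exhausted by peeling off finitely-ended clopen pieces), pinning down exactly which finite-type twist relations are invoked and citing them correctly via Proposition \ref{HatcherVSES}, and checking that the various absorbing isotopies — in particular the limiting one — assemble into a genuine continuous path in $\mathrm{Diff}^+(M_{\Gamma})$ rather than a merely formal identity in $\mathrm{Map}(M_{\Gamma})$.
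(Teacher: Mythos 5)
Your overall strategy — push $T_S$ out to infinity and argue the limit is the identity — is the right one, and your case (i) is a correct and elegant observation that is genuinely different from the paper's argument: the $\mathrm{SO}(3)$-rotation of the $S^2$-fibre of an $M_{1,1}$-piece (with the removed ball on the rotation axis), tapered over a collar on the other side of $S$, gives a path in $\mathrm{Diff}^+(M_{\Gamma})$ from $\mathrm{id}$ to a representative of $T_S$. Case (ii) also matches the paper in spirit. However, the paper's proof is simpler and uniform and needs no case analysis: applying the pants relation $T_{S_1}=T_{S_2}T_{S_3}$ repeatedly pushes the support of $T_S$ across the pieces of $M_{\Gamma}$ out of any compact set, and concatenating the resulting finitely-supported isotopies, exactly as in the proof of Lemma~\ref{KernelStructure}, assembles a continuous path in $\mathrm{Diff}^+(M_\Gamma)$ from $T_S$ to $\mathrm{id}$. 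In particular $T_{\partial M_{1,1}}=T_{C}T_{C'}=T_C^2=\mathrm{id}$, for two parallel core spheres $C,C'$ of the $M_{1,1}$-piece, is the pants-relation incarnation of your case (i), and more generally any core-sphere twists produced when crossing a handle come in isotopic pairs and cancel, so no special treatment of handles is required.

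The genuine gap is in your general case. The decomposition $S\simeq S'_n\#A_n$ into a single faraway sphere plus ``interesting'' accessory data is not well formed: after one pants move, $T_S$ becomes a product of twists over \emph{all} the remaining boundary spheres of the crossed piece, and this product proliferates as you cross more pieces rather than staying a single $S'_n$, unless the tree underlying $\Gamma$ is a line. Your worry about uncountable end spaces is a symptom of the reduction being misdirected — the swindle only needs to push the support across pieces along one escaping direction, and it never needs to exhaust or control the end space on the other side of $S$, so there is no need to ``peel off finitely-ended clopen pieces.'' Finally, your appeal to Hausdorffness of $\mathrm{Map}(M_{\Gamma})$ to conclude $T_S=\mathrm{id}$ risks a circularity: Hausdorffness is obtained in Corollary~\ref{cor:doubledHandlebodyPolish} as a consequence of Theorem~\ref{SESMainTheorem}, whose proof uses the present lemma. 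The explicit concatenation of isotopies — the step you flag as unresolved — is what removes this dependence and is what the paper's argument rests on.
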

\begin{proof}
    The primary fact needed is that we can move sphere twists along pairs of pants. Namely, if $P$ is an embedded copy of $S^3$ with three open balls removed in $M_{\Gamma}$, and $S_1, S_2$, and $S_3$ the boundary components of $P$, then $T_{S_1}=T_{S_2}T_{S_3}$ in $\text{Map}(M_{\Gamma})$. This can be seen in Figure 2.1 of \cite{hatcher_stabilization_2010}.
    \par 
    Using this, one can push $T_S$ by an isotopy so that its support lies outside of any arbitrarily large compact subset of $M_{\Gamma}$. To do this, one pushes $T_S$ across the pieces of $M_{\Gamma}$ via sequences of pairs of pants, as pictured in Figure \ref{fig:SphereTwistSwindle}. This implies that $T_S$ is isotopic to the identity.
\end{proof}

\begin{figure}
    \centering
    \includegraphics[scale=.5]{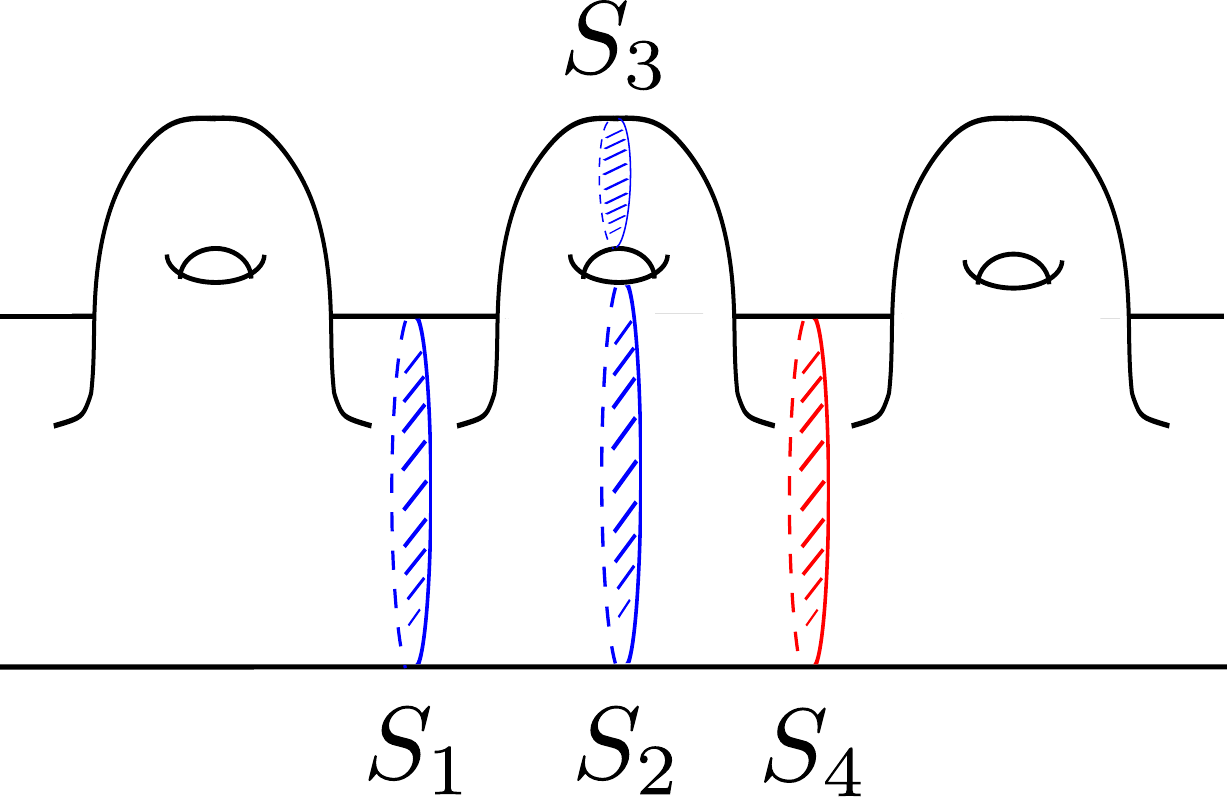}
    \caption{The sphere twist $T_{S_1}$ is equal to the composition of sphere twists $T_{S_2}T_{S_3}$, which is in turn isotopic to $T_{S_4}$. One can continue these moves forever to push the support outside of any compact set.}
    \label{fig:SphereTwistSwindle}
\end{figure}

This implies that any (possibly infinite) product of sphere twists $f$ on the spheres from Lemmas \ref{KernelStructure} or \ref{lem:rank1KernelStructure} is isotopic to a product of sphere twists on the core spheres of the pieces. Indeed, we can write $f=f'f''$ where $f'$ consists of the sphere twists on the core spheres of the pieces, and $f''$ consists of those on the boundary spheres. Then $f''$ is a limit of finite products of sphere twists on the boundary components of pieces, which by Lemma \ref{TrivialSeperatingTwists} are all equal to the identity in $\text{Map}(M_{\Gamma})$. Thus $f''=\text{id}$ in $\text{Map}(M_{\Gamma})$ as well. 
\par 
We note the following simple corollary. 

\begin{corollary}\label{RankZeroPMapTrivial}
    Suppose $\Gamma$ has rank $0$. Then $\text{PMap}(M_{\Gamma})$ is trivial.
\end{corollary}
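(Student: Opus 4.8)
The plan is to read off the triviality of $\mathrm{PMap}(M_\Gamma)$ from the short exact sequence of Theorem \ref{SESMainTheorem}(1) once one observes that, in the rank $0$ case, both the twist subgroup and the target pure mapping class group are already trivial. Recall that throughout this section $\Gamma$ is assumed to be in standard form; since a rank $0$ graph is simply connected, it is a tree, and in standard form it carries no loops. Hence no piece of $M_\Gamma$ is homeomorphic to an $M_{1,s}$, so $M_\Gamma$ has no core spheres. By Lemma \ref{KernelStructure} (the rank $0$ case), $\ker(\Psi) = \mathrm{Twists}(M_\Gamma)$ consists of (possibly infinite) products of sphere twists on the fixed sphere system of that lemma, and by Lemma \ref{TrivialSeperatingTwists} together with the discussion following it, every such product is isotopic to a product of sphere twists on core spheres. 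As there are no core spheres, $\mathrm{Twists}(M_\Gamma)$ is trivial, so $\Psi \colon \mathrm{Map}(M_\Gamma) \to \mathrm{Map}(\Gamma)$ is injective.

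Next I would pass to pure subgroups. By Proposition \ref{MCGHom}, $\Psi$ restricts to $\Psi_P \colon \mathrm{PMap}(M_\Gamma) \to \mathrm{PMap}(\Gamma)$, and this restriction is again injective. It therefore suffices to show $\mathrm{PMap}(\Gamma)$ is trivial. Since $\Gamma$ is a tree, its characteristic triple is $(0, E(\Gamma), \varnothing)$, and Theorem \ref{ADMQGraphClassification} asserts that a proper homotopy equivalence of $\Gamma$ inducing the identity on $E(\Gamma)$ is unique up to proper homotopy — hence properly homotopic to the identity. Thus the only element of $\mathrm{Map}(\Gamma)$ acting trivially on $E(\Gamma)$ is the identity, i.e. $\mathrm{PMap}(\Gamma) = 1$. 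Combining this with the injectivity of $\Psi_P$ yields $\mathrm{PMap}(M_\Gamma) = 1$.

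There is essentially no hard step here: every ingredient has been established upstream, namely the identification $\ker(\Psi) = \mathrm{Twists}(M_\Gamma)$, the triviality of boundary-sphere twists, the absence of core spheres for a loopless graph, and the rigidity of trees under proper homotopy equivalence. The only point that needs a moment's care is the bookkeeping around "no core spheres": one must use that the decomposition of $M_\Gamma$ into pieces underlying the definition of core spheres can be taken with every piece of the form $M_{0,s}$, which is exactly what the standing standard-form hypothesis provides for a rank $0$ graph (and this is harmless since $M_\Gamma$ depends only on the characteristic triple of $\Gamma$ by Proposition \ref{RichardsHandlebody} and Lemma \ref{EndsHomeo}).
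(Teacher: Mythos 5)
Your proposal is correct and follows essentially the same route as the paper's proof: establish $\mathrm{Twists}(M_\Gamma) = 1$ (via Lemma \ref{TrivialSeperatingTwists}, since a rank $0$ graph has no nonseparating spheres) and $\mathrm{PMap}(\Gamma) = 1$ (via Theorem \ref{ADMQGraphClassification}), then combine through the short exact sequence of Theorem \ref{SESMainTheorem} restricted to pure subgroups. The paper phrases the twist-triviality as "all spheres are separating" rather than "no core spheres," and applies the two trivialities in the opposite order, but these are cosmetic differences.
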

\begin{proof}
    Theorem \ref{ADMQGraphClassification} shows that $\text{PMap}(\Gamma)$ is trivial. Thus, the short exact sequence in Theorem \ref{SESMainTheorem} restricted to pure mapping class groups implies that $\text{PMap}(M_{\Gamma})\cong \text{Twists}(M_{\Gamma})$. But Corollary \ref{TrivialSeperatingTwists} implies that $\text{Twists}(M_{\Gamma})$ is trivial, as all spheres in $M_{\Gamma}$ are separating. 
\end{proof}

Let $\mathcal{C}$ denote the collection of core spheres of the pieces of $M_{\Gamma}$. To finish the proof of Theorem \ref{TwistGroupStructure}, it suffices to show that no composition of sphere twists on $\mathcal{C}$ is isotopic to the identity. To see this, we utilize tools developed in \cite{brendle_mapping_2023}, which we will describe in the proof.
\begin{lemma}\label{nonseparatingNontrivial}
    Let $f$ denote a nonempty (possibly infinite) product of sphere twists on the elements of $\mathcal{C}$. Then $[f]\neq \text{id}$ in $\text{Map}(M_{\Gamma})$.
\end{lemma}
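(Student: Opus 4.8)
The plan is to detect $f$ by a homology-valued invariant, following the method of Brendle--Broaddus--Putman and replacing ordinary homology by Borel--Moore homology to accommodate noncompactness. The object I would build is a \emph{crossed homomorphism}
$$\mu:\mathrm{Map}(M_\Gamma)\longrightarrow H_2^{BM}(M_\Gamma;\Z/2),\qquad \mu(gh)=\mu(g)+g_*\mu(h),$$
normalized so that $\mu(T_S)=[S]$ for every essential non-peripheral sphere $S$. The construction copies \cite{brendle_mapping_2023}: put $g\in\mathrm{Diff}^+(M_\Gamma)$ into normal form with respect to a reduced sphere system containing the core spheres $\mathcal{C}$, and on each piece extract the $\Z/2\cong\pi_1(\mathrm{SO}(3))$ obstruction to isotoping $g$ to the identity near the corresponding core sphere; by local finiteness of the pieces relative to a compact exhaustion these contributions assemble into a locally finite $2$-cycle, whose class in $H_2^{BM}$ one checks is independent of all choices using Lemma~\ref{lem:NormalAndEquivalent} and Theorem~\ref{LaudenbachSphereIsotopy}, exactly as in the finite-type case. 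The cocycle identity and continuity of $\mu$ (continuity being part of what Lemma~\ref{CrossedHomContinuous} records) are then verified as in the compact case: on a compact union of pieces $K$ the value of $\mu$ depends only on the restriction of $g$ to a slightly larger union of pieces and only involves core spheres in that region, so $\mu$ is locally constant on the subgroups $\mathcal{V}_K$.

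Granting $\mu$, the lemma follows quickly. Write $f=\prod_{i\in J}T_{S_i}$ with $\varnothing\neq J$ and $S_i\in\mathcal{C}$. The core spheres are pairwise disjoint, so each $T_{S_i}$ is supported in a product neighborhood of $S_i$ disjoint from every other core sphere; hence each $T_{S_i}$ fixes the class in $H_2^{BM}$ of every core sphere, and since by Lemma~\ref{BorelMooreGenerators} the core spheres generate $H_2^{BM}(M_\Gamma;\Z/2)$, the subgroup generated by the $T_{S_i}$ acts trivially on $H_2^{BM}(M_\Gamma;\Z/2)$. Therefore the cocycle identity reduces on $f$ to additivity, and combining this with continuity of $\mu$ and the Hausdorffness of $H_2^{BM}(M_\Gamma;\Z/2)$ (a Cantor set when $\mathrm{rk}(\Gamma)=\infty$, a finite group otherwise) gives
$$\mu(f)=\sum_{i\in J}\mu(T_{S_i})=\sum_{i\in J}[S_i].$$
Again by Lemma~\ref{BorelMooreGenerators} the core spheres form a (topological) basis of $H_2^{BM}(M_\Gamma;\Z/2)$, so $\sum_{i\in J}[S_i]\neq 0$ for $J\neq\varnothing$; hence $[f]\neq\mathrm{id}$ in $\mathrm{Map}(M_\Gamma)$.

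The hard part will be the construction and well-definedness of $\mu$ in the infinite-type setting, not the computation above. The finite-type input — that in a single $S^2\times S^1$ summand the sphere twist on the core sphere is nontrivial and is recorded faithfully by the $\pi_1(\mathrm{SO}(3))$ obstruction, equivalently by the homology class of that sphere — is exactly the content of \cite{brendle_mapping_2023} together with Proposition~\ref{HatcherVSES}, so the genuinely new work is the bookkeeping needed to globalize it: confirming that the piecewise local $\Z/2$ data really glue to a single well-defined Borel--Moore class rather than depending on how far out one normalizes $g$, and that the resulting assignment is a continuous crossed homomorphism. I would emphasize that this approach sidesteps the obstacle flagged after Conjecture~\ref{conj:LaudenbachExtension} concerning concatenation of infinitely many isotopies, since $\mu$ is read off directly from a single diffeomorphism rather than from an isotopy.
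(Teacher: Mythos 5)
Your proposal takes essentially the same route as the paper: both rest on the Brendle--Broaddus--Putman crossed homomorphism, observing that its construction needs only a trivialization of the tangent bundle (not compactness), and both conclude by showing its value on $f$ is nonzero. The only differences are cosmetic: you take the target to be $H_2^{BM}(M_\Gamma;\Z/2)$ and detect $\mu(f)=\sum_{i\in J}[S_i]\neq 0$ via linear independence of the core-sphere classes, whereas the paper keeps the Poincar\'e-dual target $H^1(M_\Gamma;\Z/2)$ and evaluates $\mathfrak{T}(f)$ on a compact test curve $\gamma$ crossing $S$ once, which handles infinite products by locality in $\gamma$ rather than by the continuity argument you invoke.
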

\begin{proof}
    In \cite{brendle_mapping_2023}, the authors construct (in the case when $\Gamma$ is a compact graph) a crossed homomorphism
    \begin{equation*}
        \mathfrak{T}:\text{Map}(M_{\Gamma}) \to H^1(M_{\Gamma}; \Z/2).
    \end{equation*}
    By a crossed homomorphism, we mean that $\mathfrak{T}$ satisfies the property
    \begin{equation*}
        \mathfrak{T}(f_1f_2)=(f_2)^*\mathfrak{T}(f_1)+\mathfrak{T}(f_2)
    \end{equation*}
    where $(f_2)^*$ denotes the induced map on $H^1$ of $f_2$. The construction of this map does not rely on the compactness of the manifold, it relies only on the fact that there is a trivialization of the tangent bundle (which all oriented $3$-manifolds have, see Theorem 1 in Section VII of \cite{kirby_topology_1989}). Thus we may define this map for any graph $\Gamma$.
    \par 
    Following the proof of Lemma 5.1 in \cite{brendle_mapping_2023}, we may then see the nontriviality of $f$. Indeed, suppose $S\in \mathcal{C}$ is so that $T_S$ appears in $f$. We may take a smoothly embedded curve $\gamma:S^1 \to M_{\Gamma}$ that crosses a neighborhood $S$ along the axis of rotation of $T_S$ exactly one time, and so that it intersects no other sphere in $\mathcal{C}$. The argument of Lemma 5.1 in \cite{brendle_mapping_2023} shows that $\mathfrak{T}(f)([\gamma])$ counts in $\Z/2$ how many times $\gamma$ intersects the spheres that $f$ twists on. It follows, as $\gamma$ crosses $S$ exactly once, that the image of $f$ under $\mathfrak{T}$ is nontrivial, so $f$ is nontrivial ($\mathfrak{T}(id)=(id)^*\mathfrak{T}(id)+\mathfrak{T}(id)=\mathfrak{T}(id)+\mathfrak{T}(id)$, so $\mathfrak{T}(id)=0$).
\end{proof}

\begin{proof}[Proof of Theorem \ref{TwistGroupStructure}]
    Since $\text{Twists}(M_\Gamma) \subset \text{ker}(\Psi)$, Lemmas 4.3 and 4.4 in particular imply that any element of $\text{Twists}(M_{\Gamma})$ can be written as a product of sphere twists on the core spheres, $\mathcal C$, and boundary components of pieces of $M_\Gamma$. Lemma \ref{TrivialSeperatingTwists} shows that the sphere twists on boundary components of pieces are trivial, and Lemma \ref{nonseparatingNontrivial} implies that any nontrivial composition of sphere twists on the core spheres of $M_{\Gamma}$ is not isotopic to the identity. 
    \par 
    Fixing a bijection $\alpha: \mathcal{C}\to\{1, \ldots, \text{rk}(\Gamma)\}$ (allowing for $\text{rk}(\Gamma)=\infty$, in which it is not included in the set), we obtain an induced isomorphism $\tau: \text{Twists}(M_{\Gamma})\to \Pi_{i=1}^{\text{rk}(\Gamma)}\Z/2$. The map $\tau$ is obtained by sending a composition of sphere twists on $\mathcal{C}'\subset \mathcal{C}$ to the tuple $(a_i)_{i=1}^{\text{rk}(\Gamma)}$ with $a_i=1$ if $\alpha^{-1}(i)\in \mathcal{C}'$, and $a_i=0$ otherwise.
    \par 
     We wish to show that $\tau$ is also a homeomorphism. Fix a finite union of pieces $K$. Let $U_K\leq \Pi_{i=1}^{\text{rk}(\Gamma)}\Z/2$ be the subgroup of tuples so that the $i$'th component is $0$ if $\alpha^{-1}(i)$ lies in $K$. The collection of all $U_K$'s is a basis at the identity, so to show that $\tau$ is a homeomorphism it suffices to check that $\tau(\mathcal{V}_K\cap \text{Twists}(M_{\Gamma}))=U_K$, i.e. $\tau$ sends a basis to a basis. To see this, note that an element $f\in \text{Twists}(M_{\Gamma})$ lies in $\mathcal{V}_K$ precisely when no sphere twist $T_S$ that appears in $f$ is so that $S\subset K$. This is because, for a core sphere $S'\subset K$, $T_{S'}$ cannot be isotoped so that it fixes $K$, as $\mathfrak{T}([T_{S'}])$ would then evaluate to $0$ on any $\gamma \in H_1(M_{\Gamma}; \Z/2)$ which lies in $K$. But this is clearly contradicts the proof of Lemma \ref{nonseparatingNontrivial} as one can can choose the $\gamma$ in that proof to lie in $K$. Hence $\tau$ sends elements of $\mathcal{V}_K\cap \text{Twists}(M_{\Gamma})$ to $U_K$. Further, it is easy to see that $\tau(\mathcal{V}_K\cap \text{Twists}(M_{\Gamma}))$ contains $U_K$ by the definition of $\tau$, finishing the proof.
\end{proof} 
We make the following conjecture, which in the case when $\Gamma$ is compact is due to Laudenbach \cite{laudenbach_sur_1973}.
\begin{conjecture}
    Suppose $f\in \text{Diff}^+(M_{\Gamma})$ is homotopic to the identity. Then $f$ is also isotopic to the identity.
\end{conjecture}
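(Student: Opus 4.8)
The plan is to reduce this, via Theorems~\ref{SESMainTheorem} and~\ref{TwistGroupStructure}, to the assertion that no nonempty (possibly infinite) product of sphere twists on the core spheres $\mathcal{C}$ of $M_{\Gamma}$ is properly homotopic to the identity, and then to attack that assertion by exhausting $M_{\Gamma}$ by compact unions of pieces and importing Laudenbach's theorem for $\#_n(S^2\times S^1)$. (Throughout, ``homotopic'' should be read as ``properly homotopic''; otherwise a homeomorphism permuting the ends of $M_{\Gamma}$ could be homotopic, but never isotopic, to the identity.)

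For the reduction, suppose $f\in\mathrm{Diff}^+(M_{\Gamma})$ is properly homotopic to the identity via a proper homotopy $H$. Since $r$ and $i$ are proper and $r\circ i=\mathrm{id}_{\Gamma}$, the composite $r\circ H\circ(i\times\mathrm{id})$ is a proper homotopy from $rfi$ to $\mathrm{id}_{\Gamma}$, so $\Psi([f])$ is trivial in $\mathrm{Map}(\Gamma)$. By Theorem~\ref{SESMainTheorem}(1), $[f]\in\ker(\Psi)=\mathrm{Twists}(M_{\Gamma})$, and by Theorem~\ref{TwistGroupStructure} there is a unique subset $\mathcal{C}'\subseteq\mathcal{C}$ with $[f]=\prod_{S\in\mathcal{C}'}[T_S]$; the goal becomes to show $\mathcal{C}'=\varnothing$. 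Since $f$ is isotopic to the standard representative $g:=\prod_{S\in\mathcal{C}'}T_S$ by a proper isotopy (the isotopies built in Lemmas~\ref{KernelStructure}, \ref{lem:rank1KernelStructure}, and~\ref{TrivialSeperatingTwists} are all eventually stationary on each compact set, hence proper), $g$ is also properly homotopic to the identity, and it suffices to show that this is impossible once $\mathcal{C}'\neq\varnothing$. Note the swindle of Lemma~\ref{TrivialSeperatingTwists} is of no help here, since core spheres are nonseparating and their twists are genuinely nontrivial by Lemma~\ref{nonseparatingNontrivial}.

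For this remaining assertion, the compact case is exactly Laudenbach's theorem: in $\#_n(S^2\times S^1)$ no nontrivial product of sphere twists is homotopic to the identity. To reach the general case I would fix a core sphere $S_0\in\mathcal{C}'$ and a compact union of pieces $K\cong M_{n,s}$ with $S_0\subseteq K$, cap the boundary spheres of $K$ with balls to obtain $\widehat{K}\cong\#_n(S^2\times S^1)$ containing (a copy of) $T_{S_0}$, and try to push the proper homotopy $g\simeq\mathrm{id}$ to a homotopy of a nonempty product of core-sphere twists to the identity inside $\widehat{K}$, contradicting Laudenbach. The structural input available for this is properness of $H$: for every compact $L$ there is a compact $K'\supseteq L$ with $H\big((M_{\Gamma}\setminus K')\times I\big)\cap L=\varnothing$, so material starting far from $K$ is pushed back out before it can reach $K$; the task would be to use this to cut $H$ off along $\partial K$ and make it descend to $\widehat{K}$. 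An alternative route is to show directly that the Brendle--Broaddus--Putman crossed homomorphism $\mathfrak{T}\colon\mathrm{Map}(M_{\Gamma})\to H^1(M_{\Gamma};\Z/2)$ from the proof of Lemma~\ref{nonseparatingNontrivial}, which is nonzero on $\mathrm{Twists}(M_{\Gamma})$, descends to proper-homotopy classes; then $g\simeq\mathrm{id}$ would force $\mathfrak{T}(g)=\mathfrak{T}(\mathrm{id})=0$ and hence $\mathcal{C}'=\varnothing$.

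The main obstacle is the non-compactness, of the same flavor as the difficulty flagged for Conjecture~\ref{conj:LaudenbachExtension}: neither $g$ (when $\mathcal{C}'$ is infinite) nor the homotopy $H$ is compactly supported, so cutting off along $\partial K$ requires controlling simultaneously how $H$ drags material across $\partial K$ and how the tail twists on $\mathcal{C}'\setminus K$ interact with what happens inside $K$ --- a priori a stray core-sphere twist in the tail could cancel the inside contribution along the truncation, so some care in choosing $K$ (as in the proof of Lemma~\ref{KernelStructure}) will be needed. Proving that $\mathfrak{T}$ is a proper-homotopy invariant is likewise not routine, since $\mathfrak{T}$ is built from the differential of a diffeomorphism (equivalently, its action on framings), which a homotopy through non-diffeomorphisms does not see; carrying this out would essentially reprove Laudenbach's theorem. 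I expect a complete argument to hinge either on first establishing the infinite Laudenbach isotopy theorem (Conjecture~\ref{conj:LaudenbachExtension}) or on a direct obstruction-theoretic computation showing that the top obstruction to homotoping $\prod_{S\in\mathcal{C}'}T_S$ to the identity does not vanish.
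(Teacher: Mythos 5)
This statement is a \emph{conjecture} in the paper, not a theorem; the paper does not prove it. What the paper offers, in the paragraph following the conjecture, is only the reduction you also arrive at: a diffeomorphism homotopic to the identity acts trivially on $\pi_2$, hence by Lemma \ref{KernelStructure} (together with Lemma \ref{TrivialSeperatingTwists}) is isotopic to a product of core-sphere twists, so the conjecture reduces to showing that a (possibly infinite) nonempty product of core-sphere twists cannot be homotopic to the identity. Your proposal reaches the same reduction, just via a slight detour: you first conclude $\Psi([f])=1$ (which needs the proper-homotopy hypothesis you flag) and then use Theorem \ref{SESMainTheorem}(1) to land in $\mathrm{Twists}(M_{\Gamma})$, whereas the paper invokes Lemma \ref{KernelStructure} directly from the trivial $\pi_2$-action, without the ``properly'' caveat and without routing through the exact sequence. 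Both land at the same remaining assertion.

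Beyond the reduction, your exploration of how one might close the gap --- cutting off the homotopy along a compact exhaustion to reach Laudenbach's compact statement, or showing that the crossed homomorphism $\mathfrak{T}$ descends to proper homotopy classes --- goes past anything the paper claims, and you are right to be skeptical of both routes for the reasons you give: $\mathfrak{T}$ is built from the derivative of a diffeomorphism and there is no obvious reason a homotopy through arbitrary maps should preserve it, and truncating along $\partial K$ runs into precisely the concatenation/control issues that the paper itself flags as the obstacle to Conjecture \ref{conj:LaudenbachExtension}. So your proposal is a faithful reproduction of the paper's reduction together with an honest assessment of why the remaining step is open; no error, but also no proof, and the paper does not supply one either.

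One small caution on your parenthetical remark: you assert that a homeomorphism permuting ends could be homotopic but never isotopic to the identity, which is why you insist on ``properly.'' In fact a diffeomorphism of $M_{\Gamma}$ that permutes ends nontrivially will typically act nontrivially on $\pi_2(M_{\Gamma})$ (the spheres enclosing ends get permuted in homology), so it is not homotopic to the identity at all; the paper's formulation without ``properly'' is therefore not obviously vacuous. Your extra hypothesis is harmless, but it is worth noting the paper's route via $\pi_2$ sidesteps the issue entirely.
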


The proof of Theorem \ref{SESMainTheorem}(1) shows that if $f\in \text{Diff}^+(M_{\Gamma})$ is homotopic to the identity, then it is isotopic to a product of sphere twists, and Lemma \ref{TrivialSeperatingTwists} specifies which sphere twists are necessary. It thus suffices to show that if such a product of sphere twists is homotopic to the identity, then it is isotopic to the identity as well.

\subsection{Splitting the short exact sequence}\label{subsec:SplittingSES}
To obtain the second part of Theorem \ref{SESMainTheorem}, we use the following lemma, a proof of which can be found in Lemma 3.1 of \cite{brendle_mapping_2023}. 
\begin{lemma}\label{AbelianKernelSplitting}
    Let $G$ be a group and $A<G$ an abelian normal subgroup, so that $G$ acts on $A$ on the right via the formula
    \begin{equation*}
        a^g = g^{-1}ag \quad (a\in A, g\in G)
    \end{equation*}
    Leting $Q=G/A$, the short exact sequence
    \begin{equation*}
        1 \to A \to G \to Q \to 1
    \end{equation*}
    splits if and only if there exists a crossed homomorphism $\lambda :G \to A$ that restricts to the identity on $A$. Moreover, if such a $\lambda$ exists, then we can choose a splitting $Q\to G$ whose image is $\text{ker}(\lambda)$, so $G= A\rtimes \text{ker}(\lambda)$.
\end{lemma}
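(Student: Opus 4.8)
The plan is to prove both implications by a direct, elementary $1$-cocycle argument, exactly as in Lemma 3.1 of \cite{brendle_mapping_2023}; no topology enters. Throughout I write the crossed homomorphism condition multiplicatively as $\lambda(g_1g_2) = \lambda(g_1)^{g_2}\lambda(g_2)$, where $a^g = g^{-1}ag$ is the given right action, and I first record the two easy consequences $\lambda(e) = e$ (apply the identity with $g_1 = g_2 = e$, noting $e$ acts trivially) and $\lambda(g^{-1}) = \bigl(\lambda(g)^{g^{-1}}\bigr)^{-1}$ (apply it with $g_1 = g$, $g_2 = g^{-1}$).

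For the ``if'' direction, suppose $\lambda\colon G\to A$ is a crossed homomorphism with $\lambda|_A = \mathrm{id}_A$. First I would check that $K := \ker(\lambda)$ is a subgroup: closure under products is immediate from the cocycle identity, and closure under inverses follows from the formula for $\lambda(g^{-1})$ above. Next, $A\cap K = \{e\}$, since $a\in A\cap K$ forces $a = \lambda(a) = e$. The key point is $AK = G$: given $g\in G$, put $a = \lambda(g)\in A$; using that $A$ is abelian one gets $\lambda(ga^{-1}) = \lambda(g)^{a^{-1}}\lambda(a^{-1}) = \lambda(g)\cdot a^{-1} = e$, so $ga^{-1}\in K$ and hence $g = (ga^{-1})\,a\in KA = AK$ (the last equality because $A$ is normal). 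Therefore $G = A\rtimes K$ internally, so the quotient map $\pi\colon G\to Q = G/A$ restricts to a bijective homomorphism $\pi|_K\colon K\to Q$; its inverse is a homomorphic section $Q\to G$ with image $K = \ker(\lambda)$, which gives the splitting and the asserted description $G = A\rtimes\ker(\lambda)$.

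For the ``only if'' direction, suppose $s\colon Q\to G$ is a homomorphic section of $\pi$. Every $g\in G$ factors uniquely as $g = s(\pi(g))\cdot a$ with $a\in A$, namely $a = s(\pi(g))^{-1}g \in \ker(\pi) = A$, so I would define $\lambda(g) := s(\pi(g))^{-1}g$. Since $s(e) = e$, this restricts to the identity on $A$. To verify the cocycle identity, expand $\lambda(g_1g_2) = s(\pi(g_2))^{-1}s(\pi(g_1))^{-1}g_1g_2 = s(\pi(g_2))^{-1}\lambda(g_1)g_2$ and insert $s(\pi(g_2))s(\pi(g_2))^{-1}$ to rewrite it as $\lambda(g_1)^{s(\pi(g_2))}\lambda(g_2)$; since $A$ is abelian, $\lambda(g_1)^{s(\pi(g_2))},\lambda(g_2)\in A$, and $g_2 = s(\pi(g_2))\lambda(g_2)$, conjugation by $s(\pi(g_2))$ and by $g_2$ have the same effect on $\lambda(g_1)$, so $\lambda(g_1g_2) = \lambda(g_1)^{g_2}\lambda(g_2)$, as required.

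There is no serious obstacle here; the only thing to be careful about is bookkeeping with the left/right conventions — matching $a^g = g^{-1}ag$ with the cocycle identity and repeatedly using commutativity of $A$ to trade conjugation by $s(\pi(g))$ for conjugation by $g$. If one wished to avoid reproving a published lemma one could simply cite \cite[Lemma 3.1]{brendle_mapping_2023} verbatim, as the excerpt already does.
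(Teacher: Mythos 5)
Your proof is correct, and the paper itself gives no proof — it merely cites \cite[Lemma 3.1]{brendle_mapping_2023}, so there is nothing to compare against in the paper's source. Your direct $1$-cocycle argument is the standard one (essentially the $H^1$ splitting criterion for extensions with abelian kernel), and every step checks out: the cocycle identity gives $\lambda(e)=e$ and the inverse formula; $\ker(\lambda)$ is a subgroup meeting $A$ trivially; $AK=G$ via $a=\lambda(g)$ using commutativity of $A$; and conversely $\lambda(g)=s(\pi(g))^{-1}g$ lands in $A$, restricts to the identity on $A$, and satisfies the cocycle identity once you trade conjugation by $s(\pi(g_2))$ for conjugation by $g_2$ using that $A$ is abelian.
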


As we are interested in splitting the short exact sequence in Theorem \ref{SESMainTheorem} as topological groups, we need the following fact. This result likely exists somewhere in the literature, but we give a proof here for completeness. This proof is adapted from (and is nearly identical to) a proof provided in \cite{francis_short_nodate}, which proves this result for direct product splittings.
\begin{lemma}\label{ContinuousSplitting}
    Let $G$ be a topological group, and $N$ a normal subgroup. Let $\pi:G\to G/N$ denote the canonical projection map. Suppose there exists a continuous crossed homomorphism $r:G\to N$ with $r(x)=x$ for all $x\in N$. Then the continuous bijective homomorphism $(r, \pi):G \to N\rtimes (G/N)$ has a continuous inverse.
\end{lemma}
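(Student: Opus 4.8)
The plan is to exhibit an explicit continuous inverse of $(r,\pi)$, assembled from $r$ and the group operations of $G$; since $(r,\pi)$ is already a continuous bijective homomorphism, this shows it is an isomorphism of topological groups. Write $Q=G/N$, and recall that $N$ is abelian and that $\pi\colon G\to Q$ is a quotient map, $Q$ carrying the quotient topology. The first thing I would do is record the one nontrivial algebraic consequence of the hypotheses on $r$: because $r$ is a crossed homomorphism with $r|_N=\mathrm{id}$ and $N$ is abelian, the twist in the cocycle identity collapses on the subgroup $N$, giving $r(gn)=r(g)\,n$ for all $g\in G$ and $n\in N$. This is the only place where commutativity of $N$ is used, so it is the step to treat with care; the bookkeeping with the semidirect-product and cocycle conventions must also be carried out consistently, since these determine on which side the correction factor appears below. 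Two special cases are then immediate: taking $n=r(g)^{-1}$ gives $r\bigl(g\,r(g)^{-1}\bigr)=e$, and taking $g\in\ker r$ gives $r(h\,a)=a$ for $h\in\ker r$ and $a\in N$.

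Next I would introduce $\sigma\colon G\to G$, $\sigma(g)=g\,r(g)^{-1}$, which heuristically retracts $G$ onto the complement $\ker r$ of $N$. It is continuous (built from $r$, inversion and multiplication), it lands in $\ker r$ by the first special case above, it satisfies $\pi\circ\sigma=\pi$ since $r(g)\in N=\ker\pi$, and, using $r(gn)=r(g)n$, it is constant on each coset $gN$, as $\sigma(gn)=gn\,(r(g)n)^{-1}=g\,r(g)^{-1}=\sigma(g)$. Because $\pi$ is a quotient map and $\sigma$ is continuous and $N$-coset invariant, $\sigma$ descends to a unique continuous map $s\colon Q\to G$ with $s\circ\pi=\sigma$; from the properties of $\sigma$ we read off $\pi\circ s=\mathrm{id}_Q$ and $s(Q)\subseteq\ker r$. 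We do not need $s$ to be a homomorphism.

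Finally I would define $\iota\colon N\rtimes Q\to G$ by $\iota(a,q)=s(q)\,a$; this is continuous, being a composition of $s$, the two coordinate projections, and multiplication in $G$. It remains to check $\iota$ is a two-sided inverse of $(r,\pi)$. For $g\in G$ one has $\iota\bigl(r(g),\pi(g)\bigr)=s(\pi(g))\,r(g)=\sigma(g)\,r(g)=g\,r(g)^{-1}r(g)=g$. Conversely, for $(a,q)\in N\rtimes Q$ put $g=\iota(a,q)=s(q)a$; then $\pi(g)=\pi(s(q))\pi(a)=q$ since $a\in N$ and $\pi\circ s=\mathrm{id}_Q$, and $r(g)=r\bigl(s(q)\,a\bigr)=a$ by the second special case above (as $s(q)\in\ker r$), so $(r,\pi)(g)=(a,q)$. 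Hence $\iota=(r,\pi)^{-1}$ is continuous, which completes the proof. Apart from the conventions-bookkeeping, the only genuine content is the identity $r(gn)=r(g)n$ together with the observation that $\sigma$ factors through the quotient map $\pi$; everything else is formal, and the identity $r(gn)=r(g)n$ is the step I expect to require the most care.
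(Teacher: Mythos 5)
Your proof is correct and is essentially the same as the paper's: both hinge on the retraction $\sigma(g)=g\,r(g)^{-1}$ and the observation that it descends to a continuous section $s\colon G/N\to\ker(r)$ which assembles into the inverse of $(r,\pi)$. The only real difference is packaging: the paper establishes continuity of $s=(\pi|_{\ker r})^{-1}$ by directly showing $\pi|_{\ker r}$ is open (exhibiting, for each open $U$, the open set $U'=\{x:x r(x)^{-1}\in U\}=\sigma^{-1}(U)$ and checking $\pi(U\cap\ker r)=\pi(U')$), whereas you show $\sigma$ is constant on $N$-cosets and invoke the universal property of the quotient map, which is the same fact phrased more abstractly. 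You are also right to flag the identity $r(gn)=r(g)\,n$ as the place where abelianness of $N$ is used; the lemma statement omits this hypothesis, but it is implicit given the setup of Lemma \ref{AbelianKernelSplitting} and the application to $\mathrm{Twists}(M_\Gamma)$.
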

Note that it is standard that $\pi$ is an open map, and $G/N$ with the quotient topology is a topological group. 
\begin{proof}
    Let $s=(\pi|_{\text{ker}(r)})^{-1}: G/N\to \text{ker}(r)$. Then the inverse map of $(r, \pi)$ is the map $(x, y)\mapsto xs(y)$. To obtain continuity, it suffices to show that $s$ is continuous, or equivalently that $\pi|_{\text{ker}(r)}$ is an open map. 
    \par 
    Fix an open set in $\text{ker}(r)$, which we can write as $U\cap \text{ker}(r)$ for some $U\subset G$ open. It may not be the case that $\pi(U\cap \text{ker(r)})=\pi(U)$ (if it were, we would be done with this open set as $\pi$ is an open map). Instead, we define another open set $U'$ given by
    \begin{equation*}
        U'=\{x \in G \ | \ xr(x)^{-1} \in U\}.
    \end{equation*}
    As $U$ is open and $r$ is continuous, it follows that $U'$ is open. As $xr(x)^{-1}=x$ when $x\in \text{ker}(r)$, it follows that $U'\cap \text{ker}(r)=U\cap \text{ker}(r)$. 
    \par
     Fix $x\in U'$. Then $xr(x)^{-1}\in U \cap \text{ker}(r)=U' \cap \text{ker}(r)$, as by the definition of $U'$, $xr(x)^{-1}\in U$, and $$r(xr(x)^{-1})=r(x)r(r(x)^{-1})=r(x)r(x)^{-1}=\text{id}$$ where the second equality is using the fact that $r$ fixes $N$. Thus, $xr(x)^{-1}\in \text{ker}(r)$. Also, $\pi(x)=\pi(xr(x)^{-1})$ since $r(x)\in \text{ker}(\pi)$. This implies that $\pi(U')=\pi(U'\cap \text{ker}(r))$. Thus $\pi(U\cap \text{ker}(r))=\pi(U'\cap \text{ker}(r))=\pi(U')$ is open as $\pi$ is an open map.
\end{proof}

To utilize this, we prove the following result about the crossed homomorphism $\mathfrak{T}$ discussed in Lemma \ref{nonseparatingNontrivial}. Recall that $H^1(M_{\Gamma}; \Z/2)\cong H_2^{BM}(M_{\Gamma}; \Z/2)$ has either the discrete topology if $\text{rk}(\Gamma)$ is finite, and the topology of a Cantor set if it is infinite. The following lemma is a generalization of Corollary 5.2 in \cite{brendle_mapping_2023}.
\begin{lemma}\label{CrossedHomContinuous}
    The crossed homomorphism $\mathfrak{T}:\mathrm{Map}(M_{\Gamma})\to H^1(M_{\Gamma}; \Z/2)$ is continuous, and restricts to a topological group isomorphism between $\mathrm{Twists}(M_{\Gamma})$ and $H^1(M_{\Gamma}; \Z/2)$.
\end{lemma}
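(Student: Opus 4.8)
The plan is to prove the two assertions separately: continuity of $\mathfrak{T}$ on all of $\mathrm{Map}(M_{\Gamma})$, and then the fact that its restriction to $\mathrm{Twists}(M_{\Gamma})$ is an isomorphism of topological groups. Throughout I will use the identifications $H^1(M_{\Gamma};\Z/2)\cong H_2^{BM}(M_{\Gamma};\Z/2)\cong\prod_{S\in\mathcal{C}}\Z/2$ from Lefschetz duality and Lemma \ref{BorelMooreGenerators}, under which a basic neighborhood of a class $\beta$ consists of those classes agreeing with $\beta$ on $\gamma_{S_1},\dots,\gamma_{S_k}$ for finitely many core spheres $S_i\in\mathcal{C}$; here $\gamma_S$ is a fixed smooth loop crossing the core sphere $S$ once and disjoint from all other core spheres, as in the proof of Lemma \ref{nonseparatingNontrivial}.

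For continuity I would first treat the identity. If $f_n\to\mathrm{id}$ and $S\in\mathcal{C}$ is fixed, take $K$ to be a closed regular neighborhood of $\gamma_S$; for all large $n$ there is a representative $g_n\in[f_n]$ with $g_n|_K=\mathrm{id}$, and since $g_n$ is the identity near $\gamma_S$ the framing-difference loop in $\mathrm{SO}(3)$ defining $\mathfrak{T}(g_n)(\gamma_S)$ is constant, so $\mathfrak{T}(f_n)(\gamma_S)=0$. As this holds for every $S\in\mathcal{C}$, we get $\mathfrak{T}(f_n)\to 0$. For a general $f$, write $f_n=f h_n$ with $h_n=f^{-1}f_n\to\mathrm{id}$; the crossed-homomorphism identity gives $\mathfrak{T}(f_n)=h_n^{*}\mathfrak{T}(f)+\mathfrak{T}(h_n)$. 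The term $\mathfrak{T}(h_n)$ tends to $0$ by the previous step, and for the first term note that for each $S\in\mathcal{C}$ the map $h_n$ is eventually the identity on a neighborhood of the compact loop $\gamma_S$, so $h_{n*}\gamma_S=\gamma_S$ and hence $\bigl(h_n^{*}\mathfrak{T}(f)\bigr)(\gamma_S)=\mathfrak{T}(f)(\gamma_S)$ for large $n$; thus $h_n^{*}\mathfrak{T}(f)\to\mathfrak{T}(f)$ and so $\mathfrak{T}(f_n)\to\mathfrak{T}(f)$.

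For the restriction to $\mathrm{Twists}(M_{\Gamma})$, I would first observe that $\mathfrak{T}$ is an honest homomorphism there: by Theorem \ref{SESMainTheorem}(1) we have $\mathrm{Twists}(M_{\Gamma})=\ker\Psi$, and by Proposition \ref{MCGHom}(e) every element of $\ker\Psi$ acts by an inner automorphism on $\pi_1(M_{\Gamma})$, hence trivially on $H^1(M_{\Gamma};\Z/2)$, so the crossed-homomorphism correction term vanishes on $\mathrm{Twists}(M_{\Gamma})$. Next, the computation in the proof of Lemma \ref{nonseparatingNontrivial} shows that for $S\in\mathcal{C}$ one has $\mathfrak{T}(T_S)(\gamma_{S'})=\delta_{S,S'}$, i.e. $\mathfrak{T}(T_S)$ is exactly the $S$-th coordinate generator of $\prod_{S\in\mathcal{C}}\Z/2\cong H^1(M_{\Gamma};\Z/2)$. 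By Lemmas \ref{KernelStructure}, \ref{lem:rank1KernelStructure} and \ref{TrivialSeperatingTwists} every element of $\mathrm{Twists}(M_{\Gamma})$ is a (possibly infinite) product of sphere twists on $\mathcal{C}$, and the isomorphism $\tau$ of Theorem \ref{TwistGroupStructure} records which core spheres occur. Combining the homomorphism property with continuity of $\mathfrak{T}$ (to pass from finite sub-products to an infinite product as a limit), $\mathfrak{T}|_{\mathrm{Twists}(M_{\Gamma})}$ is precisely $\tau$ followed by the natural homeomorphism $\prod_{S\in\mathcal{C}}\Z/2\cong H^1(M_{\Gamma};\Z/2)$; since $\tau$ is a homeomorphism by Theorem \ref{TwistGroupStructure}, so is $\mathfrak{T}|_{\mathrm{Twists}(M_{\Gamma})}$.

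The step I expect to be the main obstacle is making the framing-difference description of $\mathfrak{T}$ precise enough in the noncompact setting to justify the two local facts used above, namely that $\mathfrak{T}(g)(\gamma_S)=0$ whenever $g$ is the identity on a neighborhood of $\gamma_S$, and that $\mathfrak{T}(T_S)$ evaluated on a loop counts crossings mod $2$ — in other words, carefully importing the Brendle--Broaddus--Putman construction and its key computation. Once this is in hand, the remainder is a formal manipulation of the crossed-homomorphism identity together with the already-established structure of $\mathrm{Twists}(M_{\Gamma})$ and the topology on $H_2^{BM}(M_{\Gamma};\Z/2)$.
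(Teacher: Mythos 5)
Your proof is correct and takes essentially the same route as the paper: both arguments hinge on the $\mathcal{V}_K$ basis at the identity together with the crossed-homomorphism identity, and both identify the restriction to $\mathrm{Twists}(M_{\Gamma})$ with the product decomposition coming from the core spheres. The only cosmetic differences are that you phrase continuity sequentially (legitimate, since a compact exhaustion gives a countable neighborhood basis of open subgroups at the identity), whereas the paper exhibits $\mathfrak{T}^{-1}(U)$ directly as a union of cosets $f\mathcal{V}_K$; and for the inverse's continuity the paper invokes compactness of both groups while you identify $\mathfrak{T}|_{\mathrm{Twists}(M_{\Gamma})}$ with the homeomorphism $\tau$, which the paper itself notes as an alternative.
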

\begin{proof}
The topology on $H^1(M_{\Gamma}; \Z/2)\cong H_2^{BM}(M_{\Gamma}, \Z/2)$ is generated by subgroups about the identity corresponding to fixing the coefficient on a finite set of core spheres to be $0$, and letting the others vary, by Lemma \ref{BorelMooreGenerators}. Fixing such a subgroup $U$, the corresponding finite set of spheres where the coefficient is fixed to be $0$ is contained in some compact submanifold $K$, and it is clear that $\mathcal{V}_K\subset \mathfrak{T}^{-1}(U)$. On the other hand, given $f \in \mathfrak{T}^{-1}(U)$ and $g\in \mathcal{V}_K$, we have
$$\mathfrak{T}(fg)=g^*(\mathfrak{T}(f))+\mathfrak{T}(g).$$
By above, $\mathfrak{T}(g)\in U$. Since $g$ preserves $K$ and and $\mathfrak{T}(f)\in U$, it is clear that $g^*(\mathfrak{T}(f))\in U$. This is because $g$ stabilizes the homology classes of both the spheres in $K$ and in $M_{\Gamma}\setminus K$. Hence $fg \in \mathfrak{T}^{-1}(U)$ and thus $f\mathcal{V}_K\subset \mathfrak{T}^{-1}(U)$. Since $f\in \mathfrak{T}^{-1}(U)$ is arbitrary, it follows that $\mathfrak{T}^{-1}(U)$ is open, as it is the union of open sets. Thus $\mathfrak{T}$ is continuous.
\par 
    The statement and argument in Lemma \ref{nonseparatingNontrivial} along with Lemma \ref{BorelMooreGenerators} implies that $\mathfrak{T}$ restricts to a bijection from $\text{Twists}(M_{\Gamma})$ to $H^1(M_{\Gamma}; \Z/2)$. Further, as $\text{Twists}(M_{\Gamma})$ acts trivially on $H^1(M_{\Gamma}; \Z/2)$, this restriction is a homomorphism. Both groups are compact and Hausdorff, and hence the inverse map is also continuous. (We remark also that the restriction of $\mathfrak{T}$ to $\text{Twists}(M_{\Gamma})$ agrees with the map $\tau$ defined in the proof of Theorem \ref{TwistGroupStructure}, and there it is shown that $\tau$ is a homeomorphism as well).
    
\end{proof}

\begin{proof}[Proof of Theorem \ref{SESMainTheorem}(2)]
We obtain a continuous crossed homomorphism $\text{Map}(M_{\Gamma})\to \text{Twists}(M_{\Gamma})$ which restricts to the identity on $\text{Twists}(M_{\Gamma})$ via composing the map $\mathfrak{T}$ with the continuous isomorphism $\mathfrak{T}^{-1}:H^1(M_{\Gamma}; \Z/2)\to \text{Twists}(M_{\Gamma})$ from Lemma \ref{CrossedHomContinuous}. Then Lemma \ref{AbelianKernelSplitting} produces the desired splitting, and by Lemma \ref{ContinuousSplitting} as the crossed homomorphism is continuous it follows that $\text{Map}(M_{\Gamma})$ and $\text{Twists}(M_{\Gamma})\rtimes \text{Map}(\Gamma)$ are isomorphic as topological groups.
\end{proof}
\par
In \cite{brendle_mapping_2023}, it is further shown that when $\Gamma$ is compact and of rank $n$, the image of $\text{Out}(F_n)$ in $\text{Map}(M_{\Gamma})$ is equal to the stabilizer of some fixed homotopy class of a trivialization of the tangent bundle of $M_{\Gamma}$. Doing this in greater generality seems difficult. Their proof produces a homomorphism from $\text{ker}(\mathfrak{T})$ to $\Z$ so that if one can show that the image of this homomorphism is trivial, then the elements of $\text{ker}(\mathfrak{T})$ fix a particular homotopy class of a trivialization of the tangent bundle. The result follows in their case, as $\text{ker}(\mathfrak{T})\cong \text{Out}(F_n)$ has finite abelianization (see Lemma 6.1 of \cite{brendle_mapping_2023} for more details). In the general case, it is not even clear how to the construct the homomorphism from $\text{ker}(\mathfrak{T})$ to $\Z$. Further, the abelianization of $\text{Map}(\Gamma)$ is unknown. 
\par
It would also be interesting if one could explicitly produce the section of $\text{Map}(\Gamma)$ to $\text{Map}(M_{\Gamma})$. In \cite{arrieta_explicit_2023}, Arrieta produces an explicit section in the case when $\Gamma$ is compact, utilizing the results of \cite{brendle_mapping_2023}. To directly generalize their argument, one would need to be able to characterize the image of $\text{Map}(\Gamma)$ in $\text{Map}(M_{\Gamma})$ as discussed above (via stabilizing the homotopy class of a trivialization of the tangent bundle), as well as be able to choose a generating set of $\text{Map}(\Gamma)$ which has the required properties.

\section{Contractibility of the sphere complex and its subcomplexes}\label{Connectivity}

\par 
In this section, we consider the topological properties of $\mathcal{S}(M_{\Gamma})$ and some of its subcomplexes. We note the following fact.
\begin{proposition}\label{SphereComplexContractible}
    If the rank of $\Gamma$ is not $0$, the complex $\mathcal{S}(M_{\Gamma})$ is contractible.
\end{proposition}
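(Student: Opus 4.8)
The plan is to reduce to Hatcher's contractibility theorem for the sphere complex of the compact manifolds $M_{n,s}$ \cite{hatcher_homological_1995}, handling finite-type and infinite-type $\Gamma$ by different mechanisms and in each case showing that $\mathcal{S}(M_\Gamma)$ is weakly contractible; since $\mathcal{S}(M_\Gamma)$ is a CW complex, Whitehead's theorem then upgrades this to contractibility.

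First suppose $\Gamma$ is of finite type. Since $\mathrm{rk}(\Gamma)\neq 0$ we have $n:=\mathrm{rk}(\Gamma)\geq 1$, and with $r:=\#E(\Gamma)$ the graph $\Gamma$ is properly homotopy equivalent to the standard graph $\Gamma_{n,r}$. By Proposition \ref{RichardsHandlebody} together with Lemma \ref{EndsHomeo}, $M_\Gamma\cong M_{\Gamma_{n,r}}$, which is $\#_n(S^2\times S^1)$ with $r$ punctures; this manifold has exactly the same essential non-peripheral spheres, realized in the same ways up to isotopy, as the compact manifold $M_{n,r}$ obtained by removing $r$ open balls rather than $r$ points, so $\mathcal{S}(M_\Gamma)\cong\mathcal{S}(M_{n,r})$. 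Hatcher's theorem \cite{hatcher_homological_1995} then gives that $\mathcal{S}(M_{n,r})$ is contractible, the hypothesis $n\geq 1$ being precisely what is needed here (it fails, for instance, for $M_{0,4}$, whose sphere complex is three isolated points).

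Now suppose $\Gamma$ is of infinite type. I would argue by coning off, mirroring the proof of Proposition \ref{SphereGraphFiniteDiameter}. Fix $k\geq 0$ and a continuous map $f\colon S^k\to\mathcal{S}(M_\Gamma)$; its image meets only finitely many simplices, so the finitely many isotopy classes of spheres occurring among their vertices can be realized disjointly inside a single compact union of pieces $K$ (Fact \ref{CompactExhaustion}). Exactly as in the proof of Proposition \ref{SphereGraphFiniteDiameter}, since $\Gamma$ is of infinite type there is a vertex $S\in\mathcal{S}(M_\Gamma)$ that can be realized disjointly from $K$. Then $S$ is disjoint from every sphere appearing in $f(S^k)$, and each simplex of $f(S^k)$, being realized by pairwise disjoint spheres lying in $K$, spans a simplex of $\mathcal{S}(M_\Gamma)$ together with $S$. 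Hence $f(S^k)$ lies in a subcomplex $L$ with $L*\{S\}\subseteq\mathcal{S}(M_\Gamma)$, and since the cone $L*\{S\}$ is contractible, $f$ extends over $D^{k+1}$. Thus $\pi_k(\mathcal{S}(M_\Gamma))=0$ for all $k$, and $\mathcal{S}(M_\Gamma)$ is contractible.

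The main obstacle is the verification, in the infinite-type case, that the sphere $S$ produced outside $K$ is essential and non-peripheral in $M_\Gamma$ itself, not merely essential in $M_\Gamma\setminus K$. I would handle this as in Proposition \ref{SphereGraphFiniteDiameter}: infinite type of $\Gamma$ forces some component of $\Gamma\setminus K$ to again be of infinite type, and one then takes $S$ to be the core sphere of a handle of $M_\Gamma$ lying in that component when it has infinite rank, or a sphere separating that component so that both complementary sides contain at least two ends of $M_\Gamma$ when it has infinitely many ends; in either case $S$ bounds neither a ball nor a once-punctured ball. In the finite-type case there is no analogous subtlety — everything is inherited from Hatcher's theorem once one checks the routine identification of the sphere complexes of the punctured and the compact (boundary) models of $M_{n,r}$.
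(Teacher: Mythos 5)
Your proof is correct. For finite-type $\Gamma$ you reduce to Hatcher's theorem exactly as the paper does (the paper just cites Theorem 2.1 and Lemma 2.2 of \cite{hatcher_homological_1995} rather than spelling out the identification $\mathcal{S}(M_\Gamma)\cong\mathcal{S}(M_{n,r})$). For infinite-type $\Gamma$, however, you take a genuinely different route: the paper treats finite and infinite type uniformly by writing $\mathcal{S}(M_\Gamma)$ as a direct limit of sphere complexes $\mathcal{S}(K)$ over connected unions of pieces $K$ and invoking Hatcher's contractibility of each $\mathcal{S}(K)$ together with Whitehead's theorem; you instead cone off. Your argument observes that the image of $f\colon S^k\to\mathcal{S}(M_\Gamma)$ lives in a finite subcomplex $L$ supported in a compact $K$, finds (as in Proposition~\ref{SphereGraphFiniteDiameter}) a vertex $S$ realizable outside $K$, and notes $L*\{S\}\subseteq\mathcal{S}(M_\Gamma)$ is a cone, so $\pi_k$ vanishes. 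This is a nice payoff: in the infinite-type case your argument is fully self-contained and does not need Hatcher at all, generalizing the diameter-two proof of Proposition~\ref{SphereGraphFiniteDiameter} to all homotopy groups. The paper's version buys uniformity across the two cases and avoids the (routine but real) check that the cone vertex $S$ is essential and non-peripheral in $M_\Gamma$, which you correctly flag and address. One phrasing to tighten: rather than asking that the finitely many vertex spheres themselves be ``realized disjointly'' in $K$, what is actually needed is that each of the finitely many \emph{simplices} of $L$ admits a disjoint realization inside $K$, so that adjoining $S$ still gives a simplex; this follows from the same compactness argument since there are finitely many simplices, each with a compact disjoint realization. Both proofs also quietly elide the fact that a sphere in $K$ which is peripheral in $K$ may be non-peripheral in $M_\Gamma$, so that $\mathcal{S}(K)$ as a genuine subcomplex of $\mathcal{S}(M_\Gamma)$ needs a slightly enlarged $K$; this is harmless in either argument.
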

\begin{proof}
    By Theorem 2.1 and Lemma 2.2 in \cite{hatcher_homological_1995}, the sphere complex of a finite type graph of positive rank is contractible. But $\mathcal{S}(M_{\Gamma})$ is the direct limit of the sphere complexes of the connected unions of pieces of $M_{\Gamma}$, each of which is contractible. Contractibility of $\mathcal{S}(M_{\Gamma})$ follows from Whitehead's theorem, as for any $m\geq 0$, any given element of $\pi_m(\mathcal{S}(M_{\Gamma}))$ is contained in the sphere complex of some connected union of pieces because it only intersects finitely many vertices and interiors of simplices of $\mathcal{S}(M_{\Gamma})$ by compactness.
\end{proof}

In the rank $0$ case it is not clear precisely when $\mathcal{S}(M_{\Gamma})$ is contractible. In the finite type case it is not contractible. For example, $\mathcal{S}(M_{0,5})$ is a graph with positive rank. On the other hand, we have the following.

\begin{proposition}\label{prop:rank0ContractibleExamples}
    Suppose $\Gamma$ is a rank $0$ graph with end space homeomorphic to the union of a Cantor set and a (possibly empty) finite collection of isolated points. Then $\mathcal{S}(M_{\Gamma})$ is contractible.
\end{proposition}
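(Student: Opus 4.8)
The plan is to translate the problem into combinatorics of clopen partitions of the end space $E := E(\Gamma)$ and then run a nerve argument. Since $\Gamma$ is a tree, $M_{\Gamma}$ is simply connected, so by Lemma \ref{lem:simConnectedSpherePartition} an isotopy class of an essential non-peripheral sphere in $M_{\Gamma}$ is exactly the datum of an unordered partition $\{A,B\}$ of $E$ into two nonempty clopen subsets, neither of which is a single isolated point. More is true: a finite collection of such spheres is realizable disjointly if and only if the associated partitions are pairwise non-crossing, i.e.\ the associated clopen sets form a \emph{laminar family}. This ``flag'' property of the sphere complex, in the simply connected case, follows from Lemma \ref{lem:simConnectedSpherePartition} together with the classical realizability of laminar families of partitions in the finite-type manifolds $M_{0,m}$ (cf.\ the proof of Lemma \ref{lem:simConnectedSpherePartition} and Lemma 9 of \cite{bering_iv_finite_2024}, applied to a finite clopen partition of $E$ supporting the spheres in question). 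Writing $\mathrm{st}(\tau)$ for the closed star of a simplex $\tau$, this says: a simplex $\sigma$ lies in $\mathrm{st}(\tau)$ precisely when the clopen sets coming from $\sigma$ and from $\tau$ together form a laminar family.

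Next I use the hypothesis on $E$. Since $E$ contains a Cantor set, it has a non-isolated point $\xi$; fix a strictly decreasing sequence $U_1 \supsetneq U_2 \supsetneq \cdots$ of clopen neighborhoods of $\xi$ forming a neighborhood basis of $\xi$ in $E$ with $\bigcap_k U_k = \{\xi\}$. After discarding finitely many terms we may assume that each $U_k$ and each $E\setminus U_k$ is nonempty and is not a single isolated point (automatic for large $k$: $U_k$ meets the Cantor part of $E$ in a nonempty clopen, hence infinite, set, and $E\setminus U_k$ is infinite once $U_k$ is small). Let $S_k$ be the sphere corresponding to the partition $\{U_k, E\setminus U_k\}$; these are distinct vertices of $\mathcal{S}(M_{\Gamma})$, pairwise non-crossing, so $\{S_1,S_2,\dots\}$ spans an infinite simplex. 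The key point is that $\mathcal{S}(M_{\Gamma}) = \bigcup_{k\ge 1}\mathrm{st}(S_k)$: given a finite sphere system $\sigma$, each of its finitely many clopen sets either contains $\xi$ (hence eventually contains $U_k$, as $\{U_k\}$ is a neighborhood basis) or misses $\xi$ (hence is eventually disjoint from $U_k$); so for all large $k$ the partition $\{U_k,E\setminus U_k\}$ is non-crossing with every partition coming from $\sigma$, which by the laminar description means $\sigma\in\mathrm{st}(S_k)$.

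To conclude, I apply the nerve lemma (in the form valid for a countable cover of a CW complex by subcomplexes all of whose nonempty finite intersections are contractible) to the cover $\{\mathrm{st}(S_k)\}_{k\ge 1}$. Each $\mathrm{st}(S_k)$ is contractible, being a closed star of a vertex. For a finite index set $I=\{k_1<\cdots<k_r\}$, I claim $\bigcap_{i}\mathrm{st}(S_{k_i}) = \mathrm{st}(\tau_I)$ where $\tau_I=\{S_{k_1},\dots,S_{k_r}\}$: a simplex $\sigma$ lies in every $\mathrm{st}(S_{k_i})$ iff the clopen sets of $\sigma$ are non-crossing with each $U_{k_i}$, and since the $U_{k_i}$ are themselves pairwise non-crossing (nested), the laminar/flag property makes this equivalent to $\sigma\cup\tau_I$ being a simplex, i.e.\ $\sigma\in\mathrm{st}(\tau_I)$. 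Since $\tau_I$ is a nonempty simplex, $\mathrm{st}(\tau_I)=\tau_I*\mathrm{lk}(\tau_I)$ is nonempty and contractible. Thus every nonempty finite intersection of members of the cover is contractible and moreover nonempty, so $\mathcal{S}(M_{\Gamma})$ is homotopy equivalent to the nerve of the cover; as all finite intersections are nonempty, that nerve is the full simplex on the countable vertex set $\{S_k\}$, which is contractible. Hence $\mathcal{S}(M_{\Gamma})$ is contractible.

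The main obstacle is the bookkeeping of the first paragraph: making precise and justifying the dictionary between simplices of $\mathcal{S}(M_{\Gamma})$ and laminar families of clopen subsets of $E$, i.e.\ that pairwise disjointly realizable sphere systems in the simply connected manifold $M_{\Gamma}$ are simultaneously disjointly realizable (and that this is preserved when adjoining the spheres $S_k$). Once this is in hand, the neighborhood-basis construction, the cover by closed stars, and the nerve computation are essentially formal; the only further care needed is to cite a version of the nerve lemma applicable to infinite covers of a CW complex by subcomplexes with contractible intersections, and to verify at the outset exactly where the hypothesis ``Cantor set plus finitely many isolated points'' is used — namely in producing a non-isolated end $\xi$ all of whose small clopen neighborhoods, and their complements, remain ``large'' enough to give essential non-peripheral spheres.
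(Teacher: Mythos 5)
Your proof is correct in outline but takes a genuinely different route from the paper. The paper's proof is a short adaptation of Hatcher's surgery‐flow argument: it observes that the deformation retraction in Theorem 2.1 of \cite{hatcher_homological_1995} only breaks when an innermost‐disc surgery produces two spheres that each bound a once‐punctured ball, and that the hypothesis on $E(\Gamma)$ rules this out; Lemma 2.2 of \cite{hatcher_homological_1995} then handles the extra isolated ends by induction. Your argument instead translates everything into the combinatorics of clopen partitions of $E(\Gamma)$ (via Lemma \ref{lem:simConnectedSpherePartition}), covers $\mathcal{S}(M_\Gamma)$ by the closed stars of a nested family of spheres $S_k$ determined by a clopen neighborhood basis of a non‐isolated end $\xi$, and invokes the nerve lemma. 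The two approaches buy different things: the paper reuses Hatcher's machinery and is very short; yours is self‐contained (given the flag property) and, notably, appears to be \emph{more general} than the statement being proved. Indeed, you only use the ``Cantor set plus finitely many isolated points'' hypothesis to produce a non‐isolated end $\xi$ whose small clopen neighborhoods $U_k$ and their complements are not singletons, but \emph{every} infinite end space of a tree has such a $\xi$ (an infinite compact space has a non‐isolated point, $U_k$ contains $\xi$ and hence at least two points, and $E\setminus U_k$ is eventually infinite since $\bigcap_k U_k=\{\xi\}$). If the argument is airtight, it would establish contractibility of $\mathcal{S}(M_\Gamma)$ for all infinite trees, which is strictly stronger than Proposition \ref{prop:rank0ContractibleExamples} and directly relevant to the open question the paper raises just after it (``things become much less clear once $E(\Gamma)$ contains infinitely many isolated points''). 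You should verify this observation, as it suggests either a worthwhile strengthening or a hidden error.

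The one genuine gap you already flag is the ``flag'' property: that a pairwise non‐crossing family of clopen partitions is simultaneously realized by disjoint embedded spheres, and conversely. The forward direction is immediate, but the converse is not in the paper and Lemma \ref{lem:simConnectedSpherePartition} alone does not give it; you need an argument that pushes the finite family of spheres into a compact $K\cong M_{0,m}$ (chosen so $\partial K$ refines the common clopen partition), invokes the laminar‐family realizability in $M_{0,m}$, and checks that isotopy classes in $K$ inject into isotopy classes in $M_\Gamma$. This is standard but must be written carefully, since both the covering claim $\mathcal{S}(M_\Gamma)=\bigcup_k\mathrm{st}(S_k)$ and the identification $\bigcap_{i}\mathrm{st}(S_{k_i})=\mathrm{st}(\tau_I)$ rely on it. You should also cite a precise form of the nerve lemma valid for an arbitrary (here countable) cover of a simplicial complex by subcomplexes with contractible finite intersections (e.g.\ Bj\"orner's version in the Handbook of Combinatorics), rather than the more commonly stated finite‐cover version.
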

\begin{proof}
    Suppose first that the finite set is empty or contains one point. The argument of Theorem 2.1 of \cite{hatcher_homological_1995} goes through verbatim. This is because the flow of $\mathcal{S}(M_{\Gamma})$ that is constructed only relies on the guarantee that an innermost disc surgery of a sphere along a component of a maximal system does not produce two spheres which both bound a once punctured ball. But this does not happen in either of the two cases of $\Gamma$ given, as for this to occur the sphere being surgered must contain two isolated ends in one of its complementary components. 
    \par 
    The proof of Lemma 2.2 of \cite{hatcher_homological_1995} also goes through to show that if the end space is a Cantor set along with any finite number of isolated points. Indeed, its proof only relies on inductively reducing down to the case of one less isolated end.
\end{proof}

Things become much less clear once $E(\Gamma)$ contains infinitely many isolated points. It would be interesting if there were other examples of contractible $\mathcal{S}(M_{\Gamma})$.
\par
Hatcher defines a variety of other complexes of spheres in \cite{hatcher_homological_1995}. The first, which we denote by $Y(M_{\Gamma})$, is a subcomplex of $\mathcal{S}(M_{\Gamma})$ with its simplices the sphere systems $S$ so that $M_{\Gamma}\setminus S$ is connected. The subcomplex $Z(M_{\Gamma})$ consists of the sphere systems $S$ so that for one component $(M_{\Gamma})_S$ of $M_{\Gamma}\setminus S$, the map induced by inclusion $\pi_1((M_{\Gamma})_S)\to \pi_1(M_{\Gamma})$ is an isomorphism. The last complex, denoted $Y^{\pm}(M_{\Gamma})$, is a complex whose simplices are sphere systems $S$ with $M_{\Gamma}\setminus S$ connected, together with a choice of orientation for each component of $S$. This is not a subcomplex of $\mathcal{S}(M_{\Gamma})$, but it is closely related to $Y(M_{\Gamma})$.
\par 
We note the following results of Hatcher about $Y$, $Y^{\pm}$, and $Z$ in the finite type case \cite{hatcher_homological_1995}. Recall that a topological space $X$ is $m$-\textit{connected} if $\pi_i(X, x_0)=0$ for $i=0, \ldots m$.
\begin{lemma}[{\cite[Proposition 3.1]{hatcher_homological_1995}}]\label{lem:HatcherOtherComplexes}
    Fix $n, s\geq 0$.
    \begin{enumerate}
        \item Both $Y(M_{n,s})$ and $Y^{\pm}(M_{n,s})$ are $n-2$ connected.
        \item $Z(M_{n,s})$ is $s-3$ connected.
    \end{enumerate}
\end{lemma}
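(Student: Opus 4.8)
The plan is simply to recall Hatcher's argument, since the statement is \cite[Proposition 3.1]{hatcher_homological_1995} applied verbatim to the manifolds $M_{n,s}$ (which are exactly the manifolds he works with), so no new input is required. The backbone of that argument is the contractibility of the full sphere complex $\mathcal{S}(M_{n,s})$ (Theorem 2.1 of \cite{hatcher_homological_1995}, also used in the proof of Proposition \ref{SphereComplexContractible}) together with the normal form and innermost-disc surgery machinery recorded in Subsection \ref{subsec:graphsandtheirdoubledhandlebodies}. From these one shows that each of $Y$, $Y^{\pm}$, $Z$ sits inside a suitable contractible ambient complex in a highly connected way, via a "bad simplices" surgery argument of the type later systematized by Hatcher--Wahl.

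Concretely, for $Y(M_{n,s})$ I would argue by induction on $n$; the base cases $n\le 1$ are vacuous since $n-2\le -1$. For the inductive step, take a map $f\colon S^k\to Y(M_{n,s})$ with $k\le n-2$, use contractibility of $\mathcal{S}(M_{n,s})$ to extend it over a $(k+1)$-disc mapping into $\mathcal{S}(M_{n,s})$, put the disc in simplicial position, and then delete its simplices carrying separating sphere systems one at a time, working from innermost simplices outward. The key local fact is that cutting $M_{n,s}$ along a non-separating sphere produces $M_{n-1,s+2}$, so the link of a vertex of $Y(M_{n,s})$ inside $Y(M_{n,s})$ is a join of a simplex with a copy of $Y(M_{n-1,s+2})$, which is $(n-3)$-connected by induction; this is exactly the connectivity needed to homotope the disc across a bad simplex of dimension at most $k+1\le n-1$ into $Y(M_{n,s})$. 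Running over all bad simplices yields a nullhomotopy of $f$ inside $Y(M_{n,s})$. For $Y^{\pm}(M_{n,s})$ one runs the same surgery argument, the finitely many orientation choices on each sphere not affecting the bookkeeping. For $Z(M_{n,s})$ the scheme is identical but the induction is on $s$: if $S$ is a sphere system admitting a complementary component $(M_{n,s})_S$ with $\pi_1((M_{n,s})_S)\to\pi_1(M_{n,s})$ an isomorphism, then $(M_{n,s})_S$ is itself a copy of $M_{n,s'}$ with $s'<s$ (its remaining boundary spheres being a proper subcollection of the original ones together with cut spheres, with at least two boundary spheres split off into a $\pi_1$-trivial complement), so the link of such a system inside $Z$ is controlled by $Z(M_{n,s'})$, which is $(s'-3)$-connected; one again starts from contractibility of $\mathcal{S}(M_{n,s})$ and deletes the simplices violating the $\pi_1$-surjectivity condition, pushing the disc into $Z$ across bad simplices of dimension at most $s-2$.

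The main obstacle is not conceptual but organizational: making precise the claim that the link of a non-separating (respectively $\pi_1$-full) sphere system in the relevant subcomplex decomposes as a join of a simplex with the same type of complex for the cut-open manifold, handling the simplices of the disc that only partially lie in the subcomplex, and verifying that the connectivity estimates align so that every bad simplex of dimension $\le k+1$ can in fact be pushed off. This is precisely the delicate part of Hatcher's analysis in \cite[\S2--3]{hatcher_homological_1995}, and since the manifolds here are literally his $M_{n,s}$, I would simply quote it rather than reproduce it.
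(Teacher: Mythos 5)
The paper does not give a proof of this lemma at all: it is recorded as a citation to \cite[Proposition~3.1]{hatcher_homological_1995}, and your conclusion that one should simply quote Hatcher (since the manifolds $M_{n,s}$ are exactly the ones he works with) coincides with what the paper does. The explanatory sketch you offer is a reasonable account of the underlying idea, though it is phrased in the later Hatcher--Wahl ``bad simplices / link decomposition'' idiom rather than in the form Hatcher actually uses in \cite{hatcher_homological_1995} (where the argument is a more direct surgery-flow refinement of the contractibility proof of his Theorem~2.1, with the $Z$ case handled by an induction on ends rather than a link computation); since you explicitly defer to the reference for the delicate bookkeeping, this difference in presentation does not affect correctness.
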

\par 
We now prove the following application of Lemma \ref{lem:HatcherOtherComplexes}. 

\begin{proposition}\label{HatcherSubcomplexConnectivity}
        If $\Gamma$ has finite rank $n$, then $Y(M_{\Gamma})$ and $Y^{\pm}(M_{\Gamma})$ are $n-2$ connected. If $\Gamma$ has infinite rank, then $Y(M_{\Gamma})$ and $Y^{\pm}(M_{\Gamma})$ are contractible. If $\Gamma$ has finite rank and an infinite end space, then $Z(M_{\Gamma})$ is contractible.
\end{proposition}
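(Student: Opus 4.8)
The plan, following the proof of Proposition~\ref{SphereComplexContractible}, is to realize each of $Y(M_\Gamma)$, $Y^\pm(M_\Gamma)$, and $Z(M_\Gamma)$ as a directed union of the analogous complexes of connected unions of pieces, apply Hatcher's connectivity bounds (Lemma~\ref{lem:HatcherOtherComplexes}), and use that homotopy groups of a directed union of CW complexes along cellular inclusions are the colimits of the homotopy groups of the terms (any sphere or homotopy of spheres has compact image, so meets only finitely many simplices). Throughout, a connected union of pieces $K$ is homeomorphic to some $M_{n_K,s_K}$, with $n_K$ its first Betti number and $s_K$ the number of boundary spheres; by Fact~\ref{CompactExhaustion} such $K$ exhaust $M_\Gamma$.

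For $Y$ and $Y^\pm$: if $K\subseteq K'$ are connected unions of pieces and $S\subseteq K$ is a sphere system with $K\setminus S$ connected, then $K'\setminus S$ is obtained from $K\setminus S$ by gluing connected manifolds along spheres of $\partial K\subseteq K\setminus S$, hence is connected, so $Y(K)\subseteq Y(K')$, and taking $K'=M_\Gamma$ gives $Y(K)\subseteq Y(M_\Gamma)$. Conversely a simplex of $Y(M_\Gamma)$ is a finite sphere system $S$ with $M_\Gamma\setminus S$ connected; since $S$ is compact and $\pi_0(M_\Gamma\setminus S)=\varinjlim_K\pi_0(K\setminus S)$ is a point, we may iteratively enlarge a union of pieces containing $S$ by absorbing paths joining the finitely many components of $K\setminus S$ until $K\setminus S$ is connected (no component of $K\setminus S$ is created away from $S$, since every component of $M_\Gamma\setminus\mathrm{int}(K)$ meets $\partial K$), so the simplex lies in $Y(K)$. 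Hence $Y(M_\Gamma)=\varinjlim_K Y(K)$, and the same reasoning applies verbatim to $Y^\pm$. By Lemma~\ref{lem:HatcherOtherComplexes}(1), each $Y(K)$ and $Y^\pm(K)$ is $(n_K-2)$-connected. If $\mathrm{rk}(\Gamma)=n<\infty$ then $n_K=n$ for every $K$ containing a fixed finite connected subgraph carrying $\pi_1(\Gamma)$, and such $K$ are cofinal, so $\pi_i(Y(M_\Gamma))=\varinjlim_K\pi_i(Y(K))=0$ for $i\le n-2$, and likewise for $Y^\pm$; if $\mathrm{rk}(\Gamma)=\infty$ then $n_K\to\infty$, so all homotopy groups vanish and, these being CW complexes, Whitehead's theorem gives contractibility.

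For $Z$ the colimit description is more delicate, and this is where finiteness of the rank enters. Fix a finite connected subgraph $\Gamma_0\subseteq\Gamma$ with $b_1(\Gamma_0)=\mathrm{rk}(\Gamma)=n$; then every component of $\overline{\Gamma\setminus\Gamma_0}$ is a tree attached to $\Gamma_0$ along a single edge, and the same holds for any finite connected subgraph containing $\Gamma_0$. Let $K_0$ be the union of pieces over $\Gamma_0$ and restrict to connected unions of pieces $K\supseteq K_0$; then $\pi_1(K)\xrightarrow{\ \sim\ }\pi_1(M_\Gamma)$, and each component of $M_\Gamma\setminus\mathrm{int}(K)$ is a simply connected doubled handlebody attached to $K$ along a single sphere. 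For $K\subseteq K'$ (both $\supseteq K_0$): if $\sigma\in Z(K)$ is witnessed by a component $C$ of $K\setminus S$ with $\pi_1(C)\xrightarrow{\ \sim\ }\pi_1(K)$, then the component $C'$ of $K'\setminus S$ containing $C$ is built from $C$ by attaching simply connected pieces along single spheres, so $\pi_1(C')\xrightarrow{\ \sim\ }\pi_1(C)$ and hence $\pi_1(C')\xrightarrow{\ \sim\ }\pi_1(K')$, giving $\sigma\in Z(K')$; taking $K'=M_\Gamma$ gives $Z(K)\subseteq Z(M_\Gamma)$. Conversely, given $\sigma\in Z(M_\Gamma)$ with witness component $W$ of $M_\Gamma\setminus S$, iterate as above to find $K\supseteq K_0$ containing $S$ with $W\cap K$ connected (no component of $W\cap K'$ is disjoint from $W\cap K$, since each component of $M_\Gamma\setminus\mathrm{int}(K)$ meets $\partial K$); then $C:=W\cap K$ is a component of $K\setminus S$, $W$ is obtained from $C$ by attaching simply connected pieces along single spheres, so $\pi_1(C)\xrightarrow{\ \sim\ }\pi_1(W)\xrightarrow{\ \sim\ }\pi_1(M_\Gamma)$, and since this factors through $\pi_1(K)\xrightarrow{\ \sim\ }\pi_1(M_\Gamma)$ we conclude $\pi_1(C)\xrightarrow{\ \sim\ }\pi_1(K)$, i.e.\ $\sigma\in Z(K)$. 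Thus $Z(M_\Gamma)=\varinjlim_{K\supseteq K_0}Z(K)$.

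Finally, since $\Gamma$ has infinite end space, $s_K\to\infty$ along this directed set: if $\Gamma\setminus K_0$ has $N$ noncompact components then so does $\Gamma\setminus K$ for every $K\supseteq K_0$, and $\Gamma$ having infinitely many ends makes $N$ unbounded. By Lemma~\ref{lem:HatcherOtherComplexes}(2), $Z(K)$ is $(s_K-3)$-connected, so for each fixed $i$ we get $\pi_i(Z(M_\Gamma))=\varinjlim_K\pi_i(Z(K))=0$; hence $Z(M_\Gamma)$ is a weakly contractible CW complex, so contractible by Whitehead's theorem. The main obstacle is precisely the identification of these complexes with the directed colimits — in particular verifying that a finite sub-system of $Z(M_\Gamma)$ genuinely sits inside some $Z(K)$ and that the $Z(K)$ form a nested family — which fails for a careless choice of exhaustion and forces the initial choice of $\Gamma_0$ so that the complementary pieces are simply connected and attach along single spheres.
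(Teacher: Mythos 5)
Your proof is correct and follows the same approach as the paper: exhibit $Y$, $Y^\pm$, and $Z$ of $M_\Gamma$ as directed unions of the corresponding complexes of finite connected unions of pieces $K$, invoke Hatcher's connectivity bounds (Lemma~\ref{lem:HatcherOtherComplexes}) on each term, and use compactness of spheres and Whitehead's theorem to pass to the limit. The main substantive thing you add over the paper's terse presentation is the careful verification that each $Z(K)$ really embeds in $Z(M_\Gamma)$ once $K$ contains a core $K_0$ carrying the full rank — namely, that the complementary pieces of such a $K$ are simply connected and attach along single spheres, so that the defining $\pi_1$-isomorphism condition is preserved under enlarging $K$; the paper states this embedding but does not argue for it. This is exactly the point that fails in infinite rank (as the paper notes after the proposition), and you are right to flag it as the crux.

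One small wording issue: the sentence claiming $\Gamma\setminus K$ has the same number $N$ of noncompact components as $\Gamma\setminus K_0$ for all $K\supseteq K_0$ is false as stated (the count is only bounded below by $N$ and typically grows), and ``makes $N$ unbounded'' doesn't parse since $N$ is fixed. The intended and correct point is simply that because $E(\Gamma)$ is infinite, the number of boundary spheres $s_K$ of $K$ can be made arbitrarily large along the cofinal system $\{K\supseteq K_0\}$, so $Z(K)$ is $(s_K-3)$-connected with $s_K\to\infty$, killing all homotopy groups in the colimit. Rephrase that sentence and the argument is clean.
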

\begin{proof}
We note first that if $K$ is a connected union of pieces of $M_{\Gamma}$, then there is a natural inclusion of $Y(K), Y^{\pm}(K)$ into $Y(M_{\Gamma}), Y^{\pm}(M_{\Gamma})$, respectively.
\par
 Given any fixed representative of an element of $\pi_m(Y(M_{\Gamma}))$ or $\pi_m(Y^{\pm}(M_{\Gamma}))$, there is some connected union of pieces $K$ so that  $Y(K)$ or $Y^{\pm}(K)$ contains this representative. In the case that the rank of $\Gamma$ is $n<\infty$, by taking the rank of $K$ to be $n$, it follows from Lemma \ref{lem:HatcherOtherComplexes}(a) that the given element of $\pi_m(Y(M_{\Gamma}))$ or $\pi_m(Y^{\pm}(M_{\Gamma}))$ is trivial for $m\leq n-2$ as $Y(K)$ is $n-2$-connected by Lemma \ref{lem:HatcherOtherComplexes}(a). If the rank of $\Gamma$ is infinite, we can instead take the rank of $K$ to be arbitrarily large (at least $m+2$) to see that the given element of $\pi_m(Y(M_{\Gamma}))$ or $\pi_m(Y^{\pm}(M_{\Gamma}))$ is trivial.
        \par 
        For $Z(M_{\Gamma})$, the same argument works. Once $K$ has rank equal to that of $\Gamma$, then $Z(K)$ naturally embeds as a subcomplex of $Z(M_{\Gamma})$. Choosing $K$ so that $Z(K)$ contains a fixed representative of an element of $\pi_m(Z(M_{\Gamma}))$ so that $K$ has at least $m+3$ boundary components, it follows that this element is trivial by Lemma \ref{lem:HatcherOtherComplexes}(b).
\end{proof}

One would like to take the previous proposition a step further, by generalizing Proposition 3.1 of \cite{hatcher_homological_1995} in the case of $Z(M_{\Gamma})$ by allowing infinite rank and/or finite end space. However, if $\Gamma$ has infinite rank and $K$ is any connected union of pieces, $Z(K)$ is not a subcomplex of the $Z(M_{\Gamma})$. This is because there are spheres in the $Z(K)$ which cut off a boundary component where on the other side there is rank in $M_{\Gamma}$. 

\section{Topology and group actions}\label{Applications}
In this section we give a variety of applications of the previous results. Much of the following is motivated by results from the study of infinite type surfaces and big mapping class groups.
\subsection{Topology}\label{subsec:Topology}
We define the \textit{permutation topology} on $\text{Map}(\Gamma)$ to be generated by the following subbasis about the identity, where here $S$ is a fixed vertex of $\mathcal{S}(M_{\Gamma})$:
\begin{equation*}
    U_{S}=\{f\in \text{Map}(\Gamma)\ |\ f(S)=S\}.
\end{equation*}
It is a consequence of the Alexander method for infinite type surfaces that the corresponding topology for infinite type mapping class groups given by the action on the curve complex of the surface is the same as the quotient topology it inherits from the homeomorphism group \cite{vlamis_notes_nodate}. See also \cite{hernandez_alexander_2019} and \cite{shapiro_alexander_2022}.
\par 
We prove an analogous result for infinite type graphs.
\begin{proposition}\label{PermutationTopology}
    The permutation topology on $\mathrm{Map}(\Gamma)$ agrees with the quotient topology.
\end{proposition}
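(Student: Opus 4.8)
The plan is to show that each of the two topologies on $\mathrm{Map}(\Gamma)$ refines the other. Write $\tau_q$ for the quotient topology and $\tau_p$ for the permutation topology. Since $\Gamma$ is of infinite type it is in particular non-sporadic, so by Theorem \ref{SESMainTheorem}(1) the action of $\mathrm{Map}(\Gamma)$ on $\mathcal{S}(M_\Gamma)$ is well defined and is exactly the action induced through the quotient map $\Psi$ of Proposition \ref{MCGHom} from the action of $\mathrm{Map}(M_\Gamma)$ on $\mathcal{S}(M_\Gamma)$ (whose kernel is $\mathrm{Twists}(M_\Gamma)=\ker\Psi$). I will also use that both $(\mathrm{Map}(\Gamma),\tau_q)$ and $(\mathrm{Map}(\Gamma),\tau_p)$ are topological groups: for $\tau_q$ this is standard, and for $\tau_p$ it follows by a direct check of continuity of multiplication and inversion, using that conjugation permutes the subbasic sets $U_S$ (indeed $gU_Sg^{-1}=U_{g(S)}$). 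Consequently in each direction it suffices to compare neighborhood bases at the identity.

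\emph{First, $\tau_p\subseteq\tau_q$.} It is enough to show each $U_S$ is $\tau_q$-open, and since $\Psi$ is a quotient map this amounts to showing $\Psi^{-1}(U_S)$ is open in $\mathrm{Map}(M_\Gamma)$. As the $\mathrm{Map}(\Gamma)$-action factors through $\Psi$, we have $\Psi^{-1}(U_S)=\mathrm{Stab}_{\mathrm{Map}(M_\Gamma)}([S])$. Isotoping $S$ into a compact union of pieces $L$ gives $\mathcal{V}_L\subseteq\mathrm{Stab}_{\mathrm{Map}(M_\Gamma)}([S])$; since $\mathcal{V}_L$ is an open subgroup of $\mathrm{Map}(M_\Gamma)$, its over-group $\mathrm{Stab}_{\mathrm{Map}(M_\Gamma)}([S])$ is a union of $\mathcal{V}_L$-cosets and hence open. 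Thus $U_S\in\tau_q$.

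\emph{Second, $\tau_q\subseteq\tau_p$; this is the substantial direction.} Because $\{\mathcal{V}_K\}_K$ is a neighborhood basis at the identity for $\tau_q$, it suffices to produce, for each compact subgraph $K$, finitely many vertices $S_1,\dots,S_\ell$ of $\mathcal{S}(M_\Gamma)$ with $\bigcap_i U_{S_i}\subseteq\mathcal{V}_K$. I would first enlarge $K$ to a compact subgraph that is a union of pieces and that swallows every compact component of its complement in $\Gamma$; then $L:=r^{-1}(K)$ is a compact connected union of pieces, with boundary spheres $T_1,\dots,T_m$, and every component of $M_\Gamma\setminus L$ is noncompact. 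For the sphere system I take the $T_j$, the core spheres of the pieces of $L$, and --- following the proof of Theorem \ref{SESMainTheorem}(1) --- a maximal sphere system of $L$ chosen so that its dual graph is simple, together with finitely many separating spheres of $L$ detecting the co-orientations of the non-separating members of that maximal system. Given $f\in\mathrm{Map}(\Gamma)$ fixing all these isotopy classes, lift it to $\phi\in\mathrm{Map}(M_\Gamma)$, which then fixes them too. Since $\phi$ fixes $[T_j]$ for all $j$ we may isotope $\phi$ to preserve the system $T_1\sqcup\cdots\sqcup T_m$ componentwise; as $L$ is the unique compact component of the complement of this system, $\phi$ preserves $L$ and each complementary component, and a further isotopy arranges $\phi|_{\partial L}=\mathrm{id}$. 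The argument of Theorem \ref{SESMainTheorem}(1) applied to the compact manifold $L$ (using Lemma 2.5 of \cite{brendle_mapping_2023} / Proposition \ref{HatcherVSES} to identify mapping classes of the complementary $M_{0,3}$-pieces with sphere twists) then forces $\phi|_L$ to be a product of sphere twists on the core spheres of $L$. Hence $\phi\in\mathcal{V}_L\cdot\mathrm{Twists}(M_\Gamma)$, and since $\ker\Psi=\mathrm{Twists}(M_\Gamma)$ and $\Psi(\mathcal{V}_L)=\mathcal{V}_{r(L)}=\mathcal{V}_K$ by Proposition \ref{MCGHom}(d), we get $\Psi^{-1}(\mathcal{V}_K)=\mathcal{V}_L\cdot\mathrm{Twists}(M_\Gamma)$, so $f=\Psi(\phi)\in\mathcal{V}_K$.

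The main obstacle is the middle step just above: verifying that finitely many spheres suffice to force $\phi|_L$ into $\mathrm{Twists}(L)$, a finite "Alexander-method" rigidity statement for the compact manifold $L$. This requires selecting the maximal sphere system and the auxiliary separating spheres carefully enough to rule out both permutations of the complementary $M_{0,3}$-pieces and flips of co-orientations --- precisely the phenomena excluded in the sporadic cases of Theorem \ref{SESMainTheorem}(1). Everything else is the bookkeeping identity $\Psi^{-1}(\mathcal{V}_K)=\mathcal{V}_L\cdot\ker\Psi$ together with Proposition \ref{MCGHom}(d) and the elementary observation that both topologies are group topologies.
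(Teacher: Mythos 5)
Your proof takes the same overall approach as the paper: compare neighborhood bases at the identity, show each $U_S$ contains some $\mathcal{V}_{r(P)}$ via $\Psi(\mathcal{V}_P)=\mathcal{V}_{r(P)}$, and conversely build a finite sphere collection whose stabilizer forces membership in $\mathcal{V}_K$. The first direction matches the paper's argument essentially verbatim (the paper invokes Proposition \ref{MCGHom}(d) rather than the open-subgroup observation, but the content is the same).

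For the second direction you are actually \emph{more} careful than the paper, and you have put your finger on a genuine soft spot. The paper's proof takes $\mathcal{S}$ to be only the core spheres and boundary spheres of $r^{-1}(L)$ and concludes from Proposition \ref{HatcherVSES} that any lift fixing these isotopy classes must restrict to a product of sphere twists on $r^{-1}(L)$. As stated, that step does not quite hold: the proper homotopy class corresponding to, say, $a\mapsto a^{-1}$, $b\mapsto b$ on a rank-$2$ piece fixes the isotopy classes of both core spheres and the boundary sphere (as unoriented embedded spheres) while being nontrivial in $\mathrm{Out}(F_2)$. This is exactly the co-orientation-flip phenomenon that the proof of Theorem \ref{SESMainTheorem}(1) goes to considerable trouble to exclude in the non-sporadic case — but there the argument uses \emph{all} spheres in $\mathcal{S}(M_\Gamma)$, whereas here one has only a finite collection. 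Your proposal repairs this by enlarging $\mathcal{S}$ to include a maximal sphere system of $r^{-1}(L)$ together with auxiliary separating spheres that detect co-orientations and rule out permutations of the $M_{0,3}$-pieces, which is the right idea and mirrors the argument in Theorem \ref{SESMainTheorem}(1). You flag this middle step yourself as "the main obstacle" rather than fully verifying it, and it does require checking (especially for small $r^{-1}(L)$ — one may need to enlarge $L$ slightly to avoid sporadic-type configurations); but the overall structure of your argument is sound and in fact more complete than what appears in the paper. Your bookkeeping identity $\Psi^{-1}(\mathcal{V}_K)=\mathcal{V}_{r^{-1}(L)}\cdot\ker\Psi$ is a clean way to finish.
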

\begin{proof}
    Recall from the work of Algom-Kfir--Bestvina that the quotient topology has a basis at the identity given by $\{\mathcal{V}_K\}$ where $K$ varies over the compact subgraphs of $\Gamma$ (these subgroups were defined right before Proposition \ref{RichardsHandlebody}). We may thus make use of these sets to show the equivalence of the two topologies. Note that both $\mathcal{V}_K$ and $U_S$ are subgroups, so it suffices to show any $\mathcal{V}_K$ contains some intersection of finitely many $U_S$'s for varying spheres $S$, and vice versa.
    \par 
    Fix a sphere $S\in \mathcal{S}(M_{\Gamma})$. Then $S$ is contained in a finite union of pieces $P$. It follows that $\mathcal{V}_{r(P)}\subset U_S$. Indeed, recall from the proof of Proposition \ref{MCGHom}(d) that $\Psi(\mathcal{V}_P)=\mathcal{V}_{r(P)}$, so the action of every element of $\mathcal{V}_{r(P)}$ on $\mathcal{S}(M_{\Gamma})$ agrees with the action of some element of $\mathcal{V}_P$, which clearly acts trivially on $S$. 
    \par 
     If $K$ is any compact subgraph of $\Gamma$, then take a graph $L$ containing $K$ so that $r^{-1}(L)$ is a union of pieces. Let $\mathcal{S}$ denote the finite collection of core spheres and boundary components of $L$. Then $\cap_{S\in \mathcal{S}} U_S \subset \mathcal{V}_L \subset \mathcal{V}_K$. Indeed, any lift to $\text{Map}(M_{\Gamma})$ of an element $f\in \cap_{S\in \mathcal{S}} U_S$ stabilizes $L$ up to isotopy, as the core spheres and boundary spheres of $L$ are fixed, There is a well defined restriction of $f$ to $L$. But up to isotopy, as the core spheres of $L$ are also fixed, the restriction of any element in $\Psi^{-1}(f)$ to $L$ must be a product of sphere twists by Proposition \ref{HatcherVSES}, and thus $f\in \mathcal{V}_L$.
\end{proof}

We will now show that the topology on $\text{Map}(M_{\Gamma})$ generated by the $\mathcal{V}_K$ sets agrees with the quotient topology arising from $\text{Homeo}^+(M_{\Gamma})$ (which recall we can use instead of $\text{Diff}^+(M_{\Gamma})$), and also that this topology is Polish. The argument here is inspired by the results in the appendix of \cite{vlamis_notes_nodate}. 
\begin{proposition}\label{TopologyEquivalence}
    The topology generated by the basis about the identity $\{\mathcal{V}_K\}$ where $K$ varies over the compact submanifolds of $M_{\Gamma}$ agrees with the quotient topology on $\mathrm{Map}(M_{\Gamma})$ arising from the compact open topology on $\mathrm{Homeo}^+(M_{\Gamma})$.
\end{proposition}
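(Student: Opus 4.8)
The plan is to show the two topologies agree by exhibiting, for each basic open set of one topology, a basic open set of the other that is contained in it; since all the relevant sets $\mathcal{V}_K$ are subgroups of $\mathrm{Map}(M_{\Gamma})$, it suffices to compare neighborhoods of the identity. First I would show that each $\mathcal{V}_K$ is open in the quotient topology. Recall $\text{Map}(M_{\Gamma})=\mathrm{Homeo}^+(M_{\Gamma})/\mathrm{Homeo}_0(M_{\Gamma})$, and let $q:\mathrm{Homeo}^+(M_{\Gamma})\to \text{Map}(M_{\Gamma})$ be the quotient map. By definition of the quotient topology it is enough to check that $q^{-1}(\mathcal{V}_K)$ is open in the compact-open topology. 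Here one uses that $M_{\Gamma}$ is a manifold, so $K$ has a compact collar neighborhood $N$; if $f\in\mathrm{Homeo}^+(M_{\Gamma})$ is isotopic to a map that is the identity on $K$, then any homeomorphism agreeing with $f$ on $N$ is also isotopic to such a map (the isotopy can be taken supported near $K$, using an isotopy extension/ambient isotopy argument in the collar). The set of homeomorphisms agreeing with $f$ on $N$ contains a compact-open neighborhood of $f$ of the form $\{g : g(N)\subset V\}$ intersected with a finite family of such conditions, hence $q^{-1}(\mathcal{V}_K)$ is open.

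Conversely I would show that every quotient-open neighborhood of the identity contains some $\mathcal{V}_K$. A subbasic compact-open neighborhood of $\mathrm{id}$ in $\mathrm{Homeo}^+(M_{\Gamma})$ has the form $W(C,U)=\{g : g(C)\subset U\}$ for $C$ compact and $U$ open with $C\subset U$; a basic neighborhood is a finite intersection of these. Passing to the quotient, a basic quotient-open neighborhood of $\mathrm{id}\in \text{Map}(M_{\Gamma})$ is $q(\mathcal{O})$ where $\mathcal{O}$ is such a basic neighborhood of $\mathrm{id}$ in $\mathrm{Homeo}^+(M_{\Gamma})$ (here we use that $q$ is open, since $\mathrm{Homeo}_0$ is normal). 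Choose by Fact \ref{CompactExhaustion} a compact union of pieces $K$ large enough that all the compact sets $C$ and closures of the open sets $U$ appearing in $\mathcal{O}$ lie in the interior of $K$. Then if $[f]\in \mathcal{V}_K$, pick the representative $g\in[f]$ with $g|_K=\mathrm{id}$; this $g$ satisfies $g(C)=C\subset U$ for each condition, so $g\in\mathcal{O}$, hence $[f]=q(g)\in q(\mathcal{O})$. Thus $\mathcal{V}_K\subset q(\mathcal{O})$, as desired.

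Combining the two inclusions, the identity map between $\text{Map}(M_{\Gamma})$ with the $\{\mathcal{V}_K\}$-topology and $\text{Map}(M_{\Gamma})$ with the quotient topology is continuous in both directions, so the topologies coincide. I expect the main obstacle to be the first direction: carefully verifying that agreeing with a homeomorphism on a compact collar neighborhood of $K$ forces isotopy to a map fixing $K$. This requires an isotopy-extension argument in dimension $3$ — one wants to first isotope $g$ so that it agrees with $f$ on $K$ itself (using that they already agree on the collar, so the "difference" is supported in a collar and is isotopically trivial there), and then compose with $f^{-1}$'s isotopy to the identity on $K$. One should also be mildly careful that the collar neighborhood can be taken to be a union of pieces or at least to have the needed product structure; this is where $M_{\Gamma}$ being a smooth (or PL) $3$-manifold, together with the cited results of Hatcher and Moise on $\mathrm{Homeo}$ versus $\mathrm{Diff}$, is used to justify working with $\mathrm{Homeo}^+$ throughout.
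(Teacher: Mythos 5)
There is a genuine gap in the first half of your argument. You claim that ``the set of homeomorphisms agreeing with $f$ on $N$ contains a compact-open neighborhood of $f$,'' but this set has empty interior in the compact-open topology: if $N$ has nonempty interior, then any compact-open neighborhood of $f$ contains homeomorphisms that are small perturbations of $f$ on $N$ and hence do not agree with $f$ there. (The set $\{g : g|_N = f|_N\}$ is a closed coset of the subgroup of homeomorphisms fixing $N$ pointwise, not an open set.) Consequently the inclusion you derive from it, that $q^{-1}(\mathcal{V}_K)$ contains an open neighborhood of $f$, does not follow. What you actually need is the weaker-looking but much harder statement that every $g$ sufficiently close to $f$ in the compact-open topology is \emph{isotopic} to a map agreeing with $f$ on $K$, even though $g$ will typically not agree with $f$ anywhere. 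This is precisely where the paper invokes nontrivial input: first Laudenbach's isotopy theorem (Theorem \ref{LaudenbachSphereIsotopy}) to move $g(\partial K)$ back to $\partial K$ when $g$ is close to $f$, then the Chernavskii / Edwards--Kirby local contractibility of homeomorphism groups of compact manifolds to conclude that the resulting map of the compact piece $K$, being close to $f|_K$, is isotopic rel $\partial K$ to it. Your appeal to a generic ``isotopy extension argument in the collar'' does not by itself supply this, because it presupposes pointwise agreement that you do not have.

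Your second direction (every quotient-open neighborhood of the identity contains some $\mathcal{V}_K$) is correct, and in fact cleaner than the paper's: you argue directly from the subbasis $W(C,U)$ of the compact-open topology and the openness of $q$, choosing $K$ to contain the compact sets $C$, whereas the paper routes through Arens' metrization of the compact-open topology. One small simplification: you do not need $\overline{U}\subset K$; it suffices that each $C_i\subset K$, since the representative $g$ with $g|_K=\mathrm{id}$ then satisfies $g(C_i)=C_i\subset U_i$ automatically.
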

\begin{proof}
    By work of Arens, the compact open topology on $\text{Homeo}^+(M_{\Gamma})$ can be metrized as follows \cite{arens_topology_1946}. Fix a complete metric $d$ on $M_{\Gamma}$, and consider a compact exhaustion $\{K_n\}_{n\in \N}$ of $M_{\Gamma}$ by unions of pieces. For $f, g\in \text{Homeo}^+(M_{\Gamma})$, define 
    \begin{equation*}
        \delta_n(f,g) = \min\{\max\{d(f(x), g(x)) \ | \ x\in K_n\}, 2^{-n}\}
    \end{equation*}
    and $\rho(f,g)=\sum_{n}\delta_n(f,g)$. Then $\rho$ induces the compact open topology on $\text{Homeo}^+(M_{\Gamma})$. 
    \par
    Let $\pi$ denote the quotient map from $\text{Homeo}^+(M_{\Gamma})$ to $\text{Map}(M_{\Gamma})$. Fix a connected union of pieces $K\subset M_{\Gamma}$. If $\rho(f, \text{id})$ is sufficiently small then $\partial K$ and $f(\partial K)$, are homotopic (as embeddings). Thus by Theorem \ref{LaudenbachSphereIsotopy}, we may modify $f$ by a small isotopy so that $f$ fixes $\partial K$ and also stabilizes $K$. But the group $\text{Map}(K, \partial K)$ is discrete. Indeed, this follows from work of Chernavskii or of Edwards--Kirby, which both say that homeomorphism groups of compact manifolds are locally contractible, and thus locally path connected \cite{chernavskii_local_1969}\cite{edwards_deformations_1971}. As $f$ restricted to $K$ is close to the identity on $K$ (assuming $\rho(f, id)$ is sufficiently small), it follows that $\pi(f)\in \mathcal{V}_K$.
    \par 
     Note that $\pi$ is an open map as quotient homomorphisms of topological groups are open. It follows that for any $[f]\in \mathcal{V}_K$, the $\pi$ image of a small $\rho$-ball about a representative $f$ of $[f]$ fixing $K$ is an open set (in the quotient topology) containing $[f]$ which is also a subset of $\mathcal{V}_K$. Indeed, if $\rho(f, f')$ is small, then since $f$ is assumed to fix $K$ pointwise, the work in the previous paragraph shows that $f'$ can be isotoped to fix $K$ pointwise as well. Thus $\mathcal{V}_K$ is a union of open sets in the quotient topology, so it is itself open.
    \par 
    On the other hand, for any open set $U$ in the quotient topology and any $[g]\in U$ there is some sufficiently large union of pieces $K$ so that $[g]\mathcal{V}_K\subset U$. This is because homeomorphisms in $\pi^{-1}([g]\mathcal{V}_K)$ have isotopy representatives whose restrictions to $K$ are the same as that of $g$, a fixed representative of $[g]$. In particular, by the definition of the metric $\rho$, every element of $\pi^{-1}([g]\mathcal{V}_K)$ has a representative $g'$ so that $\rho(g, g')<\epsilon$, where $\epsilon>0$ is small enough so that the $\pi$ image of the $\epsilon$ $\rho$-ball about $g$ lies in $U$ (allowing $K$ to be as large as desired is what allows us to guarantee this for any $\epsilon$). Thus $U$ is the union of open sets in the topology generated by the collection of $\mathcal{V}_K$'s, and is thus open in that topology as well. It follows that the topologies are equivalent.
\end{proof}

\begin{remark}
It seems likely that one can obtain a more "down to earth" proof of the fact that the mapping class group of a compact doubled handlebody with a finite collection of open balls removed equipped with the quotient topology is discrete, but the above proof suffices.     
\end{remark}

\par 

As a consequence of the previous proposition, we have the following fact.

\begin{corollary}\label{cor:doubledHandlebodyPolish}
    The topological group $\mathrm{Map}(M_{\Gamma})$ with either of the two topologies is Polish. In particular, $\mathrm{PMap}(M_{\Gamma})$ is Polish as well.
\end{corollary}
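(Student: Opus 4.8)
The plan is to realise $\mathrm{Map}(M_{\Gamma})$ as the quotient of a Polish group by a closed normal subgroup. By Proposition \ref{TopologyEquivalence} the permutation topology and the quotient topology on $\mathrm{Map}(M_{\Gamma})$ coincide, so it suffices to treat the quotient topology, for which $\mathrm{Map}(M_{\Gamma})=\mathrm{Homeo}^+(M_{\Gamma})/\mathrm{Homeo}_0(M_{\Gamma})$ with $\mathrm{Homeo}_0(M_{\Gamma})$ the path component of the identity. First I would record that $\mathrm{Homeo}(M_{\Gamma})$, equipped with the compact-open topology, is a Polish topological group: the manifold $M_{\Gamma}$ is connected, second countable, locally compact and locally connected (and $\sigma$-compact by Fact \ref{CompactExhaustion}), so by Arens \cite{arens_topology_1946} the compact-open topology is a group topology; it is separable since $M_{\Gamma}$ is second countable; and it is completely metrizable via the symmetrisation $\rho(f,g)+\rho(f^{-1},g^{-1})$ of the (bounded) metric $\rho$ recalled in the proof of Proposition \ref{TopologyEquivalence}. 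Then $\mathrm{Homeo}^+(M_{\Gamma})$ is an open subgroup of index two, hence also Polish.

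Next I would check that $\mathrm{Homeo}_0(M_{\Gamma})$ is a closed normal subgroup. It is normal as the identity component of a topological group. It is closed because, by the local contractibility of homeomorphism groups of manifolds (Chernavskii \cite{chernavskii_local_1969}, Edwards--Kirby \cite{edwards_deformations_1971}) --- already invoked in the proof of Proposition \ref{TopologyEquivalence} --- the group $\mathrm{Homeo}^+(M_{\Gamma})$ is locally path connected, so its path components coincide with its (open and closed) connected components. Invoking the standard fact that the quotient of a Polish group by a closed normal subgroup is Polish in the quotient topology, we conclude that $\mathrm{Map}(M_{\Gamma})$ is Polish; by Proposition \ref{TopologyEquivalence} this applies to both topologies on $\mathrm{Map}(M_{\Gamma})$.

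Finally, for $\mathrm{PMap}(M_{\Gamma})$ it suffices to exhibit it as a closed subgroup of $\mathrm{Map}(M_{\Gamma})$, since closed subgroups of Polish groups are Polish. The action on ends gives a homomorphism $\mathrm{Map}(M_{\Gamma})\to \mathrm{Homeo}(E(M_{\Gamma}))$ into the homeomorphism group of a compact metrizable space, and $\mathrm{PMap}(M_{\Gamma})$ is its kernel. This homomorphism is continuous: if a representative of $[f]$ is the identity on a connected union of pieces $K$, it preserves each complementary component of $K$, so the induced homeomorphism of $E(M_{\Gamma})$ fixes the clopen partition of $E(M_{\Gamma})$ cut out by $K$; as $K$ ranges over a compact exhaustion these partitions generate the topology of $E(M_{\Gamma})$, so $\mathcal{V}_K$ maps into an arbitrarily small neighbourhood of the identity. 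Hence the kernel $\mathrm{PMap}(M_{\Gamma})$ is closed, and therefore Polish.

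The part requiring the most care is the point-set verification that $\mathrm{Homeo}^+(M_{\Gamma})$, with the topology underlying the quotient description of $\mathrm{Map}(M_{\Gamma})$, is genuinely a Polish group (continuity of inversion for non-compact $M_{\Gamma}$, and completeness of the symmetrised metric) and that its identity component is closed; everything else is formal once this is in place. As an alternative route avoiding the homeomorphism group altogether, Theorem \ref{SESMainTheorem}(2) identifies $\mathrm{Map}(M_{\Gamma})$ with $\mathrm{Twists}(M_{\Gamma})\rtimes \mathrm{Map}(\Gamma)$ as topological groups, $\mathrm{Twists}(M_{\Gamma})$ is compact by Theorem \ref{TwistGroupStructure}, and a semidirect product of Polish groups is Polish, so Polishness of $\mathrm{Map}(M_{\Gamma})$ follows from that of $\mathrm{Map}(\Gamma)$ (known, e.g.\ from \cite{algom-kfir_groups_2021}), and similarly for $\mathrm{PMap}(M_{\Gamma})$ using $\mathrm{PMap}(\Gamma)$.
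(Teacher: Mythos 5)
Your proposal is correct, but your main argument runs in the opposite direction to the paper's. The paper's proof is a two-line deduction from the topological semidirect product decomposition $\mathrm{Map}(M_{\Gamma})\cong \mathrm{Twists}(M_{\Gamma})\rtimes \mathrm{Map}(\Gamma)$ of Theorem \ref{SESMainTheorem}(2): since $\mathrm{Twists}(M_{\Gamma})$ is compact (Theorem \ref{TwistGroupStructure}) and $\mathrm{Map}(\Gamma)$ is Polish (Algom-Kfir--Bestvina), the underlying space is a product of Polish spaces, hence Polish; and $\mathrm{PMap}(M_{\Gamma})$ is a closed subgroup, asserted without further comment. This is precisely your ``alternative route'' in the final paragraph. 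Your main route --- realising $\mathrm{Map}(M_{\Gamma})$ as $\mathrm{Homeo}^+(M_{\Gamma})/\mathrm{Homeo}_0(M_{\Gamma})$ and citing that quotients of Polish groups by closed normal subgroups are Polish --- is genuinely different and more self-contained: it does not use the hard-won splitting theorem, only Arens' result that $\mathrm{Homeo}(M_{\Gamma})$ is a topological group, completeness of the symmetrised sup-on-compacta metric, and local contractibility of $\mathrm{Homeo}(M_{\Gamma})$. One caution: the proof of Proposition \ref{TopologyEquivalence} only invokes Chernavskii and Edwards--Kirby for the \emph{compact} submanifold $K$, so your claim that $\mathrm{Homeo}^+(M_{\Gamma})$ itself is locally path connected for noncompact $M_{\Gamma}$ is a new application of those theorems, not a reuse --- it is true (both references cover the noncompact case), but you should cite it as such rather than as something ``already invoked.'' It is also worth noting that the paper derives the closedness of $\mathrm{Homeo}_0(M_{\Gamma})$ as a \emph{consequence} of Corollary \ref{cor:doubledHandlebodyPolish} (via Hausdorffness of the quotient), whereas you establish it first and then deduce Polishness --- a legitimate reversal that makes your argument independent of the splitting but dependent on the nontrivial local-contractibility input. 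Finally, your verification that $\mathrm{PMap}(M_{\Gamma})$ is closed, via continuity of the map to $\mathrm{Homeo}(E(M_{\Gamma}))$, supplies a detail the paper leaves implicit and is correct.
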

\begin{proof}
    This follows from Theorem \ref{SESMainTheorem}, as $\text{Map}(M_{\Gamma})$ is topologically the product of two Polish spaces, and is thus Polish. As $\text{PMap}(M_{\Gamma})$ is a closed subgroup of $\text{Map}(M_{\Gamma})$, it is Polish because closed subgroups of Polish groups are Polish. 
\end{proof}

It follows from this corollary that $\text{Homeo}_0(M_{\Gamma})$ is closed, as a quotient group of a topological group is Hausdorff if and only if the kernel is closed.
\par 
We also obtain the following result, which is analogous to Proposition 4.11 in \cite{algom-kfir_groups_2021}.
\begin{corollary}
    Suppose $\Gamma$ is an infinite type graph. Then $\mathrm{Map}(M_{\Gamma})$ is homeomorphic to $\Z^{\infty}$, i.e. to the set of irrationals.
\end{corollary}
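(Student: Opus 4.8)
The plan is to identify $\mathrm{Map}(M_{\Gamma})$ via the classical characterization of Baire space: a nonempty topological space is homeomorphic to $\mathbb{N}^{\mathbb{N}}$ — equivalently to $\Z^{\infty}$, equivalently to the space of irrationals — if and only if it is separable, completely metrizable, zero-dimensional, and nowhere locally compact (no nonempty open set has compact closure). This is the Alexandroff–Urysohn theorem, and it is the same route used for big mapping class groups of surfaces and for $\mathrm{Map}(\Gamma)$ in \cite[Proposition 4.11]{algom-kfir_groups_2021}. We carry it out for $\mathrm{Map}(M_{\Gamma})$ using the structural results already established.

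The key reduction is Theorem \ref{SESMainTheorem}(2), which gives a homeomorphism of topological spaces $\mathrm{Map}(M_{\Gamma})\cong \mathrm{Twists}(M_{\Gamma})\times\mathrm{Map}(\Gamma)$. By Theorem \ref{TwistGroupStructure} the first factor is a compact, metrizable, zero-dimensional group — a finite group if $\mathrm{rk}(\Gamma)<\infty$ and a Cantor set otherwise — and since $\Gamma$ is of infinite type, \cite[Proposition 4.11]{algom-kfir_groups_2021} identifies the second factor with the irrationals. Thus it remains only to observe that the product of a compact, metrizable, zero-dimensional space with the irrationals satisfies the four hypotheses of the Alexandroff–Urysohn theorem. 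Nonemptiness, separability, complete metrizability (which also follows from Corollary \ref{cor:doubledHandlebodyPolish}) and zero-dimensionality pass to such a product immediately; the only point needing a word is nowhere local compactness, and here one uses that a basic open set of the product has closure equal to the product of a compact set with the closure of an open subset of the irrationals, and the latter is never compact, so the closure is never compact.

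Should one prefer to avoid invoking \cite[Proposition 4.11]{algom-kfir_groups_2021}, the same four properties can be verified for $\mathrm{Map}(M_{\Gamma})$ directly: Polishness is Corollary \ref{cor:doubledHandlebodyPolish}; zero-dimensionality follows because, by Proposition \ref{TopologyEquivalence}, the subgroups $\mathcal{V}_K$ (for $K$ a compact union of pieces) form a neighborhood basis at the identity consisting of open subgroups, an open subgroup is also closed, and so their left cosets form a basis of clopen sets; and nowhere local compactness is obtained by showing each $\mathcal{V}_K$ is noncompact, by exhibiting an infinite-order $\phi\in\mathcal{V}_K$ supported off $K$ whose powers $\{\phi^{n}\}$ form an infinite closed discrete set. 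When $\mathrm{rk}(\Gamma)=\infty$ one takes $\phi$ to be a loop shift supported in a rank-infinite end neighborhood disjoint from $K$ — such an end accumulated by loops exists because the infinitely many loops of a locally finite graph leave every compact set and hence accumulate to an end lying in $E_{\ell}(\Gamma)$; when $E(\Gamma)$ is infinite one uses an accumulation point of $E(\Gamma)$ and a shift of the ends clustering there, realized as a proper homotopy equivalence via Theorem \ref{ADMQGraphClassification}. In both cases $\phi^{n}$ moves a fixed loop, or a fixed end-neighborhood, disjoint from $K$ to a new one for each $n$, so no subsequence of $\{\phi^{n}\}$ is eventually constant on a suitable compact set and $\{\phi^{n}\}$ has no convergent subsequence; by homogeneity of topological groups, $\mathrm{Map}(M_{\Gamma})$ is then nowhere locally compact. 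Either way the Alexandroff–Urysohn theorem yields $\mathrm{Map}(M_{\Gamma})\cong\Z^{\infty}$.

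The main obstacle is the nowhere-local-compactness step. On surfaces this is automatic from Dehn twists on curves disjoint from $K$, but in the graph setting ``shifts'' are not canonical, so one must name, for every prescribed compact set, an explicit infinite-order mapping class supported in its complement with non-convergent powers, and handle the infinite-rank and infinitely-many-ends cases separately (including checking the relevant shift is realized by a proper homotopy equivalence). If instead one cites \cite[Proposition 4.11]{algom-kfir_groups_2021}, this difficulty is absorbed there, and the only remaining content is the (formal) verification that the topological splitting of Theorem \ref{SESMainTheorem}(2) transports the Alexandroff–Urysohn hypotheses across a product with the compact group $\mathrm{Twists}(M_{\Gamma})$.
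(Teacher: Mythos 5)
Your main proof is essentially the same as the paper's: write $\mathrm{Map}(M_{\Gamma})\cong\mathrm{Twists}(M_{\Gamma})\times\mathrm{Map}(\Gamma)$ via Theorem \ref{SESMainTheorem}(2), identify the factors using Theorem \ref{TwistGroupStructure} and \cite[Proposition 4.11]{algom-kfir_groups_2021}, and invoke the topological characterization of $\Z^{\infty}$ (the paper cites it as a theorem of Hausdorff; you cite Alexandroff–Urysohn — same result). The alternative direct verification you sketch is a nice addition but the core argument coincides.
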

\begin{proof}
    The space $\text{Map}(M_{\Gamma})$ is topologically the product of a finite set or a Cantor set, and a space homeomorphic to $\Z^{\infty}$. This follows from Theorem \ref{SESMainTheorem} and Proposition 4.11 in \cite{algom-kfir_groups_2021}, which implies that $\text{Map}(\Gamma)$ is homeomorphic to $\Z^{\infty}$. In particular, $\text{Map}(M_{\Gamma})$ is separable, completely metrizable, zero dimensional (i.e. has a basis of clopen sets), and every compact subset has empty interior. It follows from a theorem of Hausdorff (see for example \cite{eberhart_remarks_1977}
    )
    that $\text{Map}(M_{\Gamma})$ is homeomorphic to $\Z^{\infty}$.
\end{proof}

\subsection{Nonseparating spheres and finite rank graphs}\label{nonseparating Sphere Finite Rank}
Recall from Subsection \ref{SphereGraph} the nonseparating sphere complex $\mathcal{S}_{ns}(M_{\Gamma})$, which is the full subcomplex of $\mathcal{S}(M_{\Gamma})$ with vertices the nonseparating spheres of $M_{\Gamma}$. We remark that the vertices of this complex coincide with those of the subcomplex $Y(M_{\Gamma})$ defined in Section \ref{Connectivity}, but $\mathcal{S}_{ns}(M_{\Gamma})$ is strictly larger than $Y(M_{\Gamma})$ in almost all cases.
\par 
Our goal is to prove Theorem \ref{PositiveTranslationLength}. To do this, we utilize projection maps defined on spheres, using ideas analogous to those described by Aramayona--Valdez in Theorem 1.4 of \cite{aramayona_geometry_2018} and Section 8 of the work of Durham--Fanoni--Vlamis in \cite{durham_graphs_2018}. Projection maps for spheres were initially studied by Sabalka--Savchuk in \cite{sabalka_submanifold_2012}, but their definition is too restrictive for what we need here. Namely, in their definition they require that the components of the projection of a sphere $S$ to a submanifold $K$ with sphere boundary components can only arise from surgeries of components of $S\cap K$ which are discs. This is to avoid components whose projections are not essential or non-peripheral. Consider, for example, a cylinder component of $S\cap K$ intersecting a single component of $\partial K$. There are three choices of surgery one could perform with this component and the boundary of $K$ it intersects arising from capping off by disc in the boundary. One of them would produce a sphere bounding a ball, and two of them would produce a sphere parallel to the boundary. Various other types of projections have been defined in the study of $\text{Out}(F_n)$, see \cite{hamenstadt_spheres_2011}, \cite{bestvina_subfactor_2014} and \cite{hamenstadt_submanifold_2024} for some examples.
\par 
 To define the notion of projection we will use, we need the following lemma which will imply the independence of the choice of representative of the sphere we are projecting. Recall the notions of normal form and equivalence of sphere systems defined in Subsection \ref{subsec:graphsandtheirdoubledhandlebodies}.
 \begin{lemma}\label{lem:ProjRepIndependence}
     Suppose $S$ and $S'$ are two isotopic spheres which are in normal form with respect to the boundary of a submanifold $K$ with sphere boundary components which do not bound a ball in $M_{\Gamma}$. Let $T$ be an embedded essential non-peripheral sphere which arises from capping off the boundary components of a connected component of $S\cap K$ with discs in $\partial K$. Then there is an embedded essential non-peripheral sphere $T'$ which is given by the same procedure with a component of $S'\cap K$ so that $T$ and $T'$ are isotopic inside of $K$.
 \end{lemma}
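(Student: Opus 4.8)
The plan is to exploit the equivalence of normal-form sphere systems (Lemma \ref{lem:NormalAndEquivalent}(c)) together with the fact that a homotopy of embedded spheres can be upgraded to an ambient isotopy (Theorem \ref{LaudenbachSphereIsotopy}), much as in Hatcher's original projection arguments. First I would observe that since $S$ and $S'$ are isotopic and both in normal form with respect to $\partial K$, they are \emph{equivalent} in the sense of Definition \ref{def:equivSphere}: there is a homotopy $h_t$ from $S$ to $S'$ along which every piece stays transverse to $\partial K$ and the intersection pattern $h_t(S) \cap \partial K$ varies only by isotopy inside the spheres of $\partial K$. Tracking this homotopy, it induces a bijection between the connected components of $S \cap K$ and those of $S' \cap K$: a component $P$ of $S \cap K$ is carried by $h_t$ to a component $P'$ of $S' \cap K$, and the boundary circles of $P$ (each lying on a component of $\partial K$ and bounding a specified disc there) are carried to the boundary circles of $P'$ lying on the same components of $\partial K$, bounding isotopic discs.

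Next I would use this correspondence to produce $T'$. Given the component $P$ of $S \cap K$ whose capping-off yields $T$, let $P'$ be the corresponding component of $S' \cap K$, and let $T'$ be obtained by capping $\partial P'$ with the discs in $\partial K$ that are the $h_t$-images of the capping discs used for $T$. Because the homotopy $h_t$ restricted to $P$ (extended over the capping discs, which move by isotopy in $\partial K$) gives a homotopy of embedded spheres from $T$ to $T'$ whose track stays inside $K$ — here I'd use that $\partial K$ separates $K$ from its complement and that the pieces stay transverse, so the capping discs and the swept-out region remain in $K$ — Theorem \ref{LaudenbachSphereIsotopy} (applied to the $3$-manifold $K$, noting $T$ and $T'$ are embedded spheres and no subtlety about multiple homotopic components arises since we handle one sphere) upgrades this to an ambient isotopy of $K$ carrying $T$ to $T'$. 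Finally, one must check that $T'$ is essential and non-peripheral in $M_\Gamma$: this follows because $T'$ is isotopic to $T$ in $K$, hence in $M_\Gamma$, and $T$ was assumed essential and non-peripheral; one also uses the hypothesis that the components of $\partial K$ do not bound balls in $M_\Gamma$ to rule out degenerate cappings, though since $T' \simeq T$ this is automatic once $T'$ is produced.

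The main obstacle I expect is making precise the claim that the equivalence homotopy $h_t$ genuinely induces a well-defined bijection on components of the intersections together with a matching of their boundary circles and capping discs — i.e., that "$h_t(S)\cap \partial K$ varies by isotopy in $\partial K$" really does track components of $S \cap K$ to components of $S' \cap K$ in a way compatible with the capping-off operation. This is essentially a careful unwinding of Definition \ref{def:equivSphere} and Lemma \ref{lem:NormalAndEquivalent}(c), but it requires attention: one needs the homotopy to be ambient (or at least to restrict to an isotopy of the curve system in $\partial K$) so that the capping discs move continuously, and one needs to confirm that a component of $S\cap K$ cannot merge with or split from another along $h_t$ — which is exactly what normal form and the transversality condition in the definition of equivalence prevent. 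Once this bookkeeping is in place, the rest is a direct application of Theorem \ref{LaudenbachSphereIsotopy}.
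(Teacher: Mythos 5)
Your proposal is correct and follows essentially the same route as the paper: invoke Lemma \ref{lem:NormalAndEquivalent}(c) to get an equivalence homotopy $h_t$, track the component $R$ of $S\cap K$ to the corresponding component $R' = h_1(R)$ of $S'\cap K$, cap off with the isotoped discs in $\partial K$ to produce $T'$, and apply Theorem \ref{LaudenbachSphereIsotopy} to the resulting homotopy supported in $K$. The only small technicality the paper addresses that you don't mention explicitly is pushing nested capping discs slightly off $\partial K$ into $K$ to keep $T$ and $T'$ embedded, but this falls under the "bookkeeping" you already flag.
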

 \begin{proof}
     We can assume that $S$, and therefore $S'$ nontrivially intersect $\partial K$, and thus such a $T$ as described in the statement of the lemma actually exists. Let us clarify that if the discs in $\partial K$ used to define $T$ are nested, to obtain $T$ as an embedded sphere we need to move some of the discs slightly into $K$. We can assume that this happens on some fixed small neighborhood of $\partial K$.
     \par 
     By Lemma \ref{lem:NormalAndEquivalent}(c), $S$ and $S'$ are equivalent. Fix a homotopy $h_t$ from $S$ to $S'$ as in Definition \ref{def:equivSphere}. Let $R$ denote the component of $S\cap K$ so that $T$ is the result of capping off boundary components of $R$ with discs in $\partial K$. By the definition of the homotopy $h_t$, the set $R'=h_1(R)$ is a component of $S'\cap K$. Further, the discs in $\partial K$ that cap off $R$ to obtain $T$ correspond to discs whose circle boundary components are the components of $R'\cap \partial K$, as $h_t$ restricts to an isotopy on $\partial K$. We can then define a sphere $T'$ which results from capping $R'$ with these discs (again, to obtain an embedded sphere one may need to slightly move things in the fixed small neighborhood of $\partial K$, as discussed above).
     \par 
     The restriction of the homotopy $h_t$ from $R$ to $R'$ extends to a homotopy from $T$ to $T'$ so that the image of the homotopy lies entirely in $K$, by the construction of $T'$. There is some minor care to be had here, again due to the fact that we may need to move the discs that cap off the boundary components of $R$ and $R'$ off of $\partial K$. But it is easy to see that one can modify $h_t$ on the fixed neighborhood of $\partial K$ so that the above statement is true. By Theorem \ref{LaudenbachSphereIsotopy}, it follows that $T$ and $T'$ are isotopic in $K$, since the homotopy between them is supported in $K$.
 \end{proof}
 
\par 
\begin{definition}\label{def:SphereProjection}
    Take a graph $\Gamma$, and fix an submanifold $K$ of $M_{\Gamma}$ with essential non-peripheral sphere boundary components. Let $S$ denote an essential non-peripheral embedded sphere in $M_{\Gamma}$, and suppose $S$ has been isotoped so that the intersection of $S$ and $\partial K$ is transverse and $S\cap \partial K$ has a minimal number of components among the isotopy class of $S$ (i.e. $S$ is in normal form with respect to $\partial K$, by Lemma \ref{lem:NormalAndEquivalent}). We define the \textit{projection of} $S$ \textit{to} $K$ as follows. We denote it by $\pi_K(S)$, where
    \begin{enumerate}
        \item if $S\subset K$, then $\pi_K(S)=\{S\}$,
        \item if $S\cap K=\varnothing$, then $\pi_K(S)=\varnothing$,
        \item if $S\cap K$ consists of a collection of proper submanifolds of $S$, then $\pi_K(S)$ is the set of essential non-peripheral embedded spheres which arise from capping off the boundary components of each of these submanifolds by discs in $\partial K$.
    \end{enumerate}
\end{definition}
By Lemma \ref{lem:ProjRepIndependence}, the definition of $\pi_K(S)$ does not depend on which representative of the isotopy class which is in normal form with respect to $\partial K$ that one chooses. 
\par 
As discussed above, $\pi_K(S)$ may be empty even when $S$ intersects $K$ minimally and nontrivially. Luckily, for projections of nonseparating spheres, this is not the case for all $K$ sufficiently large, in the following sense. 

\begin{definition}\label{def:FullSubmfld}
    A connected submanifold $K\subset M_{\Gamma}$ is \textit{full} if its boundary components are spheres in $M_{\Gamma}$ which are not homotopically trivial and so that every component of $M_{\Gamma}\setminus K$ is simply connected.
\end{definition}

For this class of submanifolds, we can show that projections behave in the desired way, i.e. all nonseparating spheres have a nonempty projection to any full submanifold. Before we show this, we give an auxiliary lemma.

\begin{lemma}\label{lem:arcReplacement}
    Let $U$ be a manifold which is a simply connected doubled handlebody with a single ball removed. Suppose $\{D_i\}$ is a finite set of embedded discs in $U$ so that $D_i \cap \partial U=\partial D_i$. Let $a$ be an arc connecting two points of $\partial U$ which does not intersect any disc in $\{D_i\}$. Then the endpoints of $a$ are in the same component of $\partial U \setminus \bigcup_i \partial D_i$.
\end{lemma}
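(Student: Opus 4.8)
The plan is to close the arc $a$ into a loop and exploit the fact that $\pi_1(U)$ is trivial, via $\mathbb{Z}/2$--intersection numbers with the discs $D_i$. Since $U$ is a simply connected doubled handlebody with one ball removed, $\partial U$ is a single $2$--sphere, and the circles $\partial D_i$ form a pairwise disjoint family on it; write $\partial U\setminus\partial D_i=A_i\sqcup B_i$ for the two open discs that $\partial D_i$ cuts off. Let $p,q$ denote the endpoints of $a$, so that $p,q\in\partial U\setminus\bigcup_i\partial D_i$. Note that each $D_i$ is properly embedded in $U$ with $D_i\cap\partial U=\partial D_i$, and is two--sided (being a disc in an orientable $3$--manifold).

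The heart of the argument is the claim that for each $i$ the points $p$ and $q$ lie on the same side of $\partial D_i$, i.e. both in $A_i$ or both in $B_i$. Suppose not, say $p\in A_i$ and $q\in B_i$. Choose an embedded arc $b$ from $q$ to $p$ whose interior lies in the interior of $U$ and which agrees, away from finitely many points, with an arc in $\partial U$ meeting $\partial D_i$ transversely; near each crossing, push $b$ slightly into the interior of $U$ so that it meets $D_i$ transversely there, and nowhere else. Since $\partial D_i$ separates $p$ from $q$ on $\partial U\cong S^2$, this arc crosses $\partial D_i$ an odd number of times, so $b$ meets $D_i$ transversely in an odd number of points. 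Now $\ell=a\ast b$ is a loop in $U$; as $a\cap D_i=\varnothing$ by hypothesis, the mod $2$ intersection count satisfies $|\ell\cap D_i|=|b\cap D_i|=1\in\mathbb{Z}/2$. On the other hand, mod $2$ intersection with a properly embedded two--sided codimension--one submanifold defines a homomorphism $\pi_1(U)\to\mathbb{Z}/2$, which is trivial because $U$ is simply connected; hence $|\ell\cap D_i|=0$, a contradiction. This proves the claim.

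It remains to deduce that $p$ and $q$ lie in the same component of $\partial U\setminus\bigcup_i\partial D_i$. This is a standard fact about a finite family of pairwise disjoint circles on $S^2$: the complementary regions, with the adjacency relation, form a tree (there are $n+1$ regions for $n$ circles, each circle contributing exactly one edge), and any two distinct regions are on opposite sides of every circle lying on the unique path joining them; thus two points lying on the same side of every $\partial D_i$ lie in a common component. Applying this to $p$ and $q$ finishes the proof.

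I expect the only delicate point to be the intersection--count step: arranging $b$ so that its crossings with $\partial D_i$ become honest transverse interior crossings with $D_i$ while $b$ stays disjoint from $D_i$ elsewhere, and then invoking $\pi_1(U)=1$; the reduction itself and the closing combinatorial lemma are routine. The hypothesis that the $D_i$ form a disjoint family (so the $\partial D_i$ are disjoint circles) is what the last step requires and is tacitly assumed in the statement.
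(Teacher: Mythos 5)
Your proof is correct and takes a genuinely different route. The paper argues by induction on the number of discs: since $U$ is a simply connected doubled handlebody with one ball removed, each $D_i$ separates $U$ and cuts off a punctured ball; if an endpoint of $a$ lies in an outermost disc region of $\partial U\setminus\bigcup_i\partial D_i$ then $a$ is trapped in the adjacent punctured ball so the other endpoint lies there too, and otherwise one prunes the outermost punctured balls and inducts on a manifold of the same type with fewer discs. You instead close $a$ into a loop and observe that $\pi_1(U)=1$ forces the $\Z/2$-intersection of a null-homotopic loop with a properly embedded two-sided surface to vanish, so $p$ and $q$ lie on the same side of each $\partial D_i$; the tree of complementary regions of disjoint circles on $S^2$ then promotes ``same side of every circle'' to ``same component.'' Your version isolates exactly which hypotheses matter ($\pi_1(U)=1$ and $\partial U\cong S^2$) and avoids both the induction and the ``cuts off a punctured ball'' geometry, at the cost of slightly more bookkeeping. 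Two small points worth tightening: the description of $b$ is self-contradictory as stated (it cannot simultaneously have interior in the interior of $U$ and agree with an arc on $\partial U$ away from finitely many points); instead take $b$ to be a small interior push, rel endpoints, of a $\partial U$-arc transverse to $\partial D_i$, which then meets $D_i$ transversely in as many points as the original arc met $\partial D_i$. And to justify homotopy invariance of the $\Z/2$-count one should push the null-homotopy of $\ell$ into the interior of $U$ so it avoids $\partial D_i\subset\partial U$, after which the standard transversality and boundary-parity argument applies.
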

\begin{proof}
    Let $C_i= \partial D_i$. Then the collection $\{C_i\}$ decomposes $\partial U$ into a collection of subsurfaces. Suppose an endpoint of $a$ is contained in one of the subsurfaces which is a disc. This disc has boundary equal to the boundary component of some $D_i$. But $D_i$ cuts off a punctured ball (the punctures being some subcollection of ends of $U$), so it is clear that the other endpoint of $a$ must be in this disc. If no endpoint of $a$ is in such a disc, we can modify $U$ by taking each disc as above in $\partial U$ and removing the punctured ball that the corresponding $D_i$ cuts off. The resulting manifold is still a doubled handlebody with a single ball removed, but some ends have been filled in. We can inductively do this removal until some endpoint of $a$ is in a disc, and then proceed as above. 
\end{proof}

The next lemma is stated in full generality, though we will only need it when $\Gamma$ is a finite rank graph. We note the following essential fact that is used in the following lemma. A sphere $S\subset K$ for $K$ a submanifold of $M_{\Gamma}$ is nonseparating in $K$ if and only if there is a closed curve that lies in $K$ which intersects $S$ exactly one time.

\begin{lemma}\label{nonseparatingProjections}
    Let $\Gamma$ be a graph, and fix a sphere $S\in \mathcal{S}_{ns}(M_{\Gamma})$ and a full submanifold $K$ of $M_{\Gamma}$. Then $\pi_{K}(S)$ is nonempty and consists of nonseparating spheres in $K$. Further, every choice of surgery on a component of $S\cap K$ which results in an embedded sphere is also essential and non-peripheral.
\end{lemma}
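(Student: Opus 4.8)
The plan is to work with $S$ placed in normal form with respect to $\partial K$ (Lemma \ref{lem:NormalAndEquivalent}) and to use throughout the criterion recalled just before the lemma: a sphere $T\subset K'$ is nonseparating in $K'$ exactly when some closed curve in $K'$ meets $T$ once. Write $S\cap\partial K=\{c_1,\dots,c_n\}$; since $S$ is a sphere these disjoint circles cut it into planar pieces whose dual graph is a tree, and each piece lies either in $K$ (a \emph{$K$-piece}) or in a component $C$ of $M_\Gamma\setminus K$ (a \emph{$C$-piece}). By fullness each such $C$ is a simply connected doubled handlebody with a single boundary sphere $\partial C\subset\partial K$, which is precisely the setting in which Lemma \ref{lem:arcReplacement} applies. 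I would first dispatch the degenerate cases. If $S\subset K$, then $\pi_K(S)=\{S\}$, and $S$ is nonseparating in $K$: a loop $\gamma$ in $M_\Gamma$ meeting $S$ once has each subarc inside a component $C$ disjoint from $S$ with endpoints on the single sphere $\partial C$, hence (as $C$ is simply connected) homotopic rel endpoints to an arc on $\partial C$; pushing these arcs into $K$ gives a loop in $K$ still meeting $S$ once. Next I would rule out $S$ being disjoint from $K$: if a normal-form representative of $S$ lay in some $C$, then since $C$ is simply connected $S$ would separate $C$, and since $\partial C$ is a single sphere, one side of $S$ in $C$ would contain no point of $\partial C$ and hence be a (possibly punctured) ball in $M_\Gamma$; so $S$ would be inessential, peripheral, or separating in $M_\Gamma$, all excluded by $S\in\mathcal{S}_{ns}(M_\Gamma)$. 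Thus a normal-form representative of $S$ either lies in $K$ or meets $\partial K$ essentially, and in the latter case there is at least one $K$-piece, so $\pi_K(S)\neq\varnothing$ once the essentiality claim below is known.

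For the last sentence of the lemma, I would argue by minimality of the normal form. Let $R$ be a component of $S\cap K$ and suppose capping $\partial R$ by discs in $\partial K$ gives an embedded sphere $T$ that bounds a ball (or a once-punctured ball) $B$. Then $B$, together with $R$ and the capping discs, exhibits an isotopy of the piece $R$ across $\partial K$ that strictly decreases $|S\cap\partial K|$, contradicting that $S$ is in normal form (Lemma \ref{lem:NormalAndEquivalent}(b)). Carrying this out rigorously requires controlling how $B$ meets the complementary components of $K$; this is where Lemma \ref{lem:arcReplacement} is used, to see that the frontier circles of $B$ in the various $C$'s can be pushed around inside $\partial C$, exactly as in Hatcher's normal-form arguments, so that the innermost-disc/irreducibility reduction goes through.

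Finally, for "$\pi_K(S)$ consists of nonseparating spheres in $K$", fix $T\in\pi_K(S)$ arising from a $K$-piece $R$. The same minimality argument shows that $R$ must be homologically nontrivial rel $\partial K$ inside $K$, i.e. $[R]\neq 0$ in $H_2^{BM}(K,\partial K;\Z/2)$ — otherwise $R$ would bound rel $\partial K$ and could be isotoped off $\partial K$. Since the boundary circles $\partial R$ lie on spheres, they vanish in $H_1(\partial K;\Z/2)$, so in the long exact sequence of the pair $(K,\partial K)$ the class $[R]$ lifts, and the sphere $T$ (any allowed choice of capping discs) is such a lift; hence $[T]\neq 0$ in $H_2^{BM}(K;\Z/2)$, which for a sphere is equivalent to $T$ being nonseparating in $K$. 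Alternatively, one can give a direct geometric version of this: starting from a loop $\gamma$ in $M_\Gamma$ with $\gamma\cdot S=1$ made transverse to $S\cup\partial K$, use Lemma \ref{lem:arcReplacement} to homotope each subarc of $\gamma$ inside a component $C$ to a subarc of $\partial C$ missing $S\cap\partial C$, producing a loop $\gamma'\subset K$ whose intersection with $S\cap K$ is still odd, and then push $\gamma'$ off the capping discs to land on one of the $T_j$'s an odd number of times.

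The main obstacle, I expect, is precisely the essential/non-peripheral claim and the companion step that a $K$-piece is homologically essential rel $\partial K$: both amount to converting the word "minimal" in the definition of normal form into an explicit isotopy reducing $|S\cap\partial K|$, and this demands careful handling of how the auxiliary ball or $3$-chain interacts with the simply connected complementary components of $K$ — which is exactly the purpose of Lemma \ref{lem:arcReplacement}. A secondary technical point is the bookkeeping when $C$-pieces are not discs, where Lemma \ref{lem:arcReplacement} must be applied to the circles $S\cap\partial C$ rather than to the pieces themselves.
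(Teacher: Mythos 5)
Your plan correctly identifies the key ingredients (normal form, the tree argument showing the pieces of $S$ outside $K$ are discs, Lemma \ref{lem:arcReplacement}, and the loop criterion for nonseparating), and your explicit treatment of the degenerate cases $S\subset K$ and $S\cap K=\varnothing$, which the paper leaves implicit, is a genuine improvement. But your primary argument has two real gaps. The ``minimality'' step asserts that if a capped sphere $T$ bounds a (punctured) ball $B$ then $B$ exhibits an isotopy of $R$ across $\partial K$ strictly decreasing $|S\cap\partial K|$; this is not clear, since such an isotopy may drag other pieces of $S$ into $\partial K$ and there is no reason the net count drops -- you acknowledge this yourself. The homological step then derives $[R]\neq 0$ in $H_2^{BM}(K,\partial K;\Z/2)$ from the same (unestablished) minimality fact, via the claim that a nullhomologous $R$ ``could be isotoped off $\partial K$''; but nullhomologous does not imply isotopable, and producing a dual curve in $K$ meeting $R$ an odd number of times is exactly the substance of the lemma, not a consequence of minimality.

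The argument you offer only as an ``alternative'' is in fact the paper's proof and is the right one: since $S$ is nonseparating, take a loop $\gamma$ meeting $S$ once, slide the intersection into the chosen component $R$ of $S\cap K$, show via the tree/local-injectivity argument that each component of $S\cap U$ is a disc (so Lemma \ref{lem:arcReplacement} applies), and replace the arcs of $\gamma$ in each $U$ by arcs in $\partial U\setminus S$; a small push into $\mathrm{int}(K)$ yields a loop $\gamma'\subset K$ meeting $R$ exactly once and disjoint from $\partial K$, hence from any capping discs, so $\gamma'\cdot T=1$ for \emph{every} capping choice $T$. Since nonseparating already implies essential and non-peripheral, this one construction proves both conclusions simultaneously, making the separate minimality argument unnecessary. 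Your closing worry about ``$C$-pieces that are not discs'' also does not arise: the tree argument shows they are always discs in a simply connected complementary component.
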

\begin{proof}
    We may assume via an isotopy that $S$ is in normal form with respect to $\partial K$. As $S$ is nonseparating, we can take a closed curve $\gamma$ in $M_{\Gamma}$ which intersects it in one point. Further, we can assume by a homotopy of $\gamma$ that this intersection occurs in any fixed component $R$ of $S\cap K$, and that $\gamma$ has transverse intersection with $\partial K$. 
    \par 
    Our goal to replace $\gamma$ with a curve $\gamma'$ which lies in $K$ that intersects $R$ in the same point, and nowhere else. Once this is done, the result follows, as any sphere which results from surgery using the component $R$ will be nonseparating by the fact mentioned before the lemma. Let $U$ denote a complementary component of $K$, and suppose $a$ is a component of $\gamma \cap U$. If $S$ does not intersect $U$, it is clear we may modify $\gamma$ by replacing $a$ with another arc between the intersection points of $a$ with $\partial K$ with a small arc that lies entirely in $\partial K$. This new curve does not intersect $S$ at any new points.
    \par 
    Otherwise, suppose $S$ intersects $U$. By assumption $a$ does not intersect $S\cap U$. We want to show that the endpoints of $a$ lie in the same component of $\partial K\setminus S$. If we can do this, then we can modify $\gamma$ as above by replacing $a$ with an arc in $\partial K$ that does not intersect $S$ via connecting the endpoints of $a$ with an arc in the component of $\partial K \setminus S$ that they both lie in.
    \par 
    Let $\{D_i\}$ be the collection of components of $S\cap U$. As $U$ is simply connected, it follows that each $D_i$ is a disc with boundary in $\partial U$. To see this, recall that by the definition of normal form with respect to $\partial K$, $S$ is actually in normal form with respect to some maximal sphere system $\mathcal{P}$ containing $\partial K$. Consider the collection $\mathcal{P}_U$ which consists of the collection of elements of $\mathcal{P}$ in $U$, along with $\partial U$. Then $\mathcal{P}_U$ defines a tree $T(U)$, whose vertices are the components of $U\setminus \mathcal{P}_U$, with an edge between two vertices if they share a common element of $\mathcal{P}_U$ as a boundary component. The disc $D_i$ also has a well defined tree $T(D_i)$, whose vertices are components of $D_i\setminus \mathcal{P}_U$, and with an edge between two vertices if both intersect the same component of $\mathcal{P}_U$. 
    \par 
    There is a natural map $T(D_i)\to T(U)$ sending a component of $D_i \setminus \mathcal{P}_U$ to the component of $U\setminus \mathcal{P}_U$ containing it. From the definition of normal form, the map is locally injective. But a locally injective map of a tree into a tree is injective. In particular, $D_i$ intersects the component of $U\setminus \mathcal{P}_U$ containing $\partial U$ exactly once. It follows that $D_i$ is a disc.
    \par 
    But now, by Lemma \ref{lem:arcReplacement}, the endpoints of $a$ are in the same component of $\partial U \setminus S$, so we can replace $a$ with another arc as described above. Repeating this for all arcs of $\gamma$ that leave $K$, we obtain the curve $\gamma'$ as desired. 
\end{proof}

We have produced a map $\pi_{K}:\mathcal{S}_{ns}(M_{\Gamma})\to \mathcal{P}(\mathcal{S}_{ns}(K))\setminus \{\varnothing\}$, where the right hand side is the power set of $\mathcal{S}_{ns}(K)$ for $K$ a full submanifold of $M_{\Gamma}$, excluding the empty set. (Strictly speaking the map actually only inputs vertices of $\mathcal{S}_{ns}(M_{\Gamma})$, and not an arbitrary point). We now obtain the following Lipschitz bounds on this map.
\begin{lemma}\label{lem:LipschitzProjBounds}
     Fix $S_1, S_2\in \mathcal{S}_{ns}(M_{\Gamma})$ with $d_{\mathcal{S}_{ns}(M_{\Gamma})}(S_1, S_2)\leq 1$. If $K$ is any full submanifold, then $\mathrm{diam}_{\mathcal{S}_{ns}(K)}(\pi_K(S_1), \pi_K(S_2))\leq 3$. 
\end{lemma}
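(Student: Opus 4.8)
The plan is to derive the bound from the triangle inequality after establishing two auxiliary estimates: \textbf{(A)} for any single $S\in\mathcal{S}_{ns}(M_{\Gamma})$ and any full submanifold $K$, $\mathrm{diam}_{\mathcal{S}_{ns}(K)}(\pi_K(S))\le 1$; and \textbf{(B)} if $S_1,S_2\in\mathcal{S}_{ns}(M_{\Gamma})$ are disjoint (or equal), then there exist $T_i\in\pi_K(S_i)$ with $d_{\mathcal{S}_{ns}(K)}(T_1,T_2)\le 1$. Granting these, for arbitrary $T_i\in\pi_K(S_i)$ one gets $d_{\mathcal{S}_{ns}(K)}(T_1,T_2)\le d(T_1,T_1')+d(T_1',T_2')+d(T_2',T_2)\le 1+1+1=3$, where $T_1',T_2'$ is the pair furnished by (B) and the outer estimates come from (A); and any two spheres lying in a single $\pi_K(S_i)$ are at distance $\le 1$ by (A). Hence $\mathrm{diam}_{\mathcal{S}_{ns}(K)}(\pi_K(S_1)\cup\pi_K(S_2))\le 3$. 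All of these distances are defined because $\pi_K(S_i)$ is nonempty and lies in $\mathcal{S}_{ns}(K)$ by Lemma \ref{nonseparatingProjections}, and $\mathcal{S}_{ns}(K)$ is connected by the argument of Lemma \ref{lem:nonSepConnected} applied to $K$.

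For (A), I would first isotope $S$ into normal form with respect to $\partial K$ and recall, from the proof of Lemma \ref{nonseparatingProjections}, that each component of the intersection of $S$ with a complementary component of $K$ is a disk. Thus $S$ is the union of its pieces $R_1,R_2,\dots$ in $K$ together with a family of disks $\{\Delta_c\}$ in the complementary components, glued along the circles of $S\cap\partial K$, where $\Delta_c$ denotes the disk glued to a piece along the circle $c$. The sphere obtained by surgering on $R_i$ is isotopic to $\Sigma_i:=R_i\cup\bigcup_{c\subset\partial R_i}\Delta_c$, which is embedded because $\Sigma_i\subset S$, and distinct $\Sigma_i$ are disjoint for the same reason. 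Pushing each $\Delta_c$ slightly across $\partial K$ into a thin collar, compatibly with the nesting of the circles of $S\cap\partial K$ on the components of $\partial K$, realizes the elements of $\pi_K(S)$ coming from the $R_i$ as pairwise disjoint spheres in $K$; combined with Lemma \ref{nonseparatingProjections} (which also handles any residual choice of capping disk), this exhibits $\pi_K(S)$ as spanning a simplex of $\mathcal{S}_{ns}(K)$.

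For (B), I would realize $S_1\sqcup S_2$ in normal form with respect to $\partial K$ via Lemma \ref{lem:NormalAndEquivalent}(a); since this is an isotopy of the whole system, $S_1$ and $S_2$ remain disjoint and each is individually in normal form with respect to $\partial K$. Picking a piece $R_i$ of $S_i\cap K$ and forming $\Sigma_i:=R_i\cup\bigcup\Delta_c\subset S_i$ as in (A), the spheres $\Sigma_1\subset S_1$ and $\Sigma_2\subset S_2$ are disjoint, and pushing their capping disks across $\partial K$ simultaneously — keeping the two families and their pushes disjoint, which is possible because the complementary components are simply connected — yields disjoint $T_i\in\pi_K(S_i)$, so $d_{\mathcal{S}_{ns}(K)}(T_1,T_2)\le 1$.

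The delicate step, in both (A) and (B), is exactly this simultaneous pushing of nested capping disks across $\partial K$ into a collar while keeping the resulting spheres embedded and, in (B), disjoint; this is where the embeddedness of $S$ (respectively the disjointness of $S_1$ and $S_2$) and the normal-form structure of the intersection with $\partial K$ — in particular that the pieces in the complementary components are disks, by Lemma \ref{nonseparatingProjections} — have to be used carefully. Everything else is bookkeeping with the triangle inequality. (One could hope for the stronger conclusion that $\pi_K(S_1)\cup\pi_K(S_2)$ spans a single simplex, i.e. has diameter $\le 1$, by running the argument of (A) directly on the system $S_1\sqcup S_2$; but the bound of $3$ above is all that is needed afterwards.)
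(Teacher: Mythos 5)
The key step that fails is (A), the claim that $\mathrm{diam}_{\mathcal{S}_{ns}(K)}(\pi_K(S)) \leq 1$; the paper only proves, and only needs, $\mathrm{diam}(\pi_K(S)) \leq 2$. To begin with, since every component of $S\cap U$ is a disc for each complementary component $U$ of $K$ (as shown in the proof of Lemma~\ref{nonseparatingProjections}) and $S$ is connected, $S\cap K$ in fact has exactly one component $R$; the indexed decomposition into $R_i$ and the "distinct $\Sigma_i$ are disjoint" remark are therefore vacuous, and your $\Sigma$ is just $S$ itself. The central problem is that "pushing each $\Delta_c$ slightly across $\partial K$" produces at most one element of $\pi_K(S)$, because for each circle $c$ it commits to a single side of $c$ on $\partial K$; the remaining cappings, obtained by using the complementary disc at some of the circles, are simply not realized this way, and Lemma~\ref{nonseparatingProjections} guarantees only that they are essential, nonperipheral and nonseparating, not that they are pairwise disjoint. (There is also a more basic obstruction to the push itself: in normal form each $\Delta_c$ separates ends of $U$, hence is not boundary-parallel, so it cannot be isotoped rel boundary into a collar of $\partial K$, and $S$ itself is generally not isotopic to a sphere contained in $K$.) Moreover, once $\partial R$ has nested circles $c_1, c_2$ on a single sphere component of $\partial K$, the two "opposite" cappings $R\cup D_{c_1}\cup D_{c_2}$ and $R\cup D_{c_1}'\cup D_{c_2}'$ each carry a cap disc over the annulus between $c_1$ and $c_2$, and a collar computation forces contradictory height constraints if one tries to realize them disjointly; this is precisely why the paper settles for the bound $2$, which it achieves by exhibiting a single nonseparating sphere in $K$ disjoint from $R$, and hence from every capping.

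Even granting the correct bound $\mathrm{diam}(\pi_K(S))\leq 2$, your triangle-inequality decomposition would give $d(T_1,T_2)\leq 2+1+2 = 5$, not $3$. The paper's route to $3$ is asymmetric and does not factor through anything like your (B): given arbitrary $T_1\in\pi_K(S_1)$ and $T_2\in\pi_K(S_2)$, it replaces $T_1$ by a capping $T_1'$ of the same component $R_1$ whose surgery discs are all nested or disjoint from those of $T_2$ (by swapping each offending disc $D_m$ for its complement $D_m'$), which gives $d(T_1',T_2)\leq 1$, and then combines this with $d(T_1,T_1')\leq 2$. To salvage your approach you would either need to establish the strong form of (A), which the nested-circle example above suggests is false, or restructure the argument so that the intermediate capping is tailored to the given $T_2$ rather than to an independently constructed $T_2'$.
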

\begin{proof}
    Fix components $T_i\in \pi_K(S_i)$ for $i=1,2$. Then $T_i$ is the result of a surgery of a component $R_i$ of $S_i \cap K$ with discs in $\partial K$. As $S_1$ and $S_2$ are disjoint, $R_1$ and $R_2$ are as well. 
    \par 
    If $T_1$ and $T_2$ are not disjoint, let us consider all the components $\{C_{m}\}$ of $R_1\cap \partial K$ so that the corresponding surgery discs $\{D_m\}$ are not nested or disjoint from the surgery discs of $T_2$ (such discs are the only possible sources of intersections between $T_1$ and $T_2$). If $D_m'$ denotes the complement of $D_m$ in the component of $\partial K$ that $D_m$ lies in, then the sphere $T_1'\in \pi_K(S_1)$ given by surgering $R_1$ using the discs $\{D_m'\}$ (and keeping all other surgery discs the same, which still produces an embedded sphere as one can check) is disjoint from $T_2$, as by construction all the surgery discs of $T_1'$ are nested or disjoint from those of $T_2$. 
    \par 
    Thus to show the desired result it suffices to show that the diameter of the collection of spheres resulting from surgeries of $R_1$ with discs in $\partial K$ has diameter at most $2$. To do this, it suffices to find a nonseparating sphere in $K$ which is disjoint from $R_1$. But any fixed component of $\pi_K(S_1)$ is a nonseparating sphere, and as long as $\Gamma$ is not proper homotopy equivalent to a loop with a ray attached (which only has one nonseparating sphere and thus the result is trivial), then there is a distinct nonseparating sphere disjoint from this component, and thus from $R_1$.
\end{proof}
\par 

If $i:\mathcal{S}_{ns}(K)\to \mathcal{S}_{ns}(M_{\Gamma})$ denotes the inclusion of the first complex into the second, then $\pi_K(i(S))=\{S\}$. Using these maps, along with Subsection \ref{SphereGraph}, we obtain the following result.
\begin{theorem}\label{PositiveTranslationLength}
    For $\Gamma$ a graph with $1\leq \mathrm{rk}(\Gamma)<\infty$, where in the rank $1$ case we assume $\Gamma$ has more than one end, the complex $\mathcal{S}_{ns}(M_{\Gamma})$ has infinite diameter. Further, the action of $\mathrm{Map}(\Gamma)$ on $\mathcal{S}_{ns}(M_{\Gamma})$ admits elements with positive translation length.
\end{theorem}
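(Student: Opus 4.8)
The plan is to produce, in each case, an explicit mapping class of $M_\Gamma$ acting on $\mathcal{S}_{ns}(M_\Gamma)$ with positive translation length, and then push this statement to $\mathrm{Map}(\Gamma)$. This suffices for both assertions: by Theorem \ref{SESMainTheorem}(1) (our $\Gamma$ is non-sporadic) $\mathrm{Twists}(M_\Gamma)=\ker(\Psi)$ acts trivially on $\mathcal{S}(M_\Gamma)$, so the image in $\mathrm{Map}(\Gamma)$ of an element $\tilde g\in\mathrm{Map}(M_\Gamma)$ has the same translation length on $\mathcal{S}_{ns}(M_\Gamma)$ as $\tilde g$, and any element of positive translation length forces infinite diameter. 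The transfer mechanism is the sphere-projection machinery of Definition \ref{def:SphereProjection}: for a full submanifold $K$, Lemma \ref{nonseparatingProjections} makes $\pi_K$ well defined on $\mathcal{S}_{ns}(M_\Gamma)$, Lemma \ref{lem:LipschitzProjBounds} chained along a geodesic gives $d_{\mathcal{S}_{ns}(K)}(\pi_K(S),\pi_K(S'))\le 3\,d_{\mathcal{S}_{ns}(M_\Gamma)}(S,S')$, and $\pi_K(i(S))=\{S\}$ identifies $\mathcal{S}_{ns}(K)$ with its image inside $\mathcal{S}_{ns}(M_\Gamma)$.

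First I would fix the submanifold $K$. Using Theorem \ref{ADMQGraphClassification} together with Proposition \ref{RichardsHandlebody} and Lemma \ref{EndsHomeo} (to transport constructions along the resulting homeomorphism of doubled handlebodies), I may replace $\Gamma$ by a convenient model: since $\mathrm{rk}(\Gamma)<\infty$ we have $E_\ell(\Gamma)=\varnothing$, so when $\mathrm{rk}(\Gamma)=n\ge 2$ I take $\Gamma$ to be a rose $R_n$ at a vertex $v$ joined by a single edge to a tree carrying all of $E(\Gamma)$, and when $\mathrm{rk}(\Gamma)=1$ I take $\Gamma$ to be a single loop at a vertex $v$ with two further edges leading to trees $T_1,T_2$ realizing a clopen partition $E(\Gamma)=E(T_1)\sqcup E(T_2)$ into nonempty pieces (possible since $\Gamma$ has at least two ends). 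Let $K$ be the piece of $v$. Then $K\cong M_{n,1}$ (resp. $K\cong M_{1,2}$), and $K$ is full: its complementary components are simply connected, and they are noncompact so $\partial K$ is essential.

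For $n\ge 2$: by Lemma \ref{FreeFactorPositiveTranslation} fix $\phi\in\mathrm{Aut}(F_n)$ with $l_{\mathcal{AF}_n}(\phi)>0$ and, using the surjectivity in Proposition \ref{HatcherVSES}, lift it to $g\in\mathrm{Map}(M_{n,1})$ fixing $\partial M_{n,1}$ pointwise; extend $g$ by the identity to $\tilde g\in\mathrm{Map}(M_\Gamma)$, which preserves $K$ and fixes $\partial K$ pointwise, so $\pi_K$ is $\tilde g$-equivariant. Choose any nonseparating sphere $S\subset K$ (it is nonseparating in $M_\Gamma$ as well, by the fact preceding Lemma \ref{nonseparatingProjections}). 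Combining the coarse-Lipschitz bound for $\pi_K$, the quasi-isometry $\widehat{\mathcal S}_{ns}(M_{n,1})\sim\mathcal S_{ns}(M_{n,1})$ and the $2$-Lipschitz $\mathrm{Aut}(F_n)$-equivariant map $\Phi$ of Lemma \ref{FreeFactorEquivariantMap} (using $\pi_K(S)=\{S\}$, $\pi_K(\tilde g^kS)=\{g^kS\}$, and $\Phi(g^kS)=\phi^k\Phi(S)$),
\[
  d_{\mathcal S_{ns}(M_\Gamma)}(S,\tilde g^kS)\;\ge\;\tfrac13\,d_{\mathcal S_{ns}(M_{n,1})}(S,g^kS)\;\ge\;\tfrac1{12}\,d_{\mathcal{AF}_n}\!\big(\Phi(S),\phi^k\Phi(S)\big),
\]
whence $l_{\mathcal S_{ns}(M_\Gamma)}(\tilde g)\ge\tfrac1{12}\,l_{\mathcal{AF}_n}(\phi)>0$. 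For $n=1$: here $\mathcal S_{ns}(K)=\mathcal S_{ns}(M_{1,2})$ is the vertex set of $\mathcal S(M_{1,2})\cong\mathbb R$, i.e.\ a copy of $\mathbb Z$, and I take $g\in\mathrm{Map}(M_{1,2})$ to be the sphere slide dragging one boundary sphere once around the $S^1$-factor (a cylinder shift supported in $K$, fixing $\partial K$ pointwise), which acts on $\mathcal S_{ns}(M_{1,2})\cong\mathbb Z$ as a nontrivial translation; extending by the identity and running the same projection estimate with $\tfrac13$ in place of $\tfrac1{12}$ gives $l_{\mathcal S_{ns}(M_\Gamma)}(\tilde g)>0$.

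In both cases $\tilde g$ has an unbounded orbit, so $\mathcal S_{ns}(M_\Gamma)$ has infinite diameter, and $\Psi(\tilde g)\in\mathrm{Map}(\Gamma)$ acts with the same (positive) translation length, completing the proof. The main obstacle I expect is the bookkeeping around the projection rather than anything deep: one must check that $\tilde g$ genuinely preserves the isotopy class of $K$ and that $\pi_K$ is honestly $\tilde g$-equivariant — which is why it matters that $g|_{\partial K}=\mathrm{id}$, so the capping discs are undisturbed — and one must handle the set-valuedness of $\pi_K$ when chaining Lemma \ref{lem:LipschitzProjBounds} along a geodesic. The one genuinely external ingredient, in the rank-$1$ case, is the existence and translation behavior of the sphere slide on $M_{1,2}$, which is classical.
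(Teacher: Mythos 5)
Your proposal is correct and takes essentially the same route as the paper's proof: choose a full submanifold $K\cong M_{n,1}$ (resp.\ $M_{1,2}$), show $i:\mathcal{S}_{ns}(K)\hookrightarrow\mathcal{S}_{ns}(M_\Gamma)$ is bi-Lipschitz via $\pi_K$ and Lemma \ref{lem:LipschitzProjBounds}, import positive translation length from $\mathcal{AF}_n$ via Lemmas \ref{FreeFactorPositiveTranslation} and \ref{FreeFactorEquivariantMap} for $n\ge 2$ (with the same constant $1/12$), and use a boundary-sphere push on $M_{1,2}$ for $n=1$. The only difference is cosmetic: you make explicit the choice of a standard-form model, the extension of $g$ by the identity, and the $\tilde g$-equivariance of $\pi_K$ — details the paper leaves implicit.
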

\begin{proof}
    Note that $i$ is a $1$-Lipschitz map. On the other hand, for any full submanifold $K$ and any $S, S'\in \mathcal{S}(K)$,
    $$d_{\mathcal{S}_{ns}(K)}(S, S')=d_{\mathcal{S}_{ns}(K)}(\pi_{K}(i(S)), \pi_{K}(i(S')) \leq 3d_{\mathcal{S}_{ns}(M_{\Gamma})}(i(S), i(S'))$$
    where the inequality follows Lemma \ref{lem:LipschitzProjBounds}.
    \par 
    We will split up the remainder of the proof into two cases. First, when the rank of $\Gamma$ is at least $2$, and second, when its rank is equal to $1$.
    \par 
    To find elements with positive translation length when $\text{rk}(\Gamma)\geq 2$, we utilize Lemmas \ref{FreeFactorPositiveTranslation} and \ref{FreeFactorEquivariantMap}. Fix $K$ to be a full submanifold with a single boundary component. By Lemma \ref{FreeFactorPositiveTranslation}, we can fix an element $\psi \in \text{Aut}(F_n)$ which acts with positive translation length on $\mathcal{AF}_n$. 
    \par 
    Recall the complex $\widehat{\mathcal{S}}_{ns}(M_{\Gamma})$ defined before Lemma \ref{FreeFactorEquivariantMap}, as well as the $2$-Lipschitz map $\Phi:\widehat{\mathcal{S}}_{ns}^{(1)}(M_{n,1})\to \mathcal{AF}_n$. Given a vertex $S\in \widehat{\mathcal{S}}_{ns}(K)$, we obtain for all $m\geq 0$
    
    \begin{multline*}
        d_{\mathcal{AF}_n}(\psi^m(\Phi(S)), \Phi(S))\leq 2d_{\widehat{\mathcal{S}}_{ns}(K)}(\psi^m(S), S) \leq 4 d_{\mathcal{S}_{ns}(K)}(\psi^m(S), S)
        \\ 
        \leq 12d_{\mathcal{S}_{ns}(M_{\Gamma})}(\psi^m(S), S)
    \end{multline*}
    where the first inequality is Lemma \ref{FreeFactorEquivariantMap}(b), the second inequality is Lemma \ref{FreeFactorEquivariantMap}(a), and the third follows from the bi-Lipschitz lower bounds for $i$ established in the first set of inequalities above. As $\psi$ has positive translation length on $\mathcal{AF}_n$, it follows that the same is true on $\mathcal{S}_{ns}(M_{\Gamma})$.
    \par 
    Now suppose $\text{rk}(\Gamma)=1$. In this case, we take $K$ to be a full submanifold with $2$ boundary components. As discussed in Subsection \ref{SphereGraph}, $\mathcal{S}_{ns}(K)$ is isomorphic to the real line with vertices at the integers. The action of $\text{Map}(M_{1,2})$ has elements with positive translation length. For example, one can consider maps which push one of the boundary spheres around a nontrivial loop. Consider such a map $\psi$. Then, as 
    $$d_{\mathcal{S}_{ns}(K)}(\psi^m(S), S)
        \\ 
        \leq 3d_{\mathcal{S}_{ns}(M_{\Gamma})}(\psi^m(S), S)$$
        for any $S\in \mathcal{S}(K)$, it follows that there are elements of $\text{Map}(\Gamma)$ which have positive translation length for the action on $\mathcal{S}_{ns}(M_{\Gamma})$.
\end{proof}

We discuss briefly hyperbolicity for sphere complexes in the finite type case. As of the writing of this paper it is unknown if $\mathcal{S}_{ns}(M_{n,s})$ is hyperbolic for $s\geq 1$, except for in the special cases of $M_{1,1}$ where it is a point, and $M_{1,2}$ where it is a line.

For $s\leq 1$, we note that $\mathcal{S}(M_{n,s})$ is quasi-isometric to $\mathcal{S}_{ns}(M_{n,s})$. Indeed, suppose $S_1, S_2, S_3$ is a geodesic path of vertices in $\mathcal{S}(M_{n,s})$ with $S_1$ nonseparating, As $S_1$ and $S_3$ must intersect it follows that they lie in the same component of $M_{n,s}\setminus S_2$. If $S_2$ is separating, then it follows that, from the assumption that $s\leq 1$, there is a nonseparating sphere in the component not containing $S_1$ and $S_3$ of $M_{n,s}\setminus S_2$. Thus in the path we can replace $S_2$ with this nonseparating sphere. An inductive argument applying this idea shows that $\mathcal{S}_{ns}(M_{n,s})$ is actually isometrically embedded in $\mathcal{S}(M_{n,s})$. 
\par 
This argument fails for $s\geq 2$, as the step replacing $S_2$ doesn't work in general. This provides at least some evidence of the following conjecture. This conjecture has been asked in the closed case, see \cite{clay_uniform_2017} for example.
\begin{conjecture}\label{conj:NSHyperb}
    For all $s\geq 0$ and $n\geq 2$, $\mathcal{S}_{ns}(M_{n,s})$ is hyperbolic. Further, the hyperbolicity constant can be chosen to be uniform independent of $n$ and $s$.
\end{conjecture}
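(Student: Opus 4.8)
The plan is to verify Bowditch's ``guessing geodesics'' criterion for $\mathcal{S}_{ns}(M_{n,s})$ using surgery paths of nonseparating spheres, with all constants chosen independently of $n$ and $s$. Two special cases can be disposed of by reduction to known results, providing a sanity check: for $s=0$ the complex $\mathcal{S}_{ns}(M_{n,0})$ is $1$-dense in, and (by the discussion preceding this conjecture) isometrically embedded in, the free splitting graph $\mathcal{S}(M_{n,0})$, hence quasi-isometric to it, and the free splitting graph is hyperbolic with a constant independent of $n$ (Handel--Mosher \cite{handel_free_2013}, Hilion--Horbez \cite{hilion_hyperbolicity_2017}); the case $s=1$ reduces similarly once one invokes hyperbolicity of $\mathcal{S}(M_{n,1})$, which the argument below also recovers. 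The genuinely new content is $s\ge 2$: here $\mathcal{S}(M_{n,s})$ is not hyperbolic for $n$ large (Hamenst\"{a}dt \cite{hamenstadt_spotted_2023}) and $\mathcal{S}_{ns}(M_{n,s})$ is no longer isometrically embedded in it, so the argument must be made intrinsic to $\mathcal{S}_{ns}(M_{n,s})$ and must crucially use that every vertex of every path we build is nonseparating.

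Recall that Bowditch's criterion asks for an assignment, to each ordered pair of vertices $a,b$, of a connected subgraph $\mathcal{L}(a,b)\ni a,b$ with universal constants such that (i) $\diam\mathcal{L}(a,b)$ is bounded whenever $d(a,b)\le 1$, and (ii) $\mathcal{L}(a,b)\subseteq N_\delta\big(\mathcal{L}(a,c)\cup\mathcal{L}(c,b)\big)$ for all $a,b,c$, for some universal $\delta$; any graph admitting such an assignment is $\delta'$-hyperbolic with $\delta'$ depending only on $\delta$. For $\mathcal{L}(a,b)$ I would take a \emph{nonseparating surgery path}: put $a$ in normal form with respect to $b$ and iteratively surger $a$ along an innermost disc of $a\cap b$, at each stage selecting an outcome that is nonseparating. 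Such a choice always exists by the homology argument in the proof of Lemma~\ref{lem:nonSepConnected}, since one of the two spheres produced by a surgery is homologically nontrivial; consecutive spheres in the sequence are disjoint, and the number of components of the intersection with $b$ strictly drops, so the path is finite and terminates at a sphere disjoint from $b$. Hatcher's normal-form and equivalence theory from Subsection~\ref{subsec:graphsandtheirdoubledhandlebodies} should be used to make $\mathcal{L}(a,b)$ well defined up to bounded ambiguity regardless of the chosen representatives. Condition (i) then becomes the statement that disjoint $a,b$ admit a surgery sequence that never creates new intersections, so $\mathcal{L}(a,b)$ has bounded diameter; the projection bounds of Lemma~\ref{lem:LipschitzProjBounds} are the natural device for controlling how much the intermediate spheres move.

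I expect condition (ii) to be the main obstacle: coarse transitivity of nonseparating surgery paths with constants independent of $n$ and $s$. In the arc and curve graph setting this is exactly where the Masur--Minsky argument, or the unicorn-path argument of Hensel--Przytycki--Webb, does its work, and it succeeds there because each comparison only inspects a bounded-complexity subsurface. For sphere systems the bookkeeping is subtler: a single intersection piece (a disc, cylinder, or pair of pants in $a\cap P_k$) can admit several nonseparating resolutions once $s\ge 2$, and paths built from different admissible choices must still fellow-travel. I would attempt this by combining Hilion--Horbez's surgery-path proof of hyperbolicity of the free splitting graph \cite{hilion_hyperbolicity_2017} with a ``no backtracking'' estimate driven by homology: the class in $H_2$ of the current sphere relative to $b$ should evolve monotonically along the path, which simultaneously forces progress and bounds how far two such paths with common endpoints can separate. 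The essential conceptual point to pin down is that restricting surgeries to nonseparating outcomes is exactly what destroys the quasi-flats present in $\mathcal{S}(M_{n,s})$ for $s\ge 2$, since those flats are assembled from families of commuting separating spheres.

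The uniformity of the hyperbolicity constant in $n$ and $s$ should then be essentially automatic from the \emph{locality} of the verification: every instance of (i) and (ii) examines only a bounded number of spheres and their bounded-complexity complementary regions, so no constant can grow with the ambient manifold, just as for curve graphs. If the surgery-path comparison can be made as sharp as the unicorn-path argument one might even extract an explicit small $\delta$; otherwise, once non-uniform hyperbolicity is in hand, a compactness or ultralimit argument over the family $\{(n,s): n\ge 2,\ s\ge 0\}$ would upgrade it to a uniform bound.
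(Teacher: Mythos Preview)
The statement you are attempting to prove is a \emph{Conjecture} in the paper, not a theorem; the paper explicitly records (immediately before stating it) that hyperbolicity of $\mathcal{S}_{ns}(M_{n,s})$ is unknown for $s\ge 1$ outside the trivial cases $M_{1,1}$ and $M_{1,2}$, and offers no proof. So there is no ``paper's own proof'' to compare against: you are proposing a strategy for an open problem.

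As a strategy your proposal is reasonable in outline, but it is not a proof, and the gap is exactly where you yourself locate it. Condition~(ii) of Bowditch's criterion---slimness of triangles built from your nonseparating surgery paths---is the entire content of the conjecture, and your paragraph on it consists of what you ``would attempt'' rather than an argument. The specific mechanisms you invoke are not yet load-bearing: the assertion that ``the class in $H_2$ of the current sphere relative to $b$ should evolve monotonically along the path'' is not made precise (relative to what, and in which group, and why does monotonicity of a homology class force \emph{metric} fellow-traveling in $\mathcal{S}_{ns}$?), and the claim that ``restricting surgeries to nonseparating outcomes is exactly what destroys the quasi-flats'' is a hope, not a lemma. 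The Hilion--Horbez surgery-path argument you cite is tailored to the free-splitting complex and does not obviously survive the passage to $s\ge 2$ with only nonseparating vertices allowed; indeed the whole point of the conjecture is that no one has carried this through. Likewise the final ``locality hence uniformity'' step and the ultralimit upgrade both presuppose that non-uniform hyperbolicity is already in hand, which it is not. In short: the architecture is sensible, but the keystone (a rigorous proof of condition~(ii) with constants independent of $n,s$) is missing, and supplying it would resolve the conjecture.
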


Recall that the corresponding conjecture for the full sphere complex $\mathcal{S}(M_{n,s})$ is trivial   if $s\geq 3$, as in this case $\mathcal{S}(M_{n,s})$ has finite diameter, and false when $n\geq 4$ and $s=2$ as $\mathcal{S}(M_{n,s})$ is not hyperbolic. See the discussion in Subsection \ref{SphereGraph}.
\par 
If one can mimic the results of Rasmussen which produces a uniform hyperbolicity constant for the nonseparating curve graphs of all finite type surfaces \cite{rasmussen_uniform_2020}, this conjecture along with Theorem \ref{PositiveTranslationLength} would imply the existence of loxodromic elements in the action of $\text{Map}(\Gamma)$ on $\mathcal{S}_{ns}(M_{\Gamma})$, which would be hyperbolic by the conjecture. Even a weaker result which gives a uniform hyperbolicity constant as the number of boundary components increases (but the rank is fixed) would be sufficient.
\par 
Note that regardless of the validity of this conjecture, it still follows from Theorem \ref{PositiveTranslationLength} that $\mathcal{S}_{ns}(M_{n,s})$ has infinite diameter and there are elements in $\text{Map}(\Gamma_{n,s})$ with positive translation length. Here, $\Gamma_{n,s}$ is the graph of rank $n$ with $s$ rays attached. 
\par 

\subsection{Outer space for locally finite graphs}\label{subsec:OuterSpace}
Let us fix a locally finite graph $\Gamma$ of finite rank. Outer space for compact graphs was initially defined by Culler--Vogtmann in \cite{culler_moduli_1986}, and has been used to great effect in the study of $\text{Out}(F_n)$. In the Appendix of \cite{hatcher_homological_1995}, it is shown that Outer space of a compact graph can be defined via sphere systems. We will mimic this definition to define the Outer space of $\Gamma$. Let $\mathcal{S}_{\infty}(M_{\Gamma})$ be the subcomplex of $\mathcal{S}(M_{\Gamma})$ so that the complements of its sphere systems have at least one non-simply connected component. We define $\mathbb{O}(\Gamma)=\mathcal{S}(M_{\Gamma})\setminus \mathcal{S}_{\infty}(M_{\Gamma})$ to be the \textit{Outer space} of $\Gamma$. One can think of the points of $\mathbb{O}(\Gamma)$ as weighted finite sphere systems, after identifying the $k$-simplices of $\mathcal{S}(M_{\Gamma})$ with the standard $k$-simplex in $\R^{k+1}$, for all $k$.
\par 
We have the following facts about the structure of $\mathbb{O}(\Gamma)$ and the action of $\text{Map}(\Gamma)$ on it.
\begin{proposition}\label{prop:OuterSpaceContrStab}
    Let $\Gamma$ be a locally finite graph with finite positive rank. Then $\mathbb{O}(\Gamma)$ is contractible. The point stabilizers of the action of $\mathrm{PMap}(\Gamma)$ on $\mathbb{O}(\Gamma)$ are subgroups are finite. 
\end{proposition}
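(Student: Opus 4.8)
The plan is to identify $\mathbb{O}(\Gamma)$ as a directed union of Outer spaces of finite-type graphs and then import both statements from the finite-type setting. Throughout, $\mathrm{Map}(\Gamma)$ acts on $\mathbb{O}(\Gamma)$ via its action on $\mathcal{S}(M_\Gamma)$ (the subgroup $\mathrm{Twists}(M_\Gamma)$ acts trivially, by Theorem \ref{SESMainTheorem}(1)): since ``simply connected complement'' is preserved by homeomorphisms of $M_\Gamma$, the subset $\mathbb{O}(\Gamma)=\mathcal{S}(M_\Gamma)\setminus\mathcal{S}_{\infty}(M_\Gamma)$ is invariant, so this action is well defined. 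The positive-rank sporadic graphs are all of finite type, where both assertions are classical \cite{culler_moduli_1986}, so I will assume $\Gamma$ is non-sporadic.

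For contractibility, I would fix a compact exhaustion $K_1\subset K_2\subset\cdots$ of $M_\Gamma$ by connected unions of pieces, arranged so that $K_1$ already contains every piece homeomorphic to some $M_{1,s}$; this is possible exactly because $\mathrm{rk}(\Gamma)<\infty$. Then each component of $M_\Gamma\setminus K_j$ is simply connected, so for a finite sphere system $S$ supported in $K_j$, van Kampen gives that $M_\Gamma\setminus S$ is simply connected if and only if $K_j\setminus S$ is. Hence the full subcomplex of $\mathbb{O}(\Gamma)$ spanned by systems isotopic into $K_j$ is identified with the Outer space $\mathbb{O}(r(K_j))$ of the finite-type graph $r(K_j)$, and $\mathbb{O}(\Gamma)=\varinjlim_j\mathbb{O}(r(K_j))$ because every finite sphere system lies in some $K_j$ up to isotopy. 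Each $\mathbb{O}(r(K_j))$ is contractible by the sphere-system model of Outer space in the finite-type case (the Appendix of \cite{hatcher_homological_1995}, building on \cite{culler_moduli_1986}). Contractibility of $\mathbb{O}(\Gamma)$ then follows exactly as in Proposition \ref{SphereComplexContractible}: any map of a sphere and any nullhomotopy of it have compact image, hence factor through some $\mathbb{O}(r(K_j))$, so all homotopy groups vanish and Whitehead's theorem applies (the relevant spaces have the homotopy type of CW complexes, e.g.\ after passing to spines).

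For the stabilizers, let $x\in\mathbb{O}(\Gamma)$ lie in the interior of the simplex $\sigma_S$ of a sphere system $S=\{S_0,\dots,S_k\}$; since $M_\Gamma\setminus S$ is a disjoint union of simply connected pieces and $\mathrm{rk}(\Gamma)<\infty$, such an $S$ is finite. If $g\in\mathrm{Map}(\Gamma)$ fixes $x$ then $g$ fixes $\sigma_S$, hence permutes $S$, so the point stabilizer is contained in the stabilizer $\mathrm{Stab}(S)$ of the sphere system. Cutting along $S$ writes $M_\Gamma$ as a union of simply connected pieces, each homeomorphic to some $M_{0,s}$, glued along the paired, co-oriented copies of the $S_i$. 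An element of $\mathrm{Stab}(S)$ induces a permutation of the finite set of these pieces and co-oriented spheres, and restricts on each piece to a mapping class of $M_{0,s}$; since $\mathrm{Map}(M_{0,s})$ is generated by sphere twists on its boundary spheres together with permutations of its $s$ boundary/puncture components (Lemma 2.5 of \cite{brendle_mapping_2023}, cf.\ Proposition \ref{HatcherVSES}), and sphere twists are trivial in $\mathrm{Map}(\Gamma)=\mathrm{Map}(M_\Gamma)/\mathrm{Twists}(M_\Gamma)$ (Theorem \ref{SESMainTheorem}(1)), one obtains an exact sequence
\[
1\to P\to \mathrm{Stab}(S)\to G_0\to 1,
\]
with $G_0$ a finite group of combinatorial symmetries of the cut-open configuration and $P$ the group of permutations of the punctures, i.e.\ of the ends of $M_\Gamma$, compatible with that configuration (a product of homeomorphism groups of clopen subsets of $E(\Gamma)$, matching the fuller statement).

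Restricting to $\mathrm{PMap}(\Gamma)$, whose elements fix $E(\Gamma)$ pointwise, forces $P\cap\mathrm{PMap}(\Gamma)=1$, so $\mathrm{Stab}_{\mathrm{PMap}(\Gamma)}(S)$ injects into the finite group $G_0$ and is finite; consequently every $\mathrm{PMap}(\Gamma)$ point stabilizer is finite. The step I expect to require the most care is the bookkeeping in the contractibility argument: checking that the maps $\mathbb{O}(r(K_j))\hookrightarrow\mathbb{O}(\Gamma)$ are genuinely injective and compatible (systems in $K_j$ isotopic in $M_\Gamma$ should already be isotopic in $K_j$, which uses incompressibility of the $\partial K_j$ spheres in the complement) and that the colimit fits the hypotheses of Whitehead's theorem. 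The stabilizer computation, by contrast, is essentially a finite-type calculation on the cut-open manifold and should be routine given Proposition \ref{HatcherVSES} and Theorem \ref{SESMainTheorem}.
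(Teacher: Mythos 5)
Your proof is correct, but your contractibility argument takes a genuinely different route from the paper's. The paper directly adapts Hatcher's flow: it fixes a maximal sphere system $\mathcal{P}$, puts a weighted system in normal form, and explicitly surgers it along $\mathcal{P}$ in a weight-continuous way, checking that the surgeries (including discarding spheres that cut off punctures) never leave $\mathbb{O}(\Gamma)$, which produces an honest deformation retraction onto the infinite simplex spanned by $\mathcal{P}$. You instead exhaust $M_\Gamma$ by compact submanifolds $K_j$ containing all the rank, identify $\mathbb{O}(\Gamma)$ as the directed union of the contractible finite-type Outer spaces of the $K_j$, and conclude via a compactness argument and Whitehead's theorem. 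Both are valid; the paper's argument is more self-contained and produces an explicit contraction, sidestepping any worry about Whitehead applying to the deleted simplicial complex $\mathbb{O}(\Gamma)$, whereas your direct-limit approach is ``softer'' and leans on the finite-type result as a black box. The points you flagged (injectivity of $\mathbb{O}(r(K_j))\hookrightarrow\mathbb{O}(\Gamma)$ via incompressibility of $\partial K_j$, applicability of Whitehead via spines, and the van Kampen reduction needing the dual decomposition graph to be a forest since $\partial K_j$ separates) are exactly where the extra care is required, and they do all go through. Your stabilizer argument is essentially the paper's, phrased as an exact sequence $1\to P\to\mathrm{Stab}(S)\to G_0\to 1$; the paper instead argues more directly that each complementary component contributes finitely many mapping classes once ends are fixed and twists are quotiented out, and that there are finitely many components.
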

\begin{proof}
     Contractibility follows from a similar argument to Hatcher's proof of the contractibility of the Outer space of a compact graph in \cite{hatcher_homological_1995}. See Proposition 2.1 and the discussion in the Appendix after the proposition there. Namely, fix a maximal sphere system $\mathcal{P}$. Given a point $p\in \mathbb{O}(\Gamma)$, we can first isotope it so that it is in normal form with respect to $\mathcal{P}$. We then surger the spheres of $p$ in a continuous way with respect to the weights along the spheres in $\mathcal{P}$ until it becomes a finite subset of $\mathcal{P}$. The precise details of this process is described in Proposition 2.1 of \cite{hatcher_homological_1995}. By the discussion in the appendix, the surgeries described preserve the simple connectivity of the complementary components of the sphere systems. The only minor difference is that here we are allowing punctures, so sometimes we have to throw away spheres containing on one side a puncture of $M_{\Gamma}$. This still preserves simple connectivity of the complementary components. Thus, the families arising from these surgeries are nonempty and lie in $\mathbb{O}(\Gamma)$. The continuity of the resulting deformation onto the infinite simplex defined by $\mathcal{P}$ follows from the same local argument as in Proposition 2.1 of \cite{hatcher_homological_1995}. This infinite simplex is contractible, and thus $\mathbb{O}(\Gamma)$ is as well.
    \par 
    For point stabilizers of elements pf $\mathbb{O}(\Gamma)$ in $\text{PMap}(\Gamma)$, note that any $\mathcal{S}\in \mathbb{O}(\Gamma)$ divides $M_{\Gamma}$ into finitely many components, all of which are simply connected and with finitely many sphere boundary components. By Theorems \ref{SESMainTheorem} and \ref{ADMQGraphClassification}, the subgroup of the stabilizer of $\mathcal{S}$ which stabilizes each complementary component is finite, as by the theorems, diffeomorphisms on these components are determined up to isotopy and sphere twists by how they act on the boundary spheres and ends of the component. But by assumption, as we are working in the pure mapping class group, the ends are fixed, so even noncompact complementary components of $\mathcal{S}$ contribute only finitely many elements. As there are only finitely many complementary components, it follows that the stabilizer of $\mathcal{S}$ is also finite.
\end{proof}
One can also consider stabilizers of the action of $\text{Map}(\Gamma)$ on $\mathbb{O}(\Gamma)$. Typically, these will be infinite as they will include mapping classes which stabilize some noncompact component of the complement of the sphere system, but acts by a nontrivial homeomorphism on the space of ends of the component.

\par
An unfortunate thing about the above definition for $\mathbb{O}(\Gamma)$ is that it doesn't "see" the entire graph. That is, for most $\Gamma$, many of the complementary components of a sphere system defining a point in $\mathbb{O}(\Gamma)$ will be of infinite type. In particular, if one were to try to think of points of Outer space as some sort of marked metric structures on $\Gamma$, this definition would fail to see any metric behavior of $\Gamma$ at infinity.


\section{Coarse geometry of (pure) mapping class groups}\label{sec:CoarseGeo}

In this final section we explore the coarse geometry of mapping class groups of graphs and their doubled handlebodies. We begin in Subsection \ref{subsec:CoarseGeoPure} with a simple collection of applications of the results of Domat--Hoganson--Kwak in \cite{domat_coarse_2023} and \cite{domat_generating_2023}. In Subsection \ref{subsec:CoarseGeoTranslatable} we extend the ideas of Schaffer-Cohen that appear in \cite{schaffer-cohen_graphs_2024}, which explores a particular class of infinite type surfaces, to infinite type graphs and doubled handlebodies. Recall that in Corollary \ref{cor:doubledHandlebodyPolish} we showed that $\text{PMap}(M_{\Gamma})$ is Polish. We will use this implicitly.

\subsection{Classifying the coarse geometry of pure mapping class groups}\label{subsec:CoarseGeoPure}
Fix a locally finite graph $\Gamma$, which for simplicity we assume is in standard form. In this subsection we show how to use the results of \cite{domat_coarse_2023} and \cite{domat_generating_2023} to provide a complete CB classification of the pure mapping class groups of doubled handlebodies. We will also note a variety of other results which can be extracted from these two papers. 
\par 
We split the classification into three separate statements, which appear in Corollaries \ref{CBPMap}, \ref{LocallyCBPMap}, and \ref{CBGeneratedPMap}. These three results exactly mirror Theorems A and D of \cite{domat_coarse_2023}, and Theorem A of \cite{domat_generating_2023}, respectively. 
\par 
In the following propositions, we let $\Gamma_c$ denote the \textit{core} of $\Gamma$, which is the smallest graph containing all immersed loops in $\Gamma$. 

\begin{lemma}
    Fix a subgroup $G\leq \mathrm{Map}(M_{\Gamma})$, and write $Q=\Psi(G)\leq \mathrm{Map}(\Gamma)$ and $K=\mathrm{Twists}(M_{\Gamma})\cap G$. Assume $K$ is CB in $G$. Then
    \begin{enumerate}
        \item $G$ is CB if and only if $Q$ is CB.
        \item $G$ is locally CB if and only if $Q$ is locally CB.
        \item $G$ is CB generated if and only if $Q$ is CB generated. Further, $\Psi|_G:G \to Q$ is a quasi-isometry from $G$ and $Q$ with word metrics arising from CB generating sets.
    \end{enumerate}
\end{lemma}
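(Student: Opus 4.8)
The plan is to deduce all three statements from the general coarse-geometric machinery recalled in Subsection 2.6 together with the short exact sequence $1 \to K \to G \to Q \to 1$ obtained by restricting Theorem \ref{SESMainTheorem}(1). The key structural input is that $K = \mathrm{Twists}(M_{\Gamma}) \cap G$ is a \emph{closed} normal subgroup of $G$ which is CB in $G$ by hypothesis, so Proposition \ref{CBKernelCoarseEquiv} applies: the quotient map $\Psi|_G : G \to Q$ is a coarse equivalence. (One should first check that $G$ is Polish, or at least separable and metrizable; since $\mathrm{Map}(M_{\Gamma})$ is Polish by Corollary \ref{cor:doubledHandlebodyPolish}, any subgroup we care about will be separable and metrizable, and $K$ is closed because $\mathrm{Twists}(M_{\Gamma})$ is compact, hence closed, by Theorem \ref{TwistGroupStructure}.)

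For part (1): since $\Psi|_G$ is a coarse equivalence between $G$ and $Q$, Proposition \ref{CoarseEquivBothCB} immediately gives that $G$ is CB if and only if $Q$ is CB. For part (2): local CB-ness is the statement that there is a CB neighborhood of the identity. Here I would use that $\Psi|_G$ is open (this follows because $\Psi$ is open by Proposition \ref{MCGHom}(d), and the restriction of an open quotient map to a subgroup behaves well — more precisely $\Psi|_G$ is the quotient map $G \to G/K$ since $K = \ker(\Psi|_G)$, and quotient maps of topological groups are open). Then a CB neighborhood $V$ of $\mathrm{id}$ in $Q$ pulls back to an open neighborhood $\Psi|_G^{-1}(V)$ of $\mathrm{id}$ in $G$; since $K$ is CB in $G$ and $\Psi|_G^{-1}(V)$ is, coarsely, the preimage of a CB set under a coarse equivalence, one concludes $\Psi|_G^{-1}(V)$ is CB in $G$ — here I would invoke that preimages of CB sets under coarse equivalences (equivalently, under coarsely proper maps, Definition \ref{def:coarselyProperMap}) are CB. Conversely an open CB neighborhood $W$ of $\mathrm{id}$ in $G$ has open image $\Psi|_G(W)$ which is CB in $Q$ since the image of a CB set under the coarsely Lipschitz map $\Psi|_G$ is CB.

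For part (3): CB generation and the quasi-isometry statement. The cleanest route is Lemma \ref{lem:milnorSchwartzRosendal} (the Milnor--Schwartz lemma of Rosendal): $\Psi|_G : G \to Q$ is a continuous surjective homomorphism, and it is \emph{coarsely proper} — for any CB set $A \subset Q$, $\Psi|_G^{-1}(A)$ is CB in $G$ because $A$ is CB, $K$ is CB in $G$, and $\Psi|_G^{-1}(A)$ is an extension of $A$ by $K$ in the coarse sense (concretely: $\Psi|_G^{-1}(A) \subset (\text{a bounded set mapping onto } A) \cdot K$, and a product of two CB sets is CB). So: if $Q$ is CB generated with word metric $d_Q$ from a CB generating set, Lemma \ref{lem:milnorSchwartzRosendal} yields that $G$ is CB generated and $\Psi|_G$ is a quasi-isometry onto $(Q, d_Q)$ for any word metric $d_G$ on $G$ from a CB generating set. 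For the converse, if $G$ is CB generated then $Q$, being a quotient of $G$ by the CB normal subgroup $K$, is also CB generated — the image of a CB generating set of $G$ under $\Psi|_G$ together with (the image of) a generating set for $K$'s contribution generates $Q$, and the image of a CB set is CB. Then the quasi-isometry statement follows from the $Q$-CB-generated direction just proved, together with Theorem \ref{CBGeneratedQIType} to see the answer is independent of the chosen CB generating sets.

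\textbf{Main obstacle.} The routine parts are the abstract coarse-geometry implications; the step requiring genuine care is verifying that $\Psi|_G$ is coarsely proper and that preimages (and images) of CB sets behave correctly under it — in other words, pinning down precisely that an extension of a CB group by a CB group, in the topological-group sense relevant here, is CB, and that this is exactly what $\Psi|_G^{-1}$ of a CB set is (up to bounded error using a continuous/coarsely-Lipschitz section or partial section). One must also be slightly careful that $G$ need not be closed in $\mathrm{Map}(M_{\Gamma})$, so invoking "Polish" for $G$ itself requires either an extra hypothesis or restricting attention to $G$ separable metrizable, which is all Rosendal's criterion (the defining condition of CB, Definition after Proposition \ref{CoarseEquivBothCB}) actually needs. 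I would state the lemma's proof so that only separability and metrizability of $G$ are used, matching the hypotheses actually available.
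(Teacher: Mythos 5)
Your proposal follows the same high-level strategy as the paper — use the short exact sequence $1 \to K \to G \to Q \to 1$ and the CB hypothesis on $K$ to transfer coarse properties, and invoke Lemma \ref{lem:milnorSchwartzRosendal} for the quasi-isometry in part (3) — but the route you take differs in a way that matters. The paper's proof does \emph{not} go through Proposition \ref{CBKernelCoarseEquiv}; instead it exploits the semidirect-product splitting from Theorem \ref{SESMainTheorem}(2) to write $G = K \rtimes Q$, so that $Q$ is literally a subgroup of $G$. This makes all three implications essentially one-line arguments: for ``$Q$ has the property $\Rightarrow$ $G$ has it,'' write $G = K \cdot Q$ and use that the product of two CB sets is CB; for ``$G$ has the property $\Rightarrow$ $Q$ has it,'' observe that if $C \subset G$ is CB then $C \cap Q$ is CB in $Q$ (because every continuous isometric $Q$-action pulls back to a $G$-action through $\Psi|_G$); and for coarse properness of $\Psi|_G$ in part (3), simply write $\Psi|_G^{-1}(A) = K A$ for $A \subset Q \subset G$. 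Your approach, which treats $Q$ only as a quotient, can be made to work but buys nothing and pays a real cost: you have to confront the question of whether the quotient topology on $G/K$ agrees with the subspace topology on $Q \subset \mathrm{Map}(\Gamma)$, and your two proposed justifications for openness of $\Psi|_G : G \to Q$ do not actually close this. (Restricting an open map to a subset does not yield an open map onto the image; and ``quotient maps of topological groups are open'' gives openness onto $G/K$ with the quotient topology, which is precisely the topology you haven't yet identified with the subspace topology on $Q$ — so this second argument is circular.) You correctly flag that a bounded ``partial section'' is what's needed to see $\Psi|_G^{-1}(A)$ is CB, but you don't supply one; the section from Theorem \ref{SESMainTheorem}(2) is exactly the missing ingredient, and once you have it the whole CBKernelCoarseEquiv detour becomes unnecessary. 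Your caution about $G$ needing to be separable and metrizable (rather than Polish) is reasonable; the paper does not comment on it, but since the lemma is only ever applied to subgroups like $\mathrm{PMap}(M_\Gamma)$ that are closed, this does not cause problems downstream.
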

\begin{proof}
We can give a proof of the three if and only if results using the splitting given by Theorem \ref{SESMainTheorem}. Namely, we can write
\begin{equation*}
    G=K\rtimes Q
\end{equation*}
and note that the product of a CB set with a CB set is CB. Thus if $A$ is a CB subset of $Q$ (and thus $G$), then $KA$ is a CB subset of $G$. In particular, this shows that if $Q$ has any of the three properties (CB, locally CB, or CB generated), then $G$ must have the corresponding property. 
\par 
On the other hand, if $C\subset G$ is CB in $G$, then $C\cap Q$ is CB in $Q$. This follows as any continuous group action of $Q$ on a metric space by isometries gives such an action for $G$, and if $C \cap H$ has unbounded orbits then so does $C$. The results then quickly follow. Note in the locally CB case that $C\cap Q$ is an open subset of $Q$ if $C$ is open in $G$.
\par 
Now, if $G$ and $Q$ are CB generated, then by Lemma \ref{lem:milnorSchwartzRosendal}, to show that $\Psi|_G$ is a quasi-isometry it suffices to show that is coarsely proper. But this follows from the exact same arguments as above, as $\Psi|_G^{-1}(A)=KA$.
\end{proof}

\begin{corollary}
    The group $\mathrm{PMap}(M_{\Gamma})$ is CB generated if and only if $\mathrm{PMap}(\Gamma)$ is. If both are CB generated and given word metrics arising from CB generating sets, then $\Psi_P$ is a quasi-isometry.
\end{corollary}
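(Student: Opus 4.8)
The plan is to obtain this as a direct application of the preceding lemma to the subgroup $G = \mathrm{PMap}(M_{\Gamma}) \leq \mathrm{Map}(M_{\Gamma})$, so that the work consists entirely in checking its hypotheses. First I would identify the relevant quotient and kernel. For the quotient, recall that $\Psi_P$ is by definition the restriction of $\Psi$ to $\mathrm{PMap}(M_{\Gamma})$, and Proposition \ref{MCGHom}(c) says $\Psi_P$ is surjective onto $\mathrm{PMap}(\Gamma)$; hence $Q := \Psi(\mathrm{PMap}(M_{\Gamma})) = \mathrm{PMap}(\Gamma)$. For the kernel, I would observe that $\mathrm{Twists}(M_{\Gamma}) \subseteq \mathrm{PMap}(M_{\Gamma})$: by Theorem \ref{SESMainTheorem}(1) every sphere twist lies in $\mathrm{ker}(\Psi)$, and by Proposition \ref{MCGHom}(b) an element of $\mathrm{ker}(\Psi)$ acts trivially on $E(M_{\Gamma})$, hence is pure. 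Therefore $K := \mathrm{Twists}(M_{\Gamma}) \cap \mathrm{PMap}(M_{\Gamma}) = \mathrm{Twists}(M_{\Gamma})$.

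The only remaining hypothesis is that $K$ is CB in $G$. This is immediate from Theorem \ref{TwistGroupStructure}: $\mathrm{Twists}(M_{\Gamma})$ is compact, and a compact subset of a topological group is CB (for instance, under any continuous isometric action on a metric space its orbit is compact, hence bounded, so criterion (2) in the definition of CB holds). With all hypotheses verified, part (3) of the preceding lemma yields at once that $\mathrm{PMap}(M_{\Gamma})$ is CB generated if and only if $\mathrm{PMap}(\Gamma) = Q$ is, and that when both hold, $\Psi_P = \Psi|_{\mathrm{PMap}(M_{\Gamma})}$ is a quasi-isometry between the two groups equipped with word metrics arising from CB generating sets.

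There is essentially no obstacle here: the substance of the argument is already contained in the preceding lemma, and ultimately in the splitting of Theorem \ref{SESMainTheorem} together with the Milnor--Schwarz-type Lemma \ref{lem:milnorSchwartzRosendal}. The one point meriting a sentence of care is the identification $K = \mathrm{Twists}(M_{\Gamma})$, which rests on the fact that twists act trivially on the end space; everything else is bookkeeping, and the corollary can be stated in one or two lines once these identifications are recorded.
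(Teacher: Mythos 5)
Your proof is correct and is exactly the argument the paper intends: the corollary is stated without proof as an immediate consequence of the preceding lemma, and your verification of its hypotheses (that $\mathrm{Twists}(M_\Gamma)$ is contained in $\mathrm{PMap}(M_\Gamma)$ via Proposition \ref{MCGHom}(b), that $\Psi_P$ surjects onto $\mathrm{PMap}(\Gamma)$ via Proposition \ref{MCGHom}(c), and that $K=\mathrm{Twists}(M_\Gamma)$ is CB because it is compact by Theorem \ref{TwistGroupStructure}) is precisely what is needed. No gaps.
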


One could give a more direct proof of the second part of this corollary by giving an explicit generating set for $\text{PMap}(M_{\Gamma})$ which mimics that given for $\text{PMap}(\Gamma)$ in \cite{domat_generating_2023}, which we can effectively lift to $\text{PMap}(M_{\Gamma})$ using the splitting in Theorem \ref{SESMainTheorem}. 
\par 
In particular, we obtain the following three results on the CB properties of $\mathrm{PMap}(M_{\Gamma})$ as corollaries of this and Theorems A and D in \cite{domat_coarse_2023}, as well as Theorem A of \cite{domat_generating_2023}.

\begin{corollary}\label{CBPMap}
    The group $\mathrm{PMap}(M_{\Gamma})$ is CB if and only if $\Gamma$ is of one of the following forms.
    \begin{enumerate}
        \item $\Gamma$ is a tree.
        \item $\Gamma$ is a rank $1$ graph with a single ray attached.
        \item $|E_{\ell}(\Gamma)|=1$ and $E(\Gamma)\setminus E_{\ell}(\Gamma)$ is discrete. 
    \end{enumerate}
\end{corollary}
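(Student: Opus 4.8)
The plan is to derive Corollary~\ref{CBPMap} directly from the preceding Lemma together with the classification of CB pure mapping class groups of graphs due to Domat--Hoganson--Kwak. First I would apply part (1) of the Lemma with $G = \mathrm{PMap}(M_{\Gamma})$. Since every sphere twist is supported in a regular neighborhood of an embedded sphere, it acts trivially on $E(M_{\Gamma})$, and as $\mathrm{PMap}(M_{\Gamma})$ is closed in $\mathrm{Map}(M_{\Gamma})$ this gives $\mathrm{Twists}(M_{\Gamma}) \leq \mathrm{PMap}(M_{\Gamma})$; hence $K := \mathrm{Twists}(M_{\Gamma}) \cap G = \mathrm{Twists}(M_{\Gamma})$. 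By Theorem~\ref{TwistGroupStructure} this subgroup is compact, and a compact subgroup of a topological group is CB (its image under any continuous isometric action has bounded orbit), so the standing hypothesis of the Lemma holds. By Proposition~\ref{MCGHom}(c) the map $\Psi_P$ is surjective, so $Q := \Psi_P(G) = \mathrm{PMap}(\Gamma)$, and part (1) of the Lemma yields that $\mathrm{PMap}(M_{\Gamma})$ is CB if and only if $\mathrm{PMap}(\Gamma)$ is CB.

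It then remains to invoke Theorem~A of \cite{domat_coarse_2023}, which asserts that $\mathrm{PMap}(\Gamma)$ is CB precisely when $\Gamma$ is a tree, a rank $1$ graph with a single ray attached, or satisfies $|E_{\ell}(\Gamma)| = 1$ with $E(\Gamma) \setminus E_{\ell}(\Gamma)$ discrete. Here I would only need to check that the hypotheses of that theorem are phrased in terms of the same invariants (the core $\Gamma_c$ and the pair $(E(\Gamma), E_{\ell}(\Gamma))$) and translate accordingly: ``$\Gamma$ a tree'' is the rank $0$ case, ``rank $1$ with a single ray'' is the case where $\Gamma_c$ is a single loop and $\Gamma$ has exactly one end, and the third condition matches verbatim. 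Combining this with the previous paragraph gives exactly the list in the statement.

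Rather than a genuine obstacle, the point requiring care is the bookkeeping that makes the Lemma applicable, namely confirming that $\mathrm{Twists}(M_{\Gamma})$ lies in $\mathrm{PMap}(M_{\Gamma})$ and is CB there; both are immediate from Theorem~\ref{TwistGroupStructure} and the local description of sphere twists. Beyond that, the corollary is a formal consequence of the topological splitting $\mathrm{Map}(M_{\Gamma}) \cong \mathrm{Twists}(M_{\Gamma}) \rtimes \mathrm{Map}(\Gamma)$ from Theorem~\ref{SESMainTheorem} and the cited external classification, so I do not anticipate any substantive difficulty. The analogous statements for local CB-ness and CB generation (Corollaries~\ref{LocallyCBPMap} and~\ref{CBGeneratedPMap}) will follow the same template, using parts (2) and (3) of the Lemma and Theorem~D of \cite{domat_coarse_2023} and Theorem~A of \cite{domat_generating_2023} respectively.
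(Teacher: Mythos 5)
Your proposal is correct and follows exactly the paper's approach: apply the preceding Lemma with $G=\mathrm{PMap}(M_{\Gamma})$, using Theorem~\ref{TwistGroupStructure} to see that $K=\mathrm{Twists}(M_{\Gamma})$ is compact and hence CB, and Proposition~\ref{MCGHom}(c) to identify $Q=\mathrm{PMap}(\Gamma)$, then conclude by citing Theorem A of \cite{domat_coarse_2023}. The paper states this derivation only in passing (``as corollaries of this and Theorems A and D\ldots''), and your write-up merely makes the bookkeeping explicit.
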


\begin{corollary}\label{LocallyCBPMap}
    The group $\mathrm{PMap}(M_{\Gamma})$ is locally CB if and only if $\Gamma$ is one of the following forms.
    \begin{enumerate}
        \item $\Gamma$ has finite rank.
        \item $\Gamma$ satisfies both $|E_{\ell}(\Gamma)|<\infty$ and only finitely many components of $\Gamma \setminus \Gamma_c$ have an infinite end space. 
    \end{enumerate}
\end{corollary}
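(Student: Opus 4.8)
The plan is to reduce the statement to the analogous classification for $\mathrm{PMap}(\Gamma)$ using the lemma that precedes Corollary \ref{CBPMap}, and then to quote Theorem D of \cite{domat_coarse_2023}. The two ingredients needed are that $\mathrm{Twists}(M_{\Gamma})$ sits inside $\mathrm{PMap}(M_{\Gamma})$ and is CB there, and that $\Psi_P$ maps $\mathrm{PMap}(M_{\Gamma})$ onto $\mathrm{PMap}(\Gamma)$; both are already available.

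First I would apply that preceding lemma with $G=\mathrm{PMap}(M_{\Gamma})$. Every sphere twist is supported in a regular neighborhood of a sphere, hence acts trivially on $E(M_{\Gamma})$, so $\mathrm{Twists}(M_{\Gamma})\subseteq \mathrm{PMap}(M_{\Gamma})$ and therefore $K:=\mathrm{Twists}(M_{\Gamma})\cap \mathrm{PMap}(M_{\Gamma})=\mathrm{Twists}(M_{\Gamma})$. By Theorem \ref{TwistGroupStructure} this group is compact, hence CB in $\mathrm{PMap}(M_{\Gamma})$ (compact subsets of a topological group are always CB), so the hypothesis of the lemma is satisfied. By Proposition \ref{MCGHom}(c), $\Psi_P$ is surjective, so $Q:=\Psi(G)=\mathrm{PMap}(\Gamma)$. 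Part (2) of the lemma then gives that $\mathrm{PMap}(M_{\Gamma})$ is locally CB if and only if $\mathrm{PMap}(\Gamma)$ is locally CB.

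It then remains to invoke the classification for graphs: Theorem D of \cite{domat_coarse_2023} states that $\mathrm{PMap}(\Gamma)$ is locally CB precisely when $\Gamma$ has finite rank, or when $|E_{\ell}(\Gamma)|<\infty$ and only finitely many components of $\Gamma\setminus\Gamma_c$ have infinite end space. Combining the two equivalences yields the corollary. The only point requiring attention is bookkeeping rather than a genuine obstacle, namely verifying that the hypotheses of the preceding lemma hold in this setting (that $\Psi_P(\mathrm{PMap}(M_{\Gamma}))=\mathrm{PMap}(\Gamma)$ and that $\mathrm{Twists}(M_{\Gamma})$ is CB in the ambient group); once these are in place the statement is a direct translation of the known result for $\Gamma$. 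I would likely state this proof in two sentences, in parallel with the proof of Corollary \ref{CBPMap}.
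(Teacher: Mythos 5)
Your proposal matches the paper's argument exactly: the paper obtains this corollary (together with Corollaries \ref{CBPMap} and \ref{CBGeneratedPMap}) by applying the unlabeled lemma preceding them to $G=\mathrm{PMap}(M_{\Gamma})$, using that $\mathrm{Twists}(M_{\Gamma})$ is compact (Theorem \ref{TwistGroupStructure}) hence CB and that $\Psi_P$ is surjective (Proposition \ref{MCGHom}(c)), then invoking Theorem D of \cite{domat_coarse_2023}. Your bookkeeping details are all correct and this is precisely the intended reduction.
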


\begin{corollary}\label{CBGeneratedPMap}
    The group $\mathrm{PMap}(M_{\Gamma})$ is CB generated if and only if $\Gamma$ is a tree or $\Gamma$ satisfies both
    \begin{enumerate}
        \item $|E_{\ell}(\Gamma)|< \infty$, and
        \item $E(\Gamma)\setminus E_{\ell}(\Gamma)$ has no accumulation points.
    \end{enumerate}
\end{corollary}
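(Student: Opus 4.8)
The plan is to obtain Corollary~\ref{CBGeneratedPMap} as an immediate consequence of the transfer principle established in the preceding lemma (and the corollary extracted from it), combined with the CB-generation classification of $\mathrm{PMap}(\Gamma)$ due to Domat--Hoganson--Kwak.

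First I would check that the hypotheses of the preceding lemma are met for $G = \mathrm{PMap}(M_\Gamma)$. Every sphere twist acts trivially on $E(M_\Gamma)$, so $\mathrm{Twists}(M_\Gamma) \leq \mathrm{PMap}(M_\Gamma)$, and hence $K := \mathrm{Twists}(M_\Gamma) \cap \mathrm{PMap}(M_\Gamma) = \mathrm{Twists}(M_\Gamma)$. By Theorem~\ref{TwistGroupStructure} this group is compact, so in particular it is CB in $\mathrm{PMap}(M_\Gamma)$ (a compact subset has bounded orbits under any continuous isometric action, or satisfies Rosendal's criterion directly). By Proposition~\ref{MCGHom}(c) the map $\Psi_P$ is surjective, so $Q := \Psi_P(\mathrm{PMap}(M_\Gamma)) = \mathrm{PMap}(\Gamma)$. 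Part~(3) of the preceding lemma then gives that $\mathrm{PMap}(M_\Gamma)$ is CB generated if and only if $\mathrm{PMap}(\Gamma)$ is.

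Next I would invoke Theorem~A of \cite{domat_generating_2023}, which states precisely that $\mathrm{PMap}(\Gamma)$ is CB generated if and only if $\Gamma$ is a tree, or $\Gamma$ satisfies both $|E_\ell(\Gamma)| < \infty$ and $E(\Gamma) \setminus E_\ell(\Gamma)$ has no accumulation points. Chaining this equivalence with the one from the previous paragraph yields the statement. In the tree case one may additionally observe, via Corollary~\ref{RankZeroPMapTrivial} and Theorem~\ref{ADMQGraphClassification}, that both $\mathrm{PMap}(M_\Gamma)$ and $\mathrm{PMap}(\Gamma)$ are trivial, hence trivially CB generated, which is a useful consistency check.

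I do not expect a genuine obstacle here: the substantive content is entirely contained in the cited results, so the argument is essentially a bookkeeping matter of verifying the hypotheses of the transfer lemma. The only point that deserves a moment's attention is confirming that $\mathrm{Twists}(M_\Gamma)$ is CB inside the possibly non-locally-compact group $\mathrm{PMap}(M_\Gamma)$ in the case $\mathrm{rk}(\Gamma) = \infty$, which is immediate from its compactness (Theorem~\ref{TwistGroupStructure}).
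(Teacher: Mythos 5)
Your proposal matches the paper's own reasoning: the paper derives this corollary directly by combining the preceding transfer lemma (and the corollary that $\mathrm{PMap}(M_\Gamma)$ is CB generated iff $\mathrm{PMap}(\Gamma)$ is) with Theorem A of \cite{domat_generating_2023}. Your verification that the hypotheses of the transfer lemma hold—namely that $\mathrm{Twists}(M_\Gamma)$ is CB in $\mathrm{PMap}(M_\Gamma)$ via compactness (Theorem~\ref{TwistGroupStructure}) and that $\Psi_P$ is surjective (Proposition~\ref{MCGHom}(c))—is exactly the bookkeeping the paper leaves implicit, so the argument is correct and takes the same route.
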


We also have the following consequence.

\begin{corollary}\label{AsympDim}
    For any locally finite graph $\Gamma$, if $\mathrm{Map}(M_{\Gamma})$ is locally CB, then is has a well defined coarse equivalence type (i.e. it admits continuous left-invariance coarsely proper pseudo-metrics). The same statement holds for $\mathrm{PMap}(M_{\Gamma})$, and $\Psi$ and $\Psi_P$ are coarse equivalences. Hence, $\mathrm{asdim}(\mathrm{Map}(\Gamma))=\mathrm{asdim}(\mathrm{Map}(\Gamma))$ and $\mathrm{asdim}(\mathrm{PMap}(\Gamma))=\mathrm{asdim}(\mathrm{PMap}(\Gamma))$. Thus,
    \begin{equation*}
        \mathrm{asdim}(\mathrm{PMap}(M_{\Gamma}))=\begin{cases}
            0 & \mathrm{if } |E_{\ell}(\Gamma)|\leq 1
            \\
            \infty & \mathrm{if } |E_{\ell}(\Gamma)|\geq 2
        \end{cases}
    \end{equation*}
\end{corollary}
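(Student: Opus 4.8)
The plan is to assemble this statement from the splitting in Theorem \ref{SESMainTheorem}, Theorem \ref{TwistGroupStructure}, the coarse-geometry machinery in Subsection \ref{subsec:BorelMoore}/the Coarse Geometry subsection, and the corresponding asymptotic dimension facts for $\mathrm{Map}(\Gamma)$ and $\mathrm{PMap}(\Gamma)$ from \cite{domat_coarse_2023}. First I would observe that $\mathrm{Twists}(M_{\Gamma})$ is compact (Theorem \ref{TwistGroupStructure}), hence it is a CB normal subgroup of $\mathrm{Map}(M_{\Gamma})$, and likewise $K = \mathrm{Twists}(M_{\Gamma})\cap G$ is CB for any closed subgroup $G$; in particular it is CB in $\mathrm{PMap}(M_{\Gamma})$ since $\mathrm{Twists}(M_{\Gamma})\leq \mathrm{PMap}(M_{\Gamma})$. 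Then Proposition \ref{CBKernelCoarseEquiv} shows that the quotient maps $\Psi:\mathrm{Map}(M_{\Gamma})\to \mathrm{Map}(\Gamma)$ and $\Psi_P:\mathrm{PMap}(M_{\Gamma})\to \mathrm{PMap}(\Gamma)$ are coarse equivalences once one knows the source groups are Polish (which they are, by Corollary \ref{cor:doubledHandlebodyPolish}) and that the relevant kernels are closed and CB. Combining this with Lemma \ref{CoarseEquivSameAsDim} gives $\mathrm{asdim}(\mathrm{Map}(M_{\Gamma}))=\mathrm{asdim}(\mathrm{Map}(\Gamma))$ and $\mathrm{asdim}(\mathrm{PMap}(M_{\Gamma}))=\mathrm{asdim}(\mathrm{PMap}(\Gamma))$.

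Next I would address the "well-defined coarse equivalence type" clause. When $\mathrm{Map}(M_{\Gamma})$ is locally CB, I need to check the hypotheses of Proposition \ref{locallyCBCoarseType}: separability and metrizability come from Corollary \ref{cor:doubledHandlebodyPolish}, and the "arbitrarily small subgroups" condition holds because the basic neighborhoods $\mathcal{V}_K$ of the identity are themselves subgroups. Hence $\mathrm{Map}(M_{\Gamma})$ admits a continuous left-invariant coarsely proper pseudo-metric, and all such are coarsely equivalent, giving the well-defined coarse type; the same argument applies verbatim to $\mathrm{PMap}(M_{\Gamma})$, using that it is a closed subgroup and that local-CB passes appropriately (one can also cite the lemma preceding Corollary \ref{CBPMap} relating local CB of $G$ and of $Q=\Psi(G)$). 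This also justifies calling $\Psi$ and $\Psi_P$ coarse equivalences in the locally CB regime.

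Finally, for the displayed formula I would quote the computation of $\mathrm{asdim}(\mathrm{PMap}(\Gamma))$ from \cite{domat_coarse_2023}: it is $0$ when $|E_\ell(\Gamma)|\leq 1$ and $\infty$ when $|E_\ell(\Gamma)|\geq 2$. In the first case, $\mathrm{PMap}(\Gamma)$ is CB by Corollary \ref{CBPMap} (cases (1)--(3) all have $|E_\ell(\Gamma)|\leq 1$, and conversely one checks $|E_\ell|\leq 1$ forces one of those forms together with the CB condition — more carefully, one just invokes that a CB group has asymptotic dimension $0$, and that $|E_\ell(\Gamma)|\le 1$ forces CB of the core part so that $\mathrm{asdim}=0$); transporting along the coarse equivalence $\Psi_P$ gives $\mathrm{asdim}(\mathrm{PMap}(M_{\Gamma}))=0$. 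In the second case $\mathrm{asdim}(\mathrm{PMap}(\Gamma))=\infty$ by \cite{domat_coarse_2023} (one exhibits coarsely embedded $\mathbb{Z}^n$ for all $n$, e.g. via commuting loop shifts or handle shifts across two distinct loop-ends), and again the coarse equivalence transports this to $\mathrm{PMap}(M_{\Gamma})$. The main obstacle is purely bookkeeping: making sure the hypotheses of Propositions \ref{CBKernelCoarseEquiv} and \ref{locallyCBCoarseType} are genuinely in force (closedness of the kernels, local CB of the subgroup $\mathrm{PMap}$, arbitrarily small subgroups), and correctly importing the exact asymptotic-dimension statements from \cite{domat_coarse_2023} rather than re-deriving them; there is no essentially new geometric content, only a careful transfer across the split short exact sequence.
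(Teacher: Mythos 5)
Your proposal is correct and follows essentially the same route as the paper: apply Proposition \ref{locallyCBCoarseType} (checking separability, metrizability, and arbitrarily small subgroups via the $\mathcal{V}_K$) for the well-defined coarse type, use Proposition \ref{CBKernelCoarseEquiv} with the compactness of $\mathrm{Twists}(M_{\Gamma})$ from Theorem \ref{TwistGroupStructure} to get that $\Psi$ and $\Psi_P$ are coarse equivalences, then Lemma \ref{CoarseEquivSameAsDim} and Theorem E of \cite{domat_coarse_2023} finish. Your version simply spells out the hypothesis-checking more explicitly than the paper's terse proof, but the argument is the same.
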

\begin{proof}
    We can apply Proposition \ref{locallyCBCoarseType} to $\mathrm{Map}(M_{\Gamma})$ and $\mathrm{PMap}(M_{\Gamma})$ to obtain the first statement (Proposition \ref{locallyCBCoarseType} shows that $\text{Map}(\Gamma)$ and $\text{PMap}(\Gamma)$ have a well defined coarse equivalence type as well, as was noted in Proposition 2.22 of \cite{domat_coarse_2023} for $\text{PMap}(M_{\Gamma})$). Then Theorem \ref{SESMainTheorem} and Proposition \ref{CBKernelCoarseEquiv} imply that $\Psi$ and $\Psi_P$ are coarse equivalences. Lemma \ref{CoarseEquivSameAsDim} implies the equalities between asymptotic dimensions. The values for $\text{asdim}(\text{PMap}(M_{\Gamma}))$ follow from Theorem E in \cite{domat_coarse_2023}.
\end{proof}

For completeness, we also note the following algebraic facts about $\text{PMap}(M_{\Gamma})$ using other results in \cite{domat_generating_2023}. 

\begin{corollary}
    For every locally finite graph $\Gamma$,
    \begin{equation*}
       \mathrm{rk}(H^1(\mathrm{PMap}(M_{\Gamma})); \Z)= \begin{cases}
           0 & \mathrm{if } |E_{\ell}(\Gamma)|\leq 1 \\
           n-1 & 2\leq |E_{\ell}(\Gamma)|=n<\infty\\
            \aleph_0 & \mathrm{otherwise.}
        \end{cases}
    \end{equation*}
\end{corollary}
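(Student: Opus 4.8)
The plan is to reduce immediately, via the splitting in Theorem \ref{SESMainTheorem}, to the corresponding computation for $\mathrm{PMap}(\Gamma)$, which is carried out in \cite{domat_generating_2023}. The key elementary observation is that $H^1(G;\Z)=\mathrm{Hom}(G,\Z)$ for any group $G$ (with trivial coefficients), and that $\mathrm{Hom}(-,\Z)$ only depends on the torsion-free part of the abelianization and kills torsion, so the large torsion subgroup $\mathrm{Twists}(M_{\Gamma})$ contributes nothing.

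First I would record the short exact sequence obtained by restricting Theorem \ref{SESMainTheorem} to pure mapping class groups,
$$1\to \mathrm{Twists}(M_{\Gamma})\to \mathrm{PMap}(M_{\Gamma})\xrightarrow{\ \Psi_P\ } \mathrm{PMap}(\Gamma)\to 1,$$
together with Theorem \ref{TwistGroupStructure}, which identifies $\mathrm{Twists}(M_{\Gamma})$ with $\prod_{i=1}^{\mathrm{rk}(\Gamma)}\Z/2$; in particular every element of $\mathrm{Twists}(M_{\Gamma})$ has order dividing $2$.

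Next I would apply $\mathrm{Hom}(-,\Z)$. Any homomorphism $\phi\colon \mathrm{PMap}(M_{\Gamma})\to\Z$ sends each element of $\mathrm{Twists}(M_{\Gamma})$ to a torsion element of $\Z$, hence to $0$, so $\phi$ vanishes on the normal subgroup $\mathrm{Twists}(M_{\Gamma})=\ker\Psi_P$ and therefore factors uniquely as $\phi=\bar\phi\circ\Psi_P$ with $\bar\phi\colon\mathrm{PMap}(\Gamma)\to\Z$; since $\Psi_P$ is surjective, the assignment $\bar\phi\mapsto\bar\phi\circ\Psi_P$ is also injective. Thus $\Psi_P^*$ is an isomorphism
$$H^1(\mathrm{PMap}(\Gamma);\Z)\ \xrightarrow{\ \cong\ }\ H^1(\mathrm{PMap}(M_{\Gamma});\Z),$$
so the two groups have the same rank. (If one prefers the continuous-cohomology version, the same argument works verbatim, using that $\Psi_P$ is a continuous open surjection by Proposition \ref{MCGHom}, hence a topological quotient map.) Finally I would invoke the computation of $\mathrm{rk}\,H^1(\mathrm{PMap}(\Gamma);\Z)$ from \cite{domat_generating_2023}, which produces exactly the three cases in the statement.

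The only genuine content lies on the graph side — computing the torsion-free part of $\mathrm{PMap}(\Gamma)^{\mathrm{ab}}$ — and that is a citation here rather than something to reprove; within this paper the step that needs the most care is simply confirming that $\mathrm{Twists}(M_{\Gamma})$ is precisely $\ker\Psi_P$ (Theorem \ref{SESMainTheorem}(1)) and is a torsion group (Theorem \ref{TwistGroupStructure}), so that $\mathrm{Hom}(-,\Z)$ cannot detect it. I do not anticipate any real obstacle beyond being precise about these bookkeeping points.
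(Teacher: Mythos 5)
Your argument is the same as the paper's: use the splitting/short exact sequence from Theorem \ref{SESMainTheorem}, observe that $\mathrm{Twists}(M_{\Gamma})$ is $2$-torsion so $\mathrm{Hom}(-,\Z)$ kills it and $\Psi_P^*$ is an isomorphism on $H^1(-;\Z)$, then cite Corollary C of \cite{domat_generating_2023} for the graph-side computation. Your write-up is a more detailed unpacking of the paper's two-sentence proof, but the route is identical.
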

\begin{proof}
    As every element of $\mathrm{Twists}(M_{\Gamma})$ has order $2$, all homomorphisms of $\mathrm{PMap}(M_{\Gamma})$ to $\Z$ correspond to a unique homomorphism from $\mathrm{PMap}(\Gamma)$ to $\Z$. The result then follows from Corollary C in \cite{domat_generating_2023}. 
\end{proof}

\begin{corollary}
    $\mathrm{PMap}(M_{\Gamma})$ is residually finite if and only if $\Gamma$ has finite rank.
\end{corollary}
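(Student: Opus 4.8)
The plan is to deduce this from the splitting $\mathrm{PMap}(M_{\Gamma})\cong\mathrm{Twists}(M_{\Gamma})\rtimes\mathrm{PMap}(\Gamma)$ obtained by restricting Theorem \ref{SESMainTheorem}(2) to pure mapping class groups, together with the identification $\mathrm{Twists}(M_{\Gamma})\cong\prod_{i=1}^{\mathrm{rk}(\Gamma)}\mathbb{Z}/2$ from Theorem \ref{TwistGroupStructure}, and the analogue of the statement for graphs: $\mathrm{PMap}(\Gamma)$ is residually finite if and only if $\mathrm{rk}(\Gamma)<\infty$, which follows from the results of Domat--Hoganson--Kwak in \cite{domat_generating_2023}. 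Granting these three inputs, both directions are essentially formal, so the corollary is a short assembly argument.

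For the "only if" direction I would note that the splitting realizes $\mathrm{PMap}(\Gamma)$ as a \emph{subgroup} of $\mathrm{PMap}(M_{\Gamma})$ (the image $\ker\lambda$ of the section produced via Lemmas \ref{AbelianKernelSplitting} and \ref{CrossedHomContinuous}). Since subgroups of residually finite groups are residually finite, residual finiteness of $\mathrm{PMap}(M_{\Gamma})$ forces residual finiteness of $\mathrm{PMap}(\Gamma)$, hence $\mathrm{rk}(\Gamma)<\infty$ by the graph statement.

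For the "if" direction, suppose $\mathrm{rk}(\Gamma)=n<\infty$, so $A:=\mathrm{Twists}(M_{\Gamma})\cong(\mathbb{Z}/2)^n$ is finite and $Q:=\mathrm{PMap}(\Gamma)$ is residually finite, and write $\mathrm{PMap}(M_{\Gamma})=A\rtimes Q$. It then suffices to verify the elementary fact that a semidirect product of a finite group by a residually finite group is residually finite: given $1\neq(a,q)$, if $q\neq 1$ compose the projection $A\rtimes Q\to Q$ with a finite quotient of $Q$ in which $q$ survives; and if $q=1$ (so $a\neq 1$), let $Q_0\trianglelefteq Q$ be the kernel of the action $Q\to\mathrm{Aut}(A)$, which has finite index since $\mathrm{Aut}(A)$ is finite, and check directly that $Q_0$ is normal in $A\rtimes Q$ with finite quotient $A\rtimes(Q/Q_0)$, in which $(a,1)$ survives. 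Hence $\mathrm{PMap}(M_{\Gamma})$ is residually finite, completing the equivalence.

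The main obstacle is not this assembly but the cited graph-level input, specifically the assertion that $\mathrm{PMap}(\Gamma)$ fails to be residually finite whenever $\mathrm{rk}(\Gamma)=\infty$. If one wanted a self-contained argument here, the approach would be to use the loop shifts and word maps from Theorem B of \cite{domat_generating_2023} to exhibit a restricted wreath product $H\wr\mathbb{Z}\leq\mathrm{PMap}(\Gamma)$ with $H$ a \emph{nonabelian} free group — taking $H$ generated by word maps supported in a fundamental domain (or a union of two adjacent fundamental domains, using the corresponding power of the shift) and realizing a rank-two free subgroup of the automorphism action, then conjugating by powers of the shift — and invoke the classical fact that $H\wr\mathbb{Z}$ is never residually finite when $H$ is nonabelian. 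The delicate points are ensuring $H$ is genuinely nonabelian (a lamplighter-type subgroup $\mathbb{Z}/k\wr\mathbb{Z}$ or $\mathbb{Z}\wr\mathbb{Z}$ would be residually finite and useless) and that conjugation by the shift produces a genuine restricted wreath product with no extra relations among the conjugates.
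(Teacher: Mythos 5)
Your proposal is correct and follows essentially the same approach as the paper: both directions hinge on the splitting $\mathrm{PMap}(M_{\Gamma}) \cong \mathrm{Twists}(M_{\Gamma}) \rtimes \mathrm{PMap}(\Gamma)$, the identification of $\mathrm{Twists}(M_{\Gamma})$ with $(\mathbb{Z}/2)^{\mathrm{rk}(\Gamma)}$, and the graph-level fact from Domat--Hoganson--Kwak that $\mathrm{PMap}(\Gamma)$ is residually finite exactly when $\mathrm{rk}(\Gamma)$ is finite. The only cosmetic difference is in the finite-rank direction: the paper observes that $\mathrm{PMap}(\Gamma)$ is a finite-index residually finite subgroup and invokes the standard fact that residual finiteness ascends from finite-index subgroups, whereas you prove the equivalent statement directly for semidirect products $A\rtimes Q$ with $A$ finite; these are the same argument up to repackaging. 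Your third paragraph, sketching a self-contained proof of the graph-level non-residual-finiteness, is optional — the paper simply cites it — but your identification of the wreath-product obstruction is the right intuition for what the cited theorem is doing.
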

\begin{proof}
    We can write $\mathrm{PMap}(M_{\Gamma})=\mathrm{Twists}(M_{\Gamma})\rtimes \mathrm{PMap}(\Gamma)$ by Theorem \ref{SESMainTheorem}. If the rank of $\Gamma$ is infinite, by Theorem D of \cite{domat_generating_2023} $\mathrm{PMap}(\Gamma)$ is not residually finite, so $\text{PMap}(M_{\Gamma})$ is not either. 
    \par 
    If the rank of $\Gamma$ is finite, then $\text{PMap}(M_{\Gamma})$ contains a finite index residually finite subgroup, namely $\text{PMap}(\Gamma)$, by Theorem \ref{KernelStructure}. Thus it is residually finite.
\end{proof}

The proof of the following is essentially identical to the previous proof. We say that a group $G$ satisfies the \textit{Tits alternative} if every subgroup of $G$ is either virtually solvable or contains a nonabelian free group.
\begin{corollary}
    $\mathrm{PMap}(M_{\Gamma})$ satisfies the Tits Alternative if and only if $\Gamma$ has finite rank.
\end{corollary}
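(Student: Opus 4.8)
The plan is to run the argument from the preceding corollary on residual finiteness almost verbatim, with ``residually finite'' replaced by ``satisfies the Tits alternative''. The two inputs are the splitting $\mathrm{PMap}(M_{\Gamma})\cong\mathrm{Twists}(M_{\Gamma})\rtimes\mathrm{PMap}(\Gamma)$ of Theorem~\ref{SESMainTheorem} and the computation $\mathrm{Twists}(M_{\Gamma})\cong\prod_{i=1}^{\mathrm{rk}(\Gamma)}\Z/2$ of Theorem~\ref{TwistGroupStructure}, together with the corresponding dichotomy for graphs: $\mathrm{PMap}(\Gamma)$ satisfies the Tits alternative if and only if $\mathrm{rk}(\Gamma)<\infty$ (for finite rank with finitely many ends this follows from Bestvina--Feighn--Handel applied to the relevant finite-index subgroup of some $\mathrm{Out}(F_m)$, and in general it is the Tits alternative result of \cite{domat_generating_2023}). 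I will also use two elementary permanence facts: the Tits alternative is inherited by every subgroup, and it is inherited by every finite-index overgroup (if $H\le G$ has finite index and satisfies the Tits alternative, then any $G'\le G$ has $[G':G'\cap H]<\infty$, and $G'$ is virtually solvable, resp.\ contains a nonabelian free group, precisely when $G'\cap H$ is).

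For the direction $\mathrm{rk}(\Gamma)=\infty\Rightarrow$ failure: the section $\mathrm{PMap}(\Gamma)\hookrightarrow\mathrm{PMap}(M_{\Gamma})$ coming from Theorem~\ref{SESMainTheorem} exhibits $\mathrm{PMap}(\Gamma)$ as a subgroup of $\mathrm{PMap}(M_{\Gamma})$; since $\mathrm{PMap}(\Gamma)$ does not satisfy the Tits alternative when the rank is infinite, and failure of the Tits alternative passes to overgroups, neither does $\mathrm{PMap}(M_{\Gamma})$. For the direction $\mathrm{rk}(\Gamma)<\infty\Rightarrow$ validity: by Theorem~\ref{TwistGroupStructure} the kernel $\mathrm{Twists}(M_{\Gamma})$ is then finite, so $\mathrm{PMap}(\Gamma)$ is a finite-index subgroup of $\mathrm{PMap}(M_{\Gamma})$ (as in the residual finiteness corollary, via Theorem~\ref{KernelStructure}); since $\mathrm{PMap}(\Gamma)$ satisfies the Tits alternative in the finite-rank case and the property passes to finite-index overgroups, so does $\mathrm{PMap}(M_{\Gamma})$.

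I do not expect a genuine obstacle here; the substantive content lies entirely in the cited statement for $\mathrm{PMap}(\Gamma)$. The only points requiring care are getting the directions of the two permanence properties right (arbitrary subgroups versus finite-index overgroups) and confirming that \cite{domat_generating_2023} provides the Tits alternative for $\mathrm{PMap}(\Gamma)$ as a two-sided characterization; should that reference record only the infinite-rank failure, the finite-rank half can be supplied separately from the known structure of $\mathrm{PMap}(\Gamma)$ for finite-rank graphs.
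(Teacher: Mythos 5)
Your proposal is correct and follows essentially the same route as the paper: both directions hinge on the splitting $\mathrm{PMap}(M_{\Gamma})\cong\mathrm{Twists}(M_{\Gamma})\rtimes\mathrm{PMap}(\Gamma)$, with the infinite-rank direction using that failure of the Tits alternative in the subgroup $\mathrm{PMap}(\Gamma)$ (Theorem E of Domat--Hoganson--Kwak) propagates upward, and the finite-rank direction using that $\mathrm{Twists}(M_{\Gamma})$ is then finite so that $\mathrm{PMap}(\Gamma)$ has finite index, combined with passage of the Tits alternative to finite-index overgroups. The only cosmetic difference is your (arguably cleaner) citation of Theorem \ref{TwistGroupStructure} for finiteness of the twist group where the paper points to Lemma \ref{KernelStructure}, and your explicit spelling-out of the two permanence facts, which the paper leaves implicit.
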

\begin{proof}
    If $\Gamma$ has infinite rank, then $\text{PMap}(M_{\Gamma})$ contains a subgroup which fails the Tits alternative, namely $\text{PMap}(\Gamma)$ (by Theorem E of \cite{domat_generating_2023}), and thus $\text{PMap}(M_{\Gamma})$ also does not satisfy the Tits alternative.
    \par 
    If $\Gamma$ has finite rank, then $\text{PMap}(M_{\Gamma})$ contains a finite index subgroup $\text{PMap}(\Gamma)$ which satisfies the Tits alternative. It is easy to see that the Tits alternative is preserved when passing to finite index supergroups, so $\text{PMap}(M_{\Gamma})$ satisfies the Tits alternative as well.
\end{proof}

\par

\subsection{Translatable graphs}\label{subsec:CoarseGeoTranslatable}
In this subsection we note some modifications of the techniques of Schaffer-Cohen to produce results about the coarse geometry of mapping class groups of a particular collection of graphs \cite{schaffer-cohen_graphs_2024}. Namely, we will produce a collection of graphs whose mapping class groups are quasi-isometric to a particular subgraph of the $1$-skeleton of the sphere complex $\mathcal{S}(M_{\Gamma})$, see Theorem \ref{thm:TranslCBGen}.
\par 
Below, we will sketch the key results of interest from \cite{schaffer-cohen_graphs_2024} in the language of spheres, doubled handlebodies, and graphs. Very little work needs to be done to do this. Below we will summarize some of the ideas and remark when anything different needs to be done.
\par 
We first note a general result which is analogous to the classical Milnor--Schwartz lemma in the context of CB generation. One can compare this to Lemma \ref{lem:milnorSchwartzRosendal}.

\begin{lemma}[{\cite[Lemma 2.5]{schaffer-cohen_graphs_2024}}]\label{lem:MilnorSchwartz}
    Let $G$ be a locally coarsely bounded group acting transitively by isometries on a connected graph $\Gamma$ equipped with the edge metric. Suppose for some vertex $v_0$, the set
    \begin{equation*}
        A=\{f\in G \ | \ d(v_0, f(v_0))\leq 1 \}
    \end{equation*}
    generates $G$ and is coarsely bounded. Then orbit map of $G$ in $\Gamma$ with the word metric given by $A$ is a quasi-isometry.
\end{lemma}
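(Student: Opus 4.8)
The plan is to run the classical Milnor--Schwarz argument, exploiting that both metrics in play are left-invariant and that $G$ acts by isometries. Write $\phi\colon G\to\Gamma$ for the orbit map $\phi(g)=g\cdot v_0$, and give $G$ the word metric $d_A$; note first that since every element of $G$ acts as an isometry of $\Gamma$, the set $A$ is symmetric (if $f\in A$ then $d(v_0,f^{-1}v_0)=d(fv_0,v_0)\le 1$) and contains the identity, so $A$ is a genuine generating set and $d_A$ is a finite-valued left-invariant metric. The hypotheses that $A$ is coarsely bounded and $G$ is locally coarsely bounded are what guarantee that $d_A$ is coarsely proper, hence that the asserted quasi-isometry is with respect to the canonical coarse structure on $G$; they are not needed for the bare quasi-isometry statement itself.

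Coarse surjectivity is immediate: transitivity of the $G$-action on vertices shows $\phi$ hits every vertex of $\Gamma$, and in the edge metric every point of $\Gamma$ lies within $\tfrac12$ of a vertex, so $\phi$ is $\tfrac12$-coarsely surjective. For the metric comparison, left-invariance gives $d_A(g,h)=d_A(e,g^{-1}h)$, and $d_\Gamma(\phi(g),\phi(h))=d_\Gamma(g v_0,h v_0)=d_\Gamma(v_0,g^{-1}h\cdot v_0)$ since $g^{-1}$ is an isometry, so it suffices to compare $d_A(e,g)$ with $d_\Gamma(v_0,g\cdot v_0)$ for a single $g\in G$.

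For the upper bound, if $g=a_1\cdots a_n$ with each $a_i\in A$ and $n=d_A(e,g)$, then telescoping the triangle inequality along the chain $v_0,\,a_1 v_0,\,a_1a_2 v_0,\dots,g v_0$ and using $d_\Gamma(v_0,a_i v_0)\le 1$ yields $d_\Gamma(v_0,g\cdot v_0)\le n$. For the lower bound, let $v_0=w_0,w_1,\dots,w_n=g\cdot v_0$ be a geodesic edge path in $\Gamma$, so $n=d_\Gamma(v_0,g\cdot v_0)$ and $d_\Gamma(w_{i-1},w_i)=1$. By transitivity choose $g_i\in G$ with $g_i\cdot v_0=w_i$, taking $g_0=e$ and $g_n=g$. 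Then $g_{i-1}^{-1}g_i$ sends $v_0$ to $g_{i-1}^{-1}w_i$, which lies at distance $d_\Gamma(g_{i-1}v_0,w_i)=d_\Gamma(w_{i-1},w_i)=1$ from $v_0$, so $g_{i-1}^{-1}g_i\in A$; since $g=(g_0^{-1}g_1)(g_1^{-1}g_2)\cdots(g_{n-1}^{-1}g_n)$ we get $d_A(e,g)\le n$. Combining the two bounds gives $d_A(e,g)=d_\Gamma(v_0,g\cdot v_0)$, so $\phi$ is an isometric embedding of $(G,d_A)$ onto the vertex set of $\Gamma$, and with coarse surjectivity this is a quasi-isometry.

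There is essentially no hard step: the only points requiring any care are observing that $A$ generating makes $d_A$ finite, and that the path argument can be run with the endpoints normalized as $g_0=e$, $g_n=g$, both of which are immediate from transitivity. The real work in the paper's application lies not in this lemma but in \emph{verifying its hypotheses} for $G=\mathrm{Map}(M_\Gamma)$ acting on the relevant subgraph of $\mathcal S(M_\Gamma)$ --- namely that the action is transitive on the appropriate vertices and that the "distance-one" stabilizer-enlargement $A$ is both coarsely bounded and generating --- so that is where I would expect the genuine obstacles to appear.
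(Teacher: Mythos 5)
The paper does not prove this lemma at all --- it is imported verbatim from Schaffer-Cohen (\cite[Lemma 2.5]{schaffer-cohen_graphs_2024}), so there is no in-paper proof to compare against. Your argument is the standard Milnor--Schwarz argument and is essentially correct, and your remark that the coarse-boundedness hypotheses serve only to identify $d_A$ with the canonical coarse type of $G$ (rather than being needed for the bare quasi-isometry claim) is accurate and worth noting.

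One small inaccuracy at the end: you conclude $d_A(e,g)=d_\Gamma(v_0,g\cdot v_0)$ for all $g$ and hence that $\phi$ is an isometric embedding, but this fails on the stabilizer of $v_0$. If $g$ fixes $v_0$ with $g\neq e$ then $d_\Gamma(v_0,g\cdot v_0)=0$ while $d_A(e,g)=1$; more generally $\phi$ is not injective, so it cannot be an isometric embedding. Your lower-bound construction also quietly assumes $n\geq 1$: when $n=0$ the forced choice $g_0=e$, $g_n=g$ is inconsistent unless $g=e$. What the two bounds actually give is
\begin{equation*}
  d_\Gamma(v_0,g\cdot v_0)\ \leq\ d_A(e,g)\ \leq\ d_\Gamma(v_0,g\cdot v_0)+1,
\end{equation*}
which is a $(1,1)$-quasi-isometric embedding; together with your coarse-surjectivity observation this is all that is needed, so the conclusion of the lemma still follows.
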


We now define the collection of graphs and doubled handlebodies that we will consider here.

\begin{definition} [{\cite[Definition 3.1]{schaffer-cohen_graphs_2024}}]\label{def:SphereDivergence}
    Given an infinite type graph $\Gamma$, an end $e$ of $M_{\Gamma}$, and a sequence $S_1, S_2,\ldots$ of spheres in $M_{\Gamma}$, we say that $\lim_{n\to\infty} S_n =e$ if, for every neighborhood $V$ of the end $e$ in $M_{\Gamma}$, all but finitely many of the $S_i$'s are (up to isotopy) contained in $V$.
\end{definition}

\begin{definition} [{\cite[Definition 3.2]{schaffer-cohen_graphs_2024}}]\label{def:Transl}
    Given an infinite type graph $\Gamma$, a map $h\in \text{Map}(M_{\Gamma})$ is called a \textit{translation} if there are two distinct ends $e_-$ and $e_+$ of $M_{\Gamma}$ so that for any sphere $S$ of $M_{\Gamma}$, $\lim_{n \to \infty} h^n(S)=e_+$ and $\lim_{n\to \infty} h^{-n}(S)=e_-$. If such a translation exists, we call $M_{\Gamma}$, as well as $\Gamma$, \textit{translatable}.
\end{definition}

\begin{figure}
    \centering
    \includegraphics[scale=.5]{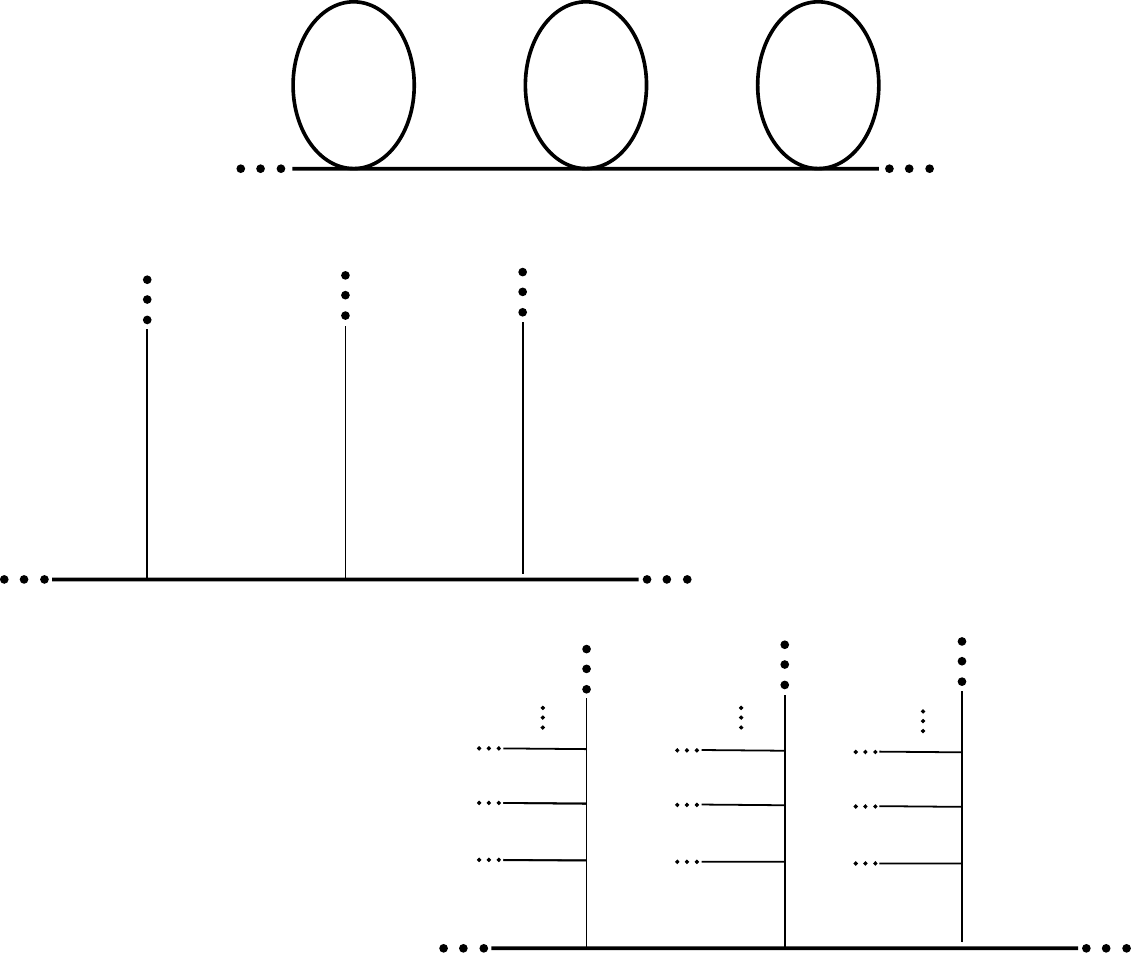}
    \caption{Some examples of translatable graphs. Note that a variation of Proposition 3.5 of \cite{schaffer-cohen_graphs_2024} implies that all translatable graphs look like this, i.e. up to proper homotopy they consist of an infinite wedge sum of some fixed graph with two distinguished vertices, with the wedge summing being arranged like $\Z$.}
    \label{fig:TranslExamples}
\end{figure}

\par 

We wish to consider a certain subgraph of $\mathcal{S}(M_{\Gamma})$ whose vertices separate the two ends $e_+$ and $e_-$, and with edges defined via a certain cobounding condition, described in Definition \ref{def:TranslSphereGraph}. Note that $\text{Map}(M_{\Gamma})$ acts transitively on this collection of spheres (see Lemma 3.6 in \cite{schaffer-cohen_graphs_2024}) which will allow for the use of Lemma \ref{lem:MilnorSchwartz}. Further, this is true even if we restrict to the subgroup fixing $e_-$ and $e_+$. We have the following result which is the first step in being able to apply Lemma \ref{lem:MilnorSchwartz}.

\begin{proposition}[{\cite[Corollary 3.9]{schaffer-cohen_graphs_2024}}]\label{lem:CBStabilizer}
    Let $\Gamma$ be a translatable graph. Fix a sphere $S\in \mathcal{S}(M_{\Gamma})$ separating $e_+$ and $e_-$. Then the subgroups
    \begin{equation*}
        H_{M_{\Gamma}}=\{f\in \mathrm{Map}(M_{\Gamma}) \ | \ f(S)=S\}
    \end{equation*}
    \begin{equation*}
        H_{\Gamma}=\{f\in \mathrm{Map}(\Gamma) \ | \ f(S)=S\}
    \end{equation*}
    are coarsely bounded in their respective groups. 
\end{proposition}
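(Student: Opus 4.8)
The plan is to follow the strategy of Schaffer-Cohen's proof of the analogous statement for translatable surfaces (\cite[Corollary 3.9]{schaffer-cohen_graphs_2024}), adapting it to the doubled-handlebody setting. The key point is that cutting $M_{\Gamma}$ along the separating sphere $S$ produces two pieces, each of which is (a doubled handlebody corresponding to) a ``ray-like'' translatable end, and the stabilizer $H_{M_{\Gamma}}$ decomposes, up to coarse equivalence, into contributions from these two half-pieces together with a sphere twist on $S$. The heart of the argument is to show that the stabilizer of a sphere separating $e_+$ from $e_-$ in a translatable manifold is coarsely bounded because a translation $h$ can be used to ``absorb'' arbitrarily large compact sets: given any neighborhood $\mathcal V$ of the identity, i.e. any $\mathcal V_K$, one can find a power of $h$ conjugating the relevant compact region past $K$, which is exactly the hypothesis needed for Rosendal's criterion.

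Concretely, first I would recall that by the ($M_{\Gamma}$-version of the) structure theorem for translatable graphs (the variant of \cite[Proposition 3.5]{schaffer-cohen_graphs_2024} referenced in Figure \ref{fig:TranslExamples}), we may assume $\Gamma$ is an infinite wedge of copies of a fixed finite-type graph arranged along $\Z$, with the translation $h$ realized as the evident shift, and $S$ chosen to be the $r$-preimage of a midpoint of one of the ``connecting'' edges, so that $h^n(S)$ are the analogous spheres further along, with $\lim_{n\to\pm\infty} h^n(S) = e_{\pm}$. Cutting along $S$ splits $M_{\Gamma}$ into $M_+$ and $M_-$, and there is a natural inclusion $H_{M_{\Gamma}} \hookleftarrow \mathrm{Map}(M_+, \partial)\times \mathrm{Map}(M_-, \partial)\times \langle T_S\rangle$ whose image is finite-index (or at least coarsely dense) — this uses Theorem \ref{SESMainTheorem} and the fact that a diffeomorphism fixing $S$ either preserves or swaps co-orientations, and swapping can only happen for a bounded correction. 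So it suffices to show each of $\mathrm{Map}(M_+,\partial)$ and $\mathrm{Map}(M_-,\partial)$ is coarsely bounded.

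For the half-piece, say $M_+$, I would verify Rosendal's criterion directly. Fix a basic neighborhood $\mathcal V_K$ of the identity in $\mathrm{Map}(M_+,\partial)$ with $K$ a union of pieces. Since $M_+$ is translatable ``towards $e_+$'' — more precisely there is a half-translation, the restriction of $h$ (or of $h^{-1}$) supported on $M_+$ up to a compact error — we may pick $N$ large enough that $h^N$ pushes $K$ off into a standard collar near $e_+$, disjoint from itself and from $S$; then for any $f\in \mathrm{Map}(M_+,\partial)$, after possibly composing with an element of a fixed finite set $\mathcal F$ (accounting for the permutation action on the finitely many ends of the ``cross-section'' graph that are not accumulated, plus the finite-type mapping class group of a single fundamental domain), the element $f$ agrees on $K$ with a product of a bounded number of conjugates $h^{jN} g_i h^{-jN}$ with $g_i$ supported in a single fundamental domain. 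This exhibits $f \in (\mathcal F \mathcal V_K)^n$ for a uniform $n$, which is exactly coarse boundedness. Finally, $H_{\Gamma}$ is handled identically — or more cheaply, since $\Psi$ is a quotient map (Proposition \ref{MCGHom}) with coarsely bounded kernel $\mathrm{Twists}(M_{\Gamma})$ (Theorem \ref{TwistGroupStructure}), one has $H_{\Gamma} = \Psi(H_{M_{\Gamma}})$ up to the kernel, and the continuous image of a coarsely bounded set is coarsely bounded (condition (2) in the definition of CB), so coarse boundedness of $H_{M_{\Gamma}}$ immediately gives it for $H_{\Gamma}$.

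The main obstacle I anticipate is the bookkeeping in the ``absorption'' step: one must be careful that the translation $h$ does not itself lie in the stabilizer $H_{M_{\Gamma}}$ (it does not, since $h(S)\neq S$), so the conjugates $h^{jN} g_i h^{-jN}$ used to reconstruct $f$ on $K$ are genuinely elements of $\mathrm{Map}(M_{\Gamma})$ but we need their relevant restrictions to land in the half-piece stabilizers — this requires choosing the fundamental domains and the collar near $e_+$ compatibly so that everything supported ``beyond'' $h^N(K)$ commutes appropriately with the pieces of $K$, and that the finite correction set $\mathcal F$ really is finite (which rests on the cross-section being finite-type, part of the definition of translatable). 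Making the decomposition $H_{M_{\Gamma}} \sim \mathrm{Map}(M_+,\partial)\times\mathrm{Map}(M_-,\partial)\times\langle T_S\rangle$ precise, including the co-orientation-swap subtlety already encountered in the proof of Theorem \ref{SESMainTheorem}(1), is the other place where care is needed.
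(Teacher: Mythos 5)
Your overall plan matches the paper's, which in fact treats this as essentially a citation: it declares the proof for $H_{M_\Gamma}$ ``identical to that given in Corollary 3.9 of Schaffer-Cohen'' and then, exactly as you do at the end, deduces $H_\Gamma$ is CB because $H_\Gamma = \Psi(H_{M_\Gamma})$ ($\Psi$ respects the action on $\mathcal S(M_\Gamma)$, and the continuous image of a CB set is CB). So your step 3 is precisely the paper's argument, and your steps 1--2 are an attempt to reconstruct the cited Schaffer-Cohen proof.

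One caution about that reconstruction: the claim that an arbitrary $f \in \mathrm{Map}(M_+, \partial)$ ``agrees on $K$ with a product of a bounded number of conjugates $h^{jN} g_i h^{-jN}$ with $g_i$ supported in a single fundamental domain'' is not right as stated. Since $f|_K$ can be an arbitrary compactly supported mapping class of a large union of fundamental domains, it cannot be written as a uniformly bounded product of (shifted) single-fundamental-domain maps; and if the $j$'s are unbounded then $h^{jN}$ cannot all come from a fixed finite set $\mathcal F$, so Rosendal's criterion does not follow this way. The Schaffer-Cohen argument instead exploits the translation via a commutator/conjugation identity that absorbs the compact set without needing to factor $f|_K$ itself — the paper does not reproduce this, but whichever version you write out, the ``bounded product of shifts of single-domain maps'' step is the one to repair.
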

\begin{proof}
    The proof for $\text{Map}(M_{\Gamma})$ is identical to that given in Corollary 3.9 of \cite{schaffer-cohen_graphs_2024}. As $H_{\Gamma}$ is the $\Psi$ image of $H_{M_{\Gamma}}$ since $\Psi$ respects the action of the two groups on $\mathcal{S}(M_{\Gamma})$, it follows that $H_{\Gamma}$ is CB in $\mathrm{Map}(\Gamma)$ too.
    
\end{proof}

From this point on, we need to make some restrictions on the end space of the graphs we are considering. We recall the preorder of Mann--Rafi on end spaces \cite{mann_large-scale_2023}. Recall that a preorder $\preceq$ on a set $X$ is a relation such that for all $x\in X$, $x\preceq x$, and if $x,y,z\in X$ so that $x\preceq y$ and $y\preceq z$, then $x\preceq z$.

\begin{definition}[{\cite[Definition 4.1]{mann_large-scale_2023}}]\label{def:preorder}
    Given two points $x, y \in E(\Gamma)$, we say that $y \preceq x$ if for every neighborhood $U$ of $x$ there is a neighborhood $V$ of $y$ such that $f(V)\subset U$ for some $f\in \text{Map}(\Gamma)$.
\end{definition}

One can equivalently define this using $\text{Map}(M_{\Gamma})$ instead. They give equivalent orders by Theorem \ref{ADMQGraphClassification} and Proposition \ref{RichardsHandlebody}.

\begin{definition}[{\cite[Definition 4.14]{mann_large-scale_2023}}]\label{def:stableends}
    Given an end $e\in E(\Gamma)$, a neighborhood $U$ of $e$ is \textit{stable} if for all neighborhoods $U' \subset U$ of $e$, $U'$ contains a homeomorphic copy of $U$. We say that $e$ is \textit{stable} if it has a stable neighborhood.
\end{definition}
 \par 
By Proposition 4.7 of \cite{mann_large-scale_2023}, the preorder in Definition \ref{def:preorder} has maximal elements. 

\begin{definition}[{\cite[Definition 2.15]{schaffer-cohen_graphs_2024}}]\label{def:tame}
    We say that $E(\Gamma)$ is \textit{tame} if all of its maximal ends and the immediate predecessors of the maximal ends are stable. 
\end{definition}

Tameness will be precisely the extra condition needed to be able to pick a good choice of edge set for the spheres we have been considering (that is, so that we get a connected graph which is often infinite diameter). 
\par 
Before this, we make the following general definition, and then discuss what is done to make the correct choices. Given two disjoint spheres $S$ and $T$ both separating $e_+$ and $e_-$ with $T$ in the complementary component of $S$ containing $e_+$, we let $(S,T)$ denote the doubled handlebody with two boundary components $S$ and $T$ which is co-bounded by these two spheres. 

\begin{definition}[{\cite[Definition 4.1]{schaffer-cohen_graphs_2024}}]\label{def:TranslSphereGraph}
    Fix a collection $\mathcal{M}$  of submanifolds of $M_{\Gamma}$ each of which is homeomorphic to a doubled handlebody with two boundary components. The \textit{translatable sphere graph} $\mathcal{TS}(M_{\Gamma}, \mathcal{M})$ of $M_{\Gamma}$ with respect to $\mathcal{M}$ is the graph whose vertices are spheres separating $e_+$ and $e_-$, with an edge between two spheres $S$ and $T$ if they are disjoint and $(S, T)$ or $(T,S)$ is homeomorphic to some $M\in \mathcal{M}$. 
\end{definition}

One could also consider the translatable sphere complex, though to be consistent with \cite{schaffer-cohen_graphs_2024} we use the graph instead.
\par
To make a good choice for $\mathcal{M}$, tameness is required. See Lemma 4.2 through Lemma 4.5 in \cite{schaffer-cohen_graphs_2024}. For surfaces, this choice is made entirely by considering the end space, including considerations about genus. We can do the same thing here, replacing genus with rank of the submanifold. As we will not need to work with the collection $\mathcal{M}$ directly, we will not define it here. See the discussion after Lemma 4.5 in \cite{schaffer-cohen_graphs_2024} for more details.
\par 
Following the convention in \cite{schaffer-cohen_graphs_2024}, we denote the graph $\mathcal{TS}(M_{\Gamma}, \mathcal{M})$ by $\mathcal{TS}(M_{\Gamma})$, where $\mathcal{M}$ is the collection discussed above. We obtain the following.

\begin{lemma}[{\cite[Lemma 4.7]{schaffer-cohen_graphs_2024}}]\label{lem:TranslSphereConnected}
    The graph $\mathcal{TS}(M_{\Gamma})$ is connected.
\end{lemma}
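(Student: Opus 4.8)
The goal is to show $\mathcal{TS}(M_{\Gamma})$ is connected, following the template of Lemma 4.7 of \cite{schaffer-cohen_graphs_2024}. The plan is to first reduce connectivity of $\mathcal{TS}(M_{\Gamma})$ to a statement about pairs of disjoint spheres, and then handle intersecting pairs by a surgery argument on the number of intersection circles. Concretely, I would proceed as follows. First, fix two vertices $S, S'$ of $\mathcal{TS}(M_{\Gamma})$, i.e. two spheres each separating $e_+$ from $e_-$. It suffices to find a path between them, and by the triangle inequality it suffices to treat two cases: (i) $S$ and $S'$ are disjoint but not joined by an edge, and (ii) $S$ and $S'$ intersect.

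For case (i), when $S \cap S' = \varnothing$ the co-bounded region $(S,S')$ (or $(S',S)$) is some doubled handlebody with two boundary components, but need not be a member of the chosen collection $\mathcal{M}$; here is where tameness of $E(\Gamma)$ enters, exactly as in \cite{schaffer-cohen_graphs_2024}. I would invoke the analogues of Lemmas 4.2–4.5 of \cite{schaffer-cohen_graphs_2024} (with genus replaced by rank of the submanifold, as indicated in the discussion preceding the statement): these guarantee that inside the region $(S,S')$, and using that $M_{\Gamma}$ is translatable so that both complementary components of $S$ accumulate the maximal-type ends $e_\pm$, one can always find a finite chain $S = S_0, S_1, \dots, S_k = S'$ of spheres separating $e_+$ from $e_-$, nested between $S$ and $S'$, with each consecutive co-bounded piece homeomorphic to a member of $\mathcal{M}$. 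The point is that the $\mathcal{M}$-pieces are chosen to tile, up to homeomorphism, every sufficiently "spread out" region, and tameness ensures the relevant neighborhoods of the maximal ends and their immediate predecessors are stable, so such tilings propagate.

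For case (ii), when $S$ and $S'$ intersect, put them in normal form with respect to each other (Lemma \ref{lem:NormalAndEquivalent}) and induct on the number of intersection circles. Pick an innermost disc $D$ of $S \cap S'$ on $S'$ and surger $S$ along $D$, obtaining two spheres $S_1, S_2$, each disjoint from $S'$ and each meeting $S'$ in fewer circles; as in the proof of Lemma \ref{lem:nonSepConnected}, one of the two, say $S_1$, still separates $e_+$ from $e_-$ (the end $e_+$ and the end $e_-$ land on opposite sides of at least one of the surgered spheres since they are separated by $S$, and we choose that one; if both work, pick either). Now $S$ and $S_1$ are disjoint, and although they may not be joined by an edge, case (i) applies to connect them by a path in $\mathcal{TS}(M_{\Gamma})$; meanwhile $S_1$ meets $S'$ in strictly fewer circles, so by induction $S_1$ is connected to $S'$. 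Concatenating gives the desired path.

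The main obstacle is case (i): verifying that the collection $\mathcal{M}$ really does "fill up" every co-bounded region between two separating spheres, so that disjoint separating spheres are always connected by an $\mathcal{M}$-chain. This is precisely the content transported from Lemmas 4.2–4.5 of \cite{schaffer-cohen_graphs_2024}, and the only genuinely new check is that replacing "genus" by "rank of the submanifold" causes no trouble — which it does not, since the homeomorphism type of a doubled handlebody with boundary depends only on its characteristic triple by Proposition \ref{RichardsHandlebody}, exactly as the homeomorphism type of a surface with boundary depends only on genus and end data. Everything else (the surgery induction, the transitivity of the $\mathrm{Map}(M_{\Gamma})$-action used implicitly, the normal-form machinery) is already available in the excerpt.
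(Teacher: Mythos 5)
Your proposal and the paper's proof both ultimately lean on the machinery of Schaffer-Cohen, but they are structured quite differently, and the difference matters.

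The paper's proof (and Schaffer-Cohen's Lemma 4.7 that it defers to) works almost entirely at the level of partitions of the end space $E(\Gamma)$: given two vertices $S, S'$ one works with the two partitions of $E(\Gamma)$ they induce and interpolates between them combinatorially, invoking (the analogue of) Schaffer-Cohen's Lemma 4.6 only to realize the intermediate partitions by actual spheres with the right rank data. In that framework there is no need to separate into a disjoint case and an intersecting case at all — whether $S$ and $S'$ intersect never enters the argument, because one is manipulating end partitions, not submanifolds. Your case (ii), the innermost-disc surgery induction on $|S \cap S'|$, is therefore extra work the paper's proof does not do. The argument in your case (ii) is sound on its own merits (one of the two surgered spheres does still separate $e_+$ from $e_-$, by the pair-of-pants analysis, and it meets $S'$ in fewer circles), so adding it does no harm; it just indicates your reconstruction diverges from the source.

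The more substantive concern is in your case (i), where you assert that the $\mathcal{M}$-chain from $S$ to $S'$ can be chosen \emph{nested between} $S$ and $S'$, i.e.\ lying inside the cobounded region $(S,S')$ (or $(S',S)$). This is generally false: if $S$ and $S'$ are disjoint but cobound a region with very little topology — say only a small finite set of ends and no rank — then no member of $\mathcal{M}$, let alone a chain of them, fits inside $(S,S')$. Schaffer-Cohen's argument, and the paper's, does not produce a nested chain; the intermediate spheres come from realizing intermediate partitions of the \emph{full} end space and in general sit outside $(S,S')$, with tameness/stability of the maximal ends controlling where they can be placed. As it happens, the nestedness is never used in your reduction — your case (ii) only needs \emph{some} path between two disjoint vertices, not a nested one — so this is an over-specification rather than a fatal gap, but it suggests a misreading of how the tameness hypothesis is actually deployed. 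You correctly identify the genus-to-rank translation and Proposition \ref{RichardsHandlebody} as what makes the transport to the doubled-handlebody setting go through.
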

\begin{proof}
    This essentially follows from the arguments given in the proof of Lemma 4.7 in \cite{schaffer-cohen_graphs_2024}. The only thing we note is that the surgeries as depicted in Figures 6 and 7 in \cite{schaffer-cohen_graphs_2024} which induce the desired separation properties can be done for spheres in $\mathcal{TS}(M_{\Gamma})$. This allows one to prove an identical version of Lemma 4.6 in \cite{schaffer-cohen_graphs_2024}. The argument of Lemma 4.7 of \cite{schaffer-cohen_graphs_2024} happens entirely in the end space, except for when Lemma 4.6 of \cite{schaffer-cohen_graphs_2024} needs to be invoked to produce spheres which realize certain partitions of $E(\Gamma)$, sometimes with rank considerations as well.
\end{proof}

We now obtain the final piece needed to apply Lemma \ref{lem:MilnorSchwartz}.

\begin{lemma}[{\cite[Lemma 4.8]{schaffer-cohen_graphs_2024}}]\label{lem:CoarseStabCB}
    Let $\Gamma$ be a translatable graph with tame end space. Fix a sphere $S$ separating $e_+$ and $e_-$. Then the subgroups
    \begin{equation*}
        H'_{M_{\Gamma}}=\{f\in \mathrm{Map}(M_{\Gamma}) \ | \ d_{\mathcal{TS}(M_{\Gamma})}(f(S),S)\leq 1\}
    \end{equation*}
    \begin{equation*}
        H'_{\Gamma}=\{f\in \mathrm{Map}(\Gamma) \ | \ d_{\mathcal{TS}(M_{\Gamma})}(f(S),S)\leq 1\}
    \end{equation*}
    are coarsely bounded in their respective groups. 
\end{lemma}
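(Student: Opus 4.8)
The idea is to reduce everything to Proposition \ref{lem:CBStabilizer} (the coarse boundedness of the point stabilizers $H_{M_\Gamma}, H_\Gamma$ in the \emph{vertex} graph), together with the fact that $\Psi$ intertwines the two actions on $\mathcal S(M_\Gamma)$. First I would handle $H'_{M_\Gamma}$, following Lemma 4.8 of \cite{schaffer-cohen_graphs_2024} essentially verbatim. The key structural observation is that there is only a \emph{bounded number of "types"} of edges emanating from a fixed vertex $S$ in $\mathcal{TS}(M_\Gamma)$: an edge from $S$ corresponds to a choice of a disjoint sphere $T$ so that the cobounded region $(S,T)$ or $(T,S)$ is homeomorphic to one of the finitely many pieces in $\mathcal M$, and by tameness the possible homeomorphism types of the two complementary sides of $S$ (the "$e_+$ side" and the "$e_-$ side") are determined and stable. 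Since $\mathrm{Map}(M_\Gamma)$ acts on the star of $S$ and (after possibly passing to a finite-index consideration coming from the finitely many local edge types) acts transitively on the set of neighbors of $S$ realizing a fixed type, one shows that $H'_{M_\Gamma}$ is generated by $H_{M_\Gamma}$ together with finitely many elements each carrying $S$ to a specified neighbor. A product of a CB set (Proposition \ref{lem:CBStabilizer}) with a finite set is CB, so $H'_{M_\Gamma}$ is CB in $\mathrm{Map}(M_\Gamma)$. One must also verify $\mathrm{Map}(M_\Gamma)$ is locally CB so that "CB" is the right notion — this is where we invoke the earlier machinery, or simply note that the statement is used through Lemma \ref{lem:MilnorSchwartz} where local coarse boundedness is a hypothesis; in any case $H'_{M_\Gamma}$ contains the open subgroup $\mathcal V_K$ for $K$ a union of pieces containing $S$ and all the relevant neighboring cobounded regions, which already makes it a neighborhood of the identity.

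For $H'_\Gamma$ I would argue exactly as in the proof of Proposition \ref{lem:CBStabilizer}: the condition $d_{\mathcal{TS}(M_\Gamma)}(f(S),S)\le 1$ is phrased purely in terms of the action on $\mathcal S(M_\Gamma)$ (disjointness of $f(S)$ and $S$ plus the homeomorphism type of the cobounded region), and by Proposition \ref{MCGHom} the map $\Psi:\mathrm{Map}(M_\Gamma)\to\mathrm{Map}(\Gamma)$ is surjective and respects this action. Hence $H'_\Gamma=\Psi(H'_{M_\Gamma})$. Since the continuous surjective homomorphism $\Psi$ sends CB sets to CB sets (the image of a set with bounded orbits under any isometric $\mathrm{Map}(\Gamma)$-action has bounded orbits, pulling the action back along $\Psi$), coarse boundedness of $H'_{M_\Gamma}$ forces coarse boundedness of $H'_\Gamma$ in $\mathrm{Map}(\Gamma)$.

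The main obstacle, as in \cite{schaffer-cohen_graphs_2024}, is the finiteness of local edge types at $S$ and the near-transitivity of the stabilizer-times-finite-set on the star of $S$ — i.e. genuinely using the tameness hypothesis. Concretely, tameness guarantees that the maximal ends and their immediate predecessors are stable, which is what forces the collection $\mathcal M$ to be finite and forces the homeomorphism type of a cobounded region $(S,T)$ to depend only on finitely much combinatorial data (which ends and how much rank lie between $S$ and $T$); without stability one could have infinitely many incomparable local configurations and the argument collapses. I expect the bookkeeping here to be the only nontrivial point; once the finite list of edge types and the transitivity (Lemma 3.6 of \cite{schaffer-cohen_graphs_2024} applied within each type) are in hand, the CB conclusion is the same "CB set times finite set is CB, and continuous homomorphisms preserve CB" formalism used already in Proposition \ref{lem:CBStabilizer}. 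I would therefore write the proof as: (i) reduce the star of $S$ to finitely many $\mathrm{Map}(M_\Gamma)$-orbits using tameness and the definition of $\mathcal M$; (ii) pick coset representatives $g_1,\dots,g_m$, one per orbit, so that $H'_{M_\Gamma}=\bigcup_i H_{M_\Gamma} g_i H_{M_\Gamma}$ (or is generated by $H_{M_\Gamma}\cup\{g_i\}$), hence CB by Proposition \ref{lem:CBStabilizer}; (iii) push forward along $\Psi$ to get the statement for $H'_\Gamma$, exactly as in Proposition \ref{lem:CBStabilizer}.
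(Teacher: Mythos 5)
Your approach matches the paper's: establish coarse boundedness of $H'_{M_\Gamma}$ by transferring the end-space argument of \cite[Lemma 4.8]{schaffer-cohen_graphs_2024} to the doubled handlebody, then push forward via $\Psi$ using $H'_\Gamma = \Psi(H'_{M_\Gamma})$ and the fact that continuous surjective homomorphisms send CB sets to CB sets. The paper's own proof is in fact terser than yours; it simply notes that the end-space combinatorics carry over (realized as homeomorphisms of $M_\Gamma$ via Proposition \ref{RichardsHandlebody}, with cylinder shifts standing in for handle shifts) and then takes the $\Psi$-image, so your "finitely many $H_{M_\Gamma}$-orbits on the star of $S$" unpacking is a reasonable reconstruction of what is being deferred.

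One slip to tighten. The set $H'_{M_\Gamma}$ is not a subgroup (if $f(S)$ and $g(S)$ are each at distance at most $1$ from $S$, the composite $fg$ may displace $S$ by distance $2$), so it cannot be "generated by $H_{M_\Gamma}$ together with finitely many elements" in any useful sense; moreover the subgroup so generated is typically not CB, only CB-generated. The correct formulation --- which you do give as an alternative --- is that once tameness (and the design of $\mathcal{M}$) gives finitely many $H_{M_\Gamma}$-orbits on the star of $S$, one gets $H'_{M_\Gamma} = H_{M_\Gamma} \cup \bigcup_i H_{M_\Gamma}\, g_i\, H_{M_\Gamma}$, a finite union of products of CB sets and hence CB. Your aside about needing $\mathrm{Map}(M_\Gamma)$ to be locally CB is also unnecessary for this lemma: coarse boundedness of a subset is defined without reference to local CB of the ambient group; that hypothesis only enters later when Lemma \ref{lem:MilnorSchwartz} is applied to produce a word metric.
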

\begin{proof}
    Just as in the proof of connectivity, essentially all the work is done in the end space and realizing certain maps defined on the end space as homeomorphisms of $M_{\Gamma}$ (which we can do by Proposition \ref{RichardsHandlebody}). In certain cases one needs be able to shift handles, and we can do this by applying cylinder shifts. As $H_{\Gamma}$ is the $\Psi$ image of $H_{M_{\Gamma}}$, the result follows.
\end{proof}

We thus can conclude the following, which is an analog of Theorem 4.9 of \cite{schaffer-cohen_graphs_2024}.

\begin{theorem}\label{thm:TranslCBGen}
    Suppose $\Gamma$ is a translatable graph that has a tame end space. The groups $\mathrm{Map}(\Gamma)$ and $\mathrm{Map}(M_{\Gamma})$ are CB generated and, equipped with a CB generating set and the associated word metric, are equivariantly quasi-isometric to $\mathcal{TS}(M_{\Gamma})$.
\end{theorem}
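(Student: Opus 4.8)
The plan is to assemble the pieces just developed and feed them into the Milnor--Schwartz type criterion, Lemma~\ref{lem:MilnorSchwartz}. First I would record that both $\mathrm{Map}(\Gamma)$ and $\mathrm{Map}(M_{\Gamma})$ are locally coarsely bounded: for $\mathrm{Map}(M_{\Gamma})$ this follows because $\mathcal{V}_K$ for $K$ a compact union of pieces is a CB neighborhood of the identity (it is itself a subgroup, and one checks using the splitting $\mathrm{Map}(M_{\Gamma})\cong\mathrm{Twists}(M_{\Gamma})\rtimes\mathrm{Map}(\Gamma)$ of Theorem~\ref{SESMainTheorem} together with the fact that $\mathrm{Twists}(M_{\Gamma})$ is compact and hence CB, and that the corresponding $\mathcal{V}_K\cap\mathrm{Map}(\Gamma)$ is CB by the local CB-ness of $\mathrm{Map}(\Gamma)$ coming from \cite{domat_coarse_2023}); and for $\mathrm{Map}(\Gamma)$ this is in the cited work. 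Next I would invoke Lemma~\ref{lem:TranslSphereConnected} so that $\mathcal{TS}(M_{\Gamma})$ is a connected graph on which $\mathrm{Map}(M_{\Gamma})$, and via $\Psi$ also $\mathrm{Map}(\Gamma)$, acts by isometries; transitivity of this action on vertices (spheres separating $e_+$ from $e_-$) is Lemma~3.6 of \cite{schaffer-cohen_graphs_2024}, restated in the sphere/doubled-handlebody language, and it descends to $\mathrm{Map}(\Gamma)$ because $\Psi$ is surjective and equivariant.

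With $G=\mathrm{Map}(M_{\Gamma})$ (respectively $\mathrm{Map}(\Gamma)$) acting transitively by isometries on the connected graph $\mathcal{TS}(M_{\Gamma})$, Lemma~\ref{lem:MilnorSchwartz} applies once we verify that the $1$-ball stabilizer set
\begin{equation*}
A=\{f\in G \ | \ d_{\mathcal{TS}(M_{\Gamma})}(S, f(S))\leq 1\}
\end{equation*}
is coarsely bounded and generates $G$. Coarse boundedness of $A$ is exactly Lemma~\ref{lem:CoarseStabCB} (for both $\mathrm{Map}(M_{\Gamma})$ and $\mathrm{Map}(\Gamma)$, where tameness of $E(\Gamma)$ is used). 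For the generation statement I would use the standard argument: given any $g\in G$ and the vertex $S$, choose an edge path $S=S_0, S_1,\dots, S_k=g(S)$ in $\mathcal{TS}(M_{\Gamma})$ (possible by connectedness); by transitivity pick $g_i\in G$ with $g_i(S_{i-1})=S_i$, which we may arrange so that $g_i\in g_{i-1}\cdots g_1\cdot A$; then $g^{-1}g_k\cdots g_1$ stabilizes $S$, and the full vertex-stabilizer $H_{M_{\Gamma}}=\{f : f(S)=S\}$ is CB by Proposition~\ref{lem:CBStabilizer} (likewise $H_\Gamma$), hence lies in a bounded power of $A$; so $g$ is a product of elements of $A$. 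Thus $A$ generates. Lemma~\ref{lem:MilnorSchwartz} then yields that each group is CB generated and that its orbit map into $\mathcal{TS}(M_{\Gamma})$, with the word metric from $A$, is a quasi-isometry; combined with Theorem~\ref{CBGeneratedQIType} this is independent of the CB generating set chosen.

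Finally, I would note equivariance: the orbit maps for $\mathrm{Map}(M_{\Gamma})$ and for $\mathrm{Map}(\Gamma)$ are both $\mathrm{Map}(M_{\Gamma})$--equivariant (the latter through $\Psi$, using Proposition~\ref{MCGHom} and the fact that $\Psi$ intertwines the two actions on $\mathcal{S}(M_{\Gamma})$ and hence on the subgraph $\mathcal{TS}(M_{\Gamma})$), so both groups are equivariantly quasi-isometric to $\mathcal{TS}(M_{\Gamma})$, and in particular quasi-isometric to each other. The main obstacle in this argument is the generation step together with the need for Proposition~\ref{lem:CBStabilizer}: one must know not just that $A$ generates but that the vertex stabilizer is coarsely bounded, which is where the translatability hypothesis (producing a translation that can be used to absorb a path into bounded pieces, cf.\ the proof of Corollary~3.9 of \cite{schaffer-cohen_graphs_2024}) and the tameness hypothesis (for the edge-set choice $\mathcal{M}$ underlying $\mathcal{TS}(M_{\Gamma})$) genuinely enter; everything else is a formal consequence of Theorem~\ref{SESMainTheorem}, the cited coarse-geometry machinery, and the connectivity lemmas.
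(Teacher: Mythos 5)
Your proof takes the same route as the paper: the paper's proof of Theorem~\ref{thm:TranslCBGen} is a single sentence citing transitivity, Lemma~\ref{lem:TranslSphereConnected}, Lemma~\ref{lem:CoarseStabCB}, and Lemma~\ref{lem:MilnorSchwartz}, and you are unpacking exactly that. The expansion is useful, but two spots deserve a comment.

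First, the local CB-ness check. Your argument routes through the splitting of Theorem~\ref{SESMainTheorem} and asserts that $\mathcal{V}_K\cap\mathrm{Map}(\Gamma)$ is CB ``by the local CB-ness of $\mathrm{Map}(\Gamma)$ coming from \cite{domat_coarse_2023}.'' That step is not justified as written: local CB-ness only gives \emph{some} CB identity neighborhood, not that every $\mathcal{V}_K$ is CB, and the cited Domat--Hoganson--Kwak results concern $\mathrm{PMap}$, not the full mapping class group. A cleaner and self-contained route: the $1$-ball stabilizer set $A$ of Lemma~\ref{lem:CoarseStabCB} is CB, and $A$ contains the stabilizer $H$ of the chosen sphere $S$, which is open since it contains $\mathcal{V}_K$ for any $K$ containing $S$ (for $\mathrm{Map}(M_\Gamma)$) respectively is open in the permutation topology of Proposition~\ref{PermutationTopology} (for $\mathrm{Map}(\Gamma)$). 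So $A$ itself is an open CB neighborhood of the identity, and local CB-ness is automatic; no appeal to the splitting or to \cite{domat_coarse_2023} is needed.

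Second, the generation step. The logic ``the vertex stabilizer is CB by Proposition~\ref{lem:CBStabilizer}, hence lies in a bounded power of $A$'' is a non sequitur --- CB-ness of a subgroup does not place it in a bounded power of a given set. What actually closes the argument is the trivial containment $H=\{f : f(S)=S\}\subset A=\{f : d(S,f(S))\le 1\}$, so $g^{-1}g_k\cdots g_1\in A$ directly. Proposition~\ref{lem:CBStabilizer} is not needed for generation (the paper does not invoke it here either). Also, the bookkeeping ``$g_i\in g_{i-1}\cdots g_1\cdot A$'' is not automatic from $g_i(S_{i-1})=S_i$; the standard way to run it is to set $\phi_i=g_i\cdots g_1$, observe $\phi_i(S_0)=S_i$ by induction, and write $\phi_i=\phi_{i-1}b_i$ with $b_i=\phi_{i-1}^{-1}g_i\phi_{i-1}$ satisfying $d(S_0,b_i(S_0))=d(S_{i-1},S_i)=1$, so $b_i\in A$. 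These are small fixes; the overall structure of your argument is sound and matches the paper's intent.
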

\begin{proof}
    By combining the transitivity of the action of these groups on $\mathcal{TS}(M_{\Gamma})$ with Lemmas \ref{lem:TranslSphereConnected} and \ref{lem:CoarseStabCB}, we can apply Lemma \ref{lem:MilnorSchwartz} to obtain the desired result.
\end{proof}

We also note the following consequence of the ideas we have discussed so far. Let $\mathcal{H}(\Sigma)$ denote the \textit{handlebody group} of the surface $\Sigma$ associated to a locally finite graph $\Gamma$. This is the subgroup of $\text{Map}(\Sigma)$, the mapping class group of $\Sigma$, which preserves the meridians of $\Sigma$, i.e. curves which bound a disc in $B$, the handlebody of $\Gamma$. This can be identified with the mapping class group of $B$, where the boundary is not assumed to be fixed.
\par 
Assume that $\Gamma$ is translatable and tame. The translatable curve graph $\mathcal{TC}(\Sigma)$ is defined in the same way as $\mathcal{TS}(M_{\Gamma})$, using submanifolds with the same end space and genus assumptions as the collection $\mathcal{M}$. We can consider the full subgraph of $\mathcal{TC}(\Sigma)$ whose vertices consist of meridians, i.e. curves which bound a disc in the handlebody $B$. We denote this graph by $\mathcal{TD}(B)$. 
\par 
All the work done so far for $\text{Map}(M_{\Gamma})$ and $\mathcal{TS}(M_{\Gamma})$ can be redone verbatim for $\mathcal{H}(\Sigma)$ and $\mathcal{TD}(B)$. Thus, under the same hypotheses as Theorem \ref{thm:TranslCBGen}, $\mathcal{H}(\Sigma)$ is CB generated, and with the associated word metric given by a CB generating set it is quasi-isometric to $\mathcal{TD}(B)$. 
\par 
On the other hand, $\mathcal{TD}(B)$ embeds as a subgraph of both $\mathcal{TC}(\Sigma)$ and $\mathcal{TS}(M_{\Gamma})$ (it embeds into the latter by doubling a disc in $B$ along its boundary to be a sphere in $M_{\Gamma}$). We then ask the following natural question.



\begin{question}
    Are there any examples of translatable graphs $\Gamma$ so that $\mathcal{TD}(B)$ is quasi-isometrically embeds into to $\mathcal{TC}(\Sigma)$ or $\mathcal{TS}(M_{\Gamma})$, and so that all three graphs have infinite diameter? What about examples where the inclusion maps are quasi-isometries?
\end{question}

If such a $\Gamma$ existed so that both inclusion maps are quasi-isometries, then this would give an example of a mapping class group of a graph and a mapping class group of a surface which are quasi-isometric so that neither are quasi-isometric to a point.

\vskip 10pt

\subsection{Further questions on coarse geometry}
Very little is known about the coarse geometry of the full mapping class group $\text{Map}(\Gamma)$ for general $\Gamma$. For finite ended graphs we know that they are all CB generated by Theorem A in \cite{domat_generating_2023}, but beyond that no classification work has been done.
\par 
From Theorem \ref{thm:TranslCBGen} one can extract from the work above many examples where $\text{Map}(\Gamma)$ is CB generated but not CB. For example, as long as at least one submanifold in $\mathcal{M}$ has a unique maximal end (as opposed to a Cantor set of maximal ends), or when $\Gamma$ has infinite rank but every noncompact submanifold in $\mathcal{M}$ is simply connected, it is easy to see that any translation of $\mathrm{Map}(\Gamma)$ has positive translation length in $\mathcal{TS}(M_{\Gamma})$.
\par 
One might hope that results of Mann--Rafi can be replicated for graphs to some degree \cite{mann_large-scale_2023}. It seems likely that many of the results that hold for surfaces also hold for graphs and their doubled handlebodies, but potentially only in one direction. For example, if $\Gamma$ has $0$ or infinite rank and the end space is self-similar in the sense of \cite{mann_large-scale_2023}, one should be able to repeat the proof of Proposition 3.1 in that paper to show that $\mathrm{Map}(M_{\Gamma})$, and thus $\mathrm{Map}(\Gamma)$, are CB. 
\par 
On the other hand, the ideas of nondisplacable subsurfaces seem much more subtle to apply. Indeed, by Theorem 1.1 of \cite{hill_large-scale_2023} it follows that if $\Sigma$ is a Loch Ness monster surface (i.e. a surface with one end accumulated by genus) with a nonzero number of punctures, then its mapping class group is not locally CB. On the other hand, Corollary \ref{CBPMap} shows that the mapping class group of the Loch Ness monster doubled handlebody with a nonzero number of punctures is CB. The failure of this technique seem to be related to the following two issues. First, defining interesting projections of spheres is hard. In Subsection \ref{nonseparating Sphere Finite Rank} we were only able to do it in a very special case using nonseparating spheres projecting to a particular type of submanifold. Second, even if one could define a reasonable notion of projection in a wider variety of contexts, the fact that $\mathcal{S}(M_{n,s})$ has finite diameter for $s\geq 3$ is a roadblock in producing unbounded length functions analogous to those given in Section 2 of \cite{mann_large-scale_2023}. 
\par 
We note one other potential application. The work of Grant--Rafi--Verberne in \cite{grant_asymptotic_2021} shows that the asymptotic dimension of many mapping class groups of surfaces is infinite. This work should translate naturally to arguments in doubled handlebodies. Indeed, the core of their arguments lie in counting ends and genus (to "count genus", they utilize homology with $\Z/2$ coefficients), much of which seems repeatable in doubled handlebodies. This would in particular produce examples of graphs $\Gamma$ where $|E_{\ell}(\Gamma)|\leq 1$ whose mapping class group has infinite asymptotic dimension. This is in contrast to Theorem E of \cite{domat_coarse_2023} which implies that the pure mapping class group of any such graph has asymptotic dimension $0$. 
\par 
On the other hand to all of this discussion, curve graphs are not sufficient to study much of the behavior of mapping class groups of infinite type surfaces. Much work has been done to study graphs of arcs, i.e. graphs whose vertices consist of proper embeddings of $\R$ into the surface. For example, the omnipresent arc graph of Fanoni--Ghaswala--McLeay and the grand arc graph of Bar-Natan--Verberne, which in many cases are known to be infinite diameter and hyperbolic \cite{fanoni_homeomorphic_2021}\cite{bar-natan_grand_2023}. It is natural to ask if there are analogs for mapping class groups of graphs.

\begin{question}
    Is there a geometrically significant graph, analogous to the omnipresent arc graph or the grand arc graph, that $\text{Map}(\Gamma)$ acts on? 
\end{question}

\printbibliography

@article{hatcher_homological_1995,
	title = {Homological stability for automorphism groups of free groups},
	volume = {70},
	copyright = {1995 Birkhäuser Verlag},
	language = {en},
	number = {1},
	journal = {Commentarii Mathematici Helvetici},
	author = {Hatcher, Allen},
	month = dec,
	year = {1995},
	pages = {39--62},
	file = {Full Text PDF:C\:\\Users\\wimpl\\Zotero\\storage\\73BAEKZE\\Hatcher - 1995 - Homological stability for automorphism groups of f.pdf:application/pdf},
}

@phdthesis{iezzi_sphere_2016,
	type = {phd},
	title = {Sphere systems in 3-manifolds and arc graphs},
	url = {http://webcat.warwick.ac.uk/record=b2866475~S1},
	school = {University of Warwick},
	author = {Iezzi, Francesca},
	month = mar,
	year = {2016},
	annote = {Has a variety of interesting results.

-Produces a “Standard form” for systems of maximal sphere systems in M\_g.

-Gives a simple proof that S(M\_\{g,s\}) is finite diameter if s{\textgreater}=3.
(Note for later: a place where the corresponding attempt at a proof fails if one were to try to do this for the curve graph is case 1 in the proof of 1.0.5, One cannot guarantee that two curves with the same properties are disjoint)

(Note that Hamenstadt’s “spotted discs” paper shows that for s=2 and g{\textgreater}=4, S(M\_\{g,s\}) contains qi embedded copies of R{\textasciicircum}n for all n)

Also remarks that infinite diameter subgraphs are still unknown. For example, her proof makes great use of spheres which bound two boundary components on one side. If you remove these, are you infinite diameter? What about the nonseperating sphere graph?

This does mean that the attempt to use Durham-Fanoni-Vlamis to find some seperating sphere graph which is infinite diameter and hyperbolic seems to fail. (There is still hope to do something like this in the finite genus case, as S(M\_\{g,1\}) I believe has infinite diameter by results in this paper, embedding arc graphs of surfaces quasi-isometrically. There is also some hope for some other graphs, as it is unknown if S(M\_\{g,2\}) is finite or infinite diameter if g{\textgreater}=2).


},
	file = {Full Text PDF:C\:\\Users\\wimpl\\Zotero\\storage\\N35UINAI\\Iezzi - 2016 - Sphere systems in 3-manifolds and arc graphs.pdf:application/pdf;Snapshot:C\:\\Users\\wimpl\\Zotero\\storage\\RXW43GKP\\78841.html:text/html},
}

@article{richards_classification_1963,
	title = {On the classification of noncompact surfaces},
	volume = {106},
	language = {en},
	number = {2},
	journal = {Transactions of the American Mathematical Society},
	author = {Richards, I.},
	year = {1963},
	pages = {259--269},
	file = {Snapshot:C\:\\Users\\wimpl\\Zotero\\storage\\SXZG7RAF\\7a8f51114de99310002c0d6296d984d672c28f7e.html:text/html},
}

@article{laudenbach_sur_1973,
	title = {Sur {Les} 2-{Spheres} {D}'une {Variete} de {Dimension} 3},
	volume = {97},
	abstract = {Semantic Scholar extracted view of \&quot;Sur Les 2-Spheres D\&\#39;une Variete de Dimension 3\&quot; by F. Laudenbach},
	language = {en},
	number = {1},
	journal = {Annals of Mathematics},
	author = {Laudenbach, François},
	year = {1973},
	pages = {57--81},
}

@article{domat_coarse_2023,
	title = {Coarse geometry of pure mappping class groups of infinite graphs},
	volume = {413},
	journal = {Advances in Mathematics},
	author = {Domat, George and Hoganson, Hannah and Kwak, Sanghoon},
	year = {2023},
	note = {Publisher: Elsevier},
	pages = {108836},
}

@article{domat_generating_2023,
	title = {Generating {Sets} and {Algebraic} {Properties} of {Pure} {Mapping} {Class} {Groups} of {Infinite} {Graphs}},
	journal = {arXiv preprint arXiv:2309.07885},
	author = {Domat, George and Hoganson, Hannah and Kwak, Sanghoon},
	year = {2023},
}

@article{bering_iv_finite_2024,
	title = {Finite rigid sets in sphere complexes},
	journal = {Topology and its Applications},
	author = {Bering IV, Edgar A and Leininger, Christopher J},
	year = {2024},
	annote = {Gives geometric rigidity for rank 0 finite type (i.e. 3 sphere minus finitely many balls). Might have relevant material for getting rigidity for S(M\_\{n, m\}).
},
	file = {Full Text PDF:C\:\\Users\\wimpl\\Zotero\\storage\\IMPXWEVG\\Bering IV and Leininger - 2022 - Finite rigid sets in sphere complexes.pdf:application/pdf},
}

@article{hatcher_homology_2004,
	title = {Homology stability for outer automorphism groups of free groups},
	volume = {4},
	issn = {1472-2739},
	number = {2},
	journal = {Algebraic \& Geometric Topology},
	author = {Hatcher, Allen and Vogtmann, Karen},
	year = {2004},
	pages = {1253--1272},
	annote = {Here they consider 
},
	file = {Full Text PDF:C\:\\Users\\wimpl\\Zotero\\storage\\J7RG534A\\Hatcher and Vogtmann - 2004 - Homology stability for outer automorphism groups o.pdf:application/pdf},
}

@article{hatcher_stabilization_2010,
	title = {Stabilization for mapping class groups of 3-manifolds},
	volume = {155(2)},
	journal = {Duke Math. J.},
	author = {Hatcher, Allen and Wahl, Nathalie},
	year = {2010},
	pages = {205--269},
	annote = {Some potential work about 3 manifolds with boundary that might be useful.

},
	file = {Full Text PDF:C\:\\Users\\wimpl\\Zotero\\storage\\B4GTNMZA\\Hatcher and Wahl - 2007 - Stabilization for mapping class groups of 3-manifo.pdf:application/pdf},
}

@article{sabalka_submanifold_2012,
	title = {Submanifold projection},
	journal = {arXiv preprint arXiv:1211.3111},
	author = {Sabalka, Lucas and Savchuk, Dmytro},
	year = {2012},
	annote = {This details projections of spheres in \#S{\textasciicircum}2xS{\textasciicircum}1. Might be relevant to finding infinite diameter sphere complexes with loxodromic elements (or at least infinite diameter orbit)  (via a Durham-Fanoni-Vlamis type construction, or maybe something with nonseparating curves for a finite rank graph).
},
	file = {Full Text PDF:C\:\\Users\\wimpl\\Zotero\\storage\\QXXMU3MJ\\Sabalka and Savchuk - 2012 - Submanifold Projection.pdf:application/pdf},
}

@article{handel_free_2013,
	title = {The free splitting complex of a free group, {I} {Hyperbolicity}},
	volume = {17},
	number = {3},
	journal = {Geometry \& Topology},
	author = {Handel, Michael and Mosher, Lee},
	year = {2013},
	note = {Publisher: Mathematical Sciences Publishers},
	pages = {1581--1670},
	file = {Full Text PDF:C\:\\Users\\wimpl\\Zotero\\storage\\T3YBFFQS\\Handel and Mosher - 2011 - The free splitting complex of a free group I Hype.pdf:application/pdf},
}

@article{handel_free_2019,
	title = {The free splitting complex of a free group, {II}: {Loxodromic} outer automorphisms},
	volume = {372},
	number = {6},
	journal = {Transactions of the American Mathematical Society},
	author = {Handel, Michael and Mosher, Lee},
	year = {2019},
	pages = {4053--4105},
	annote = {This gives loxodromic actions on the free splitting (sphere) complex of F\_n.

},
	file = {Full Text PDF:C\:\\Users\\wimpl\\Zotero\\storage\\8AHUWTLK\\Handel and Mosher - 2014 - The free splitting complex of a free group II Lox.pdf:application/pdf},
}

@article{brendle_mapping_2023,
	title = {The mapping class group of connect sums of {$S^2\times {S}^1$}},
	volume = {376},
	number = {04},
	journal = {Transactions of the American Mathematical Society},
	author = {Brendle, Tara and Broaddus, Nathan and Putman, Andrew},
	year = {2023},
	pages = {2557--2572},
	annote = {Shows that finite type Laudenbach sequence splits. Also gives outline of Laudenbach result (but doesn’t completely address separating spheres, 
},
	file = {Full Text PDF:C\:\\Users\\wimpl\\Zotero\\storage\\RMVUX5ZZ\\Brendle et al. - 2020 - The mapping class group of connect sums of \$S^2 t.pdf:application/pdf},
}

@book{laudenbach_topologie_1974,
	title = {Topologie de la dimension trois: homotopie et isotopie},
	publisher = {Société mathématique de France},
	author = {Laudenbach, François},
	year = {1974},
	file = {Snapshot:C\:\\Users\\wimpl\\Zotero\\storage\\5N8FUVUF\\4ceb77765d5cf53daf4b6eedff13f8aa7c6a3fe2.html:text/html},
}

@article{algom-kfir_groups_2021,
	title = {Groups of proper homotopy equivalences of graphs and {Nielsen} realization},
	journal = {arXiv preprint arXiv:2109.06908},
	author = {Algom-Kfir, Yael and Bestvina, Mladen},
	year = {2021},
}

@article{whitehead_certain_1936,
	title = {On {Certain} {Sets} of {Elements} in a {Free} {Group}},
	volume = {s2-41},
	language = {en},
	number = {1},
	journal = {Proc London Math Soc},
	author = {Whitehead, J. H. C.},
	month = jan,
	year = {1936},
	note = {Publisher: Oxford Academic},
	pages = {48--56},
}

@article{whitehead_equivalent_1936,
	title = {On {Equivalent} {Sets} of {Elements} in a {Free} {Group}},
	volume = {Vol 37},
	language = {en},
	number = {4},
	journal = {Annals of Mathematics},
	author = {Whitehead, J. H. C.},
	year = {1936},
	pages = {782--800},
	file = {Snapshot:C\:\\Users\\wimpl\\Zotero\\storage\\QYDIBUU4\\5ebde3bcedb608d400911a873c7b09636e0ead7e.html:text/html},
}

@article{hatcher_cerf_1998,
	title = {Cerf theory for graphs},
	volume = {58},
	issn = {1469-7750},
	number = {3},
	journal = {Journal of the London Mathematical Society},
	author = {Hatcher, Allen and Vogtmann, Karen},
	year = {1998},
	pages = {633--655},
}

@article{freudenthal_uber_1931,
	title = {Über die {Enden} topologischer {Räume} und {Gruppen}},
	volume = {33},
	language = {DE},
	journal = {Mathematische Zeitschrift, Springer Berlin / Heidelberg},
	author = {Freudenthal, Hans},
	year = {1931},
	note = {Publisher: Springer(Berlin)},
	pages = {692--713},
	file = {Snapshot:C\:\\Users\\wimpl\\Zotero\\storage\\BM2KRGKT\\203414.html:text/html},
}

@article{ayala_proper_1990,
	title = {Proper {Homotopy} {Classification} of {Graphs}},
	volume = {22},
	language = {en},
	number = {5},
	journal = {Bull London Math Soc},
	author = {Ayala, R. and Dominguez, E. and Marquez, A. and Quintero, A.},
	month = sep,
	year = {1990},
	note = {Publisher: Oxford Academic},
	pages = {417--421},
	file = {Full Text:C\:\\Users\\wimpl\\Zotero\\storage\\66ZKDZRE\\Ayala et al. - 1990 - Proper Homotopy Classification of Graphs.pdf:application/pdf},
}

@article{borel_homology_1960,
	title = {Homology theory for locally compact spaces.},
	volume = {7},
	issn = {0026-2285, 1945-2365},
	abstract = {The Michigan Mathematical Journal},
	number = {2},
	journal = {mmj},
	author = {Borel, A. and Moore, J. C.},
	month = jan,
	year = {1960},
	note = {Publisher: University of Michigan, Department of Mathematics},
	pages = {137--159},
	file = {Full Text PDF:C\:\\Users\\wimpl\\Zotero\\storage\\AGAVUNEL\\Borel and Moore - 1960 - Homology theory for locally compact spaces..pdf:application/pdf},
}

@book{iverson_cohomology_1984,
	title = {Cohomology of {Sheaves}},
	publisher = {Springer-Verlag Berlin},
	author = {Iverson, Birger},
	year = {1984},
	file = {Snapshot:C\:\\Users\\wimpl\\Zotero\\storage\\JHBKZMXV\\978-3-642-82783-9.html:text/html},
}

@misc{francis_short_nodate,
	title = {Short exact sequence of topological groups which is split, but not topologically split},
	url = {https://math.stackexchange.com/questions/2340027/short-exact-sequence-of-topological-groups-which-is-split-but-not-topologically},
	author = {Francis, Mike},
}

@article{arens_topology_1946,
	title = {A {Topology} for {Spaces} of {Transformations}},
	volume = {(2)},
	doi = {10.2307/1969087},
	abstract = {Richard Arens {\textbar} Annals of Mathematics {\textbar} This paper defines and describes a particular type of topology2 for a class C of continuous function},
	language = {en},
	number = {47},
	journal = {Annals of Mathematics},
	author = {Arens, Richard},
	year = {1946},
	pages = {480--495},
	file = {Snapshot:C\:\\Users\\wimpl\\Zotero\\storage\\HPM4L3ML\\2317734304.html:text/html},
}

@article{hernandez_alexander_2019,
	title = {The {Alexander} method for infinite-type surfaces},
	volume = {68},
	number = {4},
	journal = {Michigan Mathematical Journal},
	author = {Hernández, Jesús Hernández and Morales, Israel and Valdez, Ferrán},
	year = {2019},
	note = {ISBN: 0026-2285
Publisher: University of Michigan, Department of Mathematics},
	pages = {743--753},
}

@article{hilion_hyperbolicity_2017,
	title = {The hyperbolicity of the sphere complex via surgery paths},
	volume = {2017},
	number = {730},
	journal = {Journal für die reine und angewandte Mathematik},
	author = {Hilion, Arnaud and Horbez, Camille},
	year = {2017},
	pages = {135--161},
}

@article{rasmussen_uniform_2020,
	title = {Uniform hyperbolicity of the graphs of nonseparating curves via bicorn curves},
	volume = {148},
	number = {6},
	journal = {Proceedings of the American Mathematical Society},
	author = {Rasmussen, Alexander},
	year = {2020},
	pages = {2345--2357},
}

@article{bestvina_rigidity_2023,
	title = {Rigidity of the free factor complex},
	journal = {arXiv preprint arXiv:2306.05941},
	author = {Bestvina, Mladen and Bridson, Martin R.},
	year = {2023},
}

@book{moise_geometric_1977,
	series = {Graduate {Texts} in {Mathematics}},
	title = {Geometric {Topology} in {Dimensions} 2 and 3},
	language = {en},
	publisher = {Springer New York, NY},
	author = {Moise, Edwin},
	year = {1977},
	file = {Snapshot:C\:\\Users\\wimpl\\Zotero\\storage\\IJJMEPFS\\978-1-4612-9906-6.html:text/html},
}

@article{eberhart_remarks_1977,
	title = {Some remarks on the irrational and rational numbers},
	volume = {84(1)},
	journal = {Amer. Math. Monthly},
	author = {Eberhart, Carl},
	year = {1977},
	pages = {32--35},
}

@article{schaffer-cohen_graphs_2024,
	title = {Graphs of curves and arcs quasi-isometric to big mapping class groups},
	journal = {Groups, Geometry, and Dynamics},
	author = {Schaffer-Cohen, Anschel},
	year = {2024},
	note = {ISBN: 1661-7207},
}

@article{edwards_deformations_1971,
	title = {Deformations of {Spaces} of {Imbeddings} on {JSTOR}},
	volume = {Vol 93},
	abstract = {Robert D. Edwards, Robion C. Kirby, Deformations of Spaces of Imbeddings, Annals of Mathematics, Second Series, Vol. 93, No. 1 (Jan., 1971), pp. 63-88},
	language = {en},
	number = {No 1},
	journal = {Annals of Mathematics},
	author = {Edwards, Robert D. and Kirby, Robion C.},
	month = jan,
	year = {1971},
	pages = {63--88},
	file = {Snapshot:C\:\\Users\\wimpl\\Zotero\\storage\\XSC7TB4T\\1970753.html:text/html},
}

@article{chernavskii_local_1969,
	title = {Local contractibility of the group of homeomorphisms of a manifold},
	volume = {8},
	number = {3},
	journal = {Math. USSR-Sb},
	author = {Chernavskii, A.V.},
	year = {1969},
	pages = {287--333},
}

@article{mann_large-scale_2023,
	title = {Large-scale geometry of big mapping class groups},
	volume = {27},
	number = {6},
	journal = {Geometry \& Topology},
	author = {Mann, Kathryn and Rafi, Kasra},
	year = {2023},
	pages = {2237--2296},
}

@inproceedings{durham_graphs_2018,
	title = {Graphs of curves on infinite-type surfaces with mapping class group actions},
	volume = {68},
	booktitle = {Annales de l'{Institut} {Fourier}},
	author = {Durham, Matthew Gentry and Fanoni, Federica and Vlamis, Nicholas G.},
	year = {2018},
	note = {Issue: 6},
	pages = {2581--2612},
}

@article{hamenstadt_spotted_2023,
	title = {Spotted disk and sphere graphs},
	journal = {arXiv preprint arXiv:2307.12331},
	author = {Hamenstädt, Ursula},
	year = {2023},
}

@article{hill_large-scale_2023,
	title = {Large-scale geometry of pure mapping class groups of infinite-type surfaces},
	journal = {arXiv preprint arXiv:2309.00124},
	author = {Hill, Thomas},
	year = {2023},
}

@article{grant_asymptotic_2021,
	title = {Asymptotic dimension of big mapping class groups},
	journal = {arXiv preprint arXiv:2110.03087},
	author = {Grant, Curtis and Rafi, Kasra and Verberne, Yvon},
	year = {2021},
}

@article{fanoni_homeomorphic_2021,
	title = {Homeomorphic subsurfaces and the omnipresent arcs},
	volume = {4},
	journal = {Annales Henri Lebesgue},
	author = {Fanoni, Federica and Ghaswala, Tyrone and McLeay, Alan},
	year = {2021},
	note = {ISBN: 2644-9463},
	pages = {1565--1593},
}

@article{bar-natan_grand_2023,
	title = {The grand arc graph},
	volume = {305},
	number = {2},
	journal = {Mathematische Zeitschrift},
	author = {Bar-Natan, Assaf and Verberne, Yvon},
	year = {2023},
	note = {ISBN: 0025-5874
Publisher: Springer},
	pages = {20},
}

@article{bell_asymptotic_2008,
	title = {Asymptotic dimension},
	volume = {155},
	number = {12},
	journal = {Topology and its Applications},
	author = {Bell, Greg and Dranishnikov, Alexander},
	year = {2008},
	note = {ISBN: 0166-8641
Publisher: Elsevier},
	pages = {1265--1296},
}

@article{clay_uniform_2017,
	title = {Uniform fellow traveling between surgery paths in the sphere graph},
	volume = {17},
	number = {6},
	journal = {Algebraic \& geometric topology},
	author = {Clay, Matt and Qing, Yulan and Rafi, Kasra},
	year = {2017},
	pages = {3751--3778},
}

@article{hatcher_proof_1983,
	series = {second series},
	title = {A {Proof} of the {Smale} {Conjecture}},
	volume = {117},
	number = {no. 3},
	journal = {Annals of Mathematics},
	author = {Hatcher, Allen},
	year = {1983},
	pages = {553--607},
}

@article{hatcher_linearization_1980,
	series = {Acad. {Sci}. {Fennica}, {Helsinki}},
	title = {Linearization in 3-dimensional topology.},
	journal = {Proceedings of the International Congress of Mathematicians (Helsinki 1978)},
	author = {Hatcher, Allen},
	year = {1980},
	pages = {463--468},
}

@article{shapiro_alexander_2022,
	title = {An {Alexander} method for infinite-type surfaces},
	volume = {28},
	journal = {New York Journal of Mathematics},
	author = {Shapiro, Roberta},
	year = {2022},
	pages = {1137--1151},
}

@article{bestvina_subfactor_2014,
	title = {Subfactor projections},
	volume = {7},
	number = {3},
	journal = {Journal of Topology},
	author = {Bestvina, Mladen and Feighn, Mark},
	year = {2014},
	note = {ISBN: 1753-8424
Publisher: Oxford University Press},
	pages = {771--804},
}

@article{hamenstadt_spheres_2011,
	title = {Spheres and Projections for {$\mathrm{Out}(F_n)$}},
	journal = {arXiv preprint arXiv:1109.2687},
	author = {Hamenstädt, Ursula and Hensel, Sebastian},
	year = {2011},
}

@article{hamenstadt_submanifold_2024,
	title = {Submanifold projections and hyperbolicity in {$\mathrm{Out}(F_n)$}},
	journal = {arXiv preprint arXiv:2403.18698},
	author = {Hamenstädt, Ursula and Hensel, Sebastian},
	year = {2024},
}

@article{culler_moduli_1986,
	title = {Moduli of graphs and automorphisms of free groups},
	volume = {84},
	copyright = {1986 Springer-Verlag},
	language = {en},
	number = {1},
	journal = {Invent Math},
	author = {Culler, Marc and Vogtmann, Karen},
	month = feb,
	year = {1986},
	note = {Company: Springer
Distributor: Springer
Institution: Springer
Label: Springer
Number: 1
Publisher: Springer-Verlag},
	pages = {91--119},
}

@article{hatcher_isoperimetric_1996,
	title = {Isoperimetric inequalities for automorphism groups of free groups},
	volume = {173},
	issn = {0030-8730},
	number = {2},
	journal = {Pacific Journal of Mathematics},
	author = {Hatcher, Allen and Vogtmann, Karen},
	month = apr,
	year = {1996},
	pages = {425--441},
	file = {Full Text PDF:C\:\\Users\\wimpl\\Zotero\\storage\\RTYZC3SP\\Hatcher and Vogtmann - 1996 - Isoperimetric inequalities for automorphism groups.pdf:application/pdf},
}

@article{kapovich_hyperbolicity_2014,
	title = {On hyperbolicity of free splitting and free factor complexes},
	volume = {8},
	number = {2},
	journal = {Groups, geometry, and dynamics},
	author = {Kapovich, Ilya and Rafi, Kasra},
	year = {2014},
	pages = {391--414},
}

@article{arrieta_explicit_2023,
	title = {An explicit section of the {Laudenbach} exact sequence of the mapping class group of connect sums of {$S^2\times {S}^1$}},
	journal = {arXiv preprint arXiv:2310.13527},
	author = {Arrieta, Jorge Andres Robinson},
	year = {2023},
}

@book{baues_infinite_2001,
	address = {Dordrecht},
	series = {K-{Monographs} in {Mathematics}},
	title = {Infinite homotopy theory},
	volume = {6},
	publisher = {Kluwer Academic Publishers},
	author = {Baues, H. and Quintero, A.},
	year = {2001},
}

@book{rosendal_coarse_2022,
	address = {Cambridge},
	title = {Coarse {Geometry} of {Topological} {Groups}},
	volume = {223 of Cambridge Tracts in Mathematics},
	publisher = {Cambridge University Press},
	author = {Rosendal, Christian},
	year = {2022},
}

@article{hatcher_complex_2022,
	title = {The complex of free factors of a free group},
	journal = {arXiv preprint arXiv:2203.15602},
	author = {Hatcher, Allen and Vogtmann, Karen},
	year = {2022},
}

@article{hensel_realisation_2014,
	title = {Realisation and dismantlability},
	volume = {18},
	number = {4},
	journal = {Geometry \& Topology},
	author = {Hensel, Sebastian and Osajda, Damian and Przytycki, Piotr},
	year = {2014},
	pages = {2079--2126},
}

@article{aramayona_geometry_2018,
	title = {On the geometry of graphs associated to infinite-type surfaces},
	volume = {289},
	journal = {Mathematische zeitschrift},
	author = {Aramayona, Javier and Valdez, Ferrán},
	year = {2018},
	pages = {309--322},
}

@misc{vlamis_notes_nodate,
	title = {Notes on the topology of mapping class groups},
	url = {https://qcpages.qc.cuny.edu/~nvlamis/Papers/AIM_Notes.pdf},
	author = {Vlamis, Nicholas G},
}

@book{kirby_topology_1989,
	series = {Lecture {Notes} in {Mathematics}},
	title = {The {Topology} of 4-{Manifolds}},
	volume = {1374},
	url = {https://link.springer.com/book/10.1007/BFb0089031},
	language = {en},
	urldate = {2024-09-01},
	publisher = {Springer-Verlag Berlin},
	author = {Kirby, Robion C.},
	year = {1989},
	file = {Snapshot:C\:\\Users\\wimpl\\Zotero\\storage\\C75JVVVH\\BFb0089031.html:text/html},
}
\end{document}